\documentclass[UTF8,10pt]{article}
\setcounter{secnumdepth}{4}

\usepackage[left=1.91cm,right=1.91cm,top=2.54cm,bottom=2.54cm]{geometry}
\usepackage{amsfonts}
\usepackage{amsmath}
\usepackage{amssymb}
\usepackage{pgfplots}
\usepackage{tikz}
\usetikzlibrary{arrows}
\usepackage{appendix}
\usepackage{tikz-cd}
\usepackage{mathtools}
\usepackage{amsthm}
\usepackage{setspace}
\usepackage{enumitem}
\setenumerate[1]{itemsep=0pt,partopsep=0pt,parsep=\parskip,topsep=5pt}
\setitemize[1]{itemsep=0pt,partopsep=0pt,parsep=\parskip,topsep=5pt}
\setdescription{itemsep=0pt,partopsep=0pt,parsep=\parskip,topsep=5pt}
\usepackage[colorlinks,linkcolor=black,anchorcolor=black,citecolor=black]{hyperref}
\usepackage{cases}
\usepackage{fancyhdr}
\usepackage[scaled=0.92]{helvet}	
\usepackage{txfonts}

\theoremstyle{definition}
\newtheorem{thm}{Theorem}[section]

\newtheorem{defn}{Definition}[section]
\newtheorem{lem}[thm]{Lemma}
\newtheorem{prop}[thm]{Proposition}

\newtheorem*{rmk}{Remark}

\newcommand{\R}{\mathbb{R}}

\newcommand{\N}{\mathbb{N}}

\newcommand{\T}{\mathbb{T}}

\newcommand{\TP}{\overline{\partial}{}}
\newcommand{\TL}{\overline{\Delta}{}}
\newcommand{\tpl}{\overline{\partial}^2\overline{\Delta}}
\newcommand{\curl}{\text{curl }}
\newcommand{\dive}{\text{div }}


\newcommand{\p}{\partial}

\newcommand{\DD}{\mathcal{D}}

\newcommand{\ak}{\tilde{a}}
\newcommand{\Ak}{\tilde{A}}
\newcommand{\ek}{\tilde{\eta}}
\newcommand{\Jk}{\tilde{J}}
\newcommand{\ar}{\mathring{a}}

\newcommand{\ark}{\mathring{\tilde{{a}}}}
\newcommand{\Ark}{\mathring{\tilde{{A}}}}
\newcommand{\Jrk}{\mathring{\tilde{{J}}}}
\newcommand{\er}{\mathring{\eta}}
\newcommand{\hr}{\mathring{h}}
\newcommand{\erk}{\mathring{\tilde{{\eta}}}}
\newcommand{\vr}{\mathring{v}}
\newcommand{\Jr}{\mathring{J}}
\newcommand{\psir}{\mathring{\psi}}
\newcommand{\park}{\nabla_{\mathring{\tilde{a}}}}

\newcommand{\lapark}{\Delta_{\mathring{\tilde{a}}}}

\newcommand{\lapak}{\Delta_{\tilde{a}}}

\newcommand{\divr}{\text{div}_{\mathring{\tilde{a}}}}

\newcommand{\curlr}{\text{curl}_{\mathring{\tilde{a}}}}

\newcommand{\pa}{\nabla_a}

\newcommand{\pak}{\nabla_{\tilde{a}}}

\newcommand{\diva}{\text{div}_{\tilde{a}}}

\newcommand{\curla}{\text{curl}_{\tilde{a}}}

\newcommand{\lkk}{\Lambda_{\kk}}

\newcommand{\lap}{\Delta}

\newcommand{\dx}{\,dx}
\newcommand{\dy}{\,dy}
\newcommand{\dz}{\,dz}
\newcommand{\dt}{\,dt}
\newcommand{\dS}{\,dS}

\newcommand{\kk}{\kappa}
\newcommand{\eps}{\varepsilon}

\newcommand{\EE}{\mathbb{E}}
\newcommand{\EEE}{\mathfrak{E}}
\newcommand{\CC}{\mathfrak{C}}
\newcommand{\h}{\mathfrak{h}}

\newcommand{\VV}{\mathbf{V}}
\newcommand{\HH}{\mathbf{H}}
\newcommand{\VVV}{\mathfrak{V}}
\newcommand{\HHH}{\mathfrak{H}}
\newcommand{\GG}{\mathbf{G}}
\newcommand{\G}{\mathbf{G}^0}
\newcommand{\F}{\mathbf{F}^0}
\newcommand{\GP}{(\mathbf{G}^0_j\cdot\partial)}
\newcommand{\FP}{(\mathbf{F}^0_j\cdot\partial)}
\newcommand{\FF}{\mathbf{F}}
\newcommand{\ff}{\mathfrak{f}}
\newcommand{\KK}{\mathfrak{K}}
\newcommand{\NN}{\mathfrak{N}}
\newcommand{\BB}{\mathfrak{B}}
\newcommand{\XX}{\mathbf{X}}
\newcommand{\YY}{\mathbf{Y}}

\newcommand{\FT}{F^{\top}}

\newcommand{\FFT}{\mathbf{F}^{\top}}
\newcommand{\EEr}{\mathring{\mathbb{E}}}
\newcommand{\VVr}{\mathring{\mathbf{V}}}
\newcommand{\HHr}{\mathring{\mathbf{H}}}
\newcommand{\GGr}{\mathring{\mathbf{G}}}
\newcommand{\FFr}{\mathfrak{F}}

\newcommand{\PP}{\mathcal{P}}

\newcommand{\ww}{\textbf{w}_0}
\newcommand{\vvv}{\textbf{v}_0}

\newcommand{\wt}{e'(h)}
\newcommand{\swt}{\sqrt{e'(h)}}

\newcommand{\idt}{\int_{\mathcal{D}_t}}

\newcommand{\io}{\int_{\Omega}}

\newcommand{\ig}{\int_{\Gamma}}
\numberwithin{equation}{section}
\setcounter{secnumdepth}{4}
\setcounter{tocdepth}{4}
\usepackage{xcolor}

\begin{document}
\bibliographystyle{plain}
\title{\textbf{Local Well-posedness and Incompressible Limit \\ of the Free-Boundary Problem in \\ Compressible Elastodynamics}}
\author{{\sc Junyan Zhang}\thanks{{Johns Hopkins University, 3400 N Charles St, Baltimore, MD 21218, USA. Email: \texttt{zhang.junyan@jhu.edu}}
}}
\date{\today}
\maketitle

\begin{abstract}
We consider 3D free-boundary compressible elastodynamic system under the Rayleigh-Taylor sign condition. It describes the motion of an isentropic inviscid elastic medium with moving boundary. The deformation tensor is assumed to satisfy the neo-Hookean linear elasticity. The local well-posedness was proved by Trakhinin \cite{trakhinin2016elastic} by Nash-Moser iteration. In this paper, we give a new proof of the local well-posedness by the combination of classical energy method and hyperbolic approach. In the proof, we apply the tangential smoothing method to define the approximation system. The key observation is that the structure of the wave equation of pressure together with Christodoulou-Lindblad \cite{christodoulou2000motion} elliptic estimates reduces the energy estimates to the control of tangentially-differentiated wave equations despite a potential loss of derivative in the source term. To the best of our knowledge, we first establish the nonlinear energy estimate without loss of regularity for free-boundary compressible elastodynamics. The energy estimate is also uniform in sound speed which yields the incompressible limit, i.e., the solutions of the free-boundary compressible elastodynamic equations converge to the incompressible counterpart provided the convergence of initial datum.

It is worth emphasizing that our method is completely applicable to compressible Euler equations. Our observation also shows that it is not necessary to include the full time derivatives in the boundary energy and analyze higher order wave equations as in Lindblad-Luo \cite{lindblad2018priori} and Luo \cite{luo2018ww} even if we require the energy is uniform in sound speed. Moreover, the enhanced regularity for compressible Euler equations obtained in \cite{lindblad2018priori,luo2018ww} can still be recovered for a slightly compressible elastic medium by further delicate analysis of the Alinhac good unknowns, which is completely different from Euler equations. 
\end{abstract}

\textbf{Mathematics Subject Classification (2020): }35L65, 35Q35, 74B10, 76N10, 76N30.

\textbf{Keywords. }Compressible fluids, Neo-Hookean elastodynamics, Inviscid flows, Free boundary problem, Well-posedness, Incompressible limit.

\setcounter{tocdepth}{2}
\setcounter{MaxMatrixCols}{14}
\tableofcontents

\section{Introduction}

We consider 3D compressible elastodynamic equations describing the motion of an isentropic inviscid elastic medium with free boundary \cite{dafermos10,Gurtin}
\begin{equation}\label{elastoeq}
\begin{cases}~
\rho D_t u=-\nabla p+\dive(\rho \FF\FFT)~~~&\text{in }\DD,\\
D_t \rho+\rho\dive u=0~~~&\text{in }\DD,\\
D_t \FF=\nabla u\FF~~~&\text{in }\DD.
\end{cases}
\end{equation}Here $\DD:=\bigcup_{0\leq t\leq T}\{t\}\times\DD_t$ and $\DD_t\subseteq\R^3$ is the domain occupied by the elastic medium at time $t$. $\nabla:=(\p_{x_1},\p_{x_2},\p_{x_3})$ is the standard spatial derivative and $\dive X:=\p_{x_i} X^i$ is the standard divergence for any vector field $X$. $D_t:=\p_t+u\cdot\nabla$ is the material derivative. Throughout this paper, $X^i=\delta^{li}X_l$ for any vector field $X$, i.e., we use Einstein summation convention. The quantities $u,\rho,p$ denotes the velocity, density and fluid pressure of the elastic medium. $\FF:=(\FF_{ij})_{3\times 3}$ is the deformation tensor and $\FFT$ denotes its transpose matrix. $\rho\FF\FFT$ is the Cauchy-Green stress tensor in the case of compressible neo-Hookean linear elasticity. $D_t:=\p_t+u\cdot\nabla$ is the material derivative. 

We impose the equation of state as 
\begin{equation}\label{eos1}
p=p(\rho)\text{ being a strictly increasing function of }\rho,\text{ for }\rho\geq \bar{\rho_0}
\end{equation} where $\bar{\rho_0}:=\rho|_{\p\DD}$ is a positive constant and we set $\bar{\rho_0}=1$ throughout this manuscript, which means the elastic medium is isentropic with strictly positive density. We also impose the divergence constraints on the deformation tensor
\begin{equation}\label{divf}
\dive(\rho \FFT)=0.
\end{equation} This will not make the system be over-determined because we only require it holds for the initial data and it automatically propagates to any time (cf. Trakhinin \cite[Proposition 2.1]{trakhinin2016elastic}). Note that the system \eqref{elastoeq} together with \eqref{eos1} and \eqref{divf} describes the motion of elastic waves in a compressible inviscid neo-Hookean linear material corresponding to the elastic energy $W(\FF)=\frac12|\FF|^2$. It also arises as the inviscid limit of the compressible visco-elastodynamics \cite{dafermos10,Gurtin} of the Oldroyd type \cite{old1,old2}.

Next we introduce the boundary conditions
\begin{equation}\label{elastobdry}
\begin{cases}
V(\p\DD_t)=u\cdot n~~~&\text{ on }\p\DD,\\
p=0~~~&\text{ on }\p\DD,\\
\FFT\cdot n=\mathbf{0}~~~&\text{ on }\p\DD.
\end{cases}
\end{equation} The first boundary condition shows that the boundary moves with the velocity of the fluid. It can be considered as the definition of free-boundary problem, and can be equivalently written as $D_t|_{\p\DD}\in\mathcal{T}(\p\DD)$ where $\mathcal{T}(\p\DD)$ denotes the tangential bundle of 
$\p\DD$, or $(1,u)$ is tangent to $\p\DD$. The second condition $p=0$ means that outside the fluid region is the vacuum. The third condition means that the deformation tensor is tangential on the boundary, which is also required for initial data only and then automatically propagates. Here $n$ denotes the exterior unit normal vector to $\p\DD_t$.

\begin{rmk}
The boundary condition $\FF^{\top}\cdot n=\mathbf{0}$ originates from the Rankine-Hugoniot conditions for the vortex sheets in compressible elastodynamics \cite{CHWWY1,CHWWY2,CHWWY3,CHWWY4}.  It leads to $\det\FF=0$ on the boundary. Such ``degeneracy" is due to the mathematical formulation of the singular vortex sheet structure in elastic fluids. The same ``degenaracy" takes place for the free-boundary problems and vortex sheets problems for both incompressible \cite{HaoWang2014,GuWang2018,LWZ1,LWZ2,ZhangYH} and compressible elastodynamics \cite{trakhinin2016elastic}. For more illustrations, we refer readers to \cite[Remark 2.1]{CHWWY2}.
\end{rmk}

\paragraph*{Energy conservation law}

 Throughout this paper, we use Einstein summation convention, i.e., repeated indices is summation on this index. We define $Q(\rho):=\int_1^{\rho}p(R)/R^2~dR$. Under \eqref{elastobdry}, the system \eqref{elastoeq} has the following energy conservation law
\begin{equation}\label{econserve}
\begin{aligned}
&\frac{d}{dt}\left(\frac12\idt\rho|u|^2+\rho|\FF|^2\dx+\idt\rho Q(\rho)\dx\right)\\
=&\idt\rho u^i\cdot D_t u_i\dx+\sum_{j=1}^3\idt\rho\FF_{ij}D_t\FF_{ij}\dx+\idt\frac{p(\rho)}{\rho} D_t\rho\dx\\
=&-\idt (u\cdot\nabla) p\dx+\sum_{j=1}^3\idt \rho u^i \FF_{kj}\p_k\FF_{ij}\dx+\sum_{j=1}^3\idt\rho\FF_{ij}\FF_{kj}\p_k u^i\dx+\idt p(-\dive u)\dx\\
=&-\int_{\p\DD_t}u^i n_ip\dS+\idt p\dive u\dx-\idt p\dive u\dx\\
&-\sum_{j=1}^3\idt\rho\FF_{kj}\p_k u^i\FF_{ij}\dx-\sum_{j=1}^3\io u^i\underbrace{\p_k(\rho\FF_{kj})}_{=0}\FF_{ij}\dx+\int_{\p\DD_t}\rho u^i \underbrace{\FF_{kj}n_k}_{=0}\FF_{ij}\dS+\sum_{j=1}^3\idt\rho\FF_{ij}\FF_{kj}\p_k u^i\dx\\
=&~0.
\end{aligned}
\end{equation}

\paragraph*{Enthalpy formulation}

Before further introducing the Rayleigh-Taylor sign condition and more physical constraints, we could simplify the system by introducing the enthalpy to replace pressure and density. Define $\FF_j:=(\FF_{1j},\FF_{2j},\FF_{3j})$ to be the $j$-th column of $\FF$. Then we have
\[
0=\dive(\rho \FFT)_j:=\p_k(\rho \FF_{kj})\Rightarrow\p_k \FF_{kj}=-\FF_{kj}\frac{\p_k\rho}{\rho}=-\FF_{kj}\p_k\ln\rho=:-(\FF_j\cdot\nabla)\ln\rho,
\] and thus
\[
\dive(\rho \FF\FFT)_i:=\p_k(\rho\FF_{ij} \FF_{kj})=\underbrace{\p_k(\rho \FF_{kj})}_{=0}\FF_{ij}+\rho \FF_{kj}\p_k \FF_{ij}=(\rho(\FF_j\cdot\nabla)\FF_j)_i.
\]
The second equation reads $\dive u=-D_t(\ln\rho)$ and the third equation reads
\[
D_t \FF_{ij}=(\nabla u\FF)_{ij}:=\FF_{kj}\p_k u_i=(\FF_j\cdot\nabla)u_i.
\] The boundary condition on $\FF$ is expressed as $0=(\FFT\cdot n)_j=\FF_{ij}n_i=:\FF_j\cdot n$.

Now we introduce the ``enthalpy" $\h$ as an increasing function of $\rho$ defined by $\h(\rho):=\int_1^{\rho}\frac{p'(r)}{r}~dr$ and $e(\h):=\ln\rho(\h)$. Under the enthalpy formulation, $\frac{1}{\rho}\p_i p=\p_i\h$. Then the system \eqref{elastoeq} can be re-written as
\begin{equation}\label{elastoequ}
\begin{cases}
D_t u=-\nabla \h+\sum\limits_{j=1}^3(\FF_j\cdot\nabla)\FF_j~~~&\text{in }\DD,\\
\dive u=-D_te(\h)=-e'(\h)D_t\h~~~&\text{in }\DD,\\
D_t \FF_j=(\FF_j\cdot\nabla)u~~~&\text{in }\DD,\\
(\dive \FFT)_j:=\p_k\FF_{kj}=-(\FF_j\cdot\nabla)e(\h)=-e'(\h)(\FF_j\cdot\nabla)\h~~~&\text{in }\DD.
\end{cases}
\end{equation}
together with the boundary condition
\begin{equation}\label{elastobdryu}
\begin{cases}
D_t|_{\p\DD}\in\mathcal{T}(\p\DD)~~~&\text{ on }\p\DD,\\
\h=0~~~&\text{ on }\p\DD,\\
\FF_j\cdot n=0~~~&\text{ on }\p\DD.
\end{cases}
\end{equation}

The new system looks quite similar to the incompressible elastodynamic equations, where $\h$ plays the role as $p$ of the incompressible counterpart, while $\dive u$ and $\dive \FFT$ is no longer zero yet determined as a function of $\rho$ and thus of $\h$. In order for the initial-boundary value problem of \eqref{elastoequ}-\eqref{elastobdryu} being solvable, we need to impose some other natural physical conditions and initial data satisfying the compatibility conditions.

\paragraph*{Physical constraints}

First we impose the Rayleigh-Taylor sign condition on the free surface
\begin{equation}\label{sign}
-\frac{\p \h}{\p n}\geq c_0>0~~~\text{ on }\p\DD_t,
\end{equation}where $c_0$ is a given constant. This is equivalent to the natural physical condition $-\frac{\p p}{\p n}\geq c_0'>0$ in the study of the motion of a free-boundary fluid which says that the pressure is larger in the interior than on the boundary. Ebin \cite{ebin1987equations} proved the ill-posedness of incompressible Euler equations when \eqref{sign} is violated. The condition \eqref{sign} is only required for the initial data: We will justify this condition by proving $-\frac{\p \h}{\p n}$ on the boundary is a $C_{t,x}^{0,1/4}$ function, and thus \eqref{sign} propagates in a positive time interval.

Next we impose the following natural conditions on $e(\h)$: For each fixed $m\in\N^*$, there exists a constant $A>1$ such that
\begin{equation}\label{sound}
|e^{(k)}(\h)|\leq A\text{ and }|e^{(k)}(\h)|\leq A|e'(\h)|^k\leq A|e'(\h)|~~\forall k\leq m+1.
\end{equation}

\paragraph*{Compatibility conditions on initial data}

Finally we require the initial data $(u_0,\F,\h_0,\DD_0)$ to satisfy the compatibility conditions at the boundary. From \eqref{elastoequ}-\eqref{elastobdryu} we know $\dive u|_{\{0\}\times\p\DD_0}=0$ and $\h_0|_{\p\DD_0}=0$ is needed, which is called the 0-th order compatibility condition. We define the $m$-th order compatibility conditions to be
\begin{equation}\label{ccd}
D_t^k \h|_{\{0\}\times\p\DD_0}=0,~~~k=0,1,\cdots, m.
\end{equation} We will prove in Section \ref{data} that such initial data must exist provided that the sound speed $c^2:=p'(\rho)$ (or equivalently $1/e'(h)$) is suitably large.

Given initial data $(u_0,\F,\h_0,\DD_0)$ satisfying the compatibility conditions and constraints \eqref{divf}, ${\FF}^{\top}\cdot n|_{\{0\}\times\p\DD_0}=\mathbf{0}$ as well as Rayleigh-Taylor sign condition \eqref{sign} at $t=0$,  where $\DD_0\subseteq \R^3$ is a simply-connected bounded domain, we want to find a set $\DD_t\subseteq\R^3$ and velocity $u$, enthalpy $\h$ and deformation tensor $\FF$ solving the system \eqref{elastoequ}-\eqref{ccd}. In this manuscript, we aim to prove the local well-posedness of \eqref{elastoequ}-\eqref{ccd} with energy estimates and the incompressible limit.

\subsection{History and Background}

The study of both free-surface fluid and elastodynamics has a long history and has blossomed in the recent decades. Let us first review the results on the free-boundary Euler equations. For the incompressible case, Wu \cite{wu1997LWPww, wu1999LWPww} on the local well-posedness (LWP) of full water wave system have been considered as the first breakthrough in the study of free-surface perfect fluid. We also refer to Lannes \cite{lannes2005ww,masmoudi05,mz2009} for the study of local theory of incompressible irrotational water wave. When the vorticity is nonzero, Christodoulou-Lindblad \cite{christodoulou2000motion} first established the a priori estimates without loss of regularity. Lindblad \cite{lindblad2002, lindblad2005well} proved the local well-posedness by using Nash-Moser iteration. Coutand-Shkoller \cite{coutand2007LWP, coutand2010LWP} introduced the tangential smoothing method to proved the local well-posedness with or without surface tension and avoid the loss of regularity. See also \cite{zhangzhang08Euler,alazard2014} for the case without surface tension and \cite{Schweizer05,shatah2008geometry, shatah2008priori,shatah2011local} for the nonzero surface tension case.

The study for a free-surface compressible fluid is much more difficult because the pressure is governed by a wave equation instead of being a Lagrangian multiplier in the incompressible case. We list the results in the case of a liquid. Lindblad \cite{lindblad2003well, lindblad2005cwell} proved the LWP by Nash-Moser iteration and then Trakhinin \cite{trakhiningas2009} extended the LWP to non-isentropic, non-relativistic and relativistic liquid with gravity in an unbounded domain via hyperbolic approach and also Nash-Moser iteration. The first a priori estimates without loss of regularity was established by Lindblad-Luo \cite{lindblad2018priori} and then was extended to a compressible water wave with vorticity Luo \cite{luo2018ww}. Ginsberg-Lindblad-Luo \cite{GLL2019LWP} proved the LWP for the self-gravitating liquid. Then the author joint with Luo \cite{luozhangCWWLWP} proved the LWP for a compressible gravity water wave with vorticity with a simplified method. In the case of nonzero surface tension, we refer to \cite{coutand2013LWP,DK,luo2019limit}. In the case of a gas, we refer to \cite{coutand2010priori,coutand2012LWP,jang2014gas,luoxinzeng2014,Hao2015gas,ITgas} and references therein. 

Now let us review the development of elastodynamics system which describe the motion of an elastic medium. When the domain is fixed or $\R^n$, most of the results focus on the incompressible visco-elastic case because the solution is expected to be global. See \cite{LZ05,LLZ05,LZ08,LLZ08,dafermos10,Lin12,CSelastic} and references therein. For the compressible viscoelastic case, we refer to  \cite{HuWang2010,HuWang2011,HuWu2013,qian1,qian2}. The incompressible ideal elastodynamic equations satisfy the null condition and thus the global solution in $\R^n$ can also be expected. We refer to \cite{ebin1993,Sideris04,Sideris07,LSZ15,Lei16,wxcelastic,CaiLeiM19}.

However, the study of free-surface elastodynamics becomes quite different. On the one hand, the domain is no longer $\R^n$ but with a boundary whose regularity is also limited and thus it is quite difficult to recover the global solution as in the $\R^n$-case. The only related result is Xu-Zhang-Zhang \cite{XZZ2013} that proved the global well-posedness (GWP) of incompressible visco-elastodynamics with surface tension. On the other hand, the contribution of free boundary enters to the highest order term in the energy, and extra stabilizing conditions such as Rayleigh-Taylor sign condtion are required. So far, nearly all the known results only deal with the local-in-time a priori estimates and LWP for the neo-Hookean elastodynamics and most are only available for the incompressible case. For the incompressible case, Hao-Wang \cite{HaoWang2014} proved the Christodoulou-Lindblad type a priori estimates under Rayleigh-Taylor sign condition \eqref{sign}. Gu-Wang \cite{GuWang2018} and Li-Wang-Zhang \cite{LWZ2} proved the LWP under a mixed stability condition. Li-Wang-Zhang \cite{LWZ1} proved the LWP for the incompressible vortex sheets in elastodynamics under a non-collinearity condition. Hu-Huang \cite{HuHuang2018} proved the LWP under Rayleigh-Taylor sign condition by generalizing Lindblad \cite{lindblad2002,lindblad2005well}, and Zhang \cite{ZhangYH} gives an alternative proof by generalizing Gu-Wang \cite{gu2016construction} in incompressible MHD. In the case of nonzero surface tension, Gu-Lei \cite{GuLei2020ST} proved the LWP by vanishing viscosity method. 

Futher difficulty arises in the compressible case compared with the incompressible case due to the coupling of pressure wave and the motion of elastic medium. The only known result on the free-boundary problem in elastodynamics is Trakhinin \cite{trakhinin2016elastic} that proved the LWP under either the non-collinearity condition\footnote{The non-collinearity condition reads $|\FF_{j}\times\FF_{k}|\geq\delta>0$ for $k\neq j$. This non-collinearity condition allows us to express the gradient of the flow map in terms of the deformation tensor, and thus actually enhances extra 1/2-order regularity of the free-boundary than being under Rayleigh-Taylor sign condition.} or Rayleigh-Taylor sign condition. The proof in \cite{trakhinin2016elastic} relies on the Nash-Moser iteration and thus cannot get energy estimates without loss of regularity. Trakhinin \cite{trakhinin2016elastic} also pointed out that the ill-posedness happens if the Rayleigh-Taylor sign condition and the non-collinearity condition fail simultaneously. 

As stated in the remark after \eqref{elastobdry}, the free boundary problem can be considered as the one-phase problem of the vortex sheets in compressible elastodynamics. For that, Chen-Hu-Wang \cite{CHWWY1,CHWWY2} proved the linearized stability of vortex sheets in 2D compressible elastodynamics and Chen-Hu-Wang-Yuan \cite{CHWWY4} for the 3D case.  Chen-Hu-Wang-Wang-Yuan \cite{CHWWY3} also proved the nonlinear stability in 2D. We also mention that Chen-Secchi-Wang \cite{CSWelastic} proved the linear stability of contact discontinuity. Morando-Trakhinin-Trebeschi \cite{elasticshock} proved the structural stability of shocks in compressible elastodynamics. Among all these results, the proofs strongly rely on Nash-Moser iteration. \textbf{So far, no nonlinear energy estimate without loss of regularity is available for the free-boundary compressible elastodynamics system}.

The other topic considered in this manuscript is the incompressible limit. In physics, the incompressible limit helps people to understand the property of a slightly compressible fluid via its incompressible counterpart and vice versa. In the mathematical study of inviscid fluid, most results on the incompressible limit or slightly compressible fluid focus on the Euler equations in fixed domain or $\R^n$  \cite{klainerman1981,klainerman1982,ebin1982,Schochet1986,Sideris91,Metivier2001,alazard2005,Cheng1,Cheng2,disconziebin} and reference therein. The incompressible limit method was also used in elastodynamics \cite{Sideris04,LZ05,LiuXu2021}. In the study of the incompressible limit of a free-surface fluid, only quite few results are available. See Lindblad-Luo \cite{lindblad2018priori} for compressible Euler, Luo \cite{luo2018ww} for compressible water wave, Disconzi-Luo \cite{luo2019limit} for the nonzero surface tension case and the author \cite{ZhangCRMHD1} for compressible resistive MHD. The study of incompressible limit of free-boundary compressible elastodynamics is still open. 

In the presenting manuscript, we prove the local well-posedness of the free-boundary compressible ideal elastodynamics system under Rayleigh-Taylor sign conditon \eqref{sign} and the energy estimate without regularity loss. The energy estimates are also uniform in sound speed and thus yield the incompressible limit. Our proof no longer relies on higher order wave equation as in previous work \cite{lindblad2018priori, luo2018ww, ZhangCRMHD1} and also applies to Euler equations. Besides, we are able to recover the higher order energy established in \cite{lindblad2018priori, luo2018ww, ZhangCRMHD1} for a slightly compressible elastic medium. To the best of our knowledge, we first establish the energy estimates without loss of regularity and the incompressible limit of the free-boundary compressible elastodynamics system under Rayleigh-Taylor sign condition. See Section \ref{stat} for detailed strategy of the proof.

\subsection{Reformulation in Lagrangian coordinates and main results}

We introduce Lagrangian coordinates to reduce the free-boundary problem to a fixed-domain problem. We introduce the reference domain\footnote{The reference domain allows us to work in one coordinate patch and the result for a general simply-connect domain follows from partition of unity. See Coutand-Shkoller \cite{coutand2007LWP,coutand2010LWP} for details.} $\Omega:=\T^2\times(-1,1)$ with boundary $\Gamma:=\p\Omega=\T^2\times(\{-1\}\cup\{1\})$. Denote the coordinates on $\Omega$ by $y=(y_1,y_2,y_3)$ and the spatial derivative in the Lagrangian coordiantes by $\p=\p_y$. We define $\eta:[0,T]\times\Omega\to\DD$ to be the flow map of the velocity $u$ by
\[
\p_t\eta(t,y)=u(t,\eta(t,y)),~~~\eta(0,y)=\eta_0(y),
\] where $\eta_0:\Omega\to \DD_0$ is a diffeomorphism. For technical simplicity, we take $\eta_0=\text{Id}$, i.e., we assume the initial domian $\DD_0=\Omega=\T^2\times(0,1)$. In fact, our proof in the manuscript is also applicable to the case for general data $\eta_0\in H^4$. By the chain rule, one can verify that $\p_t(f(t,\eta(t,y))=(D_t f)(t,\eta(t,y))$, i.e., the material derivative becomes time derivative in Lagrangian coordinates.

Next we introduce the Lagrangian variables
\[
v(t,y):=u(t,\eta(t,y)),~h(t,y):=\h(t,\eta(t,y)),~F_{ij}(t,y):=\FF_{ij}(t,\eta(t,y)),~F_j(t,y):=\FF_{j}(t,\eta(t,y)),
\] the co-factor matrix $a:=[\p\eta]^{-1}$ by $a^{li}=a^l_i:=\frac{\p y^l}{\p x^i}$ where $x^i:=\eta^i(t,y)$ is the $i$-th component of Eulerian coordinate, the Jacobian determinant $J:=\det[\p\eta]$ and $A:=Ja$. These quantities are always well-defined because $\eta$ is around the identity map when $t$ is small. Then the free-boundary compressible elastodynamic system \eqref{elastoequ}-\eqref{elastobdryu} can be re-written in Lagrangian coordinates as
\begin{equation}\label{elastoL}
\begin{cases}
\p_t\eta=v~~~&\text{in }\Omega,\\
\p_t v=-\pa h+\sum\limits_{j=1}^3(F_j\cdot\pa)F_j~~~&\text{in }\Omega,\\
\dive_a v=-e'(h)\p_t h~~~&\text{in }\Omega,\\
\p_t F_j=(F_j\cdot\pa)v~~~&\text{in }\Omega,\\
(\dive_a \FT)_j=-e'(h)(F_j\cdot\pa)h~~~&\text{in }\Omega,\\
\p_t|_{\Gamma}\in\mathcal{T}([0,T]\times\Gamma)~~~&\text{on }\Gamma,\\
h=0~~~&\text{on }\Gamma,\\
F_j\cdot N=0~~~&\text{on }\Gamma,\\
-\frac{\p h}{\p N}\geq c_0>0~~~&\text{on }\Gamma,\\
(\eta,v,h,F)|_{t=0}=(\text{Id},v_0,\h_0,\F).
\end{cases}
\end{equation}Here $N=(0,0,\pm 1)$ is the exterior unit normal on the boundary $\Gamma:=\T^2\times\{\pm 1\}$, and $(\pa f)^i:=a^{li}\p_l f$. Note that the divergence constraint on $F$ and Taylor sign condition are only required for initial data, so the system \eqref{elastoL} is not over-determined.

There are several important geometric identities in Lagrangian coordinates. We have Piola's identity $\p_l A^{li}=0$ for any $i$, and 
\[
Da^{li}=-a^{lr}\p_m D\eta_r a^{mi},~~D=\p\text{ or }\p_t,
\]and $\p_t J=J\dive_a v$. With the help of these identities, one can express the deformation tensor as the $\F$-directional derivative of the flow map. We compute for any $j$
\[
\p_t(F_{ij}a^{li})=\p_t F_{ij}a^{li}+F_{ij}\p_ta^{li}=F_{kj}a^{mk}\p_m v_ia^{li}-F_{ij}a^{lr}\p_mv_ra^{mi}=0,
\]which yields $F_{ij}a^{li}=\F_{ij}\delta^{li}=\F_{lj}$ and thus
\begin{equation}\label{fid}
F_{kj}=F_{ij}a^{li}\p_l\eta_k=\F_{lj}\p_l\eta_k=\FP\eta_k.
\end{equation} From now on, $\dive Y:=\p_i Y^i$ denotes the standard Lagrangian divergence instead of the Eulerian one. Under this setting, the divergence constraint becomes 
\begin{equation}\label{divf0}
\dive({\F}^{\top})_j:=\p_k\F_{kj}=-e'(\h_0)\FP\h_0.
\end{equation}

Therefore, the system \eqref{elastoL} can be further simplified to the following system. 
\begin{equation}\label{elastol}
\begin{cases}
\p_t\eta=v~~~&\text{in }\Omega,\\
\p_t v=-\pa h+\sum\limits_{j=1}^3\FP^2\eta~~~&\text{in }\Omega,\\
\dive_a v=-e'(h)\p_t h~~~&\text{in }\Omega,\\
\dive({\F}^{\top})_j:=\p_k\F_{kj}=-e'(\h_0)\FP\h_0~~~&\text{in }\Omega,\\
\p_t|_{\Gamma}\in\mathcal{T}([0,T]\times\Gamma)~~~&\text{on }\Gamma,\\
h=0~~~&\text{on }\Gamma,\\
\F_j\cdot N=0~~~&\text{on }\Gamma,\\
-\frac{\p h}{\p N}\geq c_0>0~~~&\text{on }\Gamma,\\
(\eta,v,h)|_{t=0}=(\text{Id},v_0,\h_0).
\end{cases}
\end{equation}

We aim to prove the local well-posedness and the incompressible limit of the system \eqref{elastol}, i.e., the free-boundary compressible elastodynamic system in Lagrangian coordinates. Before stating the main results, we introduce our energy functional as follows. Here we denote $\|f\|_{s}: = \|f(t,\cdot)\|_{H^s(\Omega)}$ for any function $f(t,y)\text{ on }[0,T]\times\Omega$ and $|f|_{s}: = |f(t,\cdot)|_{H^s(\Gamma)}$ for any function $f(t,y)\text{ on }[0,T]\times\Gamma$.

\begin{defn}
Define energy functional $\EE$ at time $T$ to be
\begin{equation}\label{energy}
\begin{aligned}
\EE(T)&:=\left\|\eta\right\|_4^2+\left\|v\right\|_4^2+\sum\limits_{j=1}^3\left\|\FP\eta\right\|_4^2+\left\|h\right\|_4^2+\left|a^{3i}\TP^4\eta_i\right|_0^2\\
&+\left\|\p_t v\right\|_3^2+\sum\limits_{j=1}^3\left\|\FP\p_t\eta\right\|_3^2+\left\|\p_t h\right\|_3^2\\
&+\left\|\p_t^2v\right\|_2^2+\sum\limits_{j=1}^3\left\|\FP\p_t^2\eta\right\|_2^2+\left\|\swt\p_t^2 h\right\|_2^2\\
&+\left\|\swt\p_t^3 v\right\|_1^2+\sum\limits_{j=1}^3\left\|\swt\FP\p_t^3 \eta\right\|_1^2+\left\|\wt\p_t^3 h\right\|_1^2\\
&+\left\|\wt\p_t^4 v\right\|_0^2+\sum\limits_{j=1}^3\left\|\wt\FP\p_t^4 \eta\right\|_0^2+\left\|(\wt)^{\frac32}\p_t^4 h\right\|_0^2.
\end{aligned}
\end{equation}
And define higher order energy functional $\EEE(T)$ to be
\begin{equation}\label{energyh}
\begin{aligned}
\EEE(T):=\EE(T)+\left\|\swt\p_t^4 v\right\|_1^2+\left\|\swt\p_t^4 \left(\FP\eta\right)\right\|_1^2+\left|\swt a^{3i}\TP\p_t^4\eta_i\right|_0^2+\left\|(\wt)^{\frac32}\p_t^5 h\right\|_0^2+\left\|\wt\p_t^4 h\right\|_1^2.
\end{aligned}
\end{equation}
\end{defn}

The main results in this manuscript are listed in the following 4 theorems. 

\paragraph*{1. Local well-posedness}
\begin{thm}\label{lwp}
Given initial data $v_0,\F,\h_0\in H^4(\Omega)$ satisfying the compatibility conditions \eqref{ccd} up to 4-th order\footnote{The reason for requiring 4-th order is that $\p_t^4 h$ appears in the boundary integral in the proof.} and the Rayleigh-Taylor sign condition \eqref{sign}, there exists some $T>0$ depending only on the initial data, such that the system \eqref{elastol} has a unique strong solution $(\eta,v,h)$ with the energy estimates
\begin{equation}\label{energy1}
\sup_{0\leq t\leq T}\EE(t)\leq P\left(\|v_0\|_4,\|\F\|_4,\|\h_0\|_4\right),
\end{equation}where $P(\cdots)$ represents a polynomial in its arguments.
\end{thm}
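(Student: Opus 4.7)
The plan is to prove Theorem \ref{lwp} by the standard three-step scheme: construct an approximate solution via tangential smoothing, derive a priori energy estimates uniform in the smoothing parameter $\kk$ and in the sound speed $1/\swt$, and pass to the limit $\kk\to 0$. Concretely, I would replace the flow map $\eta$ entering the coefficient matrix $a$ by $\w\eta := \lkk^2 \eta$, where $\lkk$ is a convolution mollifier acting only in the horizontal variables $(y_1,y_2)$; the regularized version of \eqref{elastol} is then solvable by a Picard iteration on $(v,h,F)$, with contraction at a lower Sobolev level furnishing a unique fixed point. The substantive work is to bound $\EE(t)$ uniformly in $\kk$ and in $\wt$, since the limit $\kk\to 0$ is then a soft step from weak-$*$ compactness at top order plus strong convergence in a weaker norm.

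For the spatial part of the estimate, I would first control the divergence and curl of $v$ and $\eta$ at top order directly from the evolution equations. The elastic term $\sum_j\FP^2\eta$, combined with the boundary condition $\F\cdot N = 0$ and the divergence constraint \eqref{divf0}, produces after integration by parts the coercive positive-definite contribution $\sum_j\|\FP\p\eta\|_3^2$ with no boundary term. The tangential estimate is then obtained by applying $\TP^4$ to the momentum equation and testing against $\TP^4 v$. Recasting everything in terms of the Alinhac good unknowns $\VV$ and $\HH$, which subtract the $\TP^4\w\eta$-transport corrections from $\TP^4 v$ and $\TP^4 h$ respectively, turns every commutator $[\TP^4,\pa]$ into a controllable remainder and produces the boundary contribution
\begin{equation*}
\frac12\frac{d}{dt}\ipo\left(-\frac{\p h}{\p N}\right)\left|a^{3i}\TP^4\eta_i\right|^2\,dS,
\end{equation*}
which is positive and coercive by the Rayleigh-Taylor sign condition \eqref{sign} and furnishes the boundary piece of $\EE$. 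Full $H^4$ regularity of $\eta$ is then recovered from this normal-trace together with the already-controlled divergence and curl via the Christodoulou-Lindblad elliptic estimate.

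The central analytic novelty, as highlighted in the introduction, is that closing the tangential estimate does \emph{not} require the higher-order wave equation used in \cite{lindblad2018priori,luo2018ww}. Taking $\p_t$ of the relation $\dive_a v = -\wt\p_t h$ and substituting the momentum equation yields a wave equation for $h$ of schematic form
\begin{equation*}
\wt\p_t^2 h - \lapa h = \sum_{j=1}^3\dive_a(\FP^2\eta) + \mathcal{R}(v,\p v,\pa h,\ldots),
\end{equation*}
so that after applying $\TP^4$ the source a priori loses one derivative through $a$; however this apparent loss is exactly compensated by the Christodoulou-Lindblad estimate for $\eta$ and by the good-unknown structure of $\HH$. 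The time-differentiated part of $\EE$ is then handled by an ascending cascade of $\swt$-weighted energy identities mirroring the weight structure in \eqref{energy}, reflecting the fact that each extra $\p_t$ on $h$ effectively costs a factor of $1/\swt$ via the continuity relation; this weighting is precisely what keeps every constant uniform as $\wt \to 0$.

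I expect the principal obstacle to be handling the tangential source of the pressure wave equation simultaneously without loss of derivative and uniformly in the sound speed: each apparent loss of a derivative must be either absorbed by a good unknown, traded for a time derivative through the evolution equations, or recovered from the divergence-curl-normal-trace elliptic system, and the elastic divergence constraint $(\dive_a\FT)_j = -\wt(F_j\cdot\pa)h$ couples $F$ to $h$ in a way that must itself respect the $\swt$-weighting. Once the uniform bound on $\EE$ is closed in $\kk$ and $\wt$, the passage $\kk \to 0$ is standard and yields a strong solution of \eqref{elastol} satisfying \eqref{energy1}; uniqueness follows from an $L^2$-type energy identity applied to the difference of two solutions.
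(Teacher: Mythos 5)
Your outline of the a priori estimate — Alinhac good unknowns, the Rayleigh–Taylor boundary energy, Christodoulou–Lindblad elliptic estimates reducing $h$ to tangentially differentiated wave equations, and the $\swt$-weighted cascade — matches the paper's strategy. However, the construction of the approximate solutions, which is where the paper locates the genuine difficulty of compressible elastodynamics, has two concrete gaps.

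First, the smoothing you propose, $\w\eta:=\lkk^2\eta$ with $\lkk$ acting in the horizontal variables, is exactly the Coutand–Shkoller construction that the paper argues \emph{fails} here: in the elastic terms one must commute $\FP=\F_{lj}\p_l$ with $\lkk^2$, and since $\F_{3j}\p_3$ contains a normal derivative the commutator $[\FP,\lkk^2]\eta$ cannot be estimated by the horizontal mollifier lemmas. The paper instead mollifies only the boundary trace and extends harmonically ($-\Delta\ek=-\Delta\eta$ in $\Omega$, $\ek=\lkk^2\eta$ on $\Gamma$), following Gu–Wang. Relatedly, once the coefficients are mollified the cancellation in the boundary integral \eqref{tgB00} is destroyed: the term
\begin{equation*}
\sum_{L=1}^2\ig\frac{\p h}{\p N}\,\ak^{3k}\tpl\lkk\eta_k\,\ak^{3r}\TP_L\lkk^2v_r\,\ak^{Li}\tpl\lkk\eta_i\dS
\end{equation*}
vanishes at $\kk=0$ but is not controllable uniformly for $\kk>0$. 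This forces the correction term $\psi$ in the flow-map equation $\p_t\eta=v+\psi$, chosen via \eqref{psi00} precisely so that its contribution cancels this defect. Your scheme has no such correction, so the uniform-in-$\kk$ tangential estimate would not close.

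Second, your claim that the regularized system is "solvable by a Picard iteration on $(v,h,F)$, with contraction at a lower Sobolev level" skips the step the paper identifies as the essential departure from Euler: each linearized problem contains $\sum_j\FP^2\eta^{(n+1)}$, i.e.\ two derivatives of the \emph{new} iterate, so neither the Banach nor the Tikhonov fixed-point argument closes (one would need $\|v\|_4$ through $\int_0^T\|\p_t v\|_4$, hence $\|\FP^2\eta\|_4$). The incompressible workaround (a directional viscosity $\mu\sum_j\FP^2\eta$ in the flow-map equation) also fails because the vanishing-viscosity estimate relies on commuting $\divr$ with $\FP$, which is obstructed by the compressibility constraint $\diva\FP\eta\approx-e'(h)\FP h$. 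The paper resolves this by promoting $\FP\eta$ to an independent unknown and recasting the linearized system as a first-order symmetric hyperbolic system with characteristic boundary, solved by the Lax–Phillips duality argument; Picard iteration is then used only at the level of the nonlinear $\kk$-approximation. Without this (or an equivalent device), your construction step does not go through.
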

\begin{flushright}
$\square$
\end{flushright}

\paragraph*{2. Incompressible limit}
We parametrize the sound speed by parametre $\eps>0$ in such that
\[
p'_{\eps}(\rho)|_{\rho=1}=\eps.
\] We consider the compressible elastodynamic equations with variable $(\eta^{\eps},v^{\eps},F^{\eps},h^{\eps})$ if the sound speed satisfies $p'_{\eps}(\rho)|_{\rho=1}=\eps$. Under this setting, the density $\rho^{\eps}(h)\to 1$ and thus the enthalpy $e(h^{\eps})$ converges to 0 as $\eps\to\infty$. The above energy estimates are actually uniform in the sound speed, i.e., the energy bound of $\EE(T)$ does not rely on $\wt^{-1}$. 

For every $\eps>0$, let $(v_0^{\eps},{\F}^{\eps},\h_0^{\eps})\in H^4(\Omega)\times H^4(\Omega)\times H^4(\Omega)$ be the initial data with sound speed $\eps$ of the free-boundary compressible elastodynamic system \eqref{elastol} satisfying the compatibility conditions up to 4-th order. Then we can establish the incompressible limit in the following sense: 
\begin{thm}\label{limit}
Let $\vvv\in H^4(\Omega)$ be a divergence-free vector field and $\G\in H^4(\Omega)$ be a divergence-free matrix in the sense of $\p_k\G_{kj}=0$ for all $j$. Define $Q_0$ be the solution to the elliptic equation with constraints $-\frac{\p Q}{\p N}|_{\Gamma}\geq c_0>0$
\[
\begin{cases}
-\Delta Q_0=\p_i\vvv^k\p_k\vvv^i-\p_i\G_{kj}\p_k\G_{ij}~~~&\text{in }\Omega,\\
Q_0=0~~~&\text{on }\Gamma.\\
\end{cases}
\] Let $(V,G,Q)$ be the solution to the free-boundary incompressible elastodynamic system with initial data $(\vvv,\G,Q_0)\in H^4(\Omega)\times H^4(\Omega)\times H^4(\Omega)$:
\begin{equation}\label{elastoi}
\begin{cases}
\p_t\eta=V~~~&\text{in }\Omega,\\
\p_t V=-\pa Q+\sum\limits_{j=1}^3(G\cdot\pa)G~~~&\text{in }\Omega,\\
\dive_a V=0~~~&\text{in }\Omega,\\
(\dive G^{\top})_j:=\p_k G_{kj}=0~~~&\text{in }\Omega,\\
\p_t|_{\Gamma}\in\mathcal{T}([0,T]\times\Gamma)~~~&\text{on }\Gamma,\\
Q=0~~~&\text{on }\Gamma,\\
G_j\cdot N=0~~~&\text{on }\Gamma,\\
-\frac{\p Q}{\p N}\geq c_0>0~~~&\text{on }\Gamma,\\
(\eta,V,G,Q)|_{t=0}=(\text{Id},\vvv,\G,Q_0).
\end{cases}
\end{equation}  
Suppose also there exists a sequence of initial data of \eqref{elastoL} $(v_0^{\eps},{\F}^{\eps},\h^{\eps})\xrightarrow{C^1}(\vvv,\G,Q_0)$ as $\eps\to\infty$ and satisfies the compatibility conditions up to 4-th order. Then there exist some $T_0>0$ independent of $\eps$ such that 
\begin{enumerate}
\item The corresponding energy functionals $\EE^{\eps},\EEE^{\eps}$ are bounded uniformly in $\eps$ in $[0,T_0]$.
\item The corresponding solutions $(v^{\eps},F^{\eps},h^{\eps})\xrightarrow{C^1}(V,G,Q)$ as $\eps\to\infty$ in $[0,T_0]$.
\end{enumerate}
\end{thm}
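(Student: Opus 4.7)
The strategy is to combine the sound-speed-uniform energy estimate of Theorem \ref{lwp} with a compactness-and-identification scheme. First, I would note that \eqref{energy1} depends only on $\|v_0\|_4$, $\|\F\|_4$, $\|\h_0\|_4$ and, crucially, does not involve $1/e'(h^\eps)$; hence the hypothesized $C^1$-convergence of the initial data together with the 4-th order compatibility conditions \eqref{ccd} yields a uniform $H^4$-bound on $(v_0^\eps, \F^\eps, \h_0^\eps)$. Consequently the existence time $T_0>0$ produced by Theorem \ref{lwp} may be chosen independently of $\eps$, and on $[0,T_0]$ both $\EE^\eps(t)$ and $\EEE^\eps(t)$ remain uniformly bounded, which establishes assertion (1).

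For assertion (2), the uniform $\EE^\eps$-bound places $(\eta^\eps, v^\eps, F^\eps, h^\eps)$ in a bounded subset of $L^\infty([0,T_0]; H^4(\Omega))$ with uniform control of time derivatives in lower Sobolev spaces. Extracting weak-$*$ limits and applying an Aubin--Lions compactness argument gives strong convergence of a subsequence in $C([0,T_0]; H^s(\Omega))$ for every $s<4$, and in particular $C^1$-convergence via the Sobolev embedding $H^s\hookrightarrow C^1$ for $s>5/2$. The flow-map equation, the momentum equation, the evolution equation for $F_j$, the Rayleigh--Taylor sign condition, and the boundary conditions $h=0$, $\F_j\cdot N=0$ pass to the limit directly. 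The constraints $\dive_{a^\eps} v^\eps=-e'(h^\eps)\p_t h^\eps$ and $\p_k\F_{kj}^\eps=-e'(\h_0^\eps)\FP\h_0^\eps$ also pass to the limit: from \eqref{sound} we have $e'(h^\eps)=O(1/\eps)\to 0$ while $\p_t h^\eps$ and $\nab\h_0^\eps$ remain uniformly bounded, yielding $\dive_a V=0$ and $\p_k G_{kj}=0$.

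The crux, and main obstacle, is identifying the limiting ``pressure'' $h^\infty$ with the incompressible Lagrange multiplier $Q$, since the character of the pressure equation changes at $\eps=\infty$. I would take $\dive_{a^\eps}$ of the momentum equation and combine it with $\p_t$ of the compressibility constraint to obtain a wave equation schematically of the form
\begin{equation*}
e'(h^\eps)\,\p_t^2 h^\eps - \Delta_{a^\eps} h^\eps = \mathcal{N}(\eta^\eps, v^\eps, F^\eps, \p h^\eps),
\end{equation*}
where $\mathcal{N}$ is a nonlinearity uniformly bounded by $\EE^\eps$. Sending $\eps\to\infty$ degenerates the principal part to an elliptic operator and produces $-\Delta_a h^\infty = -\mathcal{N}(\eta, V, G, \p h^\infty)$ together with $h^\infty|_\Gamma=0$; at each $t$ this is precisely the elliptic problem determining $Q$ from $(\eta, V, G)$, and evaluated at $t=0$ it reduces to the defining equation for $Q_0$ in the statement. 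Hence $h^\infty\equiv Q$ and $(V,G,Q)$ solves \eqref{elastoi}; uniqueness for that system upgrades the subsequential convergence to convergence of the full sequence. The indispensable input throughout is the sound-speed uniformity of $\EE^\eps$ and $\EEE^\eps$, which keeps the $\wt$-weighted higher-order time-derivative norms under control precisely when the wave equation degenerates in the singular limit.
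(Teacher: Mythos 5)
Your proposal is correct and follows essentially the same route as the paper: the uniform-in-$\eps$ energy bound from Theorem \ref{lwp} gives an $\eps$-independent existence time and uniform bounds, a compactness argument (the paper uses Morrey embedding plus Arzel\`a--Ascoli where you use Aubin--Lions, an interchangeable choice here) extracts a convergent subsequence, and the constraints pass to the limit because $e'(h^{\eps})\to 0$. Your identification of the limiting enthalpy with the incompressible pressure via the degenerating wave equation is in fact spelled out more explicitly than in the paper, which simply asserts the limit solves \eqref{elastoi} and invokes uniqueness.
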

\begin{flushright}
$\square$
\end{flushright}

\begin{rmk}
In fact, as $\eps\to \infty$ (i.e., $e'(h^{\eps})\to 0$), $\EE(T)$ and $\EEE(T)$ converge exactly to the energy functional of the free-boundary incompressible elastodynamic system under Rayleigh-Taylor sign condition $-\p_3Q\geq c_0'>0$ established by Zhang \cite{ZhangYH} 
\[
\EE^{\infty}:=\left\|\eta\right\|_4^2+\left\|V\right\|_4^2+\sum\limits_{j=1}^3\left\|\GP\eta\right\|_4^2+\left\|Q\right\|_4^2+\left|a^{3i}\TP^4\eta_i\right|_0^2.
\]
 The reason is that one can invoke the second equation to reduce time derivatives of $v$ to spatial derivatives of $h$ and $\FP\eta$ and the weighted terms converge to zero.
\end{rmk}

\paragraph*{3. Enhanced regularity in the slightly compressible case}
In Lindblad-Luo \cite{lindblad2018priori} and Luo \cite{luo2018ww} on the compressible Euler equations, they required the $H^1$-control of $\p_t^4$-derivatives to close the energy estimates and also the incompressible limit. In Theorem \ref{lwp} and \ref{limit} we are able to drop that higher order energy. Moreover, we are still able to prove such higher regularity of full time derivatives for the elastodynamics equations when the elastic medium is slightly compressible, i.e., $|e'(h)|\ll 1$ is sufficiently small. The following result shows that our method are also applicable for compressible Euler equations and also recover the previous results \cite{lindblad2018priori,luo2018ww}.

\begin{thm}\label{enhance}
Suppose the initial data satisfies the compatibility conditions up to 5-th order. When the sound speed $\eps$ is sufficiently large, the solution $(v,F,h)$ constructed in Theorem \ref{lwp} satisfies higher order energy estimates uniform in $\epsilon$ in $[0,T_1]$ where $0<T_1<T$ only depends on the initial data.
\begin{equation}\label{energy2}
\sup_{0\leq t\leq T_1}\EEE(t)\leq P\left(\|v_0\|_4,\|\F\|_4,\|\h_0\|_4\right).
\end{equation}
\end{thm}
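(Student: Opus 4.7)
The plan is to extend the energy argument of Theorem \ref{lwp} one order higher in $\p_t$, closing a weighted estimate in which the weights $\swt$, $\wt$, $(\wt)^{3/2}$ match the powers of $e'(h)$ that arise when time derivatives of $h$ are traded for spatial derivatives of $v$ via the divergence constraint $\dive_a v = -\wt\, \p_t h$. Since $e'(h) \ll 1$ in the slightly compressible regime, the new weighted quantities in $\EEE(T)-\EE(T)$ are themselves small and can be absorbed by $\EE(T)$ through a bootstrap/Gronwall argument on a possibly shorter time interval $[0,T_1]$, which is why the assumption $\eps \gg 1$ enters.

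The first step is to apply $\p_t^4$ to the momentum equation in \eqref{elastol} and reorganize the result using the Alinhac good unknown for both $\pa h$ and $\FP^2 \eta$, the latter made available by the Lagrangian identity $F_{kj}=\FP\eta_k$ from \eqref{fid}, which gives $\p_t^k F = \FP\p_t^k\eta$ and keeps the elastic stress term in divergence form at all orders of time differentiation. Testing the resulting evolution equation against $\wt\,\p_t^4 v$ and integrating by parts yields: (i) the bulk term $\|\wt\,\p_t^4 v\|_0^2+\|\wt\,\FP \p_t^4\eta\|_0^2$; (ii) $\|(\wt)^{3/2}\p_t^5 h\|_0^2$ via the time-differentiated divergence equation $\p_t^5 h = -(\wt)^{-1}(\dive_a \p_t^4 v + \text{commutators})$; and (iii) a boundary contribution that, using $h|_\Gamma=0$ and the Rayleigh--Taylor sign condition, reduces to $|\wt\,a^{3i}\TP\p_t^4\eta_i|_0^2$ after commuting one tangential derivative. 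Commuting $\p_t^4$ with a further tangential derivative $\TP$ and repeating the weighted $L^2$ scheme upgrades these to the $H^1$-level terms appearing in $\EEE(T)$, the normal-direction regularity being recovered from a div-curl decomposition (curl from the evolution equation for $\text{curl}_a v$, divergence from $\wt\,\p_t^5 h$). The remaining $\|\wt\,\p_t^4 h\|_1^2$ is obtained from the wave equation $\wt\,\p_t^2 h-\lapa h=\text{l.o.t.}$ (derived by $\p_t$-differentiating the divergence constraint and substituting the momentum equation together with the divergence constraint on $F$) by applying $\p_t^4$ and then a Christodoulou--Lindblad type elliptic estimate with the Dirichlet boundary condition, the right-hand side being controlled by $\EE(T)$ and the already-estimated $\|(\wt)^{3/2}\p_t^5 h\|_0$.

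The main obstacle is the elastic contribution $\FP^2\eta$, absent in the Euler analysis of \cite{lindblad2018priori, luo2018ww}. Commuting $\p_t^4$ with $(\F\cdot\p)$ produces terms involving $\p_t^k a$ and $\p_t^k F$ that, a priori, threaten to produce an additional factor of $\swt^{-1}$ relative to the Euler case; it is precisely the smallness of $\wt$ which compensates for this potential loss, so that the corresponding error terms can be absorbed on the left-hand side of the Gronwall inequality. The delicate bookkeeping of weights in the good-unknown reformulation for the elastic stress---genuinely different from the Euler case because the highest order of $\FP\p_t^4\eta$ must be tracked \emph{independently} of $\p_t^4 v$---is what makes the scheme close uniformly in $\eps$ and gives the higher-order estimate \eqref{energy2} on $[0,T_1]$.
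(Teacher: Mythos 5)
Your high-level strategy is the right one and matches the paper's Section 7 in its main ingredients: a div-curl-tangential decomposition at the $\p_t^4$ level, Alinhac good unknowns with respect to $\TP\p_t^4$, a curl evolution equation, and the crucial use of the smallness of $e'(h)$ to absorb error terms sourced by the deformation tensor. But there is a genuine gap in the divergence half of your argument. You claim $\|(\wt)^{3/2}\p_t^5 h\|_0^2$ is obtained ``via the time-differentiated divergence equation $\p_t^5 h = -(\wt)^{-1}(\dive_a\p_t^4 v+\text{commutators})$,'' and then use that quantity as an input to the Christodoulou--Lindblad elliptic estimate for $\|\wt\p_t^4 h\|_1$. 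This is circular: the divergence relation only tells you that $\|(\wt)^{3/2}\p_t^5 h\|_0\approx\|\swt\dive_a\p_t^4 v\|_0$, which is precisely the part of $\|\swt\p_t^4 v\|_1$ that the div-curl decomposition cannot recover from the curl and tangential pieces and that you are trying to bound. Testing the $\p_t^4$-differentiated momentum equation against $\wt\p_t^4 v$ (your item (i)/(ii)) does \emph{not} produce $\frac{d}{dt}\|(\wt)^{3/2}\p_t^5 h\|_0^2$: after integrating by parts and using $\dive_a v=-\wt\p_t h$ the contribution is $\io(\wt)^2\p_t^4 h\,\p_t^5 h\approx\frac12\frac{d}{dt}\|\wt\p_t^4 h\|_0^2$, which is already in $\EE(T)$ and is one full derivative short.

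What is actually needed is a \emph{primary} energy estimate for $\p_t^5 h$, and the paper obtains it by taking the $L^2$ inner product of the $\p_t^4$-differentiated wave equation $\wt\p_t^6 h-\dive_a(\p_t^4\pa h)=\sum_j e'(h)\FP^2\p_t^4 h+Z_1+Z_2+Z_3$ against $(\wt)^2\p_t^5 h$. This yields $\frac12\frac{d}{dt}\io|(\wt)^{3/2}\p_t^5 h|^2+|\wt\p_t^4\pa h|^2+\sum_j|(\wt)^{3/2}\FP\p_t^4 h|^2\dy$, which is exactly the missing divergence control (and gives $\|\wt\p_t^4 h\|_1$ directly, since $h|_\Gamma=0$). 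The delicate point, which is where $|e'(h)|\ll 1$ genuinely enters, is the term $\p_t a\cdot\p\p_t^3\pa h$ inside the commutator $Z_3=[\p_t^4,\dive_a]\pa h$: it requires $\|\swt\p_t^3\pa h\|_1\approx\|\swt\p_t^3 h\|_2$, which after applying Lemma \ref{GLL} and plugging in the $\p_t^3$-differentiated wave equation reproduces itself with a factor $\|\wt\|_{L^{\infty}}$ in front, coming from the source $e'(h)\FP^2\p_t^3 h$. Only then can it be absorbed. The same mechanism recurs in the curl estimate (via $\curl_{\p_t a}\FP\p_t^3 F_j$) and in the tangential commutators $\KK_6,\KK_7$, so the smallness of $\wt$ is used in all three pieces of the decomposition, not only when commuting $\p_t^4$ with $(\F\cdot\p)$. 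Two smaller points: the boundary energy carries the weight $\swt$, not $\wt$, because the good-unknown equation is tested with the single weight $\wt$ in front; and the Alinhac correction is applied only to $\pa h$ (and to $\pa v$ via $\VVV$), not to $\sum_j\FP^2\eta$ --- the latter is not a covariant derivative in the flow-map variable, so $[\TP\p_t^4,\FP]$ is a benign lower-order commutator rather than a good-unknown correction.
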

\begin{flushright}
$\square$
\end{flushright}

\paragraph*{4. Existence of initial data satisfying the compatibility conditions}

Finally, we proved the existence of initial data which satisfies the compatibility conditions up to 5-th order\footnote{The reason for requiring 5-th order is that $\p_t^5 h$ appears on the boundary integral in the analysis of enhanced regularity.} and strongly converges to the incompressible data as the sound speed goes to infinity.  Define $M_0:=\|v_0\|_4^2+\|\F\|_4^2+\|\h_0\|_4^2$. 

\begin{thm}\label{dataexist}
Given the initial data $(\vvv,\G,Q_0)\in H^5\times H^5\times H^5$ of the incompressible elastodynamics system \eqref{elastoi}, there exist initial data $(v_0^{\eps},{\F}^{\eps},\h_0^{\eps})\in H^5\times H^5\times H^5$ of \eqref{elastoL} such that 
\begin{enumerate}
\item $(v_0^{\eps},{\F}^{\eps},\h_0^{\eps})\xrightarrow{C^1}(\vvv,\G,Q_0)$ as $\eps\to+\infty$.
\item $(v_0^{\eps},{\F}^{\eps},\h_0^{\eps})$ satisfies the compatibility conditions \eqref{ccd} up to 5-th order.
\item The energy functionals satisfy $\EE(0)+\EEE(0)\leq P(M_0)$.
\end{enumerate}
\end{thm}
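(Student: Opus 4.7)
The plan is to construct $(v_0^\eps, {\F}^\eps, \h_0^\eps)$ as perturbations of the incompressible data $(\vvv, \G, Q_0)$ by $O(\eps^{-1})$-corrections, designed to enforce the compatibility conditions \eqref{ccd} through fifth order. Set $v_0^\eps := \vvv$; since $\dive \vvv = 0$ on $\Omega$ and $\h_0^\eps|_\Gamma = 0$ will be imposed, the 0th and 1st-order compatibility conditions are automatic (the latter because $\p_t h|_{t=0,\Gamma} = -(e'(\h_0^\eps))^{-1}\dive v_0^\eps|_\Gamma = 0$). To handle the divergence constraint \eqref{divf0}, observe that it is equivalent to $\p_k(\rho(\h_0^\eps){\F}_{kj}^\eps) = 0$, so setting ${\F}_{kj}^\eps := \G_{kj}/\rho(\h_0^\eps)$ works: divergence-freeness comes from $\p_k\G_{kj} = 0$, the tangency ${\F}_j^\eps\cdot N|_\Gamma = 0$ follows from $\G_j\cdot N|_\Gamma = 0$ and $\rho|_\Gamma = 1$, and the $C^1$-convergence ${\F}^\eps \to \G$ follows from $\rho(\h_0^\eps) = 1 + O(\eps^{-1})$. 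The problem thereby reduces to constructing $\h_0^\eps$ alone.

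\textbf{Algebraic reformulation of the higher compatibility conditions.} Using the equations of \eqref{elastol}, one inductively writes $\p_t^k h|_{t=0}$ as a nonlinear differential polynomial $\mathcal{H}_k(v_0^\eps, {\F}^\eps, \h_0^\eps)$ in spatial derivatives of the initial data: starting from $\p_t h = -(e'(h))^{-1}\dive_a v$ and repeatedly substituting $\p_t v = -\pa h + \sum_j \FP^2\eta$, $\p_t F_j = (F_j\cdot\pa)v$, and $\p_t a^{li} = -a^{lr}\p_m v_r a^{mi}$, one generates $\mathcal{H}_2, \mathcal{H}_3, \mathcal{H}_4, \mathcal{H}_5$ explicitly. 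The $k$-th compatibility condition then reads $\mathcal{H}_k|_\Gamma = 0$. A key observation is that in the incompressible limit the corresponding conditions are automatic because $Q|_\Gamma = 0$ holds for all $t$ in \eqref{elastoi}; hence in the $\eps^{-1}$-expansion of $\mathcal{H}_k(v_0^\eps, {\F}^\eps, \h_0^\eps)|_\Gamma$ the leading (incompressible) coefficient vanishes identically when evaluated at $(\vvv, \G, Q_0)$, leaving only sub-leading $O(\eps^{-1})$ obstructions that must be removed by correctors.

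\textbf{Inductive construction of $\h_0^\eps$.} Write $\h_0^\eps := Q_0 + \sum_{k=1}^{N} \eps^{-k} q_k^\eps$ for $N$ large enough to accommodate the fifth-order condition, and determine the $q_k^\eps$ recursively. Equating successive coefficients in the $\eps^{-1}$-expansion of $\mathcal{H}_k|_\Gamma = 0$ to zero yields, at each order, an elliptic boundary-value problem of the form $-\lap^{m_k} q_k^\eps = \Phi_k$ in $\Omega$ with homogeneous Dirichlet-type traces $q_k^\eps|_\Gamma = \p_N q_k^\eps|_\Gamma = \cdots = 0$ up to sufficient order, where $\Phi_k$ is a known differential polynomial in $\vvv, \G, Q_0$ and the previously-constructed $q_1^\eps, \ldots, q_{k-1}^\eps$. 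Standard elliptic theory produces $q_k^\eps \in H^5(\Omega)$ with $\|q_k^\eps\|_{H^5} \le P(M_0)$ uniformly in $\eps$; the vanishing traces on $\Gamma$ guarantee that adding $\eps^{-k} q_k^\eps$ does not disturb compatibility conditions already secured at lower orders.

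\textbf{Verification and main obstacle.} For $\eps$ sufficiently large, $\eps^{-k}\|q_k^\eps\|_{C^1(\Omega)} \to 0$, so the Rayleigh-Taylor sign condition $-\p_N \h_0^\eps|_\Gamma \ge c_0$ is inherited from $-\p_N Q_0|_\Gamma \ge c_0$, and the $C^1$-convergence $(v_0^\eps, {\F}^\eps, \h_0^\eps) \to (\vvv, \G, Q_0)$ follows from the asymptotic expansion via the Sobolev embedding $H^5 \hookrightarrow C^1$. The energy bound $\EE(0) + \EEE(0) \le P(M_0)$ is then obtained by re-expressing every time-derivative appearing in \eqref{energy} and \eqref{energyh} through the same recursion defining $\mathcal{H}_k$ and controlling each term by $H^s$ product estimates using the uniform elliptic bounds on the $q_k^\eps$. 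The principal obstacle is the combinatorial growth of $\mathcal{H}_k$ under repeated differentiation of the coupled system and the verification that each $q_k^\eps$ has enough vanishing on $\Gamma$ not to spoil the lower-order conditions; a secondary subtlety is that ${\F}^\eps$ depends on $\h_0^\eps$ through $\rho(\h_0^\eps)$, so the $q_k^\eps$ also enter $\mathcal{H}_\ell$ for $\ell > k$ through this composition, but since $\rho(\h_0^\eps) = 1 + O(\eps^{-1})$ in $H^5$ this feedback enters only at subleading order and does not alter the elliptic structure of the problems for the correctors.
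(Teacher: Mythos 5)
There is a genuine gap, and it is structural: you set $v_0^{\eps}:=\vvv$ with $\dive\vvv=0$, which (via the continuity equation at $t=0$, $\dive v_0=-e'(\h_0)\,h_{(1)}$ with $h_{(k)}:=\p_t^kh|_{t=0}$) forces $h_{(1)}\equiv 0$ in all of $\Om$. The once-time-differentiated wave equation \eqref{h1eq} then gives $h_{(3)}=\eps\,\NN_1$ (up to the harmless $\sum_j\FP^2h_{(1)}=0$ term), so the third-order compatibility condition $h_{(3)}|_{\Gamma}=0$ becomes the exact pointwise constraint $\NN_1|_{\Gamma}=0$. This is a nontrivial condition on the \emph{given} incompressible data: the incompressible identity $-\Delta Q_0=\NN_0$ holds in $\Om$ and cancels the leading term of the \emph{even}-order obstructions, but there is no analogous cancellation for $\NN_1|_\Gamma$, and your correctors $\eps^{-k}q_k$ perturb $\NN_1|_\Gamma$ only at order $\eps^{-1}$, so they cannot remove an $O(1)$ obstruction (an $O(1)$ modification of $\h_0$ would destroy item 1). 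The same failure recurs at fifth order. This is precisely why the paper's construction (Section \ref{data}) must perturb the velocity as well: $v_0=\vvv+\p\phi$ with $-\Delta\phi=\eps^{-1}h_{(1)}$, $\p_N\phi|_\Gamma=0$, so that $h_{(1)}$ becomes a genuine unknown solving $-\Delta h_{(1)}=-\eps^{-1}h_{(3)}+\eps^{-1}\sum_j\FP^2h_{(1)}+\NN_1$ with $h_{(1)}|_\Gamma=0$; only then is $h_{(3)}$ bounded and vanishing on $\Gamma$. Relatedly, your claimed triangular recursion for the $q_k$ with polyharmonic problems $-\Delta^{m_k}q_k=\Phi_k$ does not reflect the actual structure: the system \eqref{datak} couples $h_{(k)}$ to $h_{(k+2)}$ through the $\eps^{-1}h_{(k+2)}$ terms and to $v_0,\F$ through $\NN_k$, and is solved as a single coupled second-order elliptic system by a fixed-point iteration whose contraction factor is $\eps^{-1}P(\cdots)$, truncated by setting $h_{(4)}=h_{(5)}=0$.

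One part of your proposal is correct and is a genuinely cleaner alternative to the paper: taking $\F^{\eps}_{kj}:=\G_{kj}/\rho(\h_0^{\eps})$ satisfies the constraint $\p_k\F_{kj}=-e'(\h_0)\FP\h_0$ \emph{exactly} (since $\p_k(\rho\F_{kj})=\p_k\G_{kj}=0$), preserves $\F_j\cdot N|_\Gamma=0$, and converges to $\G$ because $\rho(\h_0^\eps)=1+O(\eps^{-1})$; the paper instead writes $\F_j=\G_j+\p\varphi_j$ with a Neumann problem for $\varphi_j$. But this substitution does not repair the velocity issue above, so the construction as proposed does not yield data satisfying \eqref{ccd} beyond second order.
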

\begin{flushright}
$\square$
\end{flushright}

\subsection{Strategy of the proof}\label{stat}

Our strategy is different from the previous works on either compressible Euler equations or incompressible elastodynamic equations. In particular, the simultaneous presence of deformation tensor and compressibility makes the fixed-point argument in \cite{coutand2007LWP,coutand2010LWP,GLL2019LWP, luozhangCWWLWP,GuWang2018, gu2016construction, ZhangYH} no longer applicable to solve the linearized system even if one has the a priori estimates without regularity loss. Now we give illustrations on the strategy of the proof.  Keep in mind that $\F_j\cdot N|_{\Gamma}=0$ implies \textbf{$\FP$ is a tangential derivative on the boundary.}

\subsubsection{Control of the energy functional}\label{stat1}

Let us temporarily focus on the energy estimates of the original equation \eqref{elastol} instead of constructing the approximation solution. The first step is div-curl-tangential decomposition
\begin{align*}
\|v\|_4\lesssim&\|v\|_0+\|\curl v\|_3+\|\dive v\|_3+|\TP v\cdot N|_{5/2},\\
\|\FP\eta\|_4\lesssim&\|\FP\eta\|_0+\|\curl \FP\eta\|_3+\|\dive \FP\eta\|_3+|\TP(\FP\eta)\cdot N|_{5/2}.
\end{align*} The curl estimates can be directly controlled via the evolution equation. The boundary part can be reduced to interior tangential estimates and divergence estimates by the normal trace lemma (cf. Lemma \ref{normaltrace}).  To estimate $\|\TP^4 v\|_0$ and $\|\TP^4 \FP\eta\|_0$, one cannot directly take $\TP^4$ in the equation $\p_t v=-\pa h+\sum\limits_{j=1}^3\FP^2\eta$ and compute the $L^2$-type estimates because this requires the control of $\|[\TP^4,a^{li}]\p_l h\|_0$ where $\TP^4 a\approx \TP^4\p\eta\times\p\eta$ cannot be controlled in $L^2$. For Euler equations \cite{coutand2007LWP,coutand2010LWP} one can integrate $\TP^{1/2}$ by parts and control $\|\curl\eta\|_{7/2}$, but this is no longer applicable to elastodynamics equations: The preserved property of irrotationality for Euler equations, which is the key to the enhanced regularity of flow map $\eta$ than $v$, no longer holds for elastodynamics equations. Instead, $\|\FP\eta\|_{9/2}$ is necessary for $\|\curl\eta\|_{7/2}$ but impossible for us to control. 

To overcome such difficulty, we use Alinhac good unknown method which reveals that the ``essential" highest order term in $\TP^4(\pa f)$ should be the covariant derivative of the ``Alinhac good unknown" instead of simply commute $\TP^4$ with $\pa$. In other words, the main idea is to rewrite $\TP^4(\pa f)$ to be the sum of convariance part $\pa \mathsf{F}$ plus an $L^2(\Omega)$-bounded term $C(f)$ such that
\[
\TP^4(\pa f)=\pa\mathsf{F}+C(f), \text{ with } \|\mathsf{F}-\TP^4 f\|_0+\|\p_t(\mathsf{F}-\TP^4 f)\|_0+\|C(f)\|_0\leq P(\EE(t)).
\] The quantity $\mathsf{F}$ is called the ``Alinhac good unknown" of $f$ with respect to $\TP^4$. That is to say, Alinhac's good unknowns allow us to take into account the covariance under the change of coordinates to avoid the extra regularity assumption on the flow map.

To derive the precise form of $\mathsf{F}$, one can use chain rule $\TP_i=\dfrac{\p}{\p y^i}=\dfrac{\p \eta^{\alpha}}{\p y^i}\dfrac{\p}{\p \eta^{\alpha}}=\TP_i \eta\cdot\pa$:
\[
\pa(\TP^4 f)=\pa\left((\TP\eta\cdot\pa)(\TP\eta\cdot\pa)(\TP\eta\cdot\pa)(\TP\eta\cdot\pa)f\right)=\TP^4(\pa f)+\pa(\TP^4\eta\cdot\pa f)+\text{l.o.t.}.\
\]It is not difficult to find that the term $\TP^4\eta\cdot\pa f$ also enters to the highest order, which should be merged into the covariance part. Therefore, $\mathsf{F}:=\TP^4 f-\TP^4\eta\cdot\pa f$ satisfies our requirement. Such crucial fact was first observed by Alinhac \cite{alinhacgood89}, and then widely used in the study of first-order quasilinear hyperbolic system. In the study of free-surface fluid, this was first implicity applied by Christodoulou-Lindblad \cite{christodoulou2000motion}, and later there are some explicit applications such as  \cite{MRgood2017,wangxingood,gu2016construction,GuWang2018,ZhangYH,luozhangCWWLWP,ZhangCRMHD2}. 

Now we introduce the Alinhac good unknowns $\VV:=\TP^4 v-\TP^4\eta\cdot\pa v$ and $\HH:=\TP^4 h-\TP^4\eta\cdot\pa h$ and get the evolution equations. Below, $\cdots$ stands for controllable terms.
\begin{equation}\label{goodeq0}
\p_t\VV+\pa\HH-\sum\limits_{j=1}^{3} \FP\left(\TP^4\FP\eta\right)=\cdots,
\end{equation}subject to
\begin{align}
\label{0hhbdry}\pa\cdot\VV=\TP^4(\dive_a v)+C(v)\text{ in }\Omega\text{ and }\HH=(-\p_3h)a^{3k}\TP^4\eta_k\text{ on }\Gamma,
\end{align}where $C(f)$ is defined to be the error terms satisfying $\TP^4\pa f=\pa(\TP^4 f-\TP^4\eta\cdot\pa f)+C(f)$ and is controllable. Then the standard $L^2$-estimate of \eqref{goodeq0} gives the boundary integral contributed by the free surface
\begin{equation}\label{tgB00}
\begin{aligned}
&\ig\frac{\p h}{\p N} \TP^4\eta_k a^{3k}a^{3i}\VV_i\dS=\ig\frac{\p h}{\p N} \TP^4\eta_k a^{3k}a^{3i}(\TP^4 \p_t\eta_i-\TP^4 \eta_ra^{lr}\p_l v_i)\dS\\
=&\frac12\frac{d}{dt}\ig \frac{\p h}{\p N}\left|a^{3i}\TP^4\eta_i\right|^2\dS+\cdots\\
&-\ig\frac{\p h}{\p N}a^{3k}\TP^4 \eta_k \p_ta^{3i}\TP^4\eta_i\dS -\ig\frac{\p h}{\p N} \TP^4\eta_k a^{3k}a^{3i}\TP^4 \eta_ra^{lr}\p_l v_i\dS.
\end{aligned}
\end{equation}Invoking the Taylor sign condition \eqref{sign} we get the boundary energy $\left|a^{3i}\TP^4\eta_i\right|_0^2$. Then plugging $\p_ta^{3i}=-a^{3r}\p_l v_r a^{li}$ into the second term yields the cancellation structure: The last two terms in \eqref{tgB00} can be controlled by the boundary energy when $l=3$, and exactly cancel each other when $l=1,2$. 

\paragraph*{Key observation: Structure of wave equation of $h$}

From \eqref{elastol} we can deduce that $\dive_a v=-\wt\p_t h$ and $\dive_a\FP\eta=-e'(h)\FP h$. This motivates us to consider the wave equation of $h$ derived by taking $\dive_a$ in the second equation of \eqref{elastol}
\begin{equation}\label{h00wave}
\begin{aligned}
\wt \p_t^2 h-\Delta_a h=\wt\sum_{j=1}^3\FP^2 h+\text{quadratic terms of first order derivative}.
\end{aligned}
\end{equation}

Although the RHS of \eqref{h00wave} has a potential to lose regularity due to the same order of derivatives, we are still able to do the energy estimates thanks to special structure of $\FP^2 h$. If we take $L^2$ inner product of \eqref{h00wave} and $\p_t h$, the LHS gives the energy terms $\frac12\frac{d}{dt}\io \wt|\p_t h|^2+|\pa h|^2\dy$ containing $\dive_a v$. The RHS now becomes
\[
\io \wt \FP^2 h\cdot\p_t h=-\frac12\frac{d}{dt}\io\wt|\FP h|^2\dy+\cdots,
\]which exactly gives the control of $\dive_a \FP\eta$. However, the $H^3$-control of $\dive_a v$ and $\dive_a \FP\eta$ is not a direct result of the $L^2$-control of $\p^3$-differentiated wave equation \eqref{h00wave} because integrating the Laplacian term by parts yields a boundary integral which cannot be controlled due to the presence of normal derivatives.

Our idea to overcome this difficulty is to use Christodoulou-Lindblad type elliptic estimates in Lagrangian coordinates (Lemma \ref{GLL}) and invoke \eqref{h00wave} to replace two normal derivatives of $h$ by two tangential derivatives. Repeatedly, the estimates of $h$ can be reduced to tangential derivatives of $h,\p h$ which can be controlled by tangentially-differentiated wave equation \eqref{h00wave}. Specifically, we start with 
\[
\|h\|_4\approx \|\pa h\|_3\lesssim P(\|\eta\|_3)(\|\Delta_a h\|_2+\|\TP\eta\|_3\|h\|_3).
\]Then invoking \eqref{h00wave} we can reduce $\|\Delta_a h\|_2$ to $\|\wt\p_t^2 h\|_2$ and $\|\wt\FP^2 h\|_2$ plus lower order terms. Note that $\p_t$ and $\FP$ are tangential derivatives, so we gain 2 tangential derivatives once we proceed such a step. Then we can do the same thing for  $\|\wt\p_t^2 h\|_2$ and $\|\wt\FP^2 h\|_2$. For example, 
\[
\|\wt \p_t^2 h\|_2\approx\|\wt\pa \p_t^2 h\|_1\lesssim P(\|\eta\|_2)(\|\wt\Delta_a \p_t^2 h\|_0+\|\TP\eta\|_2\|\wt\p_t^2 h\|_0),
\]then plugging the $\p_t^2$-differentiated wave equations \eqref{h00wave} reduces $\|\wt\Delta_a \p_t^2 h\|_0$ to $\|(\wt)^2\p_t^4 h\|_0$ and $\|(\wt)^2\p_t^2\FP^2 h\|_0$ which are purely tangential derivatives of $h$.

In summary, we have the following reduction procedure
{\small\begin{equation}\label{reduceh}
\begin{tikzcd}
& \|\wt\p_t^2 h\|_2\arrow[r]\arrow[dr]&\|(\wt)^2\p_t^4 h\|_0\\
\|h\|_4\arrow[ur]\arrow[dr]& &\sum\limits_{j=1}^3\|(\wt)^2\FP^2\p_t^2 h\|_0\\
&\sum\limits_{j=1}^3 \|\wt\FP^2 h\|_2\arrow[r]\arrow[ur]&\sum\limits_{j,k=1}^3\|(\wt)^2(\F_k\cdot\p)^2\FP^2 h\|_0,
\end{tikzcd}
\end{equation}} and
{\small\begin{equation}\label{reducehtF}
\begin{tikzcd}
& \|\wt\p_t^3 h\|_1& &\sum\limits_{j=1}^3\|\wt\FP \p_t^2 h\|_1\\
\|\p_t h\|_3\arrow[ur]\arrow[dr]& &\sum\limits_{j=1}^3\|\FP h\|_3\arrow[ur]\arrow[dr] &\\
&\sum\limits_{j=1}^3 \|\wt\FP^2\p_t h\|_1& &\sum\limits_{j,k=1}^3\|\wt(\F_k\cdot\p)^2\FP h\|_1.
\end{tikzcd}
\end{equation}}
Therefore, \eqref{reduceh} and \eqref{reducehtF} show that it suffices to control $\p_t^3$-differentiated, $\p_t^2\FP$-differentiated, $\p_t\FP^2$-differentiated and $\sum\limits_{k=1}^3 (\F_k\cdot\p)^2\FP$-differentiated wave equation \eqref{h00wave}, whose weighted $L^2$-estimates exactly give the energy terms that we want. See Section \ref{wavediv} for detailed computation.

The above analysis gives the control of divergence and thus finalize the control of full spatial derivatives. It now remains to control the time derivatives of $v$ and $\FP\eta$. We can invoke $\p_t v=\sum\limits_{j=1}^3\FP(\FP\eta)-\pa h$ and $\p_t(\FP\eta)=\FP v$ repeatedly to replace time derivatives by spatial derivatives until there is no time derivative. 

%

\subsubsection{Tangentially-smoothed approximation problem}\label{stat2}

\paragraph*{1. Tangential smoothing: Necessity and adjusted mollification}

With the a priori estimates of \eqref{elastol}, it is straightforward to consider the standard iteration to prove the well-posedness. Specifically, if one starts with trivial solution $(\eta^{(0)},v^{(0)},h^{(0)})=(\text{Id},0,0)$ and inductively define $(\eta^{(n+1)},v^{(n+1)},h^{(n+1)})$ by
\begin{equation}\label{elastolll}
\begin{cases}
\p_t\eta^{(n+1)}=v^{(n+1)}~~~&\text{ in }\Omega, \\
\p_t v^{(n+1)}=-\nabla_{{a^{(n)}}}h^{(n+1)}+\sum\limits_{j=1}^3\FP^2\eta^{(n+1)}~~~&\text{ in }\Omega, \\
\text{div}_{a^{(n)}}v^{(n+1)}=-e'(h^{(n)})\p_t h^{(n+1)}~~~&\text{ in }\Omega, \\
h^{(n+1)}=0~~~&\text{ on }\Gamma, \\
(\eta^{(n+1)},v^{(n+1)}, h^{(n+1)})|_{t=0}=(\text{Id},v_0, h_0)
\end{cases} 
\end{equation} 
where $a^{(n)}:=[\p\eta^{(n)}]^{-1}$, then we have to control $\|v\|_4$ when solving the linearized system where $\|\nabla_{{a^{(n)}}}h^{(n+1)}\|_4$ is required. Invoking Lemma \ref{GLL} yields
\begin{equation}\label{ell5}
\|\nabla_{{a^{(n)}}}h^{(n+1)}\|_4\lesssim P(\|\eta^{(n)}\|_4)\left(\|\Delta_{a^{(n)}} h^{(n+1)}\|_3+\|\TP\eta^{(n+1)}\|_4\|h^{(n+1)}\|_4\right),
\end{equation}where $\|\TP\eta^{(n+1)}\|_4$ loses one tangential derivative. Another difficulty appears in the uniform-in-$n$ estimates of the linearized system \eqref{elastolll} which is necessary for iteration. The boundary integral \eqref{tgB00} appearing in the tangential estimates now becomes
\begin{equation}\label{b45}
\ig\frac{\p h^{(n+1)}}{\p N} \underbrace{\tpl\eta_k^{(n)}}_{n\text{-th solution}} a^{(n)3k}a^{(n)3i}(\underbrace{\p_t\tpl\eta^{(n+1)}_i}_{(n+1)\text{-th solution}}-\tpl\eta_k^{(n)}a^{(n)lk}\p_l v^{(n+1)})\dS
\end{equation}which no longer exhibits the cancellation structure or produces the boundary energy term $\left|a^{3i}\TP^4\eta_i\right|_0^2$.

Note that such derivative losses are both tangential and even appear in the study of Euler equations \cite{coutand2007LWP,coutand2010LWP,coutand2012LWP,GLL2019LWP,luozhangCWWLWP}. This motivates people to tangentially mollify the coefficients $a$ to compensate the loss of regularity. Such tangential smoothing method is first introduced by Coutand-Shkoller \cite{coutand2007LWP,coutand2010LWP}. Define $\lkk$ to be the standard mollifier with parametre $\kk>0$ on $\R^2$ as in \eqref{lkk0}. In \cite{coutand2007LWP,coutand2010LWP}, they defined $\ek:=\lkk^2 \eta$ and $\ak=[\p\ek]^{-1}$ and then defined the nonlinear $\kk$-approximation problem by replacing $a$ with $\ak$. However, such construction is not applicable to the elastodynamics system because we have to control the term $\|[\FP,\lkk^2]\eta\|_4$ in which there is a normal derivative $\F_{3j}\p_3$. Motivated by Gu-Wang \cite{gu2016construction}, we can mollify $\eta$ on the boundary first, then do the harmonic extensin to the interior
\begin{equation}
\begin{cases}
-\Delta \ek=-\Delta\eta~~~&\text{ in }\Omega,\\
\ek=\lkk^2\eta~~~&\text{ on }\Gamma.
\end{cases}
\end{equation} Denote $\ak:=[\p\ek]^{-1}$ to be the cofactor matrix, $\Jk:=\det[\p\ek]$ to be the Jacobian determinant and $\Ak:=\Jk\ak$. Then we define the nonlinear $\kk$-approximation system of \eqref{elastol} to be
\begin{equation}\label{elastolkk00}
\begin{cases}
\p_t\eta=v+\psi~~~&\text{in }\Omega,\\
\p_t v=-\pak h+\sum\limits_{j=1}^3\FP^2\eta~~~&\text{in }\Omega,\\
\diva v=-e'(h)\p_t h~~~&\text{in }\Omega,\\
\dive({\F}^{\top})_j:=\p_k\F_{kj}=-e'(\h_0)\FP\h_0~~~&\text{in }\Omega,\\
\p_t|_{\Gamma}\in\mathcal{T}([0,T]\times\Gamma)~~~&\text{on }\Gamma,\\
h=0,\F_j\cdot N=0~~~&\text{on }\Gamma,\\
-\frac{\p h}{\p N}\geq c_0>0~~~&\text{on }\Gamma,\\
(\eta,v,h)|_{t=0}=(\text{Id},v_0,\h_0).
\end{cases}
\end{equation}Here the term $\psi=\psi(\eta,v)$ is a correction term which solves the following Laplacian equation
\begin{equation}\label{psi00}
\begin{cases}
\Delta \psi=0  &~~~\text{in }\Omega, \\
\psi=\TL^{-1}\mathbb{P}_{\neq 0}\sum\limits_{L=1}^2\left(\TL\eta_l\ak^{Lk}\TP_L\lkk^2 v-\TL\lkk^2\eta_k\ak^{Lk}\TP_L v\right) &~~~\text{on }\Gamma,
\end{cases}
\end{equation}where $\mathbb{P}_{\neq 0} $ denotes the standard Littlewood-Paley projection in $\T^2$  which removes the zero-frequency part. $\TL:=\p_1^2+\p_2^2$ denotes the tangential Laplacian operator. The special structure of $\psi$ also indicates that it is more convenient to replace $\TP^4$ by $\tpl$ in the tangential estimates.

We emphasize that it seems necessary to add a correction term $\psi$ defined in \eqref{psi00} to the flow map equation. The reason is that the mollification breaks the cancellation structure in the boundary integral \eqref{tgB00}, which now becomes 
\begin{equation}\label{tgB01}
\begin{aligned}
&\ig\frac{\p h}{\p N} \tpl\lkk^2\eta_k \ak^{3k}\ak^{3i}(\tpl \p_t\eta_i-\tpl \ek_r\ak^{lr}\p_l v_i)\dS\\
=&\frac12\frac{d}{dt}\ig \frac{\p h}{\p N}\left|\ak^{3i}\tpl\lkk\eta_i\right|^2\dS+\cdots\\
+\sum_{L=1}^2&\left(\ig\frac{\p h}{\p N}\ak^{3k}\tpl\lkk\eta_k\ak^{3r}\TP_L\lkk^2v_r\ak^{Li}\tpl\lkk\eta_i\dS -\ig\frac{\p h}{\p N}  \ak^{3k}\tpl\lkk^2\eta_k\ak^{3i}\tpl\lkk^2\eta_ra^{Lr}\p_L v_i\dS\right),
\end{aligned}
\end{equation}of which the last line is zero when $\kk=0$ according to the cancellation structure \eqref{tgB00} but cannot be controlled when $\kk>0$. In Ginsberg-Lindblad-Luo \cite{GLL2019LWP} they introduced $\kk^2$-weighted higher order terms in the energy functional which made the proof quite complicated. Here, such term can be exactly cancelled with the contribution of $\psi$ on $\Gamma$
\[
\ig\frac{\p h}{\p N}\ak^{3i}\ak^{3k}\tpl\ek_k\tpl\psi_i\dS.
\] 
\begin{rmk}
After introducing the correction term $\psi$, we no longer have $\p_t\eta=v$. We note that the contribution of $\psi$ is controllable and is analyzed in Lemma \ref{etapsi} and Section \ref{hFFFdiv}. Therefore, we are able to follow the strategy in Section \ref{stat1} to prove the uniform-in-$\kk$ a priori estimates of \eqref{elastolkk00}.
\end{rmk}

\subsubsection{Construction of solutions to the approximation system}\label{stat3}

To prove the local well-posedness of \eqref{elastol}, it now remains to construct the unique strong solution to the nonlinear $\kk$-approximation problem \eqref{elastolkk00} for each \textbf{fixed} $\kk>0$. According to the analysis above, it now seems hopeful to proceed the linearization and fixed-point argument because the coefficient $a$ has been mollified to be a $C^{\infty}$-function $\ak$. Let us consider the linearized $\kk$-approximation system \eqref{elastollr}
\begin{equation}\label{elastollr00}
\begin{cases}
\p_t\eta=v+\psir~~~&\text{in }\Omega,\\
\p_t v=-\park h+\sum\limits_{j=1}^3\FP^2\eta~~~&\text{in }\Omega,\\
\divr v=-e'(\hr)\p_t h~~~&\text{in }\Omega,\\
\dive \F_j:=\p_k\F_{kj}=-e'(\h_0)\FP\h_0~~~&\text{in }\Omega,\\
h=0,~\F_j\cdot N=0~~~&\text{on }\Gamma,\\
(\eta,v,h)|_{t=0}=(\text{Id},v_0,\h_0),
\end{cases}
\end{equation}where the variables marked with ``ring" on top denotes the $n$-th approximation solution in the iteration scheme and the others correspond to the $(n+1)$-th approximation solution.

\paragraph*{1. Failure of fixed-point argument due to compressibility and deformation tensor}

Unfortunately, the fixed-point argument seems not applicable to solve the linearized system \eqref{elastollr00} even if we could obtain the a priori estimates without loss of regularity. The presence of Cauchy-Green tensor $\sum\limits_{j=1}^3\FP^2\eta$ makes the second equation of \eqref{elastollr00} lose one derivative in the fixed-point argument. This tells an essential difficulty in elastodynamics that the Euler equations never have. In Gu-Wang \cite{gu2016construction} and Zhang \cite{ZhangYH}, they introduced a directional viscosity term in flow map to compensate the derivative loss: $$\p_t\eta-\mu\sum\limits_{j=1}^3\FP^2\eta=v+\psir.$$  But the estimate for vanishing viscosity process strongly relies on the incompressibility assumptions. In other words, the compressibility stands for another essential difficulty compared with the study of Euler equations and incompressible elastodynamics. See detailed analysis in Section \ref{fail}. Therefore, it is not possible for us to deal with the compressible elastodynamic system by merely technical modifying the methods in the study of compressible Euler equations \cite{lindblad2018priori,GLL2019LWP,luozhangCWWLWP} and incompressible MHD \cite{gu2016construction} or elastodynamics \cite{ZhangYH}.

\paragraph*{2. Hyperbolic approach to solve the linearized $\kk$-approximation system}
We observe that if we replace the first equation of \eqref{elastollr00} by $\p_t\FP\eta=\FP v+\FP\psir$ and consider $\FFr_j:=\FP\eta$ instead of $\eta$ itself as an unknown of the linearized system, then the linearized system \eqref{elastollr00} becomes a first-order symmetric hyperbolic system with characteristic boundary conditions \cite{lax60,MetivierNotes} and thus can be solved by the arguments in Lax-Phillips \cite{lax60}. Note that once $\FFr,v,h$ are solved, the flow map $\eta$ is automatically solved by $\p_t\eta=v+\psir$. After this, we can construct the solution of the nonlinear $\kk$-approximation system \eqref{elastolkk00} for each fixed $\kk>0$ via standard Picard iteration and we refer to Section \ref{hyperbolic} for details.  Finally, the uniform-in-$\kk$ estimate for \eqref{elastolkk00} yields the local well-posedness and energy estimates of the original system \eqref{elastol}. Theorem \ref{lwp} is proven.

\subsubsection{Incompressible limit and Construction of initial data}\label{stat4}

The second goal of the presenting manuscript is to establish the incompressible limit as stated in Theorem \ref{limit}. This directly follows from the energy estimates uniform in sound speed, for which we need to set the energy functional $\EE(T)$ to be $\wt$-weighted by suitable choices of the power of weight functions. We note that our choices of weight functions in $\EE(T)$ come from the reduction procedures \eqref{reduceh}-\eqref{reducehtF}, and also help us avoid analyzing the higher order wave equation of $h$ which are different from the related previous works Lindblad-Luo \cite{lindblad2018priori}, Luo \cite{luo2018ww} and the author \cite{ZhangCRMHD1}. Finally, we need to prove the existence of the initial data satisfying the compatibility conditions \eqref{ccd} up to 5-th order. See Section \ref{data} for detailed description and construction. Once the initial datum are constructed, all the proofs of well-posedness and incompressible limit are finished.

\subsubsection{Comparison with previous works on Euler equations}\label{stat5}

In the previous works of incompressible limit of compressible free-surface inviscid fluid \cite{lindblad2018priori,luo2018ww,ZhangCRMHD1}, the weighted $H^1$-norm of full time derivative of $h$ is also required, i.e., $\|\p_t^4 h\|_1$ and $\|\p_t^5 h\|_0$. The reason is that the Christodoulou-Lindblad type energy functionals in \cite{lindblad2018priori,luo2018ww,ZhangCRMHD1} contain the boundary energy of time derivatives. Our work shows that such boundary energies are not necessary even if we require the energy to be uniform in sound speed. In other words, we show that the contribution of free boundary is presented by $\left|a^{3i}\TP^4\eta_i\right|_0^2$, exactly the tangential derivatives of the second fundamental form of the free boundary. This coincides with the core conclusion in \cite{christodoulou2000motion,lindblad2018priori,luo2018ww}: The regularity and geometry of the free surface enters to the highest order. Hence, our proof is completely applicable to compressible Euler equations just by mathematically setting $\FF=\mathbf{O}$ and avoid analyzing unnecessary terms. Moreover, we are also able to recover the enhanced regularity of full time derivatives in \cite{lindblad2018priori,luo2018ww,ZhangCRMHD1} for a slightly compressible elastic medium. The proof method is completely different from \cite{lindblad2018priori,luo2018ww,ZhangCRMHD1} because we do not allow more spatial derivatives appearing in the estimates due to the presence of deformation tensor. Such difficulty can be overcome by further delicate analysis of Alinhac good unknowns and elliptic estimates. See Section \ref{enhance0} for details. 

\subsection{Organisation of the paper}

In Section \ref{prelemma} we list preliminary lemmas which will be repeatedly used in the manuscript. In Section \ref{apriorinkk} we introduce the nonlinear $\kk$-approximation system \eqref{elastolkk} and prove the uniform-in-$\kk$ a priori estimates. Then we construct the solution to \eqref{elastolkk} by linearization, hyperbolic approach and Picard iteration in Section \ref{linearize}. These two results together with the uniqueness prove the local well-posedness of the free-boundary compressible elastodynamics system \eqref{elastol} in Section \ref{lwp0}. Then we establish the incompressible limit in Section \ref{ilimit}, and recover the enhaced regularity of full time derivatives  in Section \ref{enhance0}. Finally, the construction of initial data satisfying the compatibility conditions is shown in Section \ref{data}.

\bigskip

\noindent\textbf{List of Notations: }
\begin{itemize}
\item $\Omega:=\T^2\times(-1,1)$ and $\Gamma:=\T^2\times(\{\pm 1\})$.
\item $\|\cdot\|_{s}$:  We denote $\|f\|_{s}: = \|f(t,\cdot)\|_{H^s(\Omega)}$ for any function $f(t,y)\text{ on }[0,T]\times\Omega$.
\item $|\cdot|_{s}$:  We denote $|f|_{s}: = |f(t,\cdot)|_{H^s(\Gamma)}$ for any function $f(t,y)\text{ on }[0,T]\times\Gamma$.
\item $\|\cdot\|_{\dot{H}^s}$, $|\cdot|_{\dot{H}^s}$: Homogeneous Sobolev norm, replacing $H^s$ above by $\dot{H}^s.$
\item $P(\cdots)$:  A generic polynomial in its arguments;
\item $\PP_0$:  $\PP_0=P(\|\F\|_4,\|v_0\|_4,\|h_0\|_4)$;
\item $[T,f]g:=T(fg)-T(f) g$, and $[T,f,g]:=T(fg)-T(f)g-fT(g)$, where $T$ denotes a differential operator or the mollifier and $f,g$ are arbitrary functions.
\item $\TP,\TL$: $\TP=\p_1,\p_2$ denotes the tangential derivative and $\TL:=\p_1^2+\p_2^2$ denotes the tangential Laplacian.
\item $\nabla^{i}_a f:=a^{li}\p_{l} f$, $\dive_a\mathbf{f}:=a^{li}\p_l \mathbf{f}_{i}$ and $(\curl_a\mathbf{f})_k:=\epsilon_{kli}a^{ml}\p_m \mathbf{f}^{i}$, where $\epsilon_{kli}$ is the sign of the 3-permutation $(kli)\in S_3.$
\end{itemize}

\section{Preliminary lemmas}\label{prelemma}

We need the following lemmas in this manuscript.

\subsection{Sobolev inequalities}

\begin{lem}[\textbf{Kato-Ponce type inequalities}]  Let $J=(I-\Delta)^{1/2},~s\geq 0$. Then the following estimates hold:

(1) $\forall s\geq 0$, we have 
\begin{equation}\label{product}
\begin{aligned}
\|J^s(fg)\|_{L^2}&\lesssim \|f\|_{W^{s,p_1}}\|g\|_{L^{p_2}}+\|f\|_{L^{q_1}}\|g\|_{W^{s,q_2}},\\
\|\p^s(fg)\|_{L^2}&\lesssim \|f\|_{\dot{W}^{s,p_1}}\|g\|_{L^{p_2}}+\|f\|_{L^{q_1}}\|g\|_{\dot{W}^{s,q_2}},
\end{aligned}
\end{equation}with $1/2=1/p_1+1/p_2=1/q_1+1/q_2$ and $2\leq p_1,q_2<\infty$;

(2) $\forall s\geq 1$, we have
\begin{equation}\label{kato3}
\|J^s(fg)-(J^sf)g-f(J^sg)\|_{L^p}\lesssim\|f\|_{W^{1,p_1}}\|g\|_{W^{s-1,q_2}}+\|f\|_{W^{s-1,q_1}}\|g\|_{W^{1,q_2}}
\end{equation} for all the $1<p<p_1,p_2,q_1,q_2<\infty$ with $1/p_1+1/p_2=1/q_1+1/q_2=1/p$.
\end{lem}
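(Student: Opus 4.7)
The plan is to establish both estimates via Littlewood--Paley decomposition together with Bony's paraproduct calculus. Writing $f=\sum_j \Delta_j f$ and $g=\sum_k \Delta_k g$ for the standard dyadic projections, I would decompose the product as
\begin{equation*}
fg = T_f g + T_g f + R(f,g),
\end{equation*}
where $T_f g := \sum_k S_{k-3}f\cdot \Delta_k g$ is the low--high paraproduct, $T_g f$ is the symmetric high--low piece, and $R(f,g):=\sum_{|j-k|\leq 2}\Delta_j f\cdot \Delta_k g$ is the resonant diagonal part. This decomposition is the workhorse for both items in the lemma.

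For part (1), I would estimate $\|J^s P\|_{L^2}$ on each of $P = T_f g,\, T_g f,\, R(f,g)$ separately. On $T_f g$, the Fourier support of $S_{k-3}f\cdot \Delta_k g$ is contained in a dyadic annulus of scale $2^k$, so $J^s$ acts essentially as multiplication by $2^{sk}$. Combining the maximal-function bound on $S_{k-3}f$ in $L^{q_1}$ with the Littlewood--Paley square-function characterization of $W^{s,q_2}$, H\"{o}lder's inequality with the pair $(q_1,q_2)$ yields the bound by $\|f\|_{L^{q_1}}\|g\|_{W^{s,q_2}}$. The symmetric piece $T_g f$ produces the companion term $\|f\|_{W^{s,p_1}}\|g\|_{L^{p_2}}$. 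For the resonant $R(f,g)$, one first applies Bernstein to move $J^s$ past the dyadic projection at a cost of $2^{sk}$, then uses H\"{o}lder and the $\ell^2$ summation in the dyadic index to dominate it by either target norm. The homogeneous $\partial^s$ variant follows identically after replacing the inhomogeneous square function by the homogeneous one.

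For part (2), the key is the near-cancellation identity
\begin{equation*}
J^s(fg)-(J^sf)g-f(J^sg) = [J^s, T_f] g + [J^s, T_g] f + \bigl(J^s R(f,g) - R(J^sf,g) - R(f,J^sg)\bigr),
\end{equation*}
valid modulo acceptable lower-order remainders arising when $J^s$ falls on the cutoff $S_{k-3}$. Each commutator on the right gains one derivative. For the paraproduct commutator $[J^s, T_f]g$ I would check that the symbol $\langle\xi+\eta\rangle^s-\langle\eta\rangle^s$, restricted to the region $|\xi|\ll|\eta|$, is a Coifman--Meyer multiplier of order $s-1$ in $\eta$ tensored with a symbol of order one in $\xi$; this produces $\|[J^s,T_f]g\|_{L^p}\lesssim \|\nabla f\|_{L^{p_1}}\|g\|_{W^{s-1,q_2}}$, and symmetrically for $[J^s,T_g]f$. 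The resonant commutator is handled by using the mean value theorem on $\langle\xi+\eta\rangle^s$ to extract a gradient on one factor, combined with Bernstein summation.

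The main obstacle I anticipate is the symbolic calculus for non-integer $s$ in the paraproduct commutator: one must verify the Coifman--Meyer hypotheses on $\langle\xi+\eta\rangle^s-\langle\eta\rangle^s$ uniformly across the low-frequency variable $\xi$, and reassemble the resulting dyadic operators in $L^p$ via the Fefferman--Stein vector-valued maximal inequality. Since the lemma is invoked in the paper only as a standard harmonic-analysis tool, in practice one would cite the classical Kato--Ponce and Kenig--Ponce--Vega fractional Leibniz rules (and more recent refinements of Grafakos--Oh) rather than reproduce the full multiplier argument.
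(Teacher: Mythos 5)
The paper does not prove this lemma at all: its ``proof'' is a one-line citation to Kato--Ponce \cite{kato1988commutator}, and the lemma is used purely as an off-the-shelf harmonic-analysis tool. Your proposal, by contrast, sketches an actual proof via Bony's paraproduct decomposition, which is the standard modern route (the original Kato--Ponce argument instead applied the Coifman--Meyer multiplier theorem to the symbol of $J^s$ directly, without organizing the product into $T_fg+T_gf+R(f,g)$). Your outline for part (1) is correct: frequency localization of each paraproduct piece lets $J^s$ act as $2^{sk}$, and H\"older plus the square-function characterization and Fefferman--Stein give the two terms on the right. For part (2) your skeleton identity is the right one, but note that expanding $(J^sf)g$ and $f(J^sg)$ in Bony form also produces the ``wrong-position'' paraproducts $T_{J^sf}g$ and $T_{J^sg}f$, which are not commutators and do not cancel; they are controlled only because the low-frequency cutoff $S_{k-3}$ lets you trade $J^s$ for $J\cdot 2^{(s-1)k}$ by Bernstein, landing them in $\|f\|_{W^{1,p_1}}\|g\|_{W^{s-1,q_2}}$ and its mirror. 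You fold these into ``acceptable lower-order remainders,'' which is where the actual work of part (2) lives, so be explicit there if you write this out. Your closing remark is the practically correct one: for the purposes of this paper one simply cites the classical fractional Leibniz and commutator estimates rather than reproducing the multiplier analysis.
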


\begin{proof}
See Kato-Ponce \cite{kato1988commutator}.
\end{proof}

\begin{lem}[\textbf{Trace theorem for harmonic functions}]\label{harmonictrace}
Suppose that $s\geq 0.5$ and $u$ solves the boundary-valued problem
\[
\begin{cases}
\Delta u=0~~~&\text{ in }\Omega,\\
u=g~~~&\text{ on }\Gamma
\end{cases}
\] where $g\in H^{s}(\Gamma)$. Then it holds that 
\[
|g|_{s}\lesssim\|u\|_{s+0.5}\lesssim|g|_{s}
\]
\end{lem}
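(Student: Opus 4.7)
The plan is to prove the two inequalities separately. The first inequality $|g|_s \lesssim \|u\|_{s+1/2}$ is simply the standard trace theorem for Sobolev spaces: since $s+1/2 > 1/2$, the trace map $H^{s+1/2}(\Omega)\to H^s(\Gamma)$ is continuous, and $u|_{\Gamma}=g$ by hypothesis. No harmonicity is needed for this direction.

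The second inequality $\|u\|_{s+1/2}\lesssim |g|_s$ is the nontrivial part. I would exploit the product structure $\Omega=\T^2\times(-1,1)$ and perform Fourier analysis in the tangential directions. Writing $u(y',y_3)=\sum_{\xi\in\Z^2}\hat u_\xi(y_3)e^{i\xi\cdot y'}$ and decomposing $g=g_+\oplus g_-$ on the two boundary components $\T^2\times\{\pm 1\}$, the equation $\Delta u=0$ reduces to the ODE $\partial_{y_3}^2\hat u_\xi-|\xi|^2\hat u_\xi=0$. For $\xi\neq 0$ the solution is
\[
\hat u_\xi(y_3)=A_\xi\cosh(|\xi|y_3)+B_\xi\sinh(|\xi|y_3),
\]
with coefficients $A_\xi,B_\xi$ determined linearly by $\hat g_{\pm,\xi}$ via the $2\times 2$ system coming from $\hat u_\xi(\pm 1)=\hat g_{\pm,\xi}$; the zero mode $\xi=0$ is handled by the affine solution $A_0+B_0y_3$. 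Substituting into the tangential-Fourier representation of the norm
\[
\|u\|_{H^{s+1/2}(\Omega)}^2\approx\sum_\xi\int_{-1}^1\bigl((1+|\xi|^2)^{s+1/2}|\hat u_\xi(y_3)|^2+(1+|\xi|^2)^{s-1/2}|\partial_{y_3}\hat u_\xi(y_3)|^2\bigr)dy_3
\]
and using the identities $\int_{-1}^1\cosh^2(|\xi|y_3)dy_3,\,\int_{-1}^1\sinh^2(|\xi|y_3)dy_3\lesssim |\xi|^{-1}\cosh^2(|\xi|)$ (and the analogous derivative identities) produces exactly one gained power of $|\xi|^{-1}$, which cancels the $(1+|\xi|^2)^{1/2}$ gap between the interior and boundary norms. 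Summation in $\xi$ then yields $\|u\|_{H^{s+1/2}(\Omega)}^2\lesssim\sum_\xi(1+|\xi|^2)^s(|\hat g_{+,\xi}|^2+|\hat g_{-,\xi}|^2)\approx |g|_s^2$.

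The main obstacle, as I see it, is being precise about fractional Sobolev norms on the slab and handling the two-component boundary cleanly: the system for $(A_\xi,B_\xi)$ needs to be invertible uniformly in $\xi$ (one must check that the relevant determinant $\cosh(|\xi|)\sinh(|\xi|)$ is bounded below for $|\xi|\geq 1$ and that the $\xi=0$ mode is controlled separately by an elementary $1$D estimate), and the equivalence of the Fourier-based norm with the intrinsic $H^{s+1/2}(\Omega)$ norm requires a short justification via interpolation between integer orders. An alternative, more abstract route is to invoke boundedness of the Poisson extension operator $P\colon H^s(\Gamma)\to H^{s+1/2}(\Omega)$ directly from the general theory of elliptic boundary value problems, proving the integer-$s$ case by standard energy/Lax--Milgram estimates iterated in the normal direction and then interpolating, which avoids any Fourier computation at the cost of citing more machinery.
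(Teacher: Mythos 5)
Your proposal is correct, and the first half (standard trace theorem for $|g|_s\lesssim\|u\|_{s+1/2}$) coincides with the paper. For the reverse inequality the paper simply cites the boundedness of the Poisson integral, namely \cite[Proposition 5.1.7]{taylorPDE1} --- which is exactly your ``alternative, more abstract route'' --- whereas your primary argument is a self-contained Fourier computation exploiting the product structure $\Omega=\T^2\times(-1,1)$. That computation is sound: the $2\times2$ system for $(A_\xi,B_\xi)$ diagonalizes into $A_\xi=(\hat g_{+,\xi}+\hat g_{-,\xi})/(2\cosh|\xi|)$, $B_\xi=(\hat g_{+,\xi}-\hat g_{-,\xi})/(2\sinh|\xi|)$, the cross terms $\int_{-1}^1\cosh(|\xi|y_3)\sinh(|\xi|y_3)\,dy_3$ vanish by parity, and the identities $\int_{-1}^1\cosh^2(|\xi|y_3)\,dy_3=1+\sinh(2|\xi|)/(2|\xi|)\sim\cosh^2(|\xi|)/|\xi|$ deliver the promised gain of $|\xi|^{-1}$. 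The one point to be careful about is the displayed ``$\approx$'' for $\|u\|_{H^{s+1/2}(\Omega)}^2$: the anisotropic sum you write down carries only one normal derivative and is \emph{not} equivalent to the intrinsic slab norm for general $u$ once $s+1/2>1$; for harmonic $u$ you must recover the higher normal derivatives from the ODE $\partial_{y_3}^2\hat u_\xi=|\xi|^2\hat u_\xi$ (i.e.\ trade $\partial_3^2$ for $\TL$) and then interpolate, which you do flag but should carry out if you want the argument to stand on its own. What your route buys is an elementary, citation-free proof tailored to the specific reference domain of the paper; what the paper's citation buys is immediate generality (it does not depend on the slab geometry) at the cost of invoking the machinery of Poisson extensions for elliptic boundary value problems.
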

\begin{proof}
The LHS follows from the standard Sobolev trace lemma, while the RHS is the property of Poisson integral, which can be found in \cite[Proposition 5.1.7]{taylorPDE1}.
\end{proof}

\begin{lem}[\textbf{Normal trace theorem}]\label{normaltrace}
It holds that for a vector field $X$
\begin{equation}\label{ntr}
\left|\TP X\cdot N\right|_{-0.5}\lesssim\|\TP X\|_0+\|\dive X\|_0
\end{equation}
\end{lem}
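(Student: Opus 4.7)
The plan is a duality argument combined with Stokes' theorem and tangential integration by parts, exploiting that $\Omega=\T^2\times(-1,1)$ has flat boundary with constant outward normal $N=(0,0,\pm 1)$. First, I would observe that by duality with $H^{1/2}(\Gamma)$ it suffices to bound the pairing
\[
I(\phi):=\int_\Gamma (\TP X\cdot N)\,\phi\,dS
\]
by $(\|\TP X\|_0+\|\dive X\|_0)\,|\phi|_{1/2}$ for each test function $\phi\in H^{1/2}(\Gamma)$, and then take supremum. Using Lemma \ref{harmonictrace}, I would take the harmonic extension $\Phi\in H^1(\Omega)$ of $\phi$, which satisfies $\|\Phi\|_1\lesssim |\phi|_{1/2}$. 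Since $\Gamma$ is a disjoint union of two flat tori (without boundary), one can integrate the tangential derivative $\TP$ by parts on $\Gamma$ to move it onto $\phi$, obtaining $I(\phi)=-\int_\Gamma (X\cdot N)\,\TP\phi\,dS$.

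Next, I would convert this boundary integral into a volume integral via the divergence theorem applied to the vector field $X\,\TP\Phi$, yielding
\[
I(\phi)=-\int_\Omega (\dive X)\,\TP\Phi\,dy-\int_\Omega X_i\,\p_i(\TP\Phi)\,dy.
\]
The first term is immediately controlled by $\|\dive X\|_0\,\|\TP\Phi\|_0\lesssim \|\dive X\|_0\,|\phi|_{1/2}$ via Cauchy--Schwarz. For the second term I split $\p_i$ into its tangential components $\p_\alpha$ ($\alpha=1,2$) and the normal component $\p_3$. The tangential piece $-\int_\Omega X_\alpha\,\p_\alpha(\TP\Phi)\,dy$ is handled by integrating the outer $\p_\alpha$ by parts in $\Omega$; since $\p_\alpha$ differentiates only in the periodic directions, there is no boundary contribution, and the resulting $\int_\Omega (\p_\alpha X_\alpha)\,\TP\Phi\,dy$ is bounded by $\|\TP X\|_0\,\|\TP\Phi\|_0\lesssim \|\TP X\|_0\,|\phi|_{1/2}$.

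The remaining piece $-\int_\Omega X_3\,\p_3(\TP\Phi)\,dy$ is the only delicate one. Here I would commute $\p_3$ past $\TP$ (they commute as plain partial derivatives) and then integrate the tangential $\TP$ by parts in $\Omega$, which again produces no boundary contribution, to obtain $\int_\Omega (\TP X_3)\,\p_3\Phi\,dy$. Cauchy--Schwarz bounds this by $\|\TP X\|_0\,\|\p_3\Phi\|_0\lesssim \|\TP X\|_0\,|\phi|_{1/2}$. Collecting all the estimates gives $|I(\phi)|\lesssim (\|\TP X\|_0+\|\dive X\|_0)\,|\phi|_{1/2}$ and the claim follows by duality.

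The main obstacle to be careful about is precisely this last normal term: a naive Cauchy--Schwarz would require controlling $\|X_3\|_0$ and $\|\p_3\TP\Phi\|_0$, neither of which is available (the latter needs $\|\Phi\|_2$, which the harmonic extension does not provide). The resolution is the commutation $\p_3\TP=\TP\p_3$ followed by a tangential integration by parts, which transfers the tangential derivative from $\Phi$ back onto $X$ and is exactly what makes the asymmetric right-hand side $\|\TP X\|_0+\|\dive X\|_0$---with $\TP$ landing on $X$ but not on $\dive X$---sharp and sufficient.
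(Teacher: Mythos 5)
Your argument is correct and is essentially the paper's proof (which simply cites testing against an $H^{1/2}(\Gamma)$ function, the divergence theorem, and tangential integration by parts, following Gu--Wang): the key points — that $\TP$ integrates by parts on the closed boundary $\T^2$ and in the periodic interior directions without boundary contributions, and that the harmonic extension controls $\|\Phi\|_1$ by $|\phi|_{1/2}$ — are exactly the ones used. The only cosmetic difference is that you apply the divergence theorem to $X\,\TP\Phi$ and then redistribute derivatives, whereas applying it directly to $\TP X\,\Phi$ reaches the same final identity in one step and avoids the intermediate expression $\int_\Omega X_i\p_i\TP\Phi$, which formally needs $\Phi\in H^2$ and would otherwise have to be justified by density.
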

\begin{proof}
The proof directly follows from testing by a $H^{0.5}(\Gamma)$ function and divergence theorem. See \cite[Lemma 3.4]{gu2016construction}.
\end{proof}

\subsection{Properties of tangential smoothing operator}

Let $\zeta=\zeta(y_1,y_2)\in C_c^{\infty}(\R^2)$ be a standard cut-off function such that $\text{Spt }\zeta=\overline{B(0,1)}\subseteq\R^2,~~0\leq\zeta\leq 1$ and $\int_{\R^2}\zeta=1$. The corresponding dilation is $$\zeta_{\kk}(y_1,y_2)=\frac{1}{\kk^2}\zeta\left(\frac{y_1}{\kk},\frac{y_2}{\kk}\right),~~\kk>0.$$ Now we define
\begin{equation}\label{lkk0}
\lkk f(y_1,y_2,y_3):=\int_{\R^2}\zeta_{\kk}(y_1-z_1,y_2-z_2)f(z_1,z_2,z_3)\dz_1\dz_2.
\end{equation}

The following lemma records the basic properties of tangential smoothing.
\begin{lem}[\textbf{Regularity and Commutator estimates}]\label{tgsmooth} For $\kk>0$, we have

(1) The following regularity estimates:
\begin{align}
\label{lkk11} \|\lkk f\|_s&\lesssim \|f\|_s,~~\forall s\geq 0;\\
\label{lkk1} |\lkk f|_s&\lesssim |f|_s,~~\forall s\geq -0.5;\\ 
\label{lkk2} |\TP\lkk f|_0&\lesssim \kk^{-s}|f|_{1-s}, ~~\forall s\in [0,1];\\  
\label{lkk3} |f-\lkk f|_{L^{\infty}}&\lesssim \sqrt{\kk}|\TP f|_{0.5}.  
\end{align}

(2) Commutator estimates: Define the commutator $[\lkk,f]g:=\lkk(fg)-f\lkk(g)$. Then it satisfies
\begin{align}
\label{lkk4} |[\lkk,f]g|_0 &\lesssim|f|_{L^{\infty}}|g|_0,\\ 
\label{lkk5} |[\lkk,f]\TP g|_0 &\lesssim |f|_{W^{1,\infty}}|g|_0, \\ 
\label{lkk6} |[\lkk,f]\TP g|_{0.5}&\lesssim |f|_{W^{1,\infty}}|g|_{0.5}.
\end{align}
\end{lem}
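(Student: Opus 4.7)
The plan is to view $\lkk$ as convolution in the tangential variables $(y_1,y_2)\in\R^2$ with the scaled mollifier $\zeta_\kk$, which satisfies $\|\zeta_\kk\|_{L^1(\R^2)}=1$, and to reduce each estimate to a classical tool: Young's inequality, the Fourier multiplier $m_\kk(\xi_h):=\hat\zeta(\kk\xi_h)$, tangential Littlewood--Paley/Bernstein, and the explicit integral representation of the commutator. Throughout, any partial derivative $\p_{y_1},\p_{y_2},\p_{y_3}$ commutes with $\lkk$, which already makes most of the statements close to routine.

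For \eqref{lkk11} and the range $s\ge 0$ of \eqref{lkk1} I would expand the $H^s$-norm, commute spatial derivatives through $\lkk$, and apply Young's inequality with $\|\zeta_\kk\|_{L^1}=1$; the negative-order range $s\in[-\tfrac12,0)$ of \eqref{lkk1} then follows by duality since the formal adjoint of $\lkk$ is convolution with $\zeta_\kk(-\cdot)$, still of unit $L^1$-mass. For \eqref{lkk2} I would handle the two endpoints directly: when $s=0$, commute $\TP$ through $\lkk$ and apply \eqref{lkk1}; when $s=1$, write $\TP\lkk f=(\TP\zeta_\kk)\ast f$ and use $\|\TP\zeta_\kk\|_{L^1(\R^2)}\lesssim\kk^{-1}$ (trivial rescaling) followed by Young; the intermediate $s\in(0,1)$ is real interpolation of Sobolev spaces on $\Gamma=\T^2$. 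For \eqref{lkk3} I would pass to the tangential Fourier side on $\T^2$, where $\widehat{f-\lkk f}(\xi_h)=(1-m_\kk(\xi_h))\hat f(\xi_h)$ with $|1-m_\kk(\xi_h)|\lesssim\min(1,\kk|\xi_h|)$. A tangential Littlewood--Paley decomposition $f=\sum_j\Delta_j f$ splits the $L^\infty$-bound into low frequencies $2^j\le\kk^{-1}$, where the multiplier contributes $\kk 2^j$, and high frequencies $2^j\ge\kk^{-1}$, where it contributes $1$. Applying Bernstein $\|\Delta_j f\|_{L^\infty(\T^2)}\lesssim 2^j\|\Delta_j f\|_{L^2(\T^2)}$ and Cauchy--Schwarz against the weight $\sum_j 2^{3j}\|\Delta_j f\|_{L^2}^2\sim|\TP f|_{1/2}^2$, both the low- and high-frequency sums contribute a factor $\sqrt\kk$, which yields the claim.

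For the commutator estimates in (2), start from the integral identity
\[
[\lkk,f]g(y)=\int_{\R^2}\zeta_\kk(y_h-z_h)\bigl(f(z_h,y_3)-f(y_h,y_3)\bigr)g(z_h,y_3)\,dz_h.
\]
Estimate \eqref{lkk4} is immediate from $|f(z)-f(y)|\le 2|f|_{L^\infty}$ and Young. For \eqref{lkk5}, integrate the tangential derivative by parts in $z_h$ to obtain the algebraic identity $[\lkk,f]\TP g=\TP[\lkk,f]g-[\lkk,\TP f]g$; the second piece is controlled by $|\TP f|_{L^\infty}|g|_0$ via \eqref{lkk4}, while the first unfolds into a $\lkk$-type convolution of $(\TP f)g$ plus a kernel $\TP\zeta_\kk(y_h-z_h)(f(z_h)-f(y_h))$ whose $L^1$-mass in $z_h$ is bounded independently of $\kk$: the singular factor $\|\TP\zeta_\kk\|_{L^1}\sim\kk^{-1}$ is exactly compensated by $|z_h-y_h|\le\kk$ on the mollifier's support, after writing $|f(z)-f(y)|\le|z_h-y_h||\TP f|_{L^\infty}$. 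Estimate \eqref{lkk6} follows from the same commutator identity after taking $|\cdot|_{1/2}$ and applying the standard fractional commutator bound $|[\lkk,F]h|_s\lesssim|\nabla F|_{L^\infty}|h|_{s-1}$, which itself is obtained by interpolating between the $L^2$ case \eqref{lkk4}--\eqref{lkk5} and its $H^1$-analog.

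The main technical subtlety is the convergence rate \eqref{lkk3}: in two dimensions there is no Sobolev embedding $H^{1/2}\hookrightarrow L^\infty$, so one cannot estimate $f-\lkk f$ pointwise using only an $H^{1/2}$-type seminorm, and any argument based on pointwise Hölder continuity would force a larger right-hand side than $|\TP f|_{1/2}$. The Littlewood--Paley splitting on the Fourier multiplier $1-m_\kk$ is essential, and the critical power $\sqrt\kk$ reflects the exact balance between the low-frequency contribution $\sum_{2^j\le\kk^{-1}}\kk^2 2^j\sim\kk$ and the high-frequency tail $\sum_{2^j\ge\kk^{-1}}2^{-j}\sim\kk$, both weighted against $|\TP f|_{1/2}^2\sim\sum_j 2^{3j}\|\Delta_j f\|_{L^2}^2$.
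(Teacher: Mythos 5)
Your proposal is correct. Note that the paper does not actually prove this lemma -- it defers entirely to the references \cite{coutand2007LWP,gu2016construction,luozhangCWWLWP,ZhangCRMHD2} -- so what you have written is a self-contained reconstruction of the standard arguments rather than a divergence from the paper's proof. The one place where your route is genuinely the ``right'' one and worth the care you give it is \eqref{lkk3}: as you say, $H^{3/2}(\T^2)\hookrightarrow C^{0,1/2}$ fails at the endpoint, so a pointwise H\"older/difference-quotient argument cannot reach $|\TP f|_{0.5}$, and the Littlewood--Paley splitting of the multiplier $1-\hat\zeta(\kk\xi)$ with Bernstein and Cauchy--Schwarz against the weight $2^{3j}$ is exactly how the $\sqrt{\kk}$ arises (the zero mode drops out since $\lkk$ preserves constants, so the sum genuinely starts at frequency $\sim 1$). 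Two small points to tighten. First, in \eqref{lkk5} the identity $[\lkk,f]\TP g=\TP[\lkk,f]g-[\lkk,\TP f]g$ is correct, but when you expand $\TP[\lkk,f]g$ the non-singular piece is $\TP f(y)\,\lkk g(y)$, not $\lkk((\TP f)g)$; both are bounded by $|\TP f|_{L^\infty}|g|_0$ via Young, so nothing breaks, and the essential cancellation ($\|\TP\zeta_\kk\|_{L^1}\sim\kk^{-1}$ against $|f(z)-f(y)|\le|z-y|\,|\TP f|_{L^\infty}\lesssim\kk|\TP f|_{L^\infty}$ on the support) is exactly as you describe. Second, for \eqref{lkk6} the $H^1$ endpoint you interpolate with requires a one-line check: $\TP([\lkk,f]\TP g)=[\lkk,f]\TP(\TP g)+[\lkk,\TP f]\TP g$, where the first term is handled by \eqref{lkk5} and the second must be estimated by \eqref{lkk4} (giving $|\TP f|_{L^\infty}|\TP g|_0$) rather than by \eqref{lkk5} applied to $\TP f$, since the latter would demand two derivatives of $f$. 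With that observation the interpolation $H^0\to H^0$, $H^1\to H^1$ of the linear map $g\mapsto[\lkk,f]\TP g$ closes the argument.
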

\begin{proof}
See \cite{coutand2007LWP,gu2016construction,luozhangCWWLWP,ZhangCRMHD2}.
\end{proof}

\subsection{Elliptic estimates}
\begin{lem} [\textbf{Hodge-type decomposition}]\label{hodge}
Let $X$ be a smooth vector field and $s\geq 1$, then it holds that
\begin{equation}
\|X\|_s\lesssim\|X\|_0+\|\curl X\|_{s-1}+\|\dive X\|_{s-1}+|\TP X\cdot N|_{s-1.5}.
\end{equation}
\end{lem}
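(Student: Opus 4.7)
The plan is to prove the estimate by induction on the integer part of $s$, with the base case settled by a classical div-curl integration-by-parts identity and the inductive step by rewriting the normal derivative of $X$ in terms of its tangential derivative, its curl, and its divergence. Fractional $s$ will then follow by real interpolation between consecutive integers.

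For the base case $s=1$, I would start from the pointwise identity
\begin{equation*}
|\nabla X|^2 = (\dive X)^2 + |\curl X|^2 + \p_i(X^j \p_j X^i) - \p_j(X^j \dive X),
\end{equation*}
which is obtained by combining $|\curl X|^2 = |\nabla X|^2 - \p_i X^j \p_j X^i$ with the product rule. Integrating over $\Omega$ and applying the divergence theorem, the two divergence terms contribute a boundary integral on $\Gamma$. Since $N = (0,0,\pm 1)$, this reduces, up to a component-dependent sign, to $\int_\Gamma (X^j \p_j X^3 - X^3 \dive X)\, dS = \int_\Gamma (X^L \p_L X^3 - X^3 \p_L X^L)\, dS$ after the $X^3 \p_3 X^3$ terms cancel (with $L = 1,2$ summed). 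A tangential integration by parts on $\T^2$, which has no boundary, turns this into $2 \int_\Gamma X^L \p_L X^3\, dS$. Using $X \cdot N|_\Gamma = \pm X^3$ identifies $\p_L X^3 = \pm (\TP X \cdot N)_L$, so the $H^{1/2}$--$H^{-1/2}$ duality on $\Gamma$ bounds the boundary integral by $|X|_{0.5}\,|\TP X \cdot N|_{-0.5}$. The trace/interpolation inequality $|X|_{0.5} \lesssim \|X\|_0^{1/2}\|X\|_1^{1/2}$ together with Young's inequality absorbs the resulting $\|X\|_1$ factor into the left-hand side and completes the $s = 1$ case.

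For the inductive step at integer $s \geq 2$, I would split $\|X\|_s^2 \lesssim \|X\|_0^2 + \|\TP X\|_{s-1}^2 + \|\p_3 X\|_{s-1}^2$. The tangential term is controlled by applying the induction hypothesis to the vector field $\TP X$, using that $\TP$ commutes with both $\curl$ and $\dive$ in the flat background:
\begin{equation*}
\|\TP X\|_{s-1} \lesssim \|\TP X\|_0 + \|\TP \curl X\|_{s-2} + \|\TP \dive X\|_{s-2} + |\TP^2 X \cdot N|_{s-2.5},
\end{equation*}
which is dominated by the right-hand side of the lemma. For the normal term I invoke the pointwise algebraic identities
\begin{equation*}
\p_3 X^3 = \dive X - \p_1 X^1 - \p_2 X^2, \qquad \p_3 X^1 = \p_1 X^3 + (\curl X)^2, \qquad \p_3 X^2 = \p_2 X^3 - (\curl X)^1,
\end{equation*}
which express every normal derivative of $X$ in terms of $\TP X$, $\curl X$, and $\dive X$. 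Taking $H^{s-1}$ norms then yields $\|\p_3 X\|_{s-1} \lesssim \|\TP X\|_{s-1} + \|\curl X\|_{s-1} + \|\dive X\|_{s-1}$, closing the induction.

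The main obstacle is the base-case boundary integral: one has to perform the tangential integration by parts carefully enough to produce exactly the pairing of $X$ against $\TP X \cdot N$ in the correct negative-order dual norm, matching the normal trace framework of Lemma \ref{normaltrace}, and then absorb the resulting $\|X\|_1$ factor on the right-hand side without degrading the estimate. The inductive step itself is essentially algebraic and introduces no serious new analytic difficulty.
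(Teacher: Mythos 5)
Your argument is correct in its overall architecture and reaches the stated estimate, but it takes a different route from the paper. The paper disposes of the lemma in one line via the vector identity $-\Delta X=\curl\curl X-\nabla\dive X$, integrated by parts at top order. Your base case $s=1$ is essentially the integrated form of that identity (pairing with $X$), so there you agree; but your inductive step replaces top-order integration by parts with the pointwise algebraic substitutions $\p_3 X^3=\dive X-\p_1X^1-\p_2X^2$, $\p_3X^1=\p_1X^3+(\curl X)^2$, $\p_3X^2=\p_2X^3-(\curl X)^1$, reducing $\|\p_3X\|_{s-1}$ to tangential derivatives plus $\curl$ and $\dive$, and handling $\|\TP X\|_{s-1}$ by the induction hypothesis (legitimate, since $\TP$ commutes with $\curl$, $\dive$ and with the constant normal $N$ on the flat reference domain $\Omega=\T^2\times(-1,1)$). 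This buys you a proof that only ever sees boundary terms at the $s=1$ level, at the cost of being tied to the flat geometry; the paper's route is shorter but requires bookkeeping of order-$s$ boundary terms.

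Two points need repair. First, the trace inequality you invoke, $|X|_{0.5}\lesssim\|X\|_0^{1/2}\|X\|_1^{1/2}$, is false: testing with a boundary layer oscillating tangentially at the same scale, $u=\chi(nx_3)e^{inx_1}v(x')$, gives $|u|_{H^{1/2}(\Gamma)}\sim n^{1/2}$ while $\|u\|_0^{1/2}\|u\|_1^{1/2}\sim 1$. (The multiplicative trace interpolation holds for $|X|_{L^2(\Gamma)}$, not for $|X|_{H^{1/2}(\Gamma)}$.) Fortunately you do not need it: the ordinary trace bound $|X|_{0.5}\lesssim\|X\|_1$ already gives
\begin{equation*}
\left|\ig X^L\,\p_L X^3\dS\right|\lesssim |X|_{0.5}\,\left|\TP X\cdot N\right|_{-0.5}\lesssim \|X\|_1\left|\TP X\cdot N\right|_{-0.5},
\end{equation*}
and Young's inequality absorbs $\tfrac12\|X\|_1^2$ into the left-hand side, closing the base case. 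Second, the final sentence deferring non-integer $s$ to "real interpolation between consecutive integers" is not automatic: the right-hand side is a sum of norms of several different operators applied to $X$, and interpolating such intersection-type spaces requires an argument (or a retraction/coretraction structure) that you have not supplied. Since the paper only ever applies Lemma \ref{hodge} with integer $s$ (namely $s=4,3,2,1$), this gap is harmless in context, but it should either be justified or the statement restricted to integer $s$.
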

\begin{proof}
This follows from the well-known identity $-\Delta X=\curl\curl X-\nabla\dive X$ and integrating by parts.
\end{proof}

\begin{lem}[\textbf{Christodoulou-Lindblad elliptic estimate}]\label{GLL}
If $f|_{\p\Omega}=0$, then the following elliptic estimate holds for $r\geq 2$.
\begin{align}
\|\pa f\|_{r} \leq  P(\|\eta\|_{r})\Big(\|\lap_{a} f\|_{r-1}+\| \TP \eta\|_{r}\|f\|_{r}\Big).
\end{align}
\end{lem}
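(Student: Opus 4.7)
The plan is to proceed by induction on $r$, exploiting the divergence form of $\Delta_a$ together with a tangential-normal decomposition of the top-order derivatives of $f$. Using Piola's identity $\partial_l A^{li}=0$, we recast
\[
J\Delta_a f \;=\; \partial_m(G^{ml}\partial_l f),\qquad G^{ml}:=A^{mi}a^{li}=Jg^{ml},\qquad g^{ml}:=a^{mi}a^{li},
\]
which is uniformly elliptic as long as $\eta$ stays close to the identity; in particular the coefficient $G^{33}$ admits a uniform positive lower bound. The condition $f|_\Gamma=0$ is preserved by tangential differentiation, so that every $\TP^\alpha f$ will again vanish on the boundary.

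For the base case $r=2$, I would combine the Hodge-type decomposition of Lemma \ref{hodge} applied to the vector field $\pa f$ with the observation that $\dive(\pa f)$ differs from $\Delta_a f$ only by a commutator of schematic type $(a-\mathrm{id})\partial^2 f+\partial a\cdot\partial f$, and that $\curl(\pa f)$ is also of the form $\partial a\cdot\partial f$. The normal trace $|\TP\pa f\cdot N|_{0.5}$ is handled using $f|_\Gamma=0$: only the normal derivative of $f$ survives on $\Gamma$, and the purely-normal second derivative $\partial_3^2 f$ is solved for algebraically from the equation via the ellipticity of $G^{33}$. Putting the pieces together produces the advertised structure with $P(\|\eta\|_2)$ in front of $\|\Delta_a f\|_1$ and the boundary-loss factor $\|\TP\eta\|_2$ in front of $\|f\|_2$.

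For the inductive step, assume the bound at level $r-1$. Apply one tangential derivative $\TP$ to the PDE to obtain
\[
\Delta_a(\TP f) \;=\; \TP F + [\Delta_a,\TP]f,\qquad F:=\Delta_a f.
\]
Since $\TP f$ still vanishes on $\Gamma$, the inductive hypothesis yields
\[
\|\pa(\TP f)\|_{r-1}\le P(\|\eta\|_{r-1})\bigl(\|\TP F\|_{r-2}+\|[\Delta_a,\TP]f\|_{r-2}+\|\TP\eta\|_{r-1}\|\TP f\|_{r-1}\bigr).
\]
The commutator $[\Delta_a,\TP]f$ expands schematically as $\TP a\cdot\partial^2 f+a\cdot\TP\partial a\cdot\partial f$, whose $H^{r-2}$ norm is bounded by $P(\|\eta\|_r)\|f\|_r+\|\TP\eta\|_r\|f\|_r$, the top-order factor $\partial^{r-1}\TP a\sim\partial^{r-1}\TP\partial\eta$ being exactly what $\|\TP\eta\|_r$ controls. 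Derivatives of $\pa f$ containing at least one tangential direction are controlled in this way; the remaining purely-normal derivatives are recovered by iterating the algebraic inversion for $\partial_3^2 f$ via $G^{33}$ as in the base case.

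The main obstacle is to ensure that only tangential top-order derivatives of $\eta$, rather than full interior ones, appear in the final constant. This is the essence of the Christodoulou--Lindblad mechanism: whenever a purely-normal double derivative $\partial_3^2 f$ threatens to drag in $\|\eta\|_{r+1}$, it can be algebraically eliminated using the equation, and every surviving occurrence of $\partial^{r+1}\eta$ is then paired with a tangential derivative and thus absorbed into $\|\TP\eta\|_r$. Tracking this exchange through all commutators at each induction step is tedious but mechanical, and produces the stated bound.
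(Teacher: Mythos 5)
The paper does not actually prove this lemma; it quotes it from Ginsberg--Lindblad--Luo (Appendix B) and Luo--Zhang, and your mechanism --- divergence form via Piola's identity, tangential/normal splitting, algebraic elimination of $\p_3^2 f$ through the uniform positivity of $G^{33}=A^{3i}a^{3i}$, so that only \emph{tangential} top-order derivatives of $\eta$ survive in the constant --- is exactly the mechanism of those references. Your inductive step and commutator bookkeeping are sound: $[\Delta_a,\TP]f$ produces at worst $\TP\p^{r}\eta$ paired with low-order derivatives of $f$, which is what $\|\TP\eta\|_r\|f\|_r$ absorbs. (One caveat on the left-hand side: the literal Leibniz term $\p^r a\cdot\p f$ contains $\p_3^{r+1}\eta\cdot\p f$, which neither $\|\eta\|_r$ nor $\|\TP\eta\|_r$ controls; the estimate must be read, as you implicitly do and as the paper uses it via $\|f\|_{r+1}\approx\|\pa f\|_r$, as a bound on all derivatives of $f$ up to order $r+1$.)

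The genuine gap is at the bottom of the induction: the purely tangential control. In your base case $r=2$ the Hodge decomposition leaves the boundary term $\left|\TP(\pa f)\cdot N\right|_{1/2}$, and this is \emph{not} dispatched by $f|_\Gamma=0$ together with the algebraic inversion of $\p_3^2 f$. On $\Gamma$ the normal component of $\pa f$ reduces to $a^{33}\p_3 f$, and estimating $\left|\TP(a^{33}\p_3f)\right|_{1/2}$ by trace would cost $\|f\|_3$ --- the quantity you are trying to bound. The viable route is Lemma \ref{normaltrace}, which converts this boundary norm into $\|\TP^2(\pa f)\|_0+\|\TP\dive(\pa f)\|_0$; the first of these is an interior bound on two tangential derivatives of $\p f$, and it cannot be produced algebraically from the equation, which only lets you trade \emph{normal} derivatives. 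It requires either the $r=1$ version of the lemma applied to $\TP^2f$, or equivalently a tangential energy estimate: test $\p_m(G^{ml}\p_l\TP^2 f)=\TP^2(J\Delta_af)+[\cdots]$ against $\TP^2f$, integrate by parts using $\TP^2f|_\Gamma=0$ to kill the boundary contribution, and use the uniform ellipticity of $G$ to recover $\|\p\TP^2f\|_0^2$. This is precisely why the paper flags the $r=1$ case as a separate statement (with $\|\eta\|_r$, $\|\TP\eta\|_r$ replaced by $H^2$ norms, proved in Luo--Zhang): your induction silently assumes it. Supplying that tangential energy estimate closes the argument; without it the base case is circular.
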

\begin{proof}
See Ginsberg-Lindblad-Luo \cite[Appendix B]{GLL2019LWP}. When $r=1$, $\|\eta\|_r,\|\TP\eta\|_r$ should be replaced by $H^2$-norm. The proof of this version can be found in Luo-Zhang \cite[Lemma 2.7]{luozhangCWWLWP}.
\end{proof}

\section{A priori estimates of the nonlinear approximation system}\label{apriorinkk}

We define $\ek$ to be the smoothed version of $\eta$ by 
\begin{equation}\label{lkklkk}
\begin{cases}
-\Delta \ek=-\Delta\eta~~~&\text{ in }\Omega,\\
\ek=\lkk^2\eta~~~&\text{ on }\Gamma.
\end{cases}
\end{equation} Denote $\ak:=[\p\ek]^{-1}$ to be the cofactor matrix, $\Jk:=\det[\p\ek]$ to be the Jacobian determinant and $\Ak:=\Jk\ak$. Then we define the nonlinear $\kk$-approximation system of \eqref{elastol} by 
\begin{equation}\label{elastolkk}
\begin{cases}
\p_t\eta=v+\psi~~~&\text{in }\Omega,\\
\p_t v=-\pak h+\sum\limits_{j=1}^3\FP^2\eta~~~&\text{in }\Omega,\\
\diva v=-e'(h)\p_t h~~~&\text{in }\Omega,\\
\dive({\F}^{\top})_j:=\p_k\F_{kj}=-e'(\h_0)\FP\h_0~~~&\text{in }\Omega,\\
\p_t|_{\Gamma}\in\mathcal{T}([0,T]\times\Gamma)~~~&\text{on }\Gamma,\\
h=0,\F_j\cdot N=0~~~&\text{on }\Gamma,\\
-\frac{\p h}{\p N}\geq c_0>0~~~&\text{on }\Gamma,\\
(\eta,v,h)|_{t=0}=(\text{Id},v_0,\h_0).
\end{cases}
\end{equation}Here the term $\psi=\psi(\eta,v)$ is a correction term which solves the following Laplacian equation
\begin{equation}\label{psi}
\begin{cases}
\Delta \psi=0  &~~~\text{in }\Omega, \\
\psi=\TL^{-1}\mathbb{P}_{\neq 0}\sum_{L=1}^2\left(\TL\eta_l\ak^{Lk}\TP_L\lkk^2 v-\TL\lkk^2\eta_k\ak^{Lk}\TP_L v\right) &~~~\text{on }\Gamma,
\end{cases}
\end{equation}where $\mathbb{P}_{\neq 0} $ denotes the standard Littlewood-Paley projection in $\T^2$  which removes the zero-frequency part. $\TL:=\p_1^2+\p_2^2$ denotes the tangential Laplacian operator.

\begin{rmk}
~

\begin{enumerate}
\item The correction term $\psi\to 0$ as $\kk\to 0$. We introduce it to eliminate the higher order boundary terms which appears in the tangential estimates of $v$. These higher order boundary terms are zero when $\kk=0$ but cannot be controlled when $\kk>0$.
\item The Littlewood-Paley projection is necessary here because we will repeatedly use the following inequality, otherwise the zero frequency part cannot be controlled $$|\TL^{-1}\mathbb{P}_{\neq 0}f|_{s}\approx |\mathbb{P}_{\neq 0}f|_{{H}^{s-2}}\approx |f|_{\dot{H}^{s-2}}.$$
\item The initial data is the same of origin system \eqref{elastol} because the compatibility conditions stay unchanged after mollification due to $\ak(0)=a(0)=I_3$ (identity matrix).
\end{enumerate}
\end{rmk}

In this section, we are going to prove the uniform-in-$\kk$ a priori estimates for the nonlinear $\kk$-approximation system \eqref{elastolkk}. 

For each $\kk>0$, we define the energy functional to be
\begin{equation}\label{energykk}
\begin{aligned}
\EE_{\kk}(T)&:=\left\|\eta\right\|_4^2+\left\|v\right\|_4^2+\sum\limits_{j=1}^3\left\|\FP\eta\right\|_4^2+\left\|h\right\|_4^2+\left|\ak^{3i}\tpl\lkk\eta_i\right|_0^2\\
&+\left\|\p_t v\right\|_3^2+\sum\limits_{j=1}^3\left\|\FP\p_t\eta\right\|_3^2+\left\|\p_t h\right\|_3^2\\
&+\left\|\p_t^2v\right\|_2^2+\sum\limits_{j=1}^3\left\|\FP\p_t^2\eta\right\|_2^2+\left\|\swt\p_t^2 h\right\|_2^2\\
&+\left\|\swt\p_t^3 v\right\|_1^2+\sum\limits_{j=1}^3\left\|\swt\FP\p_t^3 \eta\right\|_1^2+\left\|\wt\p_t^3 h\right\|_1^2\\
&+\left\|\wt\p_t^4 v\right\|_0^2+\sum\limits_{j=1}^3\left\|\wt\FP\p_t^4 \eta\right\|_0^2+\left\|(\wt)^{\frac32}\p_t^4 h\right\|_0^2
\end{aligned}
\end{equation}

Our conclusion is 

\begin{prop}\label{apriorikk}
There exists some $T>0$ independent of $\kk$, such that the energy functional $\EE_{\kk}$ satisfies
\begin{equation}\label{EEkk1}
\sup_{0\leq t\leq T} \EE_\kk(t)\leq P(\|v_0\|_{4},\|\F\|_4,\|\h_0\|_4),
\end{equation} provided the following assumptions hold for all $t\in[0,T]$
\begin{align}
\label{taylor1} (-\p h\cdot N)(t)\geq c_0/2~~~&\text{on }\Gamma,\\
\label{small1} \|\Jk(t)-1\|_3+\|\text{Id}-\ak(t)\|_3\leq \delta~~~&\text{in }\Omega.
\end{align}
\end{prop}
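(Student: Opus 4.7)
The plan is to establish a closed energy inequality $\EE_\kappa(t) \le P(\EE_\kappa(0)) + \int_0^t P(\EE_\kappa(\tau))\,d\tau$, uniformly in $\kappa$, and then conclude by a standard Gr\"onwall/bootstrap argument on $[0,T]$, with $T$ chosen so that the bootstrap assumptions \eqref{taylor1}--\eqref{small1} propagate. The first step is a Hodge-type decomposition (Lemma \ref{hodge}) that splits $\|v\|_4$ and $\|\FP\eta\|_4$ into $L^2$, curl, divergence, and tangential trace pieces; the normal trace of $\TP v$ and $\TP\FP\eta$ is converted to interior tangential plus divergence control via Lemma \ref{normaltrace}, and the curl parts are controlled by commuting $\curl_{\ak}$ with $\p_t$ in the momentum equation and with $\p_t$ in $\p_t F_j = (F_j\cdot\pak) v$ respectively, yielding ODE-type evolution of $\curl_{\ak} v$ and $\curl_{\ak}\FP\eta$ at the right order.

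The tangential estimates are carried out through Alinhac good unknowns adapted to the smoothed flow: $\VV := \tpl v - \tpl\ek \cdot \pak v$ and $\HH := \tpl h - \tpl\ek \cdot \pak h$, and analogously for $\FP\eta$. Applying $\tpl$ to the momentum equation and rewriting $\tpl(\pak h)$ as $\pak \HH + C(h)$, one obtains a system of the form \eqref{goodeq0} with an error $C(h)$ controllable by $P(\EE_\kappa)$. Pairing in $L^2$ with $\VV$ produces the interior energy $\|\VV\|_0^2 + \sum_j \|\FP \tpl \eta - \tpl\ek\cdot\pak(\FP\eta)\|_0^2$ plus a boundary integral which, after using $\HH|_\Gamma = -(\p h/\p N)\ak^{3k}\tpl\ek_k + (\text{l.o.t.})$, the Rayleigh-Taylor sign condition \eqref{taylor1}, and the identity $\p_t\ak^{3i} = -\ak^{3r}\p_l v_r \ak^{li}$, yields $\frac12\frac{d}{dt}\int_\Gamma(-\p h/\p N)|\ak^{3i}\tpl\lkk\eta_i|^2\,dS$ plus the uncontrollable pieces from $L=1,2$ in \eqref{tgB01}. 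The whole point of the correction term $\psi$ in \eqref{psi} is that its contribution to the boundary term $\int_\Gamma (\p h/\p N)\ak^{3i}\ak^{3k}\tpl\ek_k\tpl\psi_i\,dS$ exactly cancels those $L=1,2$ pieces, yielding the boundary energy $|\ak^{3i}\tpl\lkk\eta_i|_0^2$ with no $\kappa$-dependent loss. The contribution of $\psi$ to the interior (via $\p_t\eta = v+\psi$) is handled as a perturbation bounded by $P(\EE_\kappa)$, as foreshadowed in the remark following \eqref{tgB01}.

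Next, the divergence-type quantities $\dive_{\ak} v = -e'(h)\p_t h$ and $\dive_{\ak} \FP\eta = -e'(h)\FP h$ are controlled through the wave equation \eqref{h00wave} for $h$, combined with the Christodoulou-Lindblad elliptic estimate (Lemma \ref{GLL}). The reduction diagrams \eqref{reduceh}--\eqref{reducehtF} are executed: $\|h\|_4$ is bounded by $\|\Delta_{\ak} h\|_2$ plus tangential terms, then $\Delta_{\ak} h$ is replaced by $e'(h)\p_t^2 h - e'(h)\sum_j \FP^2 h + (\text{quadratic l.o.t.})$, and the procedure is iterated until only purely tangential derivatives $\p_t$ and $\FP$ act on $h$. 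The resulting norms are precisely the weighted time- and $\FP$-derivative norms in $\EE_\kappa$, and their evolution is controlled by testing the $\p_t^3$-, $\p_t^2\FP$-, $\p_t\FP^2$- and $(\F_k\cdot\p)^2\FP$-differentiated wave equations against the appropriate weighted derivatives of $h$; in each case integrating the Laplacian by parts produces a boundary integral that vanishes thanks to $h|_\Gamma = 0$, and the weight $e'(h)$ is chosen so that no negative powers of the sound speed appear. Once $\|h\|_4$ and all weighted time-derivative norms of $h$ are in hand, the time derivatives $\p_t^k v$ and $\FP\p_t^k \eta$ are recovered by repeatedly substituting the momentum equation to trade a $\p_t$ for a $\pak h$ and $\FP^2\eta$, until no time derivative remains, leaving quantities already controlled.

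The main obstacle, and the one requiring the most care, is the interaction between the mollification (needed to close the tangential estimate at the level $\|v\|_4$, since we cannot afford $\|\tpl a\|_0$) and the sound-speed-uniform bookkeeping. Specifically, one must verify that (i) the bad boundary integrals produced by $\kappa>0$ in \eqref{tgB01} are cancelled exactly, not merely absorbed with $\kappa$-weights, by the harmonic extension $\psi$ defined in \eqref{psi}, and (ii) the weights $\swt$ and $\wt$ on the time-derivative norms match the reductions in \eqref{reduceh}--\eqref{reducehtF} so that $\wt^{-1}$ never appears on the right-hand side. Both points depend sensitively on the algebraic structure of the good-unknown commutators and on the symmetrization afforded by $\FP$ acting as a tangential derivative on $\Gamma$. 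Once these are verified, collecting all estimates and applying \eqref{lkk11}--\eqref{lkk6} to bound the mollifier commutators by $P(\EE_\kappa)$ (uniformly in $\kappa$) closes the Gr\"onwall inequality and yields \eqref{EEkk1}.
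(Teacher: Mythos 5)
Your proposal reproduces the paper's architecture in all of its main components: the Hodge decomposition with the normal-trace reduction of the boundary pieces, the Alinhac good unknowns $\VV,\HH$ with the exact cancellation between the correction term $\psi$ and the $L=1,2$ remainders of the boundary integral, the Christodoulou--Lindblad elliptic reduction of $\|h\|_4$ and of its time derivatives to purely tangentially differentiated wave equations carrying precisely the weights that appear in $\EE_\kk$, the recovery of $\p_t^k v$ and $\FP\p_t^k\eta$ by substituting the momentum equation, and the closing Gr\"onwall/bootstrap argument together with the propagation of \eqref{taylor1}--\eqref{small1}. You also correctly isolate the two genuinely delicate points, namely the exactness of the $\psi$-cancellation and the bookkeeping of powers of $e'(h)$.

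There is, however, one step that fails as literally written: the curl estimate for $\FP\eta$. You propose to obtain the evolution of $\curla\FP\eta$ from $\p_t F_j=(F_j\cdot\p)v$; but this gives $\p_t(\curla\FP\eta)=\curla(\FP v)+\cdots$, and bounding $\|\curla\FP v\|_3$ costs $\|v\|_5$ --- one derivative too many. Symmetrically, treating $\p_t(\curla v)=\sum_j\curla\FP^2\eta+\cdots$ as a standalone ODE costs $\|\FP\eta\|_5$. The two curl energies cannot be closed separately; they must be produced simultaneously from the momentum equation, by testing $\p^3$ of \eqref{curl0} against $\p^3\curla v$ and integrating the operator $\FP$ by parts in the term $\io\FP\p^3\left(\curla\FP\eta\right)\cdot\p^3\curla v\dy$ (the boundary contribution vanishes because $\F_j\cdot N=0$), which converts it into $-\frac12\frac{d}{dt}\|\p^3\curla\FP\eta\|_0^2$ modulo controllable commutators --- exactly the coupling you already invoke in the tangential estimate when pairing $\FP(\tpl\FP\eta)$ against $\VV$. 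A second, smaller imprecision: for the smoothed system the identity $\diva\FP\eta=-e'(h)\FP h$ holds only modulo time-integrated error terms generated by $\psi$ and by $\ek\neq\eta$ (cf.\ \eqref{divkkF}); these errors propagate into the source of the wave equation \eqref{hwave0} and must be carried through the reduction, although they are ultimately harmless.
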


\begin{rmk}
The first a priori assumption \eqref{taylor1} will be justified by verifying $(-\p h\cdot N)$ is a $C_{t,x}^{0,1/4}$ function on the boundary, and thus the positivity of Taylor sign condition must propagate for a positive time. The second a priori assumption \eqref{small1} can be easily justified once the energy bounds are established by using $\ak(T)-\text{Id}=\int_0^T \p_t\ak=\int_0^T \ak:\p_t\p\ek:\ak\dt$ and the smallness of $T$.
\end{rmk}

In Section \ref{linearize}, we will prove the local well-posedness of \eqref{elastolkk} in an $\kk$-dependent time interval $[0,T_{\kk}]$.  Therefore, the uniform-in-$\kk$ a priori estimate guarantees that the solution $(\eta(\kk),v(\kk),h(\kk))$ to \eqref{elastolkk} converges to the solution to the original system \eqref{elastol} in a $\kk$-independent time interval $[0,T]$ as $\kk\to 0_+$, i.e., local existence of the solution to free-boundary compressible elastodynamic system is established.  For simplicity, we omit the $\kk$ and only write $(\eta,v,h)$ in this manuscript.

Before going to the proof, we present the estimates of the correction term $\psi$.

\begin{lem}\label{etapsi}
We have the following estimates for $(v,\psi,\eta)$ in \eqref{elastolkk}. 
\begin{align}
\label{eta4} \|\ek\|_4&\lesssim \|\eta\|_4, \\
\label{psi4} \|\psi\|_4+\|\FP\psi\|_4&\lesssim P(\|\F\|_4,\|\FP\eta\|_4,\|\eta\|_4,\|v\|_4), \\
\label{psit4}\|\p_t \psi\|_4+\|\p_t\FP\psi\|_3&\lesssim P(\|\F\|_4,\|\FP\eta\|_4,\|\eta\|_4,\|v\|_4,\|\p_t v\|_3),\\
\label{psitt3}\|\p_t^2 \psi\|_3+\|\p_t^2\FP\psi\|_2&\lesssim P(\|\F\|_4,\|\FP\eta\|_4,\|\eta\|_4,\|v\|_4, \|\p_tv\|_3, \|\p_t^2 v\|_2), 
\end{align}
\begin{equation}\label{psittt2}
\begin{aligned}
\|\swt \p_t^3\psi\|_2+\|\swt\p_t^3\FP\psi\|_1\lesssim P(\|\F\|_4,\|\FP\eta\|_4,\|\eta\|_4,\|v\|_4, \|\p_tv\|_3, \|\p_t^2 v\|_2, \|\swt\p_t^3v\|_{1}),
\end{aligned}
\end{equation}
\begin{equation}\label{psitttt1}
\begin{aligned}
\|\wt\p_t^4\psi\|_1+\|\wt\p_t^4\FP\psi\|_0\lesssim P(\|\F\|_4,\|\FP\eta\|_4,\|\eta\|_4,\|v\|_4, \|\p_tv\|_3, \|\p_t^2 v\|_2, \|\swt\p_t^3v\|_{1},\|\wt\p_t^4 v\|_0).
\end{aligned}
\end{equation}
and
\begin{align}
\label{fpeta4} \|\FP\ek\|_4&\lesssim P(\|\F\|_4,\|\FP\eta\|_4,\|\eta\|_4) \\
\label{etat4} \|\p_t\ek\|_4+\|\p_t\FP\ek\|_3&\lesssim P(\|\F\|_4,\|\FP\eta\|_4,\|\eta\|_4,\|v\|_4) , \\
\label{etatt3}\|\p_t^2\ek\|_3+\|\p_t^2\FP\ek\|_2&\lesssim P(\|\F\|_4,\|\FP\eta\|_4,\|\eta\|_4,\|v\|_4,\|\p_t v\|_3) ,
\end{align}
\begin{equation}
\label{etattt2}\|\p_t^3\ek\|_2+\|\p_t^3\FP\ek\|_1\lesssim P(\|\F\|_4,\|\FP\eta\|_4,\|\eta\|_4,\|v\|_4,\|\p_t v\|_3,\|\p_t^2 v\|_2),
\end{equation}
\begin{align}
\label{etatttt1} \|\swt\p_t^4\ek\|_1+\|\swt\p_t^4\FP\ek\|_0\lesssim&  P(\|\F\|_4,\|\FP\eta\|_4,\|\eta\|_4,\|v\|_4, \|\p_tv\|_3, \|\p_t^2 v\|_2, \|\swt\p_t^3v\|_{1})\\
\label{etattttt0} \|\wt\p_t^5\ek\|_0\lesssim\|\wt\p_t^5\eta\|_0\lesssim & P(\|\eta\|_4,\|v\|_4, \|\p_tv\|_3, \|\p_t^2 v\|_2, \|\swt\p_t^3v\|_{1},\|\wt\p_t^4 v\|_0),
\end{align}
\end{lem}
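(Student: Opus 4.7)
The plan is to estimate $\ek$ first, then $\psi$, and finally propagate these bounds through time differentiation by repeatedly invoking $\p_t\eta=v+\psi$ and the evolution equations in \eqref{elastolkk}.

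For the bounds on $\ek$, I would set $\phi:=\ek-\eta$, which is harmonic in $\Omega$ with boundary datum $\lkk^2\eta-\eta$. The harmonic trace lemma (Lemma \ref{harmonictrace}) together with the uniform-in-$\kk$ boundedness of $\lkk$ on boundary Sobolev spaces \eqref{lkk1} gives $\|\phi\|_s\lesssim|\lkk^2\eta-\eta|_{s-1/2}\lesssim|\eta|_{s-1/2}\lesssim\|\eta\|_s$, yielding \eqref{eta4}. For $\|\FP\ek\|_4$ I would decompose $\FP\ek=\FP\eta+\FP\phi$. Since $\F_j\cdot N=0$ on $\Gamma$, the operator $\FP$ reduces to $\F_\alpha\TP_\alpha$ on the boundary; therefore $\FP\phi|_\Gamma=(\lkk^2-I)(\FP\eta|_\Gamma)+[\F_\tau|_\Gamma,\lkk^2]\TP\eta|_\Gamma$, which is controlled by $|\FP\eta|_{7/2}+|\F|_{W^{1,\infty}}|\TP\eta|_{7/2}$ via \eqref{lkk4}--\eqref{lkk6}. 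In the interior, $\phi$ being harmonic gives $-\Delta(\FP\phi)=[\Delta,\FP]\phi=(\Delta\F_{kj})\p_k\phi+2\p_l\F_{kj}\p_l\p_k\phi$, a quantity bounded in $H^2$ by $P(\|\F\|_4,\|\eta\|_4)$. Standard elliptic estimates then deliver \eqref{fpeta4}.

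For $\psi$: the harmonic extension lemma gives $\|\psi\|_s\approx|\psi|_{s-1/2}$, and the factor $\TL^{-1}\mathbb{P}_{\neq 0}$ in the boundary datum of \eqref{psi} contributes a gain of two tangential derivatives on the zero-mean component, so $|\psi|_{s-1/2}\approx|\text{boundary bilinears}|_{\dot H^{s-5/2}}$. Each boundary bilinear has the schematic form $\TL\eta\cdot\ak\cdot\TP\lkk^2 v$; by the Kato--Ponce product rule \eqref{product}, the a priori bound \eqref{small1} on $\ak$, and the uniform mollifier bounds, each such term is controlled by $|\TL\eta|_{s-5/2}|\ak|_{W^{1,\infty}}|\TP v|_{s-5/2}\lesssim \|\eta\|_s\|v\|_{s-1}$, giving \eqref{psi4}. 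The estimate for $\FP\psi$ is obtained via the same elliptic-commutator decomposition used for $\FP\phi$: on $\Gamma$ one uses tangentiality of $\FP$, and in the interior one uses $-\Delta(\FP\psi)=[\Delta,\FP]\psi$ together with the $\|\psi\|_4$ bound already obtained.

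For the time-differentiated estimates \eqref{psit4}--\eqref{etattttt0}, I would proceed by induction on the number of $\p_t$-derivatives. Differentiating the elliptic problems defining $\ek$ and $\psi$ gives harmonic extensions whose boundary data are polynomials in $\eta,v,\ak$ and their time derivatives up to the same order, where $\p_t^k\ak$ is reduced via the identity $\p_ta^{li}=-a^{lr}\p_m\p_t\eta_r a^{mi}$, and $\p_t^k\eta=\p_t^{k-1}(v+\psi)$. At each stage, the newly appearing $\p_t^k v$ is absorbed into the energy functional $\EE_\kk$, and terms involving $\p_t^j\psi$ with $j<k$ are controlled by the induction hypothesis. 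The weights $\swt, \wt, (\wt)^{3/2}$ in the estimates are inherited directly from the placement of weights on $\p_t^k v$ in $\EE_\kk$, because each weight multiplies a bilinear boundary datum or Laplacian source in a way that commutes with $\lkk^2$ and $\TL^{-1}\mathbb{P}_{\neq 0}$; when needed, one uses the second equation of \eqref{elastolkk} to trade $\p_t^{k+1}v$ for $\pak\p_t^k h$ and $\sum_j\FP^2\p_t^k\eta$, the latter being already in $\EE_\kk$ and the former picking up the prescribed weight from $\wt$.

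The main obstacle is the loss-free control of $\FP$ applied to the harmonic extensions, since $\FP$ contains an interior normal derivative $\F_{3j}\p_3$ that is not killed by the mollifier; the saving observation is that $\F_{3j}$ vanishes on $\Gamma$, so all boundary commutators $[\FP,\lkk^2]$ reduce to purely tangential ones handled by Lemma \ref{tgsmooth}, while interior contributions collapse to $[\Delta,\FP]\phi$ with $\phi$ harmonic, which is bounded in $H^2$ by $\|\F\|_4$ and the $\ek$ estimates already in hand. The secondary obstacle is the book-keeping of weights at the top time-derivative level, where one invokes the second equation of \eqref{elastolkk} to replace each $\p_t^{k+1}v$ by spatial derivatives of $h$ and $\FP\eta$ carrying exactly the weights prescribed in the energy functional.
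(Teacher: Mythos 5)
Your argument is correct and takes essentially the same approach as the paper: the paper likewise treats $\ek$ and $\psi$ by elliptic estimates for their defining harmonic problems, controls $\FP\ek$ and $\FP\psi$ by applying $\FP$ to those problems --- using $\F_{3j}|_\Gamma=0$ to reduce $[\FP,\lkk^2]$ to the tangential commutators of Lemma \ref{tgsmooth} and $[\Delta,\FP]$ for the interior source --- and reduces the time-differentiated bounds exactly as you propose, trading $\p_t^{k+1}v$ through the momentum equation so the weights match those in $\EE_\kk$. Your decomposition $\phi=\ek-\eta$ versus the paper's direct $\FP$-differentiation of the boundary-value problem for $\ek$ is only a cosmetic difference.
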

\begin{proof}
 The estimates of $\ek,\psi$ and their derivatives are exactly the same as \cite[Lemma 3.2]{ZhangCRMHD2}, which can be proved by using the properties of mollifier and elliptic estimates, so we omit the details here. We just show the estimates of $\|\FP\psi\|_4$ and $\|\FP\ek\|_4$.

We recall the definition of $\ek$ in \eqref{lkklkk} and take $\FP$ in that equation to get
\begin{equation}
\begin{cases}
-\Delta(\FP\ek)=-\Delta(\FP\eta)-[\FP,\Delta]\eta+[\FP,\Delta]\ek~~~&\text{ in }\Omega,\\
\FP\ek=\FP\lkk^2\eta~~~&\text{ on }\Gamma.
\end{cases}
\end{equation}

By standard elliptic estimates we have
\begin{equation}
\begin{aligned}
\|\FP\ek\|_4\lesssim&\left\|-\Delta(\FP\eta)-[\FP,\Delta]\eta+[\FP,\Delta]\ek\right\|_2+\left|\FP\lkk^2\eta\right|_{7/2}\\
\lesssim&\|\FP\eta\|_4+\|\F\|_4\|\eta\|_4+\left|\lkk^2(\FP\eta)\right|_{7/2}+\left|\left[\FP,\lkk^2\right]\right|_{7/2}\\
\lesssim&\|\FP\eta\|_4+\|\F\|_4\|\eta\|_4\\
&+\sum_{L=1}^2\left|\left[\lkk^2,\F_{Lj}\right]\TP_L\eta\right|_{1/2}+\left|\left[\lkk^2,\F_{Lj}\right]\TP_L\TP^3\eta\right|_{1/2}+\left|\left[\TP^3,[\lkk^2,\F_{Lj}]\TP_L\right]\eta\right|_{1/2}\\
\lesssim&\|\FP\eta\|_4+\|\F\|_4\|\eta\|_4.
\end{aligned}
\end{equation}

The time derivative of $\FP\ek$ directly follows from $\|\FP\ek\|_4$ and the estimates of $\|\p_t^r\ek\|_{5-r}$. For example, $\|\p_t^r\FP\ek\|_{4-r}\lesssim\|\F\|_3\|\p_t^r\ek\|_{5-r}$ for $1\leq r\leq 4$.

The estimates of $\FP\psi$ follows in the same way as $\FP\ek$, i.e., taking $\FP$ in the elliptic system \eqref{psi} and using standard elliptic estimates, so we omit the proof. Its time derivative can also be controlled by using the bounds of $\p_t^r\psi$. 
\end{proof}

Now we start to analyze the energy $\EE_\kk(T)$. The first step is to control the full spatial derivative of $v$ and $\FP\eta$.

\subsection{Div-Curl estimates of full spatial derivatives}\label{divcurl}

Let $s=4$ and $X=v$ and $\FP\eta$ in Lemma \ref{hodge} respectively, we have
\begin{align}
\label{v4} \|v\|_4&\lesssim\|v\|_0+\|\dive v\|_{3}+\|\curl v\|_{3}+|\TP v\cdot N|_{5/2},\\
\label{F4} \|\FP\eta\|_4&\lesssim\|\FP\eta\|_0+\|\dive (\FP\eta)\|_{3}+\|\curl (\FP\eta)\|_{3}+|\TP(\FP\eta)\cdot N|_{5/2}.
\end{align}

\paragraph*{$L^2$-estimates: Energy conservation}

The proof is nearly the same as in \eqref{econserve}. Taking $L^2$ inner product of the second equation in \eqref{elastolkk} and $v$ yields
\[
\frac{1}{2}\frac{d}{dt}\io |v|^2=-\io v\cdot(\pak h)+\sum_{j=1}^3\FP^2\eta\cdot v\dy.
\] Integrating by parts gives
\[
-\io v\cdot(\pak h)=\io(\diva v) h+\io\p_l\ak^{li} v_i h\dy\lesssim-\frac12\io \wt|h|^2\dy+\frac12\io e''(h)\p_t h|h|^2+\io\p_l\ak^{li} v_i h\dy,
\]and 
\[
\io\sum_{j=1}^3\FP^2\eta\cdot v\dy=\sum_{j=1}^3-\frac12\frac{d}{dt}\io|\FP\eta|^2\dy+\io(\FP\eta)\cdot(\FP\psi)\dy-\io(\dive \F_j)\FP\eta\cdot v\dy.
\]
Therefore
\begin{equation}\label{vFhl2}
\frac{d}{dt}\left(\|v\|_0^2+\sum_{j=1}^3\left\|\FP\eta\right\|_0^2+\|\swt h\|_0^2\right)\lesssim P(E_\kk(T)).
\end{equation}

As for $h$, one has $\|h(T)\|_0\lesssim \|\h_0\|_0+\int_0^T\|\p_t h(t)\|_0\dt$ to control its $L^2$ norm.

\paragraph*{Boundary control: Reduce to divergence and interior tangential estimates}

The boundary part can be reduced to interior tangential estimates and divergence by Lemma \ref{normaltrace}.
\begin{equation}\label{vbd}
\left|\TP^4v\cdot N\right|_{-1/2}\lesssim\left\|\TP^4 v\right\|_0+\left\|\TP^3\dive v\right\|_{0}.
\end{equation} Similarly,
\begin{equation}\label{Fbd}
\left|\TP^4(\FP\eta)\cdot N\right|_{-1/2}\lesssim \left\|\TP^4 \left(\FP\eta\right)\right\|_0+\left\|\TP^3\dive\left(\FP\eta\right)\right\|_{0}.
\end{equation}

\paragraph*{Curl control: Direct computation}

First, it suffices to control the Eulerian curl due to the a priori assumption \eqref{small1}
\[
\|\curl X\|_3\leq\|\curla X\|_3+\|\text{Id}-\ak\|_{2}\| X\|_4\lesssim\|\curla X\|_3+\delta\| X\|_4,
\] where the $\delta$-term can be absorbed by $\|X\|_4$ in the energy functional.

Recall in \eqref{elastolkk} that we have $\p_t\eta=v+\psi$ and $\p_t v-\sum\limits_{j=1}^3\FP^2\eta=-\pak h$. Taking (Eulerian curl) $\curla$, we get the evolution equation of $\curla v$.
\begin{equation}\label{curl0}
\p_t\left(\curla v\right)-\sum\limits_{j=1}^3\FP\left(\curla(\FP\eta)\right)=\underbrace{\curl_{\ak_t} v+\sum\limits_{j=1}^3\left[\curla,\FP\right]\FP\eta}_{J_0}.
\end{equation}

Then we take 3 spatial derivatives to get
\begin{equation}\label{curl3}
\p_t\left(\p^3\curla v\right)-\sum\limits_{j=1}^3\FP\p^3\left(\curla (\FP\eta)\right)=\p^3 J_0+\underbrace{\sum\limits_{j=1}^3\left[\p^3,\FP\right]\left(\curla \FP\eta\right)}_{J_1}.
\end{equation}

Now we compute the $L^2$-inner product of \eqref{curl3} with $\p^3\curla v$ to get
\begin{equation}\label{curl31}
\io\left(\p_t\p^3\curla v\right)\left(\p^3\curla v\right)\dy=\frac12\frac{d}{dt}\io \left|\p^3\curla v\right|^2\dy.
\end{equation}

Then we integrate $\FP$ by parts in the second term of LHS in \eqref{curl3}. Note that the boundary term vanishes because $\F_j\cdot N=0$ for $j=1,2,3$.

\begin{equation}\label{curl32}
\begin{aligned}
&-\sum\limits_{j=1}^3\io\FP\p^3\left(\curla (\FP\eta)\right)\cdot\left(\p^3\curla v\right)\dy \\
=&\sum\limits_{j=1}^3\io\p^3\left(\curla (\FP\eta)\right)\cdot\left(\p^3\curla\FP v\right)\dy\\
&+\underbrace{\sum\limits_{j=1}^3\io(\dive \F_j)\p^3\left(\curla (\FP\eta)\right)\cdot\left(\p^3\curla v\right)\dy}_{J_2}\\
&+\underbrace{\sum\limits_{j=1}^3\io\p^3\left(\curla (\FP\eta)\right)\cdot\left[\FP,\p^3\curla\right] v\dy}_{J_3}\\
=&\sum\limits_{j=1}^3\frac12\frac{d}{dt}\io\left|\p^3\curla \FP\eta\right|^2\dy-\sum_{j=1}^3\io\p^3\curla (\FP\eta)\cdot\p^3\curla\FP\psi\dy\\
&+\sum\limits_{j=1}^3 \io\p^3\curla (\FP\eta)\cdot\p^3\left(\left[\curla,\p_t\right] \FP\eta\right)\dy+J_2+J_3
\end{aligned}
\end{equation}

It remains to control the terms $J_0\sim J_5$ where 
\begin{align}
J_4:=&-\sum_{j=1}^3\io\p^3\curla (\FP\eta)\cdot\p^3\curla\FP\psi\dy,\\
J_5:=&\sum\limits_{j=1}^3 \io\p^3\curla (\FP\eta)\cdot\p^3\left(\left[\curla,\p_t\right] \FP\eta\right)\dy
\end{align}

These commutators can be controlled either by direct computation or using Lemma \ref{etapsi}. We have
\begin{align}\label{J01}
\io (\p^3J_0+J_1)\cdot\left(\p^3\curla v\right)\dy \lesssim \sum_{j=1}^3 P(\|\F\|_4,\|\FP\eta\|_4.\|\p v\|_4,\|\FP\ak\|_3,\|\ak\|_3)\lesssim P(\EE_\kk(T)).
\end{align}
For $J_2$, we have
\begin{align}\label{J2}
J_2\lesssim \|\p\F\|_{L^{\infty}}\|\FP\eta\|_4\|v\|_4.
\end{align}
For $J_3$, direct computation shows that
\begin{align}\label{J3}
J_3\lesssim P\left(\|\FP\eta\|_4,\|\F\|_4,\|v\|_4,\|\FP\ak\|_3,\|\ak\|_3\right)\lesssim P(\EE_\kk(T)).
\end{align}

By using Lemma \ref{etapsi}, we have that
\begin{align}\label{J45}
J_4+J_5\lesssim P\left(\|\F\|_4,\|\FP\eta\|_4,\|\ak\|_3,\|\p_t\ak\|_3,\|\TP\psi\|_4\right)\lesssim P(\EE_\kk(T)).
\end{align}

Summing up \eqref{curl31}, \eqref{curl32}, \eqref{J01}-\eqref{J45}, we get the Eulerian curl estimates by
\begin{equation}\label{curlavF}
\frac12\frac{d}{dt}\left(\io \left|\p^3\curla v\right|^2\dy+\sum\limits_{j=1}^3\io\left|\p^3\curla \FP\eta\right|^2\dy\right)\lesssim P(\EE_\kk(T)),
\end{equation}and thus for sufficiently small $\delta>0$ we have
\begin{equation}
\left\|\curl v(T)\right\|_3^2+\left\|\curl \FP\eta(T)\right\|_3^2\lesssim\delta^2\left(\|v\|_4^2+\|\FP\eta\|_4^2\right)+\PP_0+\int_0^T P(\EE_\kk(t))\dt.
\end{equation}

\paragraph*{Divergence estimates: Reduce to the elliptic estimates of $h$}

By the third equation of \eqref{elastolkk} and the a priori assumption \eqref{small1}, we know
\begin{equation}\label{divv0}
\left\|\dive v\right\|_3^2\lesssim\delta^2\|v\|_4^2+\left\|\wt \p_t h\right\|_3^2.
\end{equation}

As for the divergence of the deformation tensor $\FF$, we recall that the original system \eqref{elastoL} reads $\dive_a\FF=-\wt\FP h$. Now we should re-produce a silimar formula of $\diva\FP\eta$ for the smoothed-out problem \eqref{elastolkk} up to some error terms. Direct computation together with $\p_t\eta=v+\psi$ yields that
\begin{equation}
\diva\FP\eta(T)-\dive\F_j=\int_0^T\diva\FP v\dt+\int_0^T\diva\FP\psi+\dive_{\ak_t}\FP\eta\dt
\end{equation}
Recall that $\diva v=-\p_t e(h)$, so we commute $\diva$ with $\FP$ to get
\begin{equation}
\begin{aligned}
\int_0^T\diva\FP v\dt=&-\int_0^T \p_t\left(\FP e(h)\right)\dt+\int_0^T\left[\diva,\FP\right] v\dt\\
=&-e'(h)\FP h\bigg|_0^T+\int_0^T\left[\diva,\FP\right] v\dt.
\end{aligned}
\end{equation}
Summing up these two equalities, we get
\begin{equation}\label{divkkF0}
\diva\FP\eta(T)=-e'(h)\FP h(T)+\int_0^T\diva\FP\psi+\dive_{\ak_t}\FP\eta+\left[\diva,\FP\right] v\dt.
\end{equation}

Direct computation shows that the commutator is only one term with first order derivative. One can also equivalently compute in Eulerian coordinate.
\begin{equation}\label{commdivF}
\begin{aligned}
\left[\diva,\FP\right]f=&\ak^{li}\p_l\left(\FF_0^{kj}\p_kf_i\right)-\FF_0^{kj}\p_k\left(\ak^{li}\p_l f_i\right)\\
=&\ak^{li}\p_l\FF_0^{kj}\p_k f_i-\FF_0^{kj}\p_k\ak^{li}\p_l f_i=\ak^{li}\p_l\FF_0^{kj}\p_k f_i+\FF_0^{kj}\ak^{lr}\p_k\p_m\ek_r\ak^{mi}\p_l f_i\\
=&\ak^{mi}\p_m\left(\FP\ek_r\right)\ak^{lr}\p_l f_i-\ak^{li}\p_l\FF_0^{kj}\p_k f_i+\ak^{li}\p_l\FF_0^{kj}\p_k f_i\\
=&\pak^i\left(\FP\ek_r\right)\pak^r f_i.
\end{aligned}
\end{equation}

On the other hand, we compute that
\begin{align*}
\dive_{\ak_t}\FP\eta=-\ak^{lr}\p_m\p_t\ek_r\ak^{mi}\p_l(\FP\eta_i)=-\pak^r\left(\FP\eta_i\right)\pak^i\p_t\ek_r.
\end{align*}

Therefore we get
\begin{equation}\label{divkkF}
\begin{aligned}
\diva\FP\eta(T)=&-e'(h)\FP h(T)+\int_0^T\diva\FP\psi-\pak^r\left(\FP\eta_i\right)\pak^i\p_t\ek_r+\pak^i\left(\FP\ek_r\right)\pak^r v_i\dt\\
=&-e'(h)\FP h(T)+\int_0^T\diva\FP\psi-\pak^r\left(\FP\eta_i\right)(\pak^i\psi_r)\dt
\\&+\int_0^T\pak^i\left(\FP\ek_r\right)\pak^r\left(\p_t\eta_i-\p_t\ek_i\right)+\pak^i\left(\FP\ek_r-\FP\eta_r\right)\pak^rv_i\dt
\end{aligned}
\end{equation}

\begin{rmk}
We find that, if $\kk=0$ (and thus $\psi=0,~\ek=\eta$), then the formula \eqref{divkkF} for the smoothed-Eulerian divergence of $\FF$ exactly reproduces the divergence constraint for the original equation.
\end{rmk}

By Jensen's inequality, we know
\begin{equation}\label{divF0}
\begin{aligned}
&\left\|\dive\FP\eta(T)\right\|_3^2\lesssim \delta^2\left\|\FP\eta\right\|_4^2+\left\|\diva\FP\eta(T)\right\|_3^2\\
\lesssim& \delta^2\left\|\FP\eta\right\|_4^2+\left\|e'(h)\FP h(T)\right\|_3^2+\int_0^T\left\|\diva\FP\psi\right\|_3^2+\left\|\dive_{\ak_t}\FP\eta\right\|_3^2+\left\|\left[\diva,\FP\right] v\right\|_3^2\dt\\
\lesssim&\delta^2\left\|\FP\eta\right\|_4^2+\left\|e'(h)\FP h(T)\right\|_3^2+\int_0^T P\left(\|\FP\psi\|_4,\|\ak\|_3,\|\p_t\ak\|_3,\|\FP\eta\|_4,\|\p v\|_3\right)\dt\\
\lesssim&\delta^2\left\|\FP\eta\right\|_4^2+\left\|e'(h)\FP h(T)\right\|_3^2+\int_0^T P(\EE_\kk(t))\dt.
\end{aligned}
\end{equation}

Therefore, the divergence control of $v$ and $\FP\eta$ are reduced to weight norm $$\|\wt\p_t h\|_3\text{ and }\|\wt\FP h\|_3$$ which will be further reduced to tangentially-differentiated wave equation of $h$ by using Christodoulou-Lindblad elliptic estimates Lemma \ref{GLL}. We postpone the proof to Section \ref{divreduce} and \ref{wavediv}.

\subsection{Tangential estimates of full spatial derivatives: Alinhac good unknown}\label{tgspace}

The boundary part in the Hodge-type decomposition is reduced to the interior tangential estimates and divergence control. We already reduce divergence control to the elliptic estimates of $h$. This part we deal with the interior tangential derivatives $\|\TP^4v\|_0$ and $\|\TP^4(\FP\eta)\|_0$.  Due to the special structure of the correction term $\psi$, we replace $\TP^4$ by $\tpl$. Note that we cannot directly commute $\tpl$ in the equation $\p_t v+\sum\limits_{j=1}^3\FP^2\eta=-\pak h$ because one of the leading order terms in the commutator $[\tpl,\pak]h$ is $\tpl\ak\cdot\p h\approx \tpl\p\ek\times\p\ek\cdot\p h$ which cannot be controlled in $L^2$. The reason is that the essential highest order term in the standard derivatives of a covariant derivative is actually the covariant derivative of Alinhac good unknown, instead of simply commuting $\tpl$ with $\pak$.

For a function $f$, define its Alinhac good unknown to be $\mathbf{f}:=\tpl f-\tpl\ek\cdot\pak f$. Then the following property holds
\begin{align*}
\tpl(\pak^{i}f)&=\pak^{\alpha}(\tpl f)+(\tpl\ak^{li})\p_l f+[\tpl,\ak^{li},\p_l f] \\
&=\pak^{i}(\tpl f)-\TP\TL(\ak^{lr}\TP\p_{m}\ek_{r}\ak^{mi})\p_l f+[\tpl,\ak^{li},\p_{l} f] \\
&=\underbrace{\pak^{i}(\tpl f-\tpl \eta_{r}\ak^{lr}\p_l f)}_{=\pak^{i}\mathbf{f}}+\underbrace{\tpl \eta_{r}\pak^{i}(\pak^{r} f)-([\TP\TL,\ak^{lr}\ak^{mi}]\TP\p_{m}\ek_{r})\p_l f+[\tpl,\ak^{li},\p_{l} f] }_{=:C^{i}(f)},
\end{align*} where $[\tpl,g,h]:=\tpl(gh)-\tpl(g)h-g\tpl(h)$.
Direct computation yields that
\begin{align*}
\|\tpl \eta_{r}\pak^{i}(\pak^{r} f)\|_0&\lesssim\|\ek\|_4\|\pak^{i}(\pak^{r} f)\|_{L^{\infty}}\;\\
\|([\TP\TL,\ak^{lr}\ak^{mi}]\TP\p_{m}\ek_{r})\p_l f\|_0&\lesssim\|[\TP\TL,\ak^{lr}\ak^{mi}]\TP\p_{m}\ek_{r}\|_0\|f\|_{W^{1,\infty}}\lesssim P(\|\ek\|_4)\|f\|_3 \\
\|[\tpl,\ak^{li},\p_{l} f]\|_0&\lesssim P(\|\ek\|_4)\|f\|_4.
\end{align*}
Therefore, Alinhac good unknown enjoys the following important properties:
\begin{equation}\label{goodid}
\tpl(\pak^{i}f)=\pak^{i}\mathbf{f}+C^{\alpha}(f)
\end{equation}
with 
\begin{equation}\label{goodco}
\|C^{i}(f)\|\lesssim P(\|\ek\|_4)\|f\|_4.
\end{equation}

We introduce the Alinhac good unknowns of $v$ and $h$ with respect to $\tpl$ by
\[
\VV:=\tpl v-\tpl\ek\cdot\pak v,~~\HH:=\tpl h-\tpl\ek\cdot\pak h.
\]

Taking $\tpl$ in the second equation of \eqref{elastolkk}, we get the evolution of Alinhac good unknowns 
\begin{equation}\label{goodeq}
\p_t\VV+\pak\HH-\sum\limits_{j=1}^{3} \FP\left(\tpl\FP\eta\right)=\underbrace{\p_t(\tpl\ek\cdot\pak v)-C(h)+\sum\limits_{j=1}^{3}\left[\tpl,\FP\right](\FP\eta)}_{K_0},
\end{equation}subjected to
\begin{align}
\label{vvdiv} \pak\cdot\VV=\tpl(\diva v)-C^i(v_i)~~~&\text{in }\Omega \\
\label{hhbdry} \HH=(-\p_3h)\ak^{3k}\tpl\ek_k~~~&\text{on }\Gamma.
\end{align}

Now we take $L^2$-inner product of \eqref{goodeq} and $\VV$ to get
\begin{equation}\label{good0}
\frac12\frac{d}{dt}\io\left|\VV\right|^2\dy=-\io\pak\HH\cdot\VV\dy+\sum_{j=1}^3\io\left(\FP\left(\tpl\FP\eta\right)\right)\VV\dy+\io K_0\cdot\VV\dy
\end{equation}

For the first term on the RHS of \eqref{good0}, we integrate by parts and invoke \eqref{vvdiv}-\eqref{hhbdry} to get
\begin{equation}\label{good1}
\begin{aligned}
&-\io \pak \HH\cdot \VV=-\io \ak^{li}\p_l \HH\cdot \VV_{i}\dy \\
=&-\ig \HH(\ak^{li}N_{l}\VV_{i})dS+\io \HH(\pak\cdot\VV)\dy+\io(\p_\mu \ak^{li})\HH\VV_{i}\dy\\
=&\ig\p_3h\tpl\ek_{k}\ak^{3k}\ak^{li}N_{l}\VV_{i} dS+\io \HH\tpl (\diva v)\dy-\io \HH C^{i}(v_{i})+\io(\p_l \ak^{li})\HH\VV_{\alpha}\dy\\
=&-\ig\left(-\frac{\p h}{\p N}\right)\tpl\ek_{k}\ak^{3k}\ak^{3i}\VV_{i} dS+\io \HH\tpl (\diva v)\dy-\io \HH C^{i}(v_{i})+\io(\p_l \ak^{li})\HH\VV_{i}\dy\\
=&:B+K_1+K_2+K_3.
\end{aligned}
\end{equation}

We postpone the estimates of $B$ to the end of this section because it is the most difficult part and contributes to the boundary energy in $\EE_\kk(T)$. The term $K_2,K_3$ can be controlled directly
\begin{align}
\label{K2} K_2=-\io \HH C^{i}(v_{i})\lesssim&\|\HH\|_0\|C(v)\|_0\lesssim P\left(\|h\|_4,\|\eta\|_4,\|v\|_4,\|\ak\|_3\right).\\
\label{K3} K_3=\io(\p_l \ak^{li})\HH\VV_{i}\dy\lesssim& \|\p\ak\|_{L^{\infty}}\|\HH\|_0\|\VV\|_0\lesssim P\left(\|h\|_4,\|\eta\|_4,\|v\|_4,\|\ak\|_3\right).
\end{align}

In $K_1$, we invoke $\diva v=-e'(h)\p_t h$ to get
\begin{equation}\label{K10}
\begin{aligned}
K_1=&\io \HH\tpl (\diva v)\dy=\io\left(\tpl h-\tpl\ek\cdot\pak h\right)\tpl(e'(h)\p_t h)\dy\\
=&-\frac12\frac{d}{dt}\io e'(h)\left|\tpl h\right|^2\dy+\frac12\io e''(h)\p_t h\left|\tpl h\right|^2\dy\\
&+\io\HH\left(\left[\tpl,e'(h)\right]\p_t h\right)\dy\underbrace{-\io e'(h)\tpl\p_t h\tpl\ek\cdot\pak h\dy}_{K_{11}}\\
\lesssim&-\frac12\frac{d}{dt}\io e'(h)\left|\tpl h\right|^2\dy+K_{11}+P\left(\|\eta\|_4,\|v\|_4,\|h\|_4,\|\wt \p_th\|_3\right)
\end{aligned}
\end{equation}

 The term $K_{11}$ should be controlled by integrating $\p_t$ by parts under the time integral.
\begin{equation}\label{K11}
\begin{aligned}
\int_0^T K_{11}(t)\dt=-&\int_0^T\io e'(h)\tpl\p_t h(\tpl\ek\cdot\pak h)\dy\dt\\
\overset{\p_t}{=}&\io e'(h)\tpl h(\tpl\ek\cdot\pak h)\dy\bigg|^{t=T}_{t=0}+\int_0^T\io e'(h)\tpl h\p_t\left(\tpl\ek\cdot\pak h\right)\dy \\
\lesssim& \left\|e'(h)\tpl h\right\|_0\|\TP^4\eta\|_0\|\pak h\|_{L^{\infty}}\bigg|_{t=T}+P\left(\|\h_0\|_4,\|v_0\|_4\right)+\int_0^T P\left(\|h\|_4,\|\eta\|_4,\|v\|_4\right)\dt.
\end{aligned}
\end{equation}
Invoking that $\p^2\eta(0)=0$ (recall $\eta(0,y)=y$ is identity map initially), we have
\begin{align*}
\|e'(h)\tpl h\|_0\|\TP^4\eta\|_0\|\pak h\|_{L^{\infty}}\bigg|_{t=T}\lesssim P(\EE_\kk(T))\int_0^T\left\|\TP^4 \p_t\eta(t)\right\|_0^2\dt \\
\end{align*}
\begin{rmk}
One can also use Young's inequality to control $\int_0^T K_{11}$ by $$\delta \|\swt\tpl h(T)\|_0^2+\left(\PP_0+\int_0^TP(\|v\|_4,\|\eta\|_{4},\|h\|_3,\|\swt\p_t h\|_3)\dt\right).$$
\end{rmk}
Therefore we get the estimate of $K_{1}$:
\begin{equation}\label{K1}
\begin{aligned}
\int_0^T K_{1}(t)\dt\lesssim -\left\|\swt\tpl h(T)\right\|_0^2 +\PP_0+P(\EE_{\kk}(T))\int_0^TP(\EE_\kk(t))\dt.
\end{aligned}
\end{equation}

Next we estimate the tangential derivative of $\FP\eta$ produced in the second term on RHS of \eqref{good0}. Integrating $\FP$ by parts yields that
\[
\sum_{j=1}^3\io\left(\FP\left(\tpl\FP\eta\right)\right)\VV\dy=-\sum_{j=1}^{3}\io\left(\tpl\FP\eta\right)\FP\VV\dy\underbrace{-\io (\dive\F_j)\left(\tpl\FP\eta\right)\VV}_{K_4}
\]

$K_4$ has direct control
\begin{equation}\label{K4}
K_4\lesssim\sum_{j=1}^3\|\p\F\|_{L^{\infty}}\left\|\FP\eta\right\|_0\left\|\VV\right\|_0\lesssim\sum_{j=1}^3 P\left(\|\FP\eta\|_4,\|v\|_4,\|\ak\|_3,\|\p\F\|_{L^{\infty}}\right).
\end{equation}

In the remaining term, we invoke $\VV=\tpl v-\tpl\ek\cdot\pak v$ and $v=\p_t\eta-\psi$ to get 
\begin{equation}
\begin{aligned}
&-\sum_{j=1}^{3}\io\left(\tpl\FP\eta\right)\FP\VV\dy\\
=&-\sum_{j=1}^{3}\io\left(\tpl\FP\eta^i\right)\FP\left(\tpl v_i-\tpl\ek\cdot\pak v_i\right)\dy \\
=&-\sum_{j=1}^{3}\io\left(\tpl\FP\eta^i\right)\left(\tpl\FP\p_t\eta_i\right)\dy+\sum_{j=1}^{3}\io\left(\tpl\FP\eta\right)\left(\tpl\FP\psi\right)\dy\\
&+\sum_{j=1}^{3}\io\left(\tpl\FP\eta^i\right)\left(\left[\tpl,\FP\right]v_i+\FP\left(\tpl\ek\cdot\pak v_i\right)\right)\dy\\
=:&-\sum_{j=1}^3\frac12\frac{d}{dt}\io\left|\tpl\FP\eta\right|^2\dy+K_5,
\end{aligned}
\end{equation}where the commutators $K_5$ can be directly controlled with the help of Lemma \ref{etapsi}
\begin{equation}\label{K5}
K_5\lesssim \sum_{j=1}^3\left\|\FP\eta\right\|_4 P\left(\|\FP\psi\|_4,\|\F\|_4,\|v\|_4,\|\ek\|_4,\|\FP\ek\|_4\right)\lesssim P\left(\EE_\kk(T)\right).
\end{equation}

Summing up \eqref{K4}-\eqref{K5}, we get
\begin{equation}\label{tgF}
\sum_{j=1}^3\io\left(\FP\left(\tpl\FP\eta\right)\right)\VV\dy\lesssim-\sum_{j=1}^3\frac12\frac{d}{dt}\io\left|\tpl\FP\eta\right|^2\dy+\int_0^T P(\EE_{\kk}(t))\dt.
\end{equation}

Also, the last term in \eqref{good0} can also be directly controlled
\begin{equation}\label{tgK0}
\begin{aligned}
\io K_0\cdot\VV\lesssim& \left(\|\p_t(\tpl\ek\cdot\pak v)\|_0+\|C(h)\|_0+\|[\tpl,\FP]\FP\eta\|_0\right)\left\|\VV\right\|_0 \\
\lesssim& P\left(\|\eta\|_4,\|v\|_4,\|\p_t v\|_3,\|h\|_4,\|\F\|_4,\sum_{j=1}^3\|\FP\eta\|_4\right)\lesssim P(\EE_\kk(T)).
\end{aligned}
\end{equation}

It remains to deal with the boundary term $B$ produced in \eqref{good1}. In the proof we will see that the boundary integral together with the Raylrigh-Taylor sign condition contributes to the boundary energy, which enters to the highest order and exactly gives the control of the second fundamental form of the free surface. The correction term, first introduced by Gu-Wang \cite{gu2016construction}, plays the key role in eliminating the higher order terms brought by the tangential smoothing. First, we have

\begin{equation}\label{B0}
\begin{aligned}
B=&-\ig\left(-\frac{\p h}{\p N}\right)\tpl\ek_{k}\ak^{3k}\ak^{3i}\VV_{i}~dS\\
=&\ig\left(\frac{\p h}{\p N}\right)\tpl\ek_{k}\ak^{3k}\ak^{3i}\tpl\p_t\eta_i~dS\\
&-\ig\left(\frac{\p h}{\p N}\right)\tpl\ek_{k}\ak^{3k}\ak^{3i}\tpl\psi_i\dS-\ig\left(\frac{\p h}{\p N}\right)\tpl\ek_{k}\ak^{3k}\ak^{3i}\tpl\ek\cdot\pak v_i\dS\\
=:&B_1+B_2+B_3.
\end{aligned}
\end{equation}

The term $B_1$ is expected to give the boundary part of the energy functional $\EE_\kk(T)$ after moving one mollifier from $\ek_k$ to $\eta_i$.

\begin{equation}\label{B10}
\begin{aligned}
&\ig\left(\frac{\p h}{\p N}\right)\tpl\ek_{k}\ak^{3k}\ak^{3i}\tpl\p_t\eta_i~dS\\
=&\ig\left(\frac{\p h}{\p N}\right)\tpl\lkk\eta_{k}\ak^{3k}\ak^{3i}\tpl\p_t\lkk\eta_i~dS\underbrace{+\ig\tpl\lkk\eta_k\left(\left[\lkk,\frac{\p h}{\p N}\ak^{3k}\ak^{3i}\right]\tpl\p_t\eta_i\right)\dS}_{B_{11}}\\
=&\frac12\frac{d}{dt}\ig\frac{\p h}{\p N}\left|\ak^{3i}\tpl\lkk\eta_i\right|^2\dS+B_{11}\\
&\underbrace{-\frac12\ig\p_t\left(\frac{\p h}{\p N}\right)\left|\ak^{3i}\tpl\lkk\eta_i\right|^2\dS}_{B_{12}}\underbrace{-\ig\left(\frac{\p h}{\p N}\right)\ak^{3k}\tpl\lkk\eta_k\p_t\ak^{3i}\tpl\lkk\eta_i\dS}_{B_{13}}.
\end{aligned}
\end{equation}

The time integral of the main term gives the boundary part of $\EE_\kk(T)$ with the help of Taylor sign condition \eqref{sign}, \eqref{taylor1}.
\begin{equation}\label{B100}
\int_0^T\left(\frac12\frac{d}{dt}\ig\frac{\p h}{\p N}\left|\ak^{3i}\tpl\lkk\eta_i\right|^2\dS\right)\dt\leq -\frac{c_0}{4}\left|\ak^{3i}\TP^4\lkk\p_t\eta_i\right|_0^2\bigg|^T_0.
\end{equation}

$B_{11}$ and $B_{12}$ can be controlled with the help of mollifier property (Lemma \ref{tgsmooth}).
\begin{equation}\label{B11}
\begin{aligned}
B_{11}=&\ig\TP^{1.5}\TL\lkk\eta_{k}\TP^{0.5}\left(\left[\lkk,\frac{\p h}{\p N}\ak^{3k}\ak^{3i}\right]\tpl\p_t\eta_i\right)\dS\\
\lesssim&\|\eta_k\|_4\left|\p_3 h\ak^{3i}\ak^{3k}\right|_{W^{1,\infty}}\|\p_t\eta_i\|_4\lesssim P\left(\|\eta\|_4,\|v\|_3,\|h\|_4\right),
\end{aligned}
\end{equation}
\begin{equation}\label{B12}
B_{12}\lesssim\left|\p_t\p_3 h\right|_{L^{\infty}}\left|\ak^{3i}\tpl\lkk\eta_i\right|^2\lesssim\|\p_t h\|_3\left|\ak^{3i}\tpl\lkk\eta_i\right|^2.
\end{equation}

For $B_{13}$, we invoke $\p_t\ak^{3i}=-\ak^{3r}\p_m\p_t\ek_r\ak^{mi}$ to get
\begin{equation}\label{B130}
\begin{aligned}
B_{13}=&\ig\left(\frac{\p h}{\p N}\right)\ak^{3k}\tpl\lkk\eta_k\ak^{3r}\p_m\p_t\ek_r\ak^{mi}\tpl\lkk\eta_i\dS\\
=&\ig\left(\frac{\p h}{\p N}\right)\ak^{3k}\tpl\lkk\eta_k\ak^{3r}\p_3\p_t\ek_r\ak^{3i}\tpl\lkk\eta_i\dS \\
&+\sum_{L=1}^2\ig\left(\frac{\p h}{\p N}\right)\ak^{3k}\tpl\lkk\eta_k\ak^{3r}\TP_L\lkk^2 v_r\ak^{Li}\tpl\lkk\eta_i\dS\\
&+\sum_{L=1}^2\ig\left(\frac{\p h}{\p N}\right)\ak^{3k}\tpl\lkk\eta_k\ak^{3r}\TP_L\lkk^2\psi_r\ak^{Li}\tpl\lkk\eta_i\dS\\
=:&B_{131}+B_{132}+B_{133}.
\end{aligned}
\end{equation} Among these three terms, $B_{132}$ cannot be directly controlled. Instead, it will cancel with the main term in $B_{3}$ and the correction term $B_2$. First we give the control of $B_{131}$
\begin{equation}\label{B131}
B_{131}\lesssim\left|\ak^{3i}\TP^4\lkk\eta_i\right|_0^2\left|\p_3 h\ak^{3r}\p_3\p_t\ek_{r}\right|_{L^{\infty}}\lesssim P(\EE_\kk(T)).
\end{equation} In $B_{133}$, we need to use the mollifier property Lemma \ref{tgsmooth} to control $\tpl\lkk\eta$ by sacrificing a $1/\sqrt{\kk}$. Luckily, the $\psi$ term compensates this factor such that the energy bound is still uniform in $\kk$.
\begin{equation}
B_{133}\lesssim\sum_{L=1}^2\frac{1}{\sqrt{\kk}}\left|\p_3 h\ak^{3r}\ak^{Li}\right|_{L^{\infty}}\left|\ak^{3i}\tpl\lkk\eta_i\right|_0\left|\TP\lkk\psi\right|_{L^{\infty}}\left|\eta\right|_{7/2}.
\end{equation} Then we use Sobolev embedding $W^{1,4}(\T^2)\hookrightarrow L^{\infty}(\T^2)$ to get
\[
\left|\TP\psi\right|_{L^{\infty}}\lesssim\left|\TL\psi\right|_{L^4}=\sum_{L=1}^2\left|\TL\left(\eta_{k}-\ek_k\right)\ak^{Lk}\TP_L\lkk v-\TL\ek_k\ak^{Lk}\TP_L\left(v-\lkk^2 v\right)\right|_{L^4}.
\]By the mollifier property \eqref{lkk3}, we know
\[
\left|\TP\psi\right|_{L^{\infty}}\lesssim\sqrt{\kk} P\left(\|\eta\|_4,\|v\|_3\right),
\] and thus
\begin{equation}\label{B133}
B_{133}\lesssim P(\EE_\kk(T)).
\end{equation}

Now we start to analyze $B_3:=-\ig\left(\frac{\p h}{\p N}\right)(\tpl\ek_{k})\ak^{3k}\ak^{3i}(\tpl\ek\cdot\pak v_i)\dS$. Again we separate normal derivative from tangential derivative.
\begin{equation}\label{B30}
\begin{aligned}
B_3=&-\ig\left(\frac{\p h}{\p N}\right)\tpl\ek_{k}\ak^{3k}\ak^{3i}\tpl\ek_r\ak^{3r}\p_3v_i\dS\\
&\underbrace{-\sum_{L=1}^2\ig\left(\frac{\p h}{\p N}\right)\tpl\ek_{k}\ak^{3k}\ak^{3i}\tpl\ek_r\ak^{Lr}\p_Lv_i\dS}_{B_{31}}\\
\lesssim&\left|\ak^{3i}\tpl\ek_i\right|_0^2P\left(\|v\|_3,\|h\|_3,\|\eta\|_4\right)+B_{31}.
\end{aligned}
\end{equation}Here we note that $\ak^{3i}\tpl\ek_i$ can be controlled by commuting one mollifier with the help of Lemma \ref{tgsmooth}
\[
\left|\ak^{3i}\tpl\ek_i\right|_0\leq\left|\lkk\left(\ak^{3i}\tpl\lkk\eta_i\right)\right|_0+\left|\left[\lkk,\ak^{3k}\right]\tpl\lkk\eta_k\right|_0\lesssim P(\|\eta\|_4).
\]

There are still 3 terms remaining to be controlled
\begin{align}
\label{B20} B_2=&-\ig\left(\frac{\p h}{\p N}\right)\tpl\ek_{k}\ak^{3k}\ak^{3i}\tpl\psi_i\dS\\
\label{B1320} B_{132}=&\sum_{L=1}^2\ig\left(\frac{\p h}{\p N}\right)\ak^{3k}\tpl\lkk\eta_k\ak^{3r}\TP_L\lkk^2 v_r\ak^{Li}\tpl\lkk\eta_i\dS\\
\label{B310} B_{31}=&-\sum_{L=1}^2\ig\left(\frac{\p h}{\p N}\right)\tpl\ek_{k}\ak^{3k}\ak^{3i}\tpl\ek_r\ak^{Lr}\TP_Lv_i\dS
\end{align}\

We plug the expression of $\TL\psi$ in $B_2$ to get
\begin{align}
\label{B201} B_2=&-\sum_{L=1}^2\ig \left(\frac{\p h}{\p N}\right)\ak^{3i}\ak^{3k}\tpl \ek_{k}\TP^2(\TL\eta_{r}\ak^{Lr}\TP_L\lkk^2 v_{i})\dS\\
\label{B202} &+\sum_{L=1}^2\ig \left(\frac{\p h}{\p N}\right)\ak^{3i}\ak^{3k}\tpl \ek_{k}\TP^2\ek_{r}\ak^{Lr} \TP_Lv_{i}\dS\\
\label{B203} &+\sum_{L=1}^2\ig \left(\frac{\p h}{\p N}\right)\ak^{3i}\ak^{3k}\tpl \ek_{k}([\TP^2,\ak^{Lr}\TP_L v_{i}]\TL\ek_{r})\dS\\
\label{B204} &+\sum_{L=1}^2\ig \left(\frac{\p h}{\p N}\right)\ak^{3i}\ak^{3k}\tpl \ek_{k}\TP^2\mathbb{P}_{=0}\left(\TL\eta_{k}\ak^{Lk}\TP_L\lkk^2 v-\TL\lkk^2\eta_{k}\ak^{Lk}\TP_L v\right).
\end{align} 

First we notice that $\eqref{B202}+B_{31}=0$, and \eqref{B203}-\eqref{B204} can be controlled by direct computation. Notice that the zero frequency part is always of lower order by Bernstein's inequality.
\begin{align}
\eqref{B203}\lesssim& P\left(\|h\|_3,\|\eta\|_4,\|v\|_4,\left|\ak^{3i}\tpl\lkk\eta_i\right|_0\right)\lesssim P(\EE_\kk(T)),\\
\eqref{B204}\lesssim& \sum_{L=1}^2\left|\p_3 h\ak^{3i}\right|_{L^{\infty}}\left|\ak^{3i}\tpl\lkk\eta_i\right|_0\left|\TL\eta_{k}\ak^{Lk}\TP_L\lkk^2 v-\TL\lkk^2\eta_{k}\ak^{Lk}\TP_L v\right|_0\lesssim P(\EE_\kk(T)).
\end{align}

In order for cancelling $B_{132}$, we just need to commute one mollifier in $B_{201}$ from $\eta_k$ to $\eta_i$. We find that \eqref{B2011} exactly canceles with $B_{132}$. All the commutators \eqref{B2012}-\eqref{B2013} can be controlled directly so we omit the details.
\begin{align}
\label{B2011} \eqref{B201}&=-\sum_{L=1}^2\ig \left(\frac{\p h}{\p N}\right) \ak^{3k}\tpl\lkk\eta_{k}\left(\ak^{3i}\TP_L\lkk^2v_{i}\right)\left(\ak^{Lr}\tpl\lkk\eta_{r}\right)\dS \\
\label{B2012} &~~~~-\sum_{L=1}^2 \ig \left(\frac{\p h}{\p N}\right) \ak^{3k}\tpl\lkk\eta_{k}\left(\left[\lkk,\ak^{3i}\ak^{3k}\ak^{Lr}\TP_i\lkk^2 v_{i}\right]\tpl\eta_{r}\right)\dS \\
\label{B2013} &~~~~-\sum_{L=1}^2 \ig \left(\frac{\p h}{\p N}\right) \ak^{3i}\ak^{3k}\tpl \ek_{k} \left(\left[\TP^2,\ak^{Lr}\TP_i\lkk^2v_{i}\right]\TL\eta_{r}\right)\dS.
\end{align}

Summarising \eqref{B0}-\eqref{B2013}, we get the control of the boundary integral $B$ by
\begin{equation}\label{B}
\int_0^T B(t)\dt\lesssim-\frac{c_0}{4}\left|\ak^{3i}\tpl\lkk\eta_i\right|_0^2+\int_0^T P(\EE_\kk(t))\dt.
\end{equation}

Now, summing up \eqref{K2}, \eqref{K3}, \eqref{K1}, \eqref{tgF}, \eqref{tgK0}, \eqref{B0} and \eqref{B}, we get the estimates of Alinhac good unknowns
\begin{equation}
\left\|\VV\right\|_0^2+\sum_{j=1}^3\left\|\tpl\FP\eta\right\|_0^2+\left\|\swt\tpl h\right\|_0^2+\frac{c_0}{4}\left|\ak^{3i}\tpl\lkk\eta_i\right|_0^2\lesssim\PP_0+P(\EE_\kk(T))\int_0^T P(\EE_\kk(t))\dt.
\end{equation}

Finally by the definition of Alinhac good unknown, we know
\[
\|\TP^4 v\|_0\lesssim\|\VV\|_0+\|\tpl\eta\|_0\|\ak\p v\|_{L^{\infty}}\lesssim\|\VV\|_0+P(\EE_\kk(T))\int_0^T P(\EE_\kk(t))\dt,
\]and thus we finalize the tangential estimates
\begin{equation}\label{tgenergy}
\left\|\TP^4 v\right\|_0^2+\sum_{j=1}^3\left\|\TP^4\FP\eta\right\|_0^2+\left\|\swt\TP^4 h\right\|_0^2+\frac{c_0}{4}\left|\ak^{3i}\TP^4\lkk\eta_i\right|_0^2\lesssim\PP_0+P(\EE_\kk(T))\int_0^T P(\EE_\kk(t))\dt.
\end{equation}

\subsection{Elliptic estimates and reduction to tangentially-differentiated wave equations}\label{divreduce}

The divergence of $v$ and $\FP\eta$ are reduced to $\|\wt \p_t h\|_3$ and $\sum\limits_{j=1}^3\|\wt\FP h\|_3$ in div-curl estimates. In this section we invoke Christodoulou-Lindblad type elliptic estimates (Lemma \ref{GLL}) to further reduce the estimates of $h$ to tangential derivatives.

Let us derive the wave equation of $h$ first. Taking smoothed-divergence $\diva$ in the second equation of \eqref{elastolkk}, we get
\begin{equation}\label{hwave00}
\wt\p_t^2h-\lapak h=-\p_t\ak^{li}\p_l v_i-e''(h)(\p_th)^2-\sum_{j=1}^3\left(\FP\left(\diva\FP\eta\right)-\left[\diva,\FP\right]\FP\eta\right).
\end{equation}

Invoking the formula \eqref{divkkF} of $\diva \FP\eta$, we know the wave equation of $h$ becomes
\begin{equation}\label{hwave0}
\begin{aligned}
\wt\p_t^2h-\lapak h=&-\p_t\ak^{li}\p_l v_i-\pak^i\left(\FP\ek_l\right)\pak^l\left(\FP\ek_i\right)+\sum_{j=1}^3e'(h)\FP^2 h\\
&-e''(h)(\p_th)^2+\sum_{j=1}^3e''(h)\left(\FP h\right)^2\\
&-\sum_{j=1}^3\FP\int_0^T\diva\FP\psi-\pak^r\left(\FP\eta_i\right)\pak^i\p_t\ek_r+\pak^i\left(\FP\ek_r\right)\pak^r v_i\dt.
\end{aligned}
\end{equation}

\begin{rmk}
Among all terms in the source part of \eqref{hwave0}, the first two lines show the terms that also appears in the original system \eqref{elastol}. In Eulerian coordinates, they can be written as $$(\nabla v)\cdot(\nabla v)-(\nabla \FF)\cdot(\nabla \FF)+\sum_{j=1}^3 e'(\h)(\FF^j\cdot\nabla)^2\h-e''(\h)(D_t\h)^2+e''(\h)\sum_{j=1}^3\left((\FF^j\cdot\nabla) \h\right)^2.$$
The last two terms are of higher order weight function and thus much are smaller. Therefore the main terms are the first three terms. 
\end{rmk}

Now we start to control $h$. From the a priori assumption \eqref{small1} and Poincar\'e inequality, we have 
\[
\|h\|_4\lesssim\|\p^4 h\|_0\leq \|\pak h\|_3+\|a-I_3\|_{3}\|h\|_4\lesssim \|\pak h\|_3+\delta\|h\|_4,
\] and the $\delta$-term can be absorbed by LHS if we choose $\delta>0$ to be sufficiently small. From now on, we will use the notation $\|f\|_r\approx \|\pak f\|_{r-1}$ to record similar inequality for $1\leq r\leq 4$. 

Invoking Lemma \ref{GLL}, we have
\begin{equation}\label{h40}
\|h\|_4\approx\|\pak h\|_3\lesssim P(\|\ek\|_3)\left(\|\lapak h\|_2+\|\TP\ek\|_3\|h\|_3\right).
\end{equation}

Then inserting the wave equation, we know
\begin{equation}
\begin{aligned}
\|\lapak h\|_2\lesssim& \left\|\wt\p_t^2 h\right\|_2+\sum_{j=1}^3\left\|\wt\FP^2 h\right\|_2\\
&+\left\|\p_t\ak\cdot\p v\right\|_2+\sum_{j=1}^3\left\|\pak(\FP\eta)\right\|_2^2+\left\|e''(h)(\p_th)^2\right\|_2+\sum_{j=1}^3\left\|e''(h)\left(\FP h\right)^2\right\|_2\\
&+\sum_{j=1}^3\int_0^T\left\|\FP\left(\diva\FP\psi-\pak^r\left(\FP\eta_i\right)\pak^i\p_t\ek_r+\pak^i\left(\FP\ek_r\right)\pak^r v_i\right)\right\|_2\dt\\
\lesssim&\left\|\wt\p_t^2 h\right\|_2+\sum_{j=1}^3\left\|\wt\FP^2 h\right\|_2\\
&+\|\p v\|_2^2\|\p\ek\|_{L^{\infty}}+\sum_{j=1}^3\|\ak\|_2^2\left\|\FP\eta\right\|_3^2+\left\|\left(e'(h)\p_t h\right)\right\|_2^2+\sum_{j=1}^3\left\|\left(e'(h)\FP h\right)\right\|_2^2\\
&+\int_0^T P\left(\|\ak\|_3,\sum\limits_{j=1}^3\|\FP\psi\|_4+\|\FP\ek\|_4+\|\FP\eta\|_4,\|\p_t\ek\|_4\right)\dt\\
\lesssim&\left\|\wt\p_t^2 h\right\|_2+\sum_{j=1}^3\left\|\wt\FP^2 h\right\|_2+\PP_0+\int_0^TP(\EE_\kk(t))\dt.
\end{aligned}
\end{equation}Therefore, we reduce the estimates of $\|h\|_4$ to $\|\wt\p_t^2 h\|_2$ and $\sum\limits_{j=1}^3\|\wt\FP^2 h\|_2$. 
\begin{rmk}
~

\begin{enumerate}
\item In \eqref{h40}, the essential terms are $\|\wt\p_t^2 h\|_2$ and $\sum\limits_{j=1}^3\|\wt\FP^2 h\|_2$ obtained from $\lapak h$. The term $P(\|\ek\|_3)\|\TP\ek\|_3\|h\|_3$ is actually a lower order term. One can repeatedly apply Lemma \ref{GLL} to $h$ in order to reduce to the estimates of $\|h\|_1$ which can be directly controlled by the $L^2$-estimate of the wave equation \eqref{hwave0}. Due to this reason, we will omit the processes of controlling these lower order terms appearing in the elliptic estimates throughout this manuscript.

\item Note that $h|_{\Gamma}=0$ and $\p_t,\FP$ are tangential derivatives on the boundary, so $\p_t^2 h$ and $\FP^2 h$ are still vanishing on the boundary and thus we can use Lemma \ref{GLL} to further reduce them to full tangentially-differentiated quantites and use the tangentially-differentiated wave equation to finalize the control. We postpone the proof after the elliptic estimates of $\p_t h$.
\end{enumerate}
\end{rmk}

Next we control $\|\p_t h\|_3$ in the same way by inserting the $\p_t$-differentiated wave equation \eqref{hwave0}
\begin{equation}
\|\p_t h\|_3\approx\|\pak \p_t h\|_2\lesssim P(\|\ek\|_2)\left(\|\lapak \p_t h\|_1+\|\TP\ek\|_2\|\p_t h\|_2\right),
\end{equation}and then
\begin{equation}
\begin{aligned}
\|\lapak \p_t h\|_1\lesssim&\|\p_t\lapak h\|_1+\|[\lapak,\p_t] h\|_1\\
\lesssim&\left\|\wt\p_t^3 h\right\|_1+\sum_{j=1}^3\left\|\wt \FP^2\p_t h\right\|_1+\left\|e''(h)\p_t h\p_t^2 h\right\|_1+\sum_{j=1}^3\left\|e''(h)\p_t h\FP^2 h\right\|_1\\
&+\|\p_t(\p_t\ak \p v)\|_1+\sum_{j=1}^3\left\|\p_t\left(\pak(\FP\eta)\right)\right\|_1^2+\|[\lapak,\p_t] h\|_1\\
&+\|\p_t(e''(h)(\p_t h)^2)\|_1+\sum_{j=1}^3\left\|\p_t\left(e''(h)(\FP h)^2\right)\right\|_1\\
&+\sum_{j=1}^3\left\|\FP\left(\diva\FP\psi-\pak^r\left(\FP\eta_i\right)\pak^i\p_t\ek_r+\pak^i\left(\FP\ek_r\right)\pak^r v_i\right)\right\|_1.
\end{aligned}
\end{equation}
Notice that $|e^{(k)}(h)|\lesssim|e'(h)|^k$, so the power of weight function $\wt$ is always enough. For example,
\[
\left|\p_t(e''(h)(\p_t h)^2)\right|\leq\left|2(e''(h)\p_t^2 h)\p_t h\right|+\left|e'''(h)(\p_t h)^3\right|\lesssim\left|((\wt)^2\p_t^2 h)\p_t h\right|+\left|(\wt\p_t h)^3\right|.
\] Therefore, we know
\begin{equation}\label{ht30}
\|\lapak \p_t h\|_1\lesssim\left\|\wt\p_t^3 h\right\|_1+\sum_{j=1}^3\left\|\wt \FP^2\p_t h\right\|_1+\PP_0+\int_0^T P(\EE_\kk(t))\dt.
\end{equation}
As one can see, the remaining terms in \eqref{ht30} are exactly from the energy functionalsof $\p_t^3$-differentiated and $\sum\limits_{j=1}^3\FP^2\p_t$-differentiated wave equation \eqref{hwave0}. 

Similarly, $\|\diva \FP\eta\|_3\approx\|\wt\FP h\|_3$ can be controlled by Lemma \ref{GLL} and inserting $\FP$-differentiated wave equation. We omit the details and get the following conclusion
\begin{equation}\label{hF30}
\|\dive \FP\eta\|_3\lesssim\left\|(\wt)^2\p_t^2\FP h\right\|_1+\sum_{k=1}^3\left\|(\wt)^2 (\F_k\cdot\p)^2\FP h\right\|_1+\PP_0+\int_0^T P(\EE_\kk(t))\dt,
\end{equation}where the remaining terms will be controlled by analyzing $\p_t^2\FP$-differentiated and $\sum\limits_{j,k=1}^3(\F_k\cdot\p)^2\FP$-differentiated wave equation \eqref{hwave0}.

Now we come to further reduce the remaining terms in \eqref{h40} to full tangential derivatives.

We have
\begin{equation}\label{htt20}
\left\|\swt \p_t^2 h\right\|_2\approx\left\|\pak\left(\swt\p_t^2 h\right)\right\|_1\leq \left\|\swt\pak\p_t^2 h\right\|_1+\left\|(\swt)^{-1/2}e''(h)\pak h\p_t^2 h\right\|_1,
\end{equation} and then
\begin{equation}
\begin{aligned}
\left\|\swt\pak\p_t^2 h\right\|_1\lesssim& P(\|\ek\|_2)\left\|\swt\lapak \p_t^2 h\right\|_0+\PP_0+\int_0^TP(\EE_{\kk}(t))\dt \\
\lesssim& P(\|\ek\|_2)\left(\left\|(\wt)^{\frac32}\p_t^4 h\right\|_0+\sum_{j=1}^3\left\|(\wt)^{\frac32}\FP^2 \p_t^2h\right\|_0\right)+\PP_0+\int_0^TP(\EE_{\kk}(t))\dt.
\end{aligned}
\end{equation}

Similarly we have
\begin{equation}\label{hFF20}
\begin{aligned}
\|\wt\FP^2 h\|_2\approx& \|\wt\pak\FP^2 h\|_1+\|e''(h)\pak h\FP^2 h\|_1\\
\lesssim& P(\|\ek\|_2)\|\wt\lapak \FP^2 h\|_0+\PP_0+\int_0^TP(\EE_{\kk}(t))\dt \\
\lesssim& P(\|\ek\|_2)\left(\|(\wt)^2\p_t^2\FP^2 h\|_0+\sum_{k=1}^3\left\|(\wt)^2(\F_k\cdot\p)^2\FP^2h\right\|_0\right)\\
&+\PP_0+\int_0^TP(\EE_{\kk}(t))\dt.
\end{aligned}
\end{equation}

Summarizing \eqref{ht30}, \eqref{hF30}, \eqref{htt20} and \eqref{hFF20}, we still need to control the following quantities.
\begin{equation}\label{remainh}
\begin{aligned}
&\left\|\wt\p_t^3 h\right\|_1+\|(\wt)^{\frac32}\p_t^4 h\|_0\\
&+\sum_{j=1}^3\left\|(\wt)^2\p_t^2\FP h\right\|_1+\sum_{j=1}^3\|(\wt)^{\frac32}\p_t^2\FP^2 h\|_0\\
&+\sum_{j=1}^3\left\|(\wt)^2\FP^2 \p_t^2h\right\|_0+\sum_{j=1}^3\left\|\wt \FP^2\p_t h\right\|_1\\
&+\sum_{j,k=1}^3\left\|(\wt)^2(\F_k\cdot\p)^2\FP^2h\right\|_0+\sum_{k=1}^3\left\|(\wt)^2 (\F_k\cdot\p)^2\FP h\right\|_1.
\end{aligned}
\end{equation}

\begin{rmk}
The quantites in \eqref{remainh} exactly correspond to the weighted energy functionals of $\p_t^3$-differentiated, $\sum\limits_{j=1}^3\p_t^2\FP$-differentiated, $\sum\limits_{j=1}^3\p_t\FP^2$-differentiated and $\sum\limits_{j,k=1}^3(\F_k\cdot\p)^2\FP$-differentiated wave equation of $h$ \eqref{hwave0}. Keep in mind that these derivatives are all tangential, and thus there is no boundary integral appearing in the energy estimates.
\end{rmk}

\paragraph*{Reduction of the remaining terms in the energy functional $\EE_\kk(T)$}

Comparing with the energy functional $\EE_\kk(T)$, we still need to control
\begin{equation}\label{remainvFvF}
\begin{aligned}
&\|\p_t v\|_3,\|\FP\p_t\eta\|_3,\|\p_t^2 v\|_2,\|\FP\p_t^2\eta\|_2,\\
&\|\wt\p_t^3 v\|_1,\|\wt\FP\p_t^3\eta\|_1,\|\wt\p_t^4 v\|_0,\|\wt\FP\p_t^4\eta\|_0.
\end{aligned}
\end{equation}

Let us start with full time derivatives. Recall $\p_t v=-\pak h+\sum\limits_{j=1}^3\FP^2\eta$ gives 
\[
\p_t^4 v=-\pak\p_t^3 h+[\pak,\p_t^3]h+\sum\limits_{j=1}^3\FP^2\p_t^3\eta, 
\]and thus
\begin{equation}\label{remainvtttt}
\|\wt\p_t^4 v\|_0\lesssim\|\wt \p_t^3 h\|_1+\sum_{j=1}^3\|\F_j\|_{L^{\infty}}\left\|\wt\FP\p_t^3\eta\right\|_1+\text{lower order terms}.
\end{equation} Then
\begin{equation}\label{remainFtttt}
\|\wt\FP\p_t^4 \eta\|_0\lesssim\|\F_j\|_{L^{\infty}}\|\wt\p_t^3 v\|_1+\|\wt\FP\p_t^3\psi\|_0.
\end{equation}

Recall that in \eqref{psittt2} we only need at most 3 time derivatives of $v$ to control $\wt\FP\p_t^3\psi$. Therefore, the control of full time derivatives of $v$ and $\FP\eta$ has been reduced to less time derivatives. Similarly, we have
\begin{equation}\label{remainvF}
\begin{aligned}
\|\swt\p_t^3 v\|_1\lesssim&\|\swt\pak \p_t^2 h\|_1+\sum_{j=1}^3\|\F_j\|_{2}\|\swt\FP\p_t^2 \eta\|_2+\text{lower order terms}.\\
\|\swt\FP\p_t^3 \eta\|_1\lesssim&\|\F_j\|_2\|\swt\p_t^2 v\|_2+\|\swt\FP\p_t^2\psi\|_1.\\
\|\p_t^2 v\|_2\lesssim&\|\pak\p_t h\|_2+\sum_{j=1}^3\|\F_j\|_2\|\FP\p_t \eta\|_3+\text{lower order terms}.\\
\|\FP\p_t^2\eta\|_2\lesssim&\|\F_j\|_2\|\p_t v\|_3+\|\FP\p_t\psi\|_2.\\
\|\p_t v\|_3\lesssim&\|\pak h\|_3+\sum_{j=1}^3\|\F_j\|_3\|\FP\eta\|_4.\\
\|\FP\p_t\eta\|_3\lesssim&\|\F_j\|_3\|v\|_4+\|\FP\psi\|_4.
\end{aligned}
\end{equation} Invoking Lemma \ref{etapsi}, all these time derivatives are reduced to full spatial derivatives and the estimates of $h$. Since the estimates of $\|v\|_4$ and $\|\FP\eta\|_4$ are reduced to the elliptic estimates of $h$, we know it remains to control the quantities in \eqref{remainh}, which will be controlled via tangentially-differentiated wave equation of $h$, and thus can be controlled by $\PP_0+\int_0^T P(\EE_\kk(t))\dt$.

\subsection{Special Structure of the wave equation}\label{wavediv}

This part is the key to the whole proof to establish the energy estimates. Let us start with the wave equation of $h$ \eqref{hwave0}.

\begin{equation}
\begin{aligned}
\wt\p_t^2h-\lapak h=&\underbrace{-\p_t\ak^{li}\p_l v_i-\pak^i\left(\FP\ek_l\right)\pak^l\left(\FP\ek_i\right)}_{L_1}+\sum_{j=1}^3e'(h)\FP^2 h\\
&\underbrace{-e''(h)(\p_th)^2+\sum_{j=1}^3e''(h)\left(\FP h\right)^2}_{L_2}\\
&-\sum_{j=1}^3\FP\int_0^T\underbrace{\diva\FP\psi-\pak^r\left(\FP\eta_i\right)\pak^i\p_t\ek_r+\pak^i\left(\FP\ek_r\right)\pak^r v_i}_{L_3^j}\dt.
\end{aligned}
\end{equation}

We will show the full details in the case of $\p_t^3$-differentiated wave equation and $\sum\limits_{j,k=1}^3(\F_k\cdot\p)^2\FP$-differentiated wave equation, which are the most complicated cases. The other mixed cases follow in the same way so we will omit those details.

\subsubsection{Fully time-differentiated wave equation}

First, we differentiate $\p_t^3$ in the wave equation \eqref{hwave0} to get
\begin{equation}\label{htttwave}
\begin{aligned}
\wt\p_t^5h-\lapak\p_t^3 h=&\sum_{j=1}^3e'(h)\FP^2\p_t^3 h\\
&+\underbrace{\left[e'(h),\p_t^3\right]\p_t^2 h+\left[\p_t^3,\lapak\right]h+\sum_{j=1}^3\left[\p_t^3,\wt\right]\FP^2 h}_{L_{41}}\\
&+\p_t^3(L_1+L_2)-\sum_{j=1}^3\FP\int_0^T\p_t^3 L_3^j\dt.
\end{aligned}
\end{equation}

Then we take $L^2$-inner product of \eqref{htttwave} with $(e'(h))^2\p_t^4 h$ to get
\begin{equation}\label{ttt1}
\begin{aligned}
&\io (\wt)^3\p_t^5 h\p_t^4 h\dy-\io(\wt)^2\diva(\pak \p_t^3 h)\p_t^4 h\dy\\
=&\sum_{j=1}^3\io(\wt)^3\FP^2\p_t^3 h\p_t^4 h\dy \\
&+\io (\wt)^2 \left(L_{41}+\p_t^3(L_1+L_2)\right)\p_t^4 h\dy\\
&-\sum_{j=1}^3\io(\wt)^2\p_t^4 h\cdot\left(\FP\int_0^T\p_t^3 L_3^j\dt\right)\dy.
\end{aligned}
\end{equation}

The LHS of \eqref{ttt1} gives the weighted energy of $\p_t^4 h$ and $\p_t^3 h$. We integrate by parts in the second term to get
\begin{equation}\label{ttt2}
\begin{aligned}
\text{LHS of }\eqref{ttt1}=&\frac12\frac{d}{dt}\io\left|(\wt)^{\frac32}\p_t^4 h\right|^2\dt+\frac12\frac{d}{dt}\io\left|\wt\pak \p_t^3 h\right|^2\dy \\
&-\frac32\io(\wt)^2e''(h)\p_t h\left|\p_t^4 h\right|^2\dy-\io e'(h)e''(h)\p_t h\left|\pak\p_t^3 h\right|^2\dy\\
&+2\io \wt e''(h)(\pak h)\cdot(\pak\p_t^3 h)\p_t^4 h\dy+\io(\wt)^2\pak\p_t^3 h\cdot\left([\pak,\p_t]\p_t^3 h\right)\dy\\
\end{aligned}
\end{equation}

The first term on RHS of \eqref{ttt1} gives the weighted energy of $\sum\limits_{j=1}^3 \FP\p_t^3 h$ after integrating $\FP$ by parts
\begin{equation}\label{ttt3}
\begin{aligned}
&\io(\wt)^3\FP^2\p_t^3 h\p_t^4 h\dy \\
=&-\frac12\frac{d}{dt}\io\left|(\wt)^{\frac32}\FP\p_t^3 h\right|^2\dy\\
&+\frac32\io(\wt)^2e''(h)\p_t h\left|\FP\p_t^3 h\right|^2\dy-3\io(\wt)^2e''(h)\left(\FP h\right)\left(\FP\p_t^3 h\right)\p_t^4 h\dy\\
&-\io\p_l\F_{lj} (\wt)^3\left(\FP\p_t^3 h\right)\p_t^4 h\dy
\end{aligned}
\end{equation}

The remaining terms can all be directly controlled by invoking the physical conditions $|e^{(k)}(h)|\lesssim|e'(h)|^k$. We list the details as follow:

In \eqref{ttt2}:
\begin{equation}\label{ttt4}
\begin{aligned}
&-\frac32\io(\wt)^2e''(h)\p_t h\left|\p_t^4 h\right|^2\dy-\io e'(h)e''(h)\p_t h\left|\pak\p_t^3 h\right|^2\dy\\
&+2\io \wt e''(h)(\pak h)\cdot(\pak\p_t^3 h)\p_t^4 h\dy+\io(\wt)^2\pak\p_t^3 h\cdot\left([\pak,\p_t]\p_t^3 h\right)\dy\\
\lesssim&\|\swt\|_{L^{\infty}}\left(\left\|(\wt)^{\frac32}\p_t^4 h\right\|_0^2+\left\|\wt\pak\p_t^3 h\right\|_0^2\right)\\
&+\|\swt\|_{L^{\infty}}\|\ak\|_{L^{\infty}}\|\p h\|_{L^{\infty}}\|\wt\pak\p_t^3 h\|_0\|(\wt)^{\frac32}\p_t^4 h\|_0+\|\pak \p_t \eta\|_{L^{\infty}}\left\|\wt\pak\p_t^3 h\right\|_0^2.
\end{aligned}
\end{equation}

In \eqref{ttt3}:
\begin{equation}\label{ttt5}
\begin{aligned}
&\frac32\io(\wt)^2e''(h)\p_t h\left|\FP\p_t^3 h\right|^2\dy-3\io(\wt)^2e''(h)\left(\FP h\right)\left(\FP\p_t^3 h\right)\p_t^4 h\dy\\
&-\io\p_l\F_{lj} (\wt)^3\left(\FP\p_t^3 h\right)\p_t^4 h\dy\\
\lesssim&\|\swt\|_{L^{\infty}}\left\|(\wt)^{\frac32}\FP\p_t^3 h\right\|_0^2\\
&+\left(\|\p\F\|_{L^{\infty}}+\|e'(h)\FP h\|_{L^{\infty}}\right)\left\|(\wt)^{\frac32}\FP\p_t^3 h\right\|_0\left\|(\wt)^{\frac32}\p_t^4 h\right\|_0
\end{aligned}
\end{equation}

In \eqref{ttt1},
\[
\io (\wt)^2 \left(L_{41}+\p_t^3(L_1+L_2)\right)\p_t^4 h\dy\lesssim\left\|(\wt)^{\frac32}\p_t^4 h\right\|_0\left\|\swt\left(L_{41}+\p_t^3(L_1+L_2)\right)\right\|_0.
\] We compute that
\begin{align*}
&\|\swt \p_t^3L_1\|_0\lesssim\|\swt\p_t^3 v\|_0\|\p v\|_{L^{\infty}}+\|\swt\|_{L^{\infty}}\|\p_t^2 v\|_0\|\p_t v\|_{L^{\infty}}\\
+&\|\ak\|_{L^{\infty}}^2\left(\|\swt\FP\p_t^3 \ek\|_0\|\p \FP\ek\|_{L^{\infty}}+\|\swt\|_{L^{\infty}}\|\FP\p_t^2 \ek\|_0\|\FP\p_t \ek\|_{L^{\infty}}\right),
\end{align*}and
\begin{align*}
\p_t^3 L_2=&e^{(5)}(h)(\p_t h)^5+8e^{(4)}(h)(\p_t h)^3\p_t^2 h+12e^{(3)}(h)\p_t h(\p_t^2 h)^2+4e^{(3)}(h)(\p_t h)^2\p_t^3 h\\
&+6e''(h)\p_t^2 h\p_t^3 h+2e''(h)\p_t h\p_t^4 h\\
&+e''(h)\left(2(\FP h)(\FP\p_t^3 h)+6(\FP\p_t^2 h)(\FP\p_t h)\right)\\
&+e^{(3)}(h)\big(\p_t^3 h(\FP h)^2+\p_t^2 h(\FP h)(\FP\p_t h)\\
&~~~~+\p_th(2(\FP\p_t h)^2+2(\FP h)(\FP\p_t^2 h))\big)\\
&+e^{(4)}(h)\left(2(\p_t h)^2(\FP h)(\FP\p_t h)+2(\p_t h)(\p_t^2 h)(\FP h)^2\right)\\
&+e^{(5)}(h)(\p_t h)^3(\FP h)^2\\
\Rightarrow \|\swt\p_t^3 L_2\|_0\lesssim& P\bigg(\|\swt\|_{L^{\infty}},\|\p_t h\|_{L^{\infty}},\|\swt\p_t^2 h\|_2,\|\wt\p_t^3 h\|_1,\|(\wt)^{\frac32}\p_t^4 h\|_0\\
&\|\FP h\|_{L^{\infty}},\|\swt\p_t \FP h\|_2,\|\wt\p_t^2\FP h\|_1,\|(\wt)^{\frac32}\p_t^3\FP h\|_0\bigg),
\end{align*}
and $L_{41}$ consists of quadratic and cubic terms and each term containing 5 derivatives in total with highest order equal to 4. Here we list the precise form, but analogous computations will be omitted in the remaining of the paper.
\begin{align*}
L_{41}=&\left[e'(h),\p_t^3\right]\p_t^2 h+\left[\p_t^3,\lapak\right]h+\sum_{j=1}^3\left[\p_t^3,\wt\right]\FP^2 h\\
=&e'(h)(3\p_t h\p_t^4 h+4\p_t^2 h\p_t^3 h)+3e''(h)((\p_t h)^2\p_t^3 h+\p_t h(\p_t^2 h)^2)+e'''(h)(\p_th)^3\\
&+(\lapak \p_t^2v)(\pak h)+(\lapak \p_tv)(\pak\p_t h)+(\lapak v)(\pak \p_t^2 h)\\
&+(\pak \p_t^2v)(\pak\pak h)+(\pak \p_tv)(\pak\pak\p_t h)+(\pak v)(\pak\pak \p_t^2 h)\\
&+\sum_{l_1+l_2+l_3=1}(\pak\p_t^{l_1} v)(\lapak\p_t^{l_2} v)(\pak\p_t^{l_3} h)+(\pak\p_t^{l_1} v)(\pak\p_t^{l_2} v)(\pak\pak\p_t^{l_3} h)\\
&+\sum_{j=1}^3 3e'(h)(\p_t h\p_t^2\FP^2 h+2\p_t^2 h\FP^2\p_t h+\p_t^3 h\FP^2 h)\\
&~~~~~~~~+2e''(h)(3(\p_t h)^2\FP^2\p_t h+3\p_t^2 h\p_t h\FP^2 h)+e'''(h)(\p_th)^3\FP^2 h,
\end{align*}which gives
\begin{equation}\label{ttt6}
\begin{aligned}
&\io (\wt)^2 \left(L_{41}+\p_t^3(L_1+L_2)\right)\p_t^4 h\dy\\
\lesssim &\sum_{j=1}^3 P\bigg(\left\|(\wt)^{\frac32}\left(\p_t^4 h,\FP^2\p_t^2 h\right)\right\|_0,\left\|\wt\left(\p_t^3 h,\FP^2\p_t h\right)\right\|_1,\\
&~~~~~~\left\|\swt\left(\p_t^2 h,\FP^2 h\right)\right\|_2,\left\|\p_t h\right\|_2,\left\|v_t\right\|_3,\left\|v\right\|_4,\left\|\eta\right\|_4\bigg).
\end{aligned}
\end{equation} Finally, 
\begin{equation}\label{ttt7}
\begin{aligned}
&-\sum_{j=1}^3\io(\wt)^2\p_t^4 h\cdot\left(\FP\int_0^T\p_t^3 L_3^j\dt\right)\dy\\
\lesssim&\sum_{j=1}^3\left\|(\wt)^{\frac32}\p_t^4 h\right\|_0\left\|\swt\FP \p_t^2 L_3^j\right\|_0\\
\lesssim&\sum_{j=1}^3\left\|(\wt)^{\frac32}\p_t^4 h\right\|_0P\left(\|\ak\|_2,\|\p_t^2(\FP\eta,\FP\ek)\|_1,\|v\|_3,\|v_t\|_2,\|\FP\p_t\psi\|_2,\|\FP\psi\|_3\right).
\end{aligned}
\end{equation}

Summarizing \eqref{ttt1}-\eqref{ttt7}, we get the energy estimates of $\p_t^3$-differentiated wave equation
\begin{equation}\label{htttenergy}
\frac{d}{dt}\left(\left\|(\wt)^{\frac32}\p_t^4 h\right\|_0^2\dt+\left\|\wt \p_t^3 h\right\|_1^2+\sum_{j=1}^3\left\|(\wt)^{\frac32}\FP\p_t^3 h\right\|_0^2\right)\lesssim P(\EE_\kk(t)).
\end{equation}

\subsubsection{Fully $(\F\cdot\p)$-differentiated wave equation}\label{hFFFdiv}

We differentiate $(\F_k\cdot\p)^2(\F_l\cdot\p)$ (no summation on $k,l$!) in the wave equation \eqref{hwave0} to get
\begin{equation}\label{hFFFwave}
\begin{aligned}
&\wt(\F_k\cdot\p)^2(\F_l\cdot\p)\p_t^2 h-\lapak\left((\F_k\cdot\p)^2(\F_l\cdot\p)\right) h\\
=&\sum_{j=1}^3 e'(h)(\F_k\cdot\p)^2(\F_l\cdot\p)\FP^2 h\\
&+\underbrace{\left[e'(h),(\F_k\cdot\p)^2(\F_l\cdot\p)\right]\p_t^2 h+\left[(\F_k\cdot\p)^2(\F_l\cdot\p),\lapak\right]h+\sum_{j=1}^3\left[(\F_k\cdot\p)^2(\F_l\cdot\p),\wt\right]\FP^2 h}_{L_{42}}\\
&+(\F_k\cdot\p)^2(\F_l\cdot\p)(L_1+L_2)-\sum_{j=1}^3\FP\int_0^T(\F_k\cdot\p)^2(\F_l\cdot\p) L_3^j\dt
\end{aligned}
\end{equation}

Then we take the $L^2$-inner product of \eqref{hFFFwave} and $(\wt)^2(\F_k\cdot\p)^2(\F_l\cdot\p)\p_t h$ to get
\begin{equation}\label{FFF0}
\begin{aligned}
&\io(\wt)^3\left((\F_k\cdot\p)^2(\F_l\cdot\p)\p_t h\right)\p_t\left((\F_k\cdot\p)^2(\F_l\cdot\p)\p_t h\right)\dy\\
&-\io(\wt)^2\diva\left(\pak(\F_k\cdot\p)^2(\F_l\cdot\p) h\right) \p_t \left((\F_k\cdot\p)^2(\F_l\cdot\p)h\right)\dy \\
=&\sum_{j=1}^3\io(\wt)^3\left((\F_k\cdot\p)^2(\F_l\cdot\p)\FP^2 h\right)\cdot \left((\F_k\cdot\p)^2(\F_l\cdot\p)\p_t h\right)\dy\\
&+\io(\wt)^2\left(L_{42}+(\F_k\cdot\p)^2(\F_l\cdot\p)(L_1+L_2)\right)\cdot \left((\F_k\cdot\p)^2(\F_l\cdot\p)\p_t h\right)\dy\\
&-\sum_{j=1}^3\io(\wt)^2(\F_k\cdot\p)^2(\F_l\cdot\p)\p_t h\cdot\left((\F_k\cdot\p)^2(\F_l\cdot\p)\FP\int_0^T L_3^j\right)\dy
\end{aligned}
\end{equation}

For LHS of \eqref{FFF0}, we integrate by part in the second term to get the energy term plus some commutators
\begin{equation}\label{FFF1}
\begin{aligned}
\text{LHS of }\eqref{FFF0}=&\frac12\frac{d}{dt}\io \left|(\wt)^{\frac32}(\F_k\cdot\p)^2(\F_l\cdot\p)\p_t h\right|^2+\left|\wt\pak(\F_k\cdot\p)^2(\F_l\cdot\p) h\right|^2\dy\\
+&\io\frac32(\wt)^2e''(h)\p_t h\left|(\F_k\cdot\p)^2(\F_l\cdot\p)h\right|^2+e'(h)e''(h)\p_t h\left|\pak(\F_k\cdot\p)^2(\F_l\cdot\p)h\right|^2\dy\\
+&\io 2e'(h)e''(h)\left(\pak(\F_k\cdot\p)^2(\F_l\cdot\p) h\right)\p_t\left((\F_k\cdot\p)^2(\F_l\cdot\p)h\right)\dy\\
+&\io(\wt)^2\left(\pak(\F_k\cdot\p)^2(\F_l\cdot\p)h\right)\cdot\left(\left[\diva,\p_t\right](\F_k\cdot\p)^2(\F_l\cdot\p)h\right)\dy.
\end{aligned}
\end{equation}

The first term on the RHS of \eqref{FFF0} gives the weight energy of $h$ after we integrate $\FP$ by parts for each $j$
\begin{equation}\label{FFF2}
\begin{aligned}
&\sum_{j=1}^3\io(\wt)^3\left((\F_k\cdot\p)^2(\F_l\cdot\p)\FP^2 h\right)\cdot \left((\F_k\cdot\p)^2(\F_l\cdot\p)\p_t h\right)\dy\\
=&-\sum_{j=1}^3\frac12\frac{d}{dt}\io\left|(\wt)^{\frac32}(\F_k\cdot\p)^2(\F_l\cdot\p)\FP h\right|^2\dy\\
&+\sum_{j=1}^3\frac32\io(\wt)^2e''(h)\p_t h\left|(\F_k\cdot\p)^2(\F_l\cdot\p)\FP h\right|^2\dy\\
&-\sum_{j=1}^3 3\io(\wt)^2e''(h)(\FP h)\left((\F_k\cdot\p)^2(\F_l\cdot\p)\FP h\right)\left((\F_k\cdot\p)^2(\F_l\cdot\p)\p_t h\right)\dy\\
&-\sum_{j=1}^3\io(\wt)^3\p_m\F_{mj}\left((\F_k\cdot\p)^2(\F_l\cdot\p)\FP h\right)\left((\F_k\cdot\p)^2(\F_l\cdot\p)\p_t h\right)\dy.
\end{aligned}
\end{equation}

Now we analyze the remaining terms. Keep in mind the physical constraints \eqref{sound}: $|e^{(k)}(h)|\lesssim|e'(h)|^k$. 

In \eqref{FFF1}, we have
\begin{equation}\label{FFF3}
\begin{aligned}
&\io 2e'(h)e''(h)\left(\pak(\F_k\cdot\p)^2(\F_l\cdot\p) h\right)\p_t\left((\F_k\cdot\p)^2(\F_l\cdot\p)h\right)\dy\\
&+\io(\wt)^2\left(\pak(\F_k\cdot\p)^2(\F_l\cdot\p)h\right)\cdot\left(\left[\diva,\p_t\right](\F_k\cdot\p)^2(\F_l\cdot\p)h\right)\dy\\
\lesssim& 2\|\swt\|_{L^{\infty}}\left\|\wt(\F_k\cdot\p)^2(\F_l\cdot\p)h \right\|_1\left\|(\wt)^{\frac32}(\F_k\cdot\p)^2(\F_l\cdot\p)\p_t h\right\|_0\\
&+\|\pak v\|_{L^{\infty}}\left\|\wt(\F_k\cdot\p)^2(\F_l\cdot\p)h \right\|_1^2.
\end{aligned}
\end{equation}

In \eqref{FFF2}, we have
\begin{equation}\label{FFF4}
\begin{aligned}
&~~~~\sum_{j=1}^3\frac32\io(\wt)^2e''(h)\p_t h\left|(\F_k\cdot\p)^2(\F_l\cdot\p)\FP h\right|^2\dy\\
&-\sum_{j=1}^3 3\io(\wt)^2e''(h)(\FP h)\left((\F_k\cdot\p)^2(\F_l\cdot\p)\FP h\right)\left((\F_k\cdot\p)^2(\F_l\cdot\p)\p_t h\right)\dy\\
&-\sum_{j=1}^3\io(\wt)^3\p_m\F_{mj}\left((\F_k\cdot\p)^2(\F_l\cdot\p)\FP h\right)\left((\F_k\cdot\p)^2(\F_l\cdot\p)\p_t h\right)\dy\\
\lesssim\sum_{j=1}^3&\|\wt\p_t h\|_{L^{\infty}}\left\|(\wt)^{\frac32}(\F_k\cdot\p)^2(\F_l\cdot\p)\FP h\right\|_0^2\\
+&3\|\wt\FP h\|_{L^{\infty}}\left\|(\wt)^{\frac32}(\F_k\cdot\p)^2(\F_l\cdot\p)\FP h\right\|_0\left\|(\wt)^{\frac32}(\F_k\cdot\p)^2(\F_l\cdot\p)\p_t h\right\|_0\\
+&P\left(\|\wt\|_{L^{\infty}},\left\|(\wt)^{\frac32}(\F_k\cdot\p)^2(\F_l\cdot\p)\FP h\right\|_0,\|\F\|_4,\|\p_t h\|_3\right).
\end{aligned}
\end{equation}

It remains to analyze the last two lines in \eqref{FFF1}. The most difficult term is the last line. We should first integrate $\p_t$ by parts and use the time integral outside $L_3^j$ to eliminate that time derivative, and then integrate $\FP$ by parts to get
\begin{equation}\label{FFF5}
\begin{aligned}
&-\int_0^T\io(\wt)^2(\F_k\cdot\p)^2(\F_l\cdot\p)\p_t h\cdot\left((\F_k\cdot\p)^2(\F_l\cdot\p)\FP\int_0^t L_3^j(\tau)\right)\dy d\tau\\
\overset{\p_t}{=}&\int_0^T\io(\wt)^2\left((\F_k\cdot\p)^2(\F_l\cdot\p) h\right) \cdot\left((\F_k\cdot\p)^2(\F_l\cdot\p)\FP L_3^j(t)\right)\dy\dt\\
&+2\int_0^T\wt e''(h)\p_t h(\F_k\cdot\p)^2(\F_l\cdot\p)h\cdot\left((\F_k\cdot\p)^2(\F_l\cdot\p)\FP\int_0^t L_3^j\right)\dy\dt\\
&-\io(\wt)^2(\F_k\cdot\p)^2(\F_l\cdot\p)h\cdot\left((\F_k\cdot\p)^2(\F_l\cdot\p)\FP\int_0^TL_3^j\right)\dy\\
\overset{\FP}{=}&-\int_0^T\io(\wt)^2\left((\F_k\cdot\p)^2(\F_l\cdot\p)\FP h\right)\cdot\left((\F_k\cdot\p)^2(\F_l\cdot\p)L_3^j(t)\right)\dy\dt\\
&-2\int_0^T\io\wt e''(h)\p_t h\left((\F_k\cdot\p)^2(\F_l\cdot\p)\FP h\right)\cdot\left((\F_k\cdot\p)^2(\F_l\cdot\p)\int_0^t L_3^j\right)\dy\dt \\
&+\io(\wt)^2\left((\F_k\cdot\p)^2(\F_l\cdot\p)\FP h\right)\cdot\left((\F_k\cdot\p)^2(\F_l\cdot\p)\int_0^T L_3^j\right)\dy+\cdots\\
\lesssim& P(\EE_\kk(T))\int_0^TP\left(\left\|(\wt)^{\frac32}(\F_k\cdot\p)^2(\F_l\cdot\p)\FP h\right\|_0,\left\|\left(\FP\psi,\FP\eta,\FP\ek,\F,v\right)\right\|_4\right)\dt,
\end{aligned}
\end{equation}where in the omitted terms the derivative falls on either $\F_j$ (and thus gives a lower order term) or the weight function (and thus gives more weight functions). 

Finally we analyze the second last line in \eqref{FFF1}. It suffices to control $\swt$-weighted $L^2$-norm of $L_{42}+(\F_k\cdot\p)^2(\F_l\cdot\p)(L_1+L_2)$.

The $L_{42}$-term is easier to control because at least one derivative falls on the weight function, and all the other terms only contain spatial derivative and thus do not require any weight function.
\begin{equation}\label{FFF6}
\begin{aligned}
&\left\|\swt L_{42}\right\|_0\\
=&\left\|\swt\left(\left[e'(h),(\F_k\cdot\p)^2(\F_l\cdot\p)\right]\p_t^2 h+\left[(\F_k\cdot\p)^2(\F_l\cdot\p),\lapak\right]h+\sum_{j=1}^3\left[(\F_k\cdot\p)^2(\F_l\cdot\p),\wt\right]\FP^2 h\right)\right\|_0\\
\lesssim& P\left(\left\|\swt\right\|_{L^{\infty}},\left\|\wt\p_t^2 h\right\|_0,\|h\|_4,\|\F\|_4\right).
\end{aligned}
\end{equation}

The term $(\F_k\cdot\p)^2(\F_l\cdot\p)(L_1+L_2)$ can be similarly controlled as in \eqref{FFF6}
\begin{equation}\label{FFF7}
\begin{aligned}
&\left\|\swt(\F_k\cdot\p)^2(\F_l\cdot\p)(L_1+L_2)\right\|_0\\
\leq&\left\|\swt(\F_k\cdot\p)^2(\F_l\cdot\p)\left(-\p_t\ak^{li}\p_l v_i-\pak^i\left(\FP\ek_l\right)\pak^l\left(\FP\ek_i\right)\right)\right\|_0\\
&+\left\|\swt(\F_k\cdot\p)^2(\F_l\cdot\p)\left(-e''(h)(\p_th)^2+\sum_{j=1}^3e''(h)\left(\FP h\right)^2\right)\right\|_0
\end{aligned}
\end{equation}

Notice that the leading order terms in $(\F_k\cdot\p)^2(\F_l\cdot\p)(L_1+L_2)$ are $$(\F_k\cdot\p)^2(\F_l\cdot\p)\left((\pak v)(\pak v)-(\pak\FP\ek)(\pak\FP\ek)-e''(h)(\p_t h)^2+e''(h)(\FP h)^2\right),$$ which are all of at most 4 derivatives with at most one time derivative. Therefore, no weight function is needed except the divergence of deformation tensor $\sum\limits_{j=1}^3\|\wt \FP h\|_3$. Luckily, $|e''(h)|\lesssim|e'(h)|^2$ has provided enough number of weight functions. Hence, we get
\begin{equation}\label{FFF8}
\left\|\swt(\F_k\cdot\p)^2(\F_l\cdot\p)(L_1+L_2)\right\|_0\lesssim P\left(\|\F\|_4,\|\eta\|_4,\|v\|_4,\|\FP\ek\|_4,\|\p_t h\|_3,\|\wt \FP h\|_3\right).
\end{equation}

Summarising \eqref{FFF0}-\eqref{FFF8}, we finally get the weighted energy estimates of $(\F_k\cdot\p)^2(\F_l\cdot\p)$-differentiated wave equation \eqref{hFFFwave} by taking summation on $k,l$,
\begin{equation}\label{hFFFenergy}
\begin{aligned}
&\sum_{k,l=1}^3\left(\left\|(\wt)^{\frac32}(\F_k\cdot\p)^2(\F_l\cdot\p)\p_t h\right\|_0^2+\left\|\wt (\F_k\cdot\p)^2(\F_l\cdot\p)h\right\|_1^2+\sum_{j=1}^3\left\|(\F_k\cdot\p)^2(\F_l\cdot\p)\FP h\right\|_0^2\right)\\
\lesssim&\PP_0+ P(\EE_\kk(T))\int_0^T P(\EE_\kk(t))\dt.
\end{aligned}
\end{equation}

\subsubsection{Mixed-tangentially-differentiated wave equations}

We still need to control the weighted energy estimates of $\sum\limits_{j=1}^3\p_t^2\FP$-differentiated and $\sum\limits_{j=1}^3\p_t\FP^2$-differentiated wave equations. The proof follows in the same way as the previously discussed cases because
\begin{enumerate}
\item \textbf{The energy estimates must be uniform in sound speed}: Compared with ``fully time-differentiated" case, there are more spatial derivatives. Therefore, less number of weight functions will be needed in the commutator estimates.
\item \textbf{No loss of derivative}: The number of derivatives in the source terms will be no greater than the previously discusses case and thus there is no loss of derivatives in the source terms. Also, since both $\FP$ and $\p_t$ are tangential derivatives on the boundary, there is no boundary intergal when integrating by parts.
\item \textbf{On the source term $L_3^j$}: The treatment of this term is the same as ``fully $(\F\cdot\p)$-differentiated" case: First integrating $\p_t$ by parts to eliminate one time derivative with the help of time integral as in \eqref{FFF5}, then integrating $\FP$ by parts to finish the control.
\end{enumerate}

We list the results here
\begin{equation}\label{htFenergy}
\begin{aligned}
&\sum_{k=1}^3\left(\left\|(\wt)^{\frac32}(\F_k\cdot\p)\p_t^3 h\right\|_0^2+\left\|\wt (\F_k\cdot\p)\p_t^2h\right\|_1^2+\sum_{j=1}^3\left\|(\F_k\cdot\p)\FP\p_t^2 h\right\|_0^2\right)\\
+&\sum_{k=1}^3\left(\left\|(\wt)^{\frac32}(\F_k\cdot\p)^2\p_t^2 h\right\|_0^2+\left\|\wt (\F_k\cdot\p)^2\p_th\right\|_1^2+\sum_{j=1}^3\left\|(\F_k\cdot\p)^2\FP \p_th\right\|_0^2\right)\\
\lesssim&\PP_0+ P(\EE_\kk(T))\int_0^T P(\EE_\kk(t))\dt.
\end{aligned}
\end{equation}

\subsection{A priori estimates of the nonlinear $\kk$-approximation system}

Now we are able to finalize the uniform-in-$\kk$ a priori estimate of the nonlinear approximation system \eqref{elastolkk}. We recall that \eqref{vFhl2} gives the $L^2$-estimates, \eqref{v4} and \eqref{F4} give the div-curl-tangential decomposition for the full spatial derivatives. Then \eqref{vbd} and \eqref{Fbd} reduce the boundary part to interior tangential estimates which are established by \eqref{tgenergy}. The inequality \eqref{curlavF} gives the common control of curl part and \eqref{divv0}, \eqref{divF0} reduce the divergence control to the elliptic estimates of $h$. With the help of Christodoulou-Lindblad \cite{christodoulou2000motion} elliptic estimates Lemma \ref{GLL}, we reduce the estimates of $h$ to the fully-tangential-differentiated derivatives in \eqref{remainh}. On the other hand, the estimates of time derivatives of $v$ and $\FP\eta$ are again reduced to full spatial derivatives and the estimates of $h$ in \eqref{remainvtttt}, \eqref{remainFtttt} and \eqref{remainvF}.

After the reductions above, it remains to control the weighted energy estimates of $h$ listed in \eqref{remainh}. This is finished by analyzing the tangentially-differentiated wave equation \eqref{hwave0} of $h$. Finally, we get the desired energy estimates (actually slightly better than our expectation because less weight functions are needed) as in \eqref{htttenergy}, \eqref{hFFFenergy} and \eqref{htFenergy}. We taking summation and thus get a nonlinear Gronwall-type inequality
\begin{equation}
\EE_\kk(T)\lesssim\delta^2\left(\|v\|_4^2+\sum_{j=1}^3\|\FP\eta\|_4^2\right)+\PP_0+P(\EE_\kk(T))\int_0^T P(\EE_\kk(t))\dt
\end{equation}under the a priori assumptions \eqref{taylor1}-\eqref{small1}.

By the argument in Tao \cite[Chapter 1.3]{tao2006nonlinear}, we can pick a suitably small $T>0$ such that the $\delta$-terms can be absorbed to LHS and get the following energy bound
\begin{equation}\label{EEkk}
\sup_{0\leq t\leq T}\EE_\kk(t)\leq P\left(\|v_0\|_4,\|\F\|_4,\|\h_0\|_4\right).
\end{equation}

\subsection{Justification of the a priori assumptions}\label{justifyn}

Finally, we need to justify the a priori assumptions \eqref{taylor1} and \eqref{small1}. The Rayleigh-Taylor sign condition can be directly justified by using Morrey's embedding. Notice that $\frac{\p h}{\p N}\in L^{\infty}([0,T];H^{\frac52}(\Gamma)),~\p_t\left(\frac{\p h}{\p N}\right)\in L^{\infty}([0,T];H^{\frac32}(\Gamma))$ and the boundary is $\T^2$ a bounded open set in 2D. By using Morrey's embedding, the taylor sign is actually a H\"older continuous function in both $t$ and $y$ variables:
$$\frac{\p h}{\p N}\in W^{1,\infty}([0,T];H^{\frac32}(\Gamma))\hookrightarrow W^{1,\infty}([0,T];W^{1,4}(\Gamma))\hookrightarrow W^{1,4}([0,T]\times\Gamma)\hookrightarrow C_{t,x}^{0,\frac14}([0,T]\times\Gamma).$$ Therefore, the a priori assumption holds is a positive time interval provided that $-\frac{\p \h_0}{\p N}\geq c_0>0$ initially. The second assumption \eqref{small1} is easily justified by $$\text{Id}-\ak=\int_0^T \p_t\ak \dt=-\int_0^T \ak:\p\ek:\ak\dt,$$ of which the $H^3$-norm is directly bounded by our energy functional. Similar estimate holds for $\Jk-1$ since $\Jk$ is a bilinear function of $\p\ek$ and $\p\ek-$Id is sufficiently small due to that of $\ak-$Id.

\section{Construction of solutions to the approximation system}\label{linearize}

In Section \ref{apriorinkk}, we have derived the uniform-in-$\kk$ a priori estimates for the nonlinear $\kk$-approximation system \eqref{elastolkk}. Now we are going to construct the strong solution to \eqref{elastolkk} for each fixed $\kk>0$. Given the a priori estimates, it is natural to consider linearization and Picard iteration to construct the solution to \eqref{elastolkk}. Specifically, we expect to start from the trivial solution $(\eta^{(0)},v^{(0)},h^{(0)})=(\eta^{(1)},v^{(1)},h^{(1)})=(\text{Id},0,0)$ and inductively define $(\eta^{(n+1)},v^{(n+1)},h^{(n+1)})$ provided that $\{(\eta^{(k)},v^{(k)},h^{(k)})\}_{0\leq k\leq n}$ are given
\begin{equation}\label{elastolln}
\begin{cases}
\p_t\eta^{(n+1)}=v^{(n+1)}+\psi^{(n)}~~~&\text{in }\Omega,\\
\p_t v^{(n+1)}=-\nabla_{\ak^{(n)}}h^{(n+1)}+\sum\limits_{j=1}^3\FP^2\eta^{(n+1)}~~~&\text{in }\Omega,\\
\dive_{\ak^{(n)}}v^{(n+1)}=-e'(h^{(n)})\p_t h^{(n+1)}~~~&\text{in }\Omega,\\
\dive \F_j:=\p_k\F_{kj}=-e'(\h_0)\FP\h_0~~~&\text{in }\Omega,\\
h^{(n+1)}=0,~\F_j\cdot N=0~~~&\text{on }\Gamma,\\
(\eta^{(n+1)},v^{(n+1)},h^{(n+1)})|_{t=0}=(\text{Id},v_0,\h_0),
\end{cases}
\end{equation}where $\ek^{(n)}$ is the smoothed version of $\eta^{(n)}$ and $\ak^{(n)}:=[\p\ek^{(n)}]^{-1}$. For simplicity on notation, we denote $(\er,\ar,\vr,\hr)=(\eta^{(n)},a^{(n)},v^{(n)},h^{(n)})$ and $(\eta,v,h):=(\eta^{(n+1)},v^{(n+1)},h^{(n+1)})$. Then the linearized system becomes
\begin{equation}\label{elastollr}
\begin{cases}
\p_t\eta=v+\psir~~~&\text{in }\Omega,\\
\p_t v=-\park h+\sum\limits_{j=1}^3\FP^2\eta~~~&\text{in }\Omega,\\
\divr v=-e'(\hr)\p_t h~~~&\text{in }\Omega,\\
\dive \F_j:=\p_k\F_{kj}=-e'(\h_0)\FP\h_0~~~&\text{in }\Omega,\\
h=0,~\F_j\cdot N=0~~~&\text{on }\Gamma,\\
(\eta,v,h)|_{t=0}=(\text{Id},v_0,\h_0).
\end{cases}
\end{equation}

What we need to do are
\begin{enumerate}
\item Construct the unique strong solution to the linearized $\kk$-approximation system \eqref{elastollr} for each fixed $\kk>0$.
\item Derive the uniform-in-$n$ estimates for the linearized $\kk$-apporximation system \eqref{elastollr} for each fixed $\kk>0$.
\item Picard iteration: Prove the sequence $\{(\eta^{(n)},v^{(n)},h^{(n)})\}_{n\in\N}$ strongly converges (subsequentially) in suitable Sobolev spaces to derive the unique strong solution to the nonlinear $\kk$-approximation system for each fixed $\kk>0$.
\end{enumerate}

We will prove the following conclusions by verifying the steps above

\begin{prop}[\textbf{Local well-posedness of the linearized approximation system \eqref{elastollr}}]\label{lllwp}
Fix $\kk>0$. There exists a time $T_\kk:=T_{\kk}(\|\p\Ark\|_{L^{\infty}},\|\p_t h\|_{L^{\infty}},\|\F\|_3,\|\FP\psir\|_1)>0$ such that the linearized $\kk$-approximation system \eqref{elastollr} has a unique strong solution $(\eta,v,h)$ in $[0,T_\kk]$ satisfying the $H^1$-estimate
\begin{equation}
\sup_{0\leq t\leq T_\kk}\|v\|_1^2+\|h\|_1^2+\sum_{j=1}^3\|\FP\eta\|_1^2\leq 2(\|v_0\|_1^2+\|\F\|_1^2+\|\h_0\|_1^2).
\end{equation}
\end{prop}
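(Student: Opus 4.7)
}

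The plan is to recast the linearized problem as a linear first-order symmetric hyperbolic system with characteristic boundary and apply the Lax--Phillips theory. The flow map $\eta$ itself is not a good dynamical unknown because the term $\sum_{j=1}^3\FP^2\eta$ in the momentum equation costs one derivative. Instead I introduce $\GGr_j := \FP\eta$ as a new unknown alongside $(v,h)$. From $\p_t\eta=v+\psir$ one gets $\p_t\GGr_j=\FP v+\FP\psir$, so the system for $(v,h,\GGr_1,\GGr_2,\GGr_3)$ becomes
\begin{equation}
\begin{cases}
\p_t v+\park h-\sum_{j=1}^3 \FP \GGr_j=0,\\
e'(\hr)\,\p_t h+\divr v=0,\\
\p_t \GGr_j-\FP v=\FP\psir,\quad j=1,2,3,
\end{cases}
\end{equation}
with $h|_\Gamma=0$ and the initial data $(v_0,\h_0,\F_j)$. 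Once this is solved, the flow map is recovered from the ODE $\p_t\eta=v+\psir$ with $\eta|_{t=0}=\mathrm{Id}$, and the identity $\FP\eta=\GGr_j$ propagates from the initial value because both satisfy the same transport equation.

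The symbol is symmetrized by the diagonal multiplier $\mathrm{diag}(I_3,\,e'(\hr),\,I_3,I_3,I_3)$, which is positive definite as long as $e'(\hr)\geq c>0$ (guaranteed by the physical constraint \eqref{sound} and the a priori assumption $|\mathrm{Id}-\ar|$ small). The coefficients are of the form $\ar^{li}$ and $\F_{kj}$; the former lies in $L^\infty_t C^1_x$ by \eqref{small1} and the properties of tangential smoothing, while the latter is $\F_j\in H^3(\Omega)$ time-independent. At the boundary $\Gamma=\{y_3=\pm 1\}$ the operator $\FP$ is tangential because $\F_j\cdot N=0$, so the equations for $\GGr_j$ contribute no boundary term; the only boundary condition is $h|_\Gamma=0$, which is the maximally nonnegative boundary condition for this characteristic problem. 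Hence the standard existence theorem for linear symmetric hyperbolic systems with characteristic boundary (Lax--Phillips, see also M\'etivier's notes cited in the paper) produces a unique strong solution $(v,h,\GGr)$ on some time interval $[0,T_\kk]$ provided the coefficients are sufficiently smooth, which follows from $\ark\in C^\infty$ thanks to the mollification.

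For the $H^1$ energy bound I test the system against $(v,\,h,\,\GGr_1,\GGr_2,\GGr_3)$ in $L^2(\Omega)$. The cross terms $\int v\cdot\park h$ and $\int h\,\divr v$ combine, after integrating by parts using $\p_l\Ark^{li}=0$ and $h|_\Gamma=0$, to produce only a commutator error controlled by $\|\p\Ark\|_{L^\infty}\|v\|_0\|h\|_0$. The terms $\int v\cdot\FP\GGr_j$ and $\int \GGr_j\cdot\FP v$ combine to a boundary integral that vanishes since $\F_j\cdot N=0$, plus a zeroth-order error controlled by $\|\dive \F_j\|_{L^\infty}\lesssim\|\F\|_3$. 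After differentiating the system once in space, the same computation gives an $H^1$ estimate with right-hand side
\begin{equation}
\frac{d}{dt}\Bigl(\|v\|_1^2+\|\swt h\|_1^2+\textstyle\sum_j\|\GGr_j\|_1^2\Bigr)\lesssim P\bigl(\|\p\Ark\|_{L^\infty},\|\p_t\hr\|_{L^\infty},\|\F\|_3,\|\FP\psir\|_1\bigr)\bigl(\|v\|_1^2+\|h\|_1^2+\textstyle\sum_j\|\GGr_j\|_1^2+1\bigr),
\end{equation}
together with a contribution from $\FP\psir$ on the right-hand side of the $\GGr_j$ equation. Gronwall then yields the doubling estimate on a possibly shorter interval $[0,T_\kk]$.

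The main obstacle I anticipate is not the energy estimate itself but verifying the symmetric hyperbolic framework rigorously for this particular system: the boundary $\Gamma$ is characteristic for the pressure-velocity block (the $N$-component of $v$ is not controlled by $h|_\Gamma=0$), and one must check that the boundary condition $h=0$ is indeed maximally nonnegative for the full augmented system including the $\GGr_j$ equations. This relies crucially on the fact that $\FP$ is tangential along $\Gamma$, so the $\GGr_j$ block is entirely characteristic with no boundary condition needed, and the coupling block contributes only the pressure flux which is annihilated by $h|_\Gamma=0$. Uniqueness follows from applying the same energy estimate to the difference of two solutions, whose right-hand side vanishes, giving the zero solution by Gronwall.
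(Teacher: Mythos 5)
Your proposal follows essentially the same route as the paper: replace $\eta$ by the unknowns $\FP\eta$, recast \eqref{elastollr} as a $13\times 13$ first-order symmetric hyperbolic system with symmetrizer $\mathrm{diag}(e'(\hr),1,\dots,1)$ and characteristic boundary, solve it in the Lax--Phillips framework, and recover $\eta$ afterwards from $\p_t\eta=v+\psir$. One remark on the existence step: the paper does not invoke the characteristic-boundary existence theorem as a black box but explicitly writes down the dual system and proves the $L^2$ bound without loss of derivatives for both the primal and the dual problem; the dual source picks up extra zero-order terms of the form $\dive\F_l\cdot w$, $\p_l\ark^{li}\theta$ from integrating $\FP$ and $\park$ by parts, and checking that these are harmless is where $\F_j\cdot N=0$ and Piola's identity are used a second time. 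If you cite Lax--Phillips you should at least record that the dual estimate holds.

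The more serious issue is the claim that ``after differentiating the system once in space, the same computation gives an $H^1$ estimate.'' This fails in the normal direction. Applying $\p_3$ to the momentum equation and testing with $\p_3 v$ produces, after integrating $\park$ by parts, the boundary integral $\ig \ark^{3i}\,\p_3 h\,\p_3 v_i\dS$, and neither $\p_3 h$ nor $\p_3 v$ vanishes on $\Gamma$ --- only $h$ and its tangential derivatives do. This term is of size $|\p_3 h|_{0}\,|\p_3 v|_{0}$, which is not controlled by $\|h\|_1\|v\|_1$ (traces of $H^1$ functions live only in $H^{1/2}(\Gamma)$), so the Gronwall argument does not close. The paper's $H^1$ estimate is structured precisely to avoid this: only the tangential derivative $\TP$ is commuted through the equation (the resulting boundary term carries $\TP h|_{\Gamma}=0$ and vanishes), the normal derivatives of $v$ and $\FP\eta$ are recovered from the div-curl decomposition of Lemma \ref{hodge} together with the divergence identities, and $\|h\|_1$ is obtained from the $L^2$ estimate of the linearized wave equation \eqref{hwavel0}. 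You need this tangential-plus-div-curl-plus-wave-equation structure even at the $H^1$ level; a direct full-gradient energy estimate breaks at the boundary.
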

\begin{flushright}
$\square$
\end{flushright}

\begin{prop}[\textbf{Uniform-in-$n$ estimates of the linearized approximation system \eqref{elastollr}}]\label{llenergy}
Fix $\kk>0$. There exists a time $T_\kk>0$, such that the unique strong solution $(\eta,v,h)$ to the linearized $\kk$-approximation system \eqref{elastollr} satisfies the following estimates
\begin{equation}
\sup_{0\leq t\leq T_\kk}\EEr_\kk(t)\leq P\left(\|v_0\|_4,\|\F\|_4,\|\h_0\|_4\right),
\end{equation}where
\begin{equation}\label{energykkll}
\begin{aligned}
\EEr_{\kk}(T)&:=\left\|\eta\right\|_4^2+\left\|v\right\|_4^2+\sum\limits_{j=1}^3\left\|\FP\eta\right\|_4^2+\left\|h\right\|_4^2\\
&+\left\|\p_t v\right\|_3^2+\sum\limits_{j=1}^3\left\|\FP\p_t\eta\right\|_3^2+\left\|\p_t h\right\|_3^2\\
&+\left\|\p_t^2v\right\|_2^2+\sum\limits_{j=1}^3\left\|\FP\p_t^2\eta\right\|_2^2+\left\|(e'(\hr))^{\frac12}\p_t^2 h\right\|_2^2\\
&+\left\|(e'(\hr))^{\frac12}\p_t^3 v\right\|_1^2+\sum\limits_{j=1}^3\left\|(e'(\hr))^{\frac12}\FP\p_t^3 \eta\right\|_1^2+\left\|e'(\hr)\p_t^3 h\right\|_1^2\\
&+\left\|e'(\hr)\p_t^4 v\right\|_0^2+\sum\limits_{j=1}^3\left\|e'(\hr)\FP\p_t^4 \eta\right\|_0^2+\left\|(e'(\hr))^{\frac32}\p_t^4 h\right\|_0^2.
\end{aligned}
\end{equation}
\end{prop}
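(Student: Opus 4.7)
The plan is to mimic the uniform-in-$\kk$ a priori estimate of Section \ref{apriorinkk}, now treating the ringed coefficients $\er,\ark,\Ark,\hr,\psir$ as known functions whose derivatives through appropriate order are controlled by the previous iterate's energy. I would begin with a bootstrap hypothesis $\sup_{[0,T_\kk]}\EEr_\kk^{(n)}\le M_0:=2P(\|v_0\|_4,\|\F\|_4,\|\h_0\|_4)$, which together with Lemma \ref{etapsi} and the smoothing construction bounds $\|\er\|_4,\|\psir\|_4,\|\FP\er\|_4,\|\FP\psir\|_4$ and their time derivatives up to order $4$ in terms of $M_0$. Morrey embedding applied as in Section \ref{justifyn} then guarantees that the Rayleigh-Taylor sign $-\p\hr/\p N\ge c_0/2$ persists on a $(\kk,n)$-independent interval $[0,T_\kk]$ provided the initial datum satisfies \eqref{sign}, and likewise $\|\text{Id}-\ark\|_3+\|\Jk^{(n)}-1\|_3\le\delta$.

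Next I would run the same div-curl-tangential decomposition (Lemma \ref{hodge}) on $v$ and $\FP\eta$. The curl evolution $\p_t(\curl_{\ark} v)-\sum_j\FP(\curl_{\ark}\FP\eta)=J_0$ is obtained by taking $\curl_{\ark}$ of the second equation of \eqref{elastollr}; the boundary contribution in the $H^3$-energy integration vanishes because $\F_j\cdot N=0$, and all commutators involve only ringed or current quantities that are $\EEr_\kk$- or $M_0$-controlled. For tangential estimates I would introduce the Alinhac good unknowns $\VV:=\tpl v-\tpl\ek\cdot\park v$ and $\HH:=\tpl h-\tpl\ek\cdot\park h$ adapted to the linearization, obtaining the identical evolution $\p_t\VV+\park\HH-\sum_j\FP(\tpl\FP\eta)=K_0$ with error $K_0$ controlled as in Section \ref{tgspace}. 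The boundary term now reads $-\ig(-\p h/\p N)\tpl\ek_k\ark^{3k}\ark^{3i}\VV_i\,dS$; here the subtle point is that the sign and cancellation structure rely on $h$ (the current iterate), not $\hr$, so I would justify $-\p h/\p N\ge c_0/2$ directly from the $H^4$-bound on $h$ in $\EEr_\kk$ plus a small-time argument, after which the correction-term cancellation against $\psir$ of Section \ref{tgspace} produces the bonus boundary energy $|\ark^{3i}\tpl\lkk\eta_i|_0^2$ even though it is not part of $\EEr_\kk$.

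The divergence of $v$ is directly $e'(\hr)\p_t h$, while that of $\FP\eta$ must be re-derived for the linearized system: differentiating $\p_t\eta=v+\psir$ in $\FP$ and commuting $\divr$ with $\FP$ yields a formula analogous to \eqref{divkkF} with all inner coefficients ringed. This reduces divergence control to the weighted norms $\|\swt\p_t h\|_3$ and $\sum_j\|\swt\FP h\|_3$ (with $\swt=\sqrt{e'(\hr)}$), which I then control by the Christodoulou-Lindblad elliptic estimate Lemma \ref{GLL} together with the linearized wave equation for $h$. This wave equation is obtained by taking $\divr$ of the velocity equation and combining with $\p_t$ of the continuity equation:
\begin{equation*}
e'(\hr)\p_t^2 h-\lapark h=\sum_{j=1}^3 e'(\hr)\FP^2 h+\mathcal{L},
\end{equation*}
where $\mathcal{L}$ collects source terms whose $L^2$-norm is controlled by $\EEr_\kk$ and $M_0$. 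The crucial $\sum_j\FP^2 h$ term on the right supplies the divergence energy of $\FP\eta$ exactly as in \eqref{reduceh}, and the reduction to $\p_t^3$-, $\p_t^2\FP$-, $\p_t\FP^2$-, and $(\F_k\cdot\p)^2\FP$-differentiated wave equations goes through unchanged because $\p_t,\FP$ are tangential on $\Gamma$ (no boundary integral) and the weight powers match the choices in $\EEr_\kk$.

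Finally, full-time-derivative norms $\|\p_t^r v\|_{4-r}$ and $\|\FP\p_t^r\eta\|_{4-r}$ are recovered algebraically from the second equation $\p_t v=-\park h+\sum_j\FP^2\eta$ and $\p_t(\FP\eta)=\FP v+\FP\psir$ as in \eqref{remainvF}, the $\FP\psir$ contributions being absorbed by $M_0$ through Lemma \ref{etapsi}. Summing all contributions produces a nonlinear Gr\"onwall inequality
\begin{equation*}
\EEr_\kk(T)\le\delta^2\bigl(\|v\|_4^2+\sum_j\|\FP\eta\|_4^2\bigr)+\PP_0+P(M_0)\int_0^T P(\EEr_\kk(t))\,dt,
\end{equation*}
which on a sufficiently small $T_\kk$ depending only on initial data yields the desired $n$-independent bound. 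The main obstacle I expect is the boundary cancellation in the tangential estimate: unlike the nonlinear case, the mismatch between the current $h$ (providing the Taylor sign) and the ringed $\ark,\psir$ (providing the cancellation with the correction term) requires a careful splitting so that the uncontrollable $\kk>0$ remnant of \eqref{tgB01} is again absorbed by the contribution of $\psir$ evaluated at the ringed coefficients, and the associated error involves only $\tpl\lkk\er$ rather than $\tpl\lkk\eta$, which are bounded by $M_0$ rather than $\EEr_\kk$.
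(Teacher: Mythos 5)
Most of your plan (curl evolution, divergence formula with ringed coefficients, the linearized wave equation for $h$, the Christodoulou--Lindblad reduction to tangentially differentiated wave equations, and the algebraic recovery of the time-derivative norms) matches the paper's proof in Section \ref{apriorill}. The genuine gap is in your treatment of the boundary term of the tangential estimate. You propose to reproduce the cancellation structure of Section \ref{tgspace} and extract the boundary energy $|\ark^{3i}\tpl\lkk\eta_i|_0^2$ using the Taylor sign of the current $h$. This cannot work: in the linearized system the boundary integrand is
\begin{equation*}
\ig \p_3 h\,\tpl\erk_{k}\,\ark^{3k}\ark^{3i}\bigl(\tpl\p_t\eta_i-\tpl\psir_i-\tpl\erk\cdot\park v_i\bigr)\dS,
\end{equation*}
where $\erk$, $\ark$, $\psir$ are built from the \emph{previous} iterate while $\eta$, $v$ are the current one. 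The main term $\tpl\erk_k\cdot\tpl\p_t\eta_i$ is a cross term between two different flow maps, so it is not $\tfrac12\tfrac{d}{dt}$ of a square of anything, and no choice of splitting or use of $\psir$ (which is designed to cancel terms involving $\vr$ and $\er$, not $v$ and $\eta$) restores this. This is precisely the obstruction the paper records at \eqref{b45}, and it is also why the boundary energy is deliberately absent from the functional $\EEr_\kk$ in \eqref{energykkll} --- a point your proposal overlooks.

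The paper's resolution is much simpler than what you attempt: since Proposition \ref{llenergy} only needs to be uniform in $n$, not in $\kk$, one gives up the cancellation entirely and bounds the boundary term crudely. Writing $\HHr|_\Gamma=-\tpl\erk_k\ark^{3k}\p_3 h$, integrating $\TP^{1/2}$ by parts, and applying Kato--Ponce together with the mollifier estimate $|\tpl\lkk^2\er|_{0.5}\lesssim\kk^{-1}|\er|_{4}$ yields a bound of the form $\kk^{-1}P(\|h\|_4,\|v\|_4,\|\er\|_4)$, which is perfectly acceptable for a fixed $\kk>0$. You should replace your boundary analysis by this argument; with that change the remainder of your scheme closes as you describe.
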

\begin{flushright}
$\square$
\end{flushright}

\begin{prop}[\textbf{Local well-posedness of the nonlinear $\kk$-approximation system \eqref{elastolkk}}]\label{nnlwp}
Fix $\kk>0$. There exists a time $T_\kk>0$ such that the nonlinear $\kk$-approximation system \eqref{elastolkk} has a unique strong solution $(\eta(\kk),v(\kk),h(\kk))$ in $[0,T_\kk]$ satisfying the following estimates
\begin{equation}
\sup_{0\leq t\leq T_\kk}\EE_\kk(t)\leq P\left(\|v_0\|_4,\|\F\|_4,\|\h_0\|_4\right).
\end{equation}
\end{prop}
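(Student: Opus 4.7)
The plan is to combine Propositions \ref{lllwp} and \ref{llenergy} via a standard Picard iteration argument. Starting from the trivial iterate $(\eta^{(0)}, v^{(0)}, h^{(0)}) = (\text{Id}, v_0, \h_0)$, one inductively defines $(\eta^{(n+1)}, v^{(n+1)}, h^{(n+1)})$ as the strong solution to the linearized $\kk$-approximation system \eqref{elastollr} with $(\er, \vr, \hr) = (\eta^{(n)}, v^{(n)}, h^{(n)})$. Proposition \ref{lllwp} provides existence and uniqueness on an a priori $n$-dependent time interval, but Proposition \ref{llenergy} upgrades this to existence on a common interval $[0,T_\kk]$ with the uniform-in-$n$ bound
\[
\sup_{n\in\N}\sup_{0\leq t\leq T_\kk}\EEr_\kk^{(n+1)}(t)\leq P(\|v_0\|_4,\|\F\|_4,\|\h_0\|_4),
\]
where $T_\kk$ and the polynomial depend on $\kk$ but not on $n$.

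The next step is to show strong convergence of the iterates in a lower-norm space. Setting $[f]^{(n)}:=f^{(n+1)}-f^{(n)}$ for $f\in\{\eta,v,h\}$ and subtracting consecutive instances of \eqref{elastollr}, one obtains a linear system for $([\eta]^{(n)},[v]^{(n)},[h]^{(n)})$ whose forcing involves differences such as $\ak^{(n)}-\ak^{(n-1)}$, $\psi^{(n)}-\psi^{(n-1)}$ and $e'(h^{(n)})-e'(h^{(n-1)})$, each controlled by $[\eta]^{(n-1)}$, $[v]^{(n-1)}$, $[h]^{(n-1)}$ via the uniform $H^4$-bound on the previous iterate and the smoothing properties of $\lkk$ in Lemma \ref{tgsmooth}. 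Performing an $H^1$-type energy estimate, structurally analogous to the $L^2$-estimate that underlies Proposition \ref{lllwp} but one derivative higher, yields
\[
\sup_{0\leq t\leq T_\kk}\left(\|[v]^{(n)}\|_1^2+\sum_{j=1}^3\|\FP[\eta]^{(n)}\|_1^2+\|[h]^{(n)}\|_1^2\right)\leq C_\kk T_\kk\sup_{0\leq t\leq T_\kk}\left(\|[v]^{(n-1)}\|_1^2+\sum_{j=1}^3\|\FP[\eta]^{(n-1)}\|_1^2+\|[h]^{(n-1)}\|_1^2\right),
\]
where $C_\kk$ depends only on the polynomial in the uniform bound and on $\kk$. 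Shrinking $T_\kk$ further so that $C_\kk T_\kk\leq 1/2$ makes the iteration a strict contraction in this $H^1$-type norm.

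With the uniform $H^4$-bound and strong low-norm convergence in hand, Sobolev interpolation gives strong convergence in $H^s$ for all $s<4$, which is enough to pass to the limit in every nonlinear term of the approximate system \eqref{elastolkk}, including $\pak h$ and $\FP^2\eta$ as well as the correction term $\psi=\psi(\eta,v)$ (whose continuity in its arguments follows from Lemma \ref{etapsi}). The limit $(\eta(\kk),v(\kk),h(\kk))$ is then a strong solution satisfying the prescribed initial conditions, and by weak-$\ast$ lower semicontinuity of the Sobolev norms it inherits the bound $\sup_{0\leq t\leq T_\kk}\EE_\kk(t)\leq P(\|v_0\|_4,\|\F\|_4,\|\h_0\|_4)$ of Proposition \ref{llenergy}. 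Uniqueness follows from a contraction estimate identical to the one for the differences of iterates applied to the difference of two hypothetical solutions.

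The main obstacle lies in the contraction step: the difference equation for $[v]^{(n)}$ contains the term $-(\ak^{(n)}-\ak^{(n-1)})\p h^{(n+1)}+\sum_{j=1}^3\FP^2[\eta]^{(n)}$, so closing the $H^1$-estimate requires simultaneously controlling $\|\p h^{(n+1)}\|_{L^\infty}$ from the uniform $H^4$-bound and handling the deformation tensor contribution by testing against $[v]^{(n)}$ and integrating $\FP$ by parts, using the boundary condition $\F_j\cdot N=0$ as in Section \ref{tgspace}. The correction term $\psi^{(n)}-\psi^{(n-1)}$ must be controlled through the elliptic estimates and mollifier properties from Lemma \ref{etapsi} and Lemma \ref{tgsmooth}, which is where the $\kk$-dependence of $C_\kk$ essentially enters. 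Since $\kk>0$ is fixed throughout this step, these $\kk$-dependent constants are acceptable; the uniform-in-$\kk$ structure is recovered only when we pass from the solution of the nonlinear $\kk$-approximation system back to the original system \eqref{elastol}, which is precisely the content of Proposition \ref{apriorikk}.
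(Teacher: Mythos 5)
Your overall architecture is the same as the paper's: Picard iteration built on Propositions \ref{lllwp} and \ref{llenergy}, a contraction estimate for consecutive differences in a norm weaker than the uniform one, and passage to the limit. The gap is in the choice of contraction norm. You propose to contract in an $H^1$-type norm and then upgrade by interpolation with the uniform $H^4$ bound, but the $H^1$ difference estimate does not close \emph{linearly} in the difference quantities. The obstruction is the difference of smoothed cofactor matrices $[\ak]^{(n)}=\ak^{(n)}-\ak^{(n-1)}\sim \ak\,\p[\ek]^{(n)}\,\ak$: since $\ek$ is defined by the elliptic problem \eqref{lkklkk} with \emph{interior} source $-\Delta\eta$, the tangential mollifier does not help in the interior and one has genuinely $\|[\ak]^{(n)}\|_{s}\lesssim\|[\eta]^{(n)}\|_{s+1}$. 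In the $H^1$ tangential estimate the forcing terms $\nabla_{[\ak]^{(n)}}h^{(n)}$ and $\dive_{[\ak]^{(n)}}v^{(n)}$ receive one derivative, producing factors like $\|\TP[\ak]^{(n)}\|_0\,\|\p v^{(n)}\|_{L^\infty}\lesssim\|[\eta]^{(n)}\|_2$, which is not controlled by $\|[v]^{(n)}\|_1$ even through $[\eta]^{(n)}=\int_0^t([v]^{(n)}+[\psi]^{(n)})$. Interpolating $\|[\eta]^{(n)}\|_2\lesssim\|[\eta]^{(n)}\|_1^{2/3}\|[\eta]^{(n)}\|_4^{1/3}$ with the uniform bound only yields a \emph{sublinear} recursion of the type $a_n\leq C_\kk T\,a_{n-1}^{2/3}$, whose iterates converge to the nonzero fixed point $(C_\kk T)^3$ rather than to $0$; so the geometric decay you need for convergence of the iterates is lost.

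This is precisely why the paper performs the difference estimate at the $H^3$ level — one derivative below the top-order regularity — with the energy \eqref{ediffl} also carrying the weighted time-derivative components down to $\p_t^3$, and why it redeploys the full machinery (Alinhac good unknowns $[\VV]^{(n)},[\HH]^{(n)}$ at the $\TP^3$ level, div--curl decomposition, and the difference wave equation for $[h]^{(n)}$) rather than a bare energy identity. At that level every term in which derivatives fall on $[\ak]^{(n)}$ can be paired as (at most $H^2$ of $[\ak]^{(n)}$, hence $H^3$ of $[\eta]^{(n)}$) times ($L^\infty$ or low norms of the background controlled by the uniform $H^4$ bound), and the good-unknown substitution absorbs the top-order occurrence $\TP^3\p[\ek]^{(n)}\cdot\pak h$ that would otherwise cost four derivatives of $[\eta]^{(n)}$. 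Your proposal also omits the time-derivative components of the difference energy, which are needed because $\dive_{\ak^{(n)}}[v]^{(n)}$ is tied to $\p_t[h]^{(n)}$ and the estimate of $[h]^{(n)}$ must be routed through the difference of the wave equations. To repair the argument, replace the $H^1$ contraction norm by the paper's $[\EE]^{(n)}$ of \eqref{ediffl} (or any norm at exactly one derivative below \eqref{energykkll}); the rest of your outline, including the limit passage and the uniqueness argument, then goes through.
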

\begin{flushright}
$\square$
\end{flushright}

\subsection{Failure of fixed-point argument to solve linearized system}\label{fail}

Before going to the proof, we would like to point out the specific reasons for the failure of fixed-point argument. In elastodynamics (resp. MHD), the presence of Cauchy-Green tensor $\sum\limits_{j=1}^3\FP^2\eta$ (resp. Lorentz force) makes the second equation of \eqref{elastollr} lose one derivative, which tells the essential difference from Euler equations. We are able to avoid such problem in the nonlinear a priori estimates because one can integrate $\FP$ by parts in curl and tangential estimates, and re-produce the divergence by $\diva \FP\eta\approx -e'(h)\FP h+\int_0^T$(controllable terms)$\dt$. But in the fixed-point argument (for both Banach fixed-point theorem and Tikhonov fixed-point theorem) to solve the linearized equation, we have to estimate $\|v\|_4$ by $\|v\|_4=\|v_0\|_4+\int_0^T\|\p_t v(t)\|_4\dt$ and thus the $H^4$-norms of $\sum\limits_{j=1}^3\FP^2\eta$ and $\park h$ are necessary to be controlled. In Gu-Wang \cite{gu2016construction} and Zhang \cite{ZhangYH} for incompressible MHD and elastodynamics, they introduced a directional viscosity term in the equation of flow map as follows which does compensate the derivative loss caused by the Cauchy-Green tensor. 
\begin{equation}\label{etamueqeq}
\p_t\eta-\mu\sum\limits_{j=1}^3\FP^2\eta=v+\psir
\end{equation} 

However, one also has to derive the estimate for passing the vanishing viscosity limit $\mu\to 0_+$ to \eqref{elastollr}. When estimating the $H^3$-norm of $\divr(\FP\eta)$, one needs to take $\p^3\diva\FP$ in \eqref{etamueqeq} and thus the RHS yields $\p^3\divr(\FP v)$ which loses one derivative. If the fluid is incompressible, then one can directly commute $\divr$ with $\FP$ to eliminate such higher order term. (See Gu-Wang \cite[(5.38)]{gu2016construction} and Zhang \cite[(3.29)]{ZhangYH}.)  In the compressible case, we have to find other ways to control the divergence. Recall that our previous a priori estimate for the nonlinear system \eqref{elastolkk} strongly depends on the fact that the difference between $\diva (\FP\eta)$ and $-e'(h)\FP h$ is controllable. But now the divergence of deformation tensor becomes
\[
\divr\FP\eta=-e'(\hr)\FP h+\int_0^T\text{controllable terms}+\mu\int_0^T\sum_{k=1}^3\divr\FP(\F_k\cdot\p)^2\eta,
\]where we have to control the $H^3$-norm of $\mu\divr\FP(\F_k\cdot\p)^2\eta$. Unfortunately, the directional viscosity only gives us the $L_t^2H_x^4$ norm of $\mu(\F_k\cdot\p)^2\eta$ and thus the expected ``error term" cannot be controlled. Therefore, the ``vanishing directional viscosity method" in \cite{gu2016construction,ZhangYH} is obviously not applicable to compressible elastodynamics, which leads to the failure of fixed-point argument.

\subsection{Hyperbolic approach to solve the linearized approximation system}\label{hyperbolic}

First we need the following bounds for the coefficients $\ar,\er,\Jr$ provided that they hold for $\{(\eta^{(k)},v^{(k)},h^{(k)})\}_{0\leq k\leq n-1}$. 
\begin{lem}\label{etapsir}
For fixed $\kk>0$, there exists $0<\delta\ll 1$ and $A>0$ such that $\forall T\in(0,T_\kk)$
\begin{align}
\psir,\p_t\psir,\FP\psir,\p_t\er,\FP\er\in& L^{\infty}([0,T];H^4(\Omega)),\\
\p_t^2\psir,\p_t\FP\psir,\p_t^2\er,\p_t\FP\er,\p_t\hr\in& L^{\infty}([0,T];H^3(\Omega)),\\
\sqrt{e'(\hr)}\p_t^3\psir,\p_t^2\FP\psi,\p_t^3\er,\p_t^2\FP\er,\sqrt{e'(\hr)}\p_t^2\hr\in& L^{\infty}([0,T];H^2(\Omega)),\\
e'(\hr)\p_t^4\psir,\sqrt{e'(\hr)}\p_t^3\FP\psir,\sqrt{e'(\hr)}\p_t^4\er,\sqrt{e'(\hr)}\p_t^3\FP\er,e'(\hr)\p_t^3\hr\in& L^{\infty}([0,T];H^1(\Omega)),\\
e'(\hr)\p_t^4\FP\psir,e'(\hr)\p_t^5\er,e'(\hr)\p_t^4\FP\er,(\wt)^{\frac32}\p_t^4\hr\in& L^{\infty}([0,T];L^2(\Omega)).\\
\|\Jr-1\|_3+\|\Jrk-1\|_3+\|\text{Id}-\ar\|_3+\|\text{Id}-\ark\|_3\leq&\delta.
\end{align}
\end{lem}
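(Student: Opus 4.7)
The plan is to recognize that the claimed estimates are, modulo notation, precisely the content of Lemma \ref{etapsi} applied to the $n$-th Picard iterate $(\er,\vr,\hr)$ rather than to the nonlinear solution. Since Picard iteration proceeds inductively, at step $n+1$ I may assume the uniform bound $\sup_{[0,T_\kk]}\EEr_\kk^{(n)}\leq P(\|v_0\|_4,\|\F\|_4,\|\h_0\|_4)$, where $\EEr_\kk^{(n)}$ denotes the energy \eqref{energykkll} evaluated on the $n$-th iterate. This supplies all quantities involving $\er,\vr,\hr$ appearing on the right-hand sides of Lemma \ref{etapsi}, and the proof reduces to rerunning those arguments with this input.

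Concretely, for $\erk$ I apply standard elliptic estimates to the harmonic extension problem \eqref{lkklkk} with boundary datum $\lkk^2\er$, using the mollifier regularity \eqref{lkk1}, and commute $\FP$ through $-\Delta$ with a first-order commutator bounded by $\|\F\|_4\|\er\|_4$. Time derivatives follow from the auxiliary relation $\p_t\er=\vr+\psir$ coming from the previous iterate's flow-map equation. For $\psir$ the boundary data in \eqref{psi} are built from $\er,\vr,\ark$; I apply Lemma \ref{harmonictrace} to reduce the interior norm to the boundary, the Littlewood-Paley isomorphism $|\TL^{-1}\mathbb{P}_{\neq 0}f|_s\approx|f|_{\dot H^{s-2}}$ to invert $\TL$, and the mollifier commutator estimates \eqref{lkk4}--\eqref{lkk6}. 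The bounds on $\ark,\Jrk$ then follow from the algebraic identities $\ark=[\p\erk]^{-1}$ and $\Jrk=\det[\p\erk]$ via Sobolev product estimates.

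For the smallness assertion I write
\[
\ar(T)-\text{Id}=-\int_0^T \ar:\p(\vr+\psir):\ar\,dt,\qquad \Jr(T)-1=\int_0^T \Jr\,\divr(\vr+\psir)\,dt,
\]
and bound the $H^3$-norm of each right-hand side by $T_\kk\cdot P\!\left(\sup_{[0,T_\kk]}\EEr_\kk^{(n)}\right)$, which is $\leq\delta$ once $T_\kk$ is chosen small relative to the energy bound. The same identity with $\p_t\erk$ obtained by differentiating \eqref{lkklkk} in $t$ controls $\ark-\text{Id}$ and $\Jrk-1$.

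The main obstacle I anticipate is correct bookkeeping of the weight powers across high-order time derivatives of $\psir$ and $\erk$: each $\p_t$ falling on $\hr$ (or on the coefficient $e'(\hr)$ appearing in the boundary data of $\psir$) loses a half-power of $e'(\hr)$, and these losses must precisely balance the weight exponents appearing in \eqref{energykkll}. The physical constraint \eqref{sound}, namely $|e^{(k)}(\hr)|\lesssim|e'(\hr)|^k$, is exactly what makes this balance work: whenever a derivative lands on a nonlinear function of $\hr$, the resulting factor of $e^{(k)}(\hr)$ supplies the missing powers. Once this weighting is verified term by term, all estimates close and the lemma follows without further ingredients beyond those used in Lemma \ref{etapsi}.
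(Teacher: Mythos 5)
Your proposal is correct and follows essentially the same route as the paper, which itself proves this lemma simply by repeating the argument of Lemma \ref{etapsi} for the $n$-th iterate (elliptic estimates for the harmonic extensions of $\lkk^2\er$ and of the boundary data of $\psir$, plus the fundamental-theorem-of-calculus identities for $\text{Id}-\ar$ and $\Jr-1$ as in Section \ref{justifyn}), under the inductive hypothesis that the uniform bound \eqref{energykkll} holds for the previous iterates. The only slip is notational: the flow-map relation for the $n$-th iterate is $\p_t\er=\vr+\psi^{(n-1)}$ rather than $\p_t\er=\vr+\psir$, but the needed bounds on $\psi^{(n-1)}$ are equally supplied by the induction, so this does not affect the argument.
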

\begin{proof}
The proof follows in the same way as Lemma \ref{etapsi} and the justification of \eqref{small1} in Section \ref{justifyn}.
\end{proof}

Now let us rewrite the system \eqref{elastollr} to be the following system by denoting $\FFr^i_j=\FP\eta^i$ and $\FFr_j=(\FP\eta^1,\FP\eta^2,\FP\eta^3)$, $\FFr^i=\left((\F_1\cdot\p)\eta_i,(\F_2\cdot\p)\eta_i,(\F_3\cdot\p)\eta_i\right)$

\begin{equation}\label{elastolr}
\begin{cases}
\p_t\FFr_{j}=\FP v+\psir~~~&\text{in }\Omega,\\
\p_t v+\park h-\sum\limits_{j=1}^3\FP\FFr_j=0~~~&\text{in }\Omega,\\
e'(\hr)\p_t h+\divr v=0~~~&\text{in }\Omega,\\
\dive \F_j:=\p_k\F_{kj}=-e'(\h_0)\FP\h_0~~~&\text{in }\Omega,\\
h=0,~\F_j\cdot N=0~~~&\text{on }\Gamma,\\
(\eta,v,h)|_{t=0}=(\text{Id},v_0,\h_0).
\end{cases}
\end{equation} Note that once $\FFr,v,h$ are solved, the flow map $\eta$ is automatically solved by $\p_t\eta=v+\psir$. Therefore it remains to solve \eqref{elastolr}. We write \eqref{elastolr} into a first-order symmetric hyperbolic system of the variable $\XX\in\R^{13}$ defined by
\begin{equation}\label{XX}
\XX:=\left[h,v,\FFr^1,\FFr^2,\FFr^3\right]^{\top}
\end{equation}
\begin{equation}\label{XXeq}
A_0(t,y)\p_t \XX+\sum_{l=1}^3A_l(t,y)\p_l \XX =f.
\end{equation}Here
{\small{\begin{equation}\label{XXf}
f(t,y)=\left[\mathbf{0}_{1\times 4},(\F_1\cdot\p)\psir_1,(\F_2\cdot\p)\psir_1,(\F_3\cdot\p)\psir_1,(\F_1\cdot\p)\psir_2,(\F_2\cdot\p)\psir_2,(\F_3\cdot\p)\psir_2,(\F_1\cdot\p)\psir_3,(\F_2\cdot\p)\psir_3,(\F_3\cdot\p)\psir_3\right]^{\top},
\end{equation} 
}}
\begin{equation}\label{A0}
A_0(t,y)=\text{diag}\left[e'(\hr),1,\cdots,1\right],
\end{equation} 
and $A_l(t,y)$ equals to the following matrix 
\begin{equation}\label{Al}
\renewcommand\arraystretch{2}
\begin{bmatrix}
0&\ark^{l1}&\ark^{l2}&\ark^{l3}&0&0&0&0&0&0&0&0&0 \\
\ark^{l1}& & & &-\F_{l1}&-\F_{l2}&-\F_{l3}&0&0&0&0&0&0\\
\ark^{l2}& & \mathbf{O}_{3\times3}&&0&0&0&-\F_{l1}&-\F_{l2}&-\F_{l3}&0&0&0\\
\ark^{l3}& & & &0&0&0&0&0&0&-\F_{l1}&-\F_{l2}&-\F_{l3}\\
0&-\F_{l1}&0&0&&&&&&&&&&\\
0&-\F_{l2}&0&0&&&&&&&&&&\\
0&-\F_{l3}&0&0&&&&&&&&&&\\
0&0&-\F_{l1}&0&&&&&&&&&&\\
0&0&-\F_{l2}&0&&&&&&\mathbf{O}_{9\times 9}&&&&\\
0&0&-\F_{l3}&0&&&&&&&&&&\\
0&0&0&-\F_{l1}&&&&&&&&&&\\
0&0&0&-\F_{l2}&&&&&&&&&&\\
0&0&0&-\F_{l3}&&&&&&&&&&\\
\end{bmatrix}.
\end{equation}

Then we find that, the normal projection of the coefficient matrix 
\[ A_{N}:=\sum_{l=1}^3 A_l N_l=\pm \begin{bmatrix}
0&\ark^{l1}&\ark^{l2}&\ark^{l3}&&&\\
\ark^{l1}& & & &&&\\
\ark^{l2}& & \mathbf{O}_{3\times3}&&&&\\
\ark^{l3}& & & &&&\\
&&&&&&&\\
&&&&&\mathbf{O}_{9\times 9}&\\
&&&&&&&\\
\end{bmatrix}\text{ similar to }\pm
\begin{bmatrix}
0&0&0&1&&&\\
0& & & &&&\\
0& & \mathbf{O}_{3\times3}&&&&\\
1& & & &&&\\
&&&&&&&\\
&&&&&\mathbf{O}_{9\times 9}&\\
&&&&&&&\\
\end{bmatrix}
\]which is of constant rank and singular on the boundary. So the linearized system \eqref{elastolr} is indeed of characteristic boundary conditions. By the argument in Lax-Phillips \cite{lax60}, we need to prove the following things in order for the existence of $L^2$-solution to \eqref{elastolr}, 
\begin{enumerate}
\item Derive the $L^2$-a priori bound for \eqref{XXeq} without loss of regularity.
\item Find the dual problem of \eqref{XXeq}.
\item Derive the $L^2$-a priori bound for the dual problem without loss of regularity.
\end{enumerate}

The $L^2$-a priori bound for \eqref{XXeq} is quite straightforward. We take $L^2$ inner product of $v$ and the second equation of \eqref{elastolr} to get
\begin{equation}
\frac{1}{2}\frac{d}{dt}\io|v|^2\dy+\io\left(\park h\right)\cdot v\dy-\sum_{l=1}^3(\F_l\cdot\p)\FFr^i\cdot v_i\dy=0
\end{equation}

Integrating by parts in the second and third term, and invoking the boundary conditions $h=0,\F\cdot N=\mathbf{0}$, we know
\begin{equation}
\io\left(\park h\right)\cdot v\dy=\frac12\frac{d}{dt}\io e'(\hr)|h|^2\dy-\frac12\io e''(\hr)\p_t\hr|h|^2\dy-\io\p_l\ark^{li} v_i\cdot h\dy,
\end{equation} and
\begin{equation}
\begin{aligned}
&-\sum_{l=1}^3(\F_l\cdot\p)\FFr^i\cdot v_i\dy=\sum_{l=1}^3\io\FFr^i\cdot(\F_l\cdot\p)v_i\dy+\io\left(\dive \F_l\right)\FFr\cdot v\dy\\
=&\sum_{l=1}^3\frac12\frac{d}{dt}\io\left|(\F_l\cdot\p)\eta\right|^2\dy-\sum_{l=1}^3\io\FFr^i\cdot(\F_l\cdot\p)\psir_i\dy+\io\left(\dive \F_l\right)\FFr\cdot v\dy.
\end{aligned}
\end{equation}

Therefore, we have
\begin{equation}
\begin{aligned}
\frac12\frac{d}{dt}\left(\left\|v\right\|_0^2+\left\|\sqrt{e'(\hr)}h\right\|_0^2+\sum_{l=1}^3\left\|\FFr_l\right\|_0^2\right)\lesssim \|\p\F\|_{L^{\infty}}\|\FFr\|_0\|v\|_0+\sum_{l=1}^3\|\FFr\|_0\left\|(\F_l\cdot\p)\psi\right\|_0+\frac12\|e''(\hr)\p_t\hr\|_{L^{\infty}}\|h\|_0^2+\|\p\ark\|_{L^{\infty}}\|v\|_0\|h\|_0,
\end{aligned}
\end{equation}and thus by Gronwall inequality we are able to get the $L^2$-a priori bound
\begin{equation}\label{XXL2}
\left\|v\right\|_0^2+\left\|\sqrt{e'(\hr)}h\right\|_0^2+\sum_{l=1}^3\left\|\FFr_l\right\|_0^2\bigg|^T_0\lesssim C\left(\|\F\|_3,\|e''(\hr)\p_t\hr\|_{L^{\infty}},\|\p\ark\|_{L^{\infty}}\right)\|f\|_{0}.
\end{equation}

Next we derive the dual problem of \eqref{XXeq}. We introduce the ``test function" $\YY\in\R^{13}$ as the variable of the dual problem by 
\begin{equation}
\YY:=\left[\theta,w,\GGr^1,\GGr^2,\GGr^3\right]^{\top},
\end{equation}where the definition of $\GGr^i$ is the similar as $\FFr^i$.

Testing \eqref{XXeq} with $\YY$ under space-time intergal, we can get the following first-order hyperbolic system 
\begin{equation}\label{YYeq}
A_0(t,y)\p_t\YY+\sum_{l=1}^3A_l(t,y)\p_l\YY=f^*,
\end{equation}
where $f^*:=\left[f^*_1,\cdots,f^*_{13}\right]^{\top}\in\R^{13}$ is defined by
\begin{align*}
f^*_1=&-\p_l\ark^{li}w_i\\
[f^*_2,f^*_3,f^*_4]=&\left[\sum_{l=1}^3\dive \F_l\cdot \GGr_1^l-\p_l\ark^{l1}\theta,\sum_{l=1}^3\dive \F_l\cdot \GGr_2^l-\p_l\ark^{l2}\theta,\sum_{l=1}^3\dive \F_l\cdot \GGr_3^l-\p_l\ark^{l3}\theta\right]\\
[f^*_5,f^*_6,f^*_7]=&\left[-\dive\F_1 w^1+(\F_1\cdot\p\psi^1),-\dive\F_2 w^1+(\F_2\cdot\p\psi^1),-\dive\F_3 w^1+(\F_3\cdot\p\psi^1)\right]\\
[f^*_8,f^*_9,f^*_{10}]=&\left[-\dive\F_1 w^2+(\F_1\cdot\p\psi^2),-\dive\F_2 w^2+(\F_2\cdot\p\psi^2),-\dive\F_3 w^2+(\F_3\cdot\p\psi^2)\right]\\
[f^*_{11},f^*_{12},f^*_{13}]=&\left[-\dive\F_1 w^3+(\F_1\cdot\p\psi^3),-\dive\F_2 w^3+(\F_2\cdot\p\psi^3),-\dive\F_3 w^3+(\F_3\cdot\p\psi^3)\right].
\end{align*}

Compared with the expression of $f$ in \eqref{XXf}, the extra terms are all cause by integrating $\FP$ by parts and $\dive\F\neq 0$ and are all of the form $\dive \F\cdot w,~\p\ark\cdot w,~\p\ark\cdot\theta$ or $\dive \F\cdot \GGr$. Therefore, we can exactly mimic the proof for \eqref{XXeq} to get the $L^2$-a priori bound with no loss of regularity of \eqref{YYeq} as
\begin{equation}\label{YYL2}
\left\|w\right\|_0^2+\left\|\sqrt{e'(\hr)}\theta\right\|_0^2+\sum_{l=1}^3\left\|\GGr_l\right\|_0^2\lesssim C\left(\|\F\|_3,\|e''(\hr)\p_t\hr\|_{L^{\infty}}\right)\|f^*\|_{0}.
\end{equation}

Combining \eqref{XXL2} and \eqref{YYL2}, we know the system \eqref{elastolr} and thus \eqref{elastollr} has a $L^2$-solution. We are able to verify it is the unique strong solution by $H^1$-estimates. Here we only show the tangential control, while the normal derivative can be estimated by div-curl decomposition which is exactly the same as what will be done in Section \ref{apriorill}.

The tangential part of $H^1$-estimate does not need Alinhac good unknown method because the coefficient $\Ark$ is now $C^{\infty}$ and $\kk>0$ is fixed. Taking $\TP$ in the second equation of \eqref{elastollr}, we get
\begin{equation}\label{tgll1}
\frac{1}{2}\frac{d}{dt}\io\left|\TP v\right|^2\dy=-\io \TP v_i\TP\left(\ark^{li}\p_l h\right)\dy+\sum_{j=1}^3\io\TP\left(\FP^2\eta\right)\cdot \TP v\dy
\end{equation}

Integrating $\FP$ by parts in the second term yields that
\begin{equation}\label{tgll2}
\begin{aligned}
&\io\TP\left(\FP^2\eta\right)\cdot \TP v\dy\\
=&-\io\TP\left(\FP\eta\right)\cdot \TP\left(\FP\p_t\eta\right)\dy+\io\TP\left(\FP\eta\right)\cdot \TP\left(\FP\psir\right)\dy+\io(\dive \F_j)\TP\left(\FP\eta\right)\cdot \TP v\dy\\
&+\io\left[\TP,\FP\right]\FP\eta\cdot\TP v\dy+\io\FP\eta\cdot\left[\TP,\FP\right]\TP v\dy\\
\lesssim&-\frac12\frac{d}{dt}\left\|\TP\left(\FP\eta\right)\right\|_0^2+P\left(\|\FP\eta\|_1,\|\FP\psir\|_1,\|\p\F\|_{L^{\infty}},\|v\|_1\right)
\end{aligned}
\end{equation}

Integrating by parts in the first term and invoking Piola's identity gives that
\begin{equation}\label{tgll3}
\begin{aligned}
&-\io \TP v_i\TP\left(\ark^{li}\p_l h\right)\dy\\
=&\io(\divr v)\TP h\dy-\ig\TP v_i\ark^{li}N_l\underbrace{\TP h}_{=0} \dS\\
&-\io\TP v_i\cdot\left[\TP,\ark^{li}\right]\p_l h\dy-\io\left[\TP,\ark^{li}\right]\p_l v_i\cdot \TP h\dy+\io\TP v_i\p_l\ark^{li}\TP h\dy\\
\lesssim&-\frac12\frac{d}{dt}\left\|\sqrt{e'(\hr)}\TP h\right\|_0^2+P\left(\|\TP\Ark\|_{L^{\infty}},\|v\|_1,\|h\|_1,\|\p_t \hr\|_{L^{\infty}}\right)
\end{aligned}
\end{equation}

Combining \eqref{tgll1}-\eqref{tgll3}, we can get the $H^1$-tangential estimates of \eqref{elastollr}. The div-curl estimate follows in the same way as in the Section \ref{apriorill} so we omit the proof here. Then $\|h\|_1$ can be estimated by the linearized wave equation derived by taking divergence in the second equation of \eqref{elastollr}. Such step is a direct consequence of $L^2$-estimate of \eqref{hwavel0} so we also skip the proof. Therefore we get the $H^1$-estimates without loss of regularity for the linearized $\kk$-approximation problem \eqref{elastollr}, which demonstrates the solution we constructed above is a strong solution and also unique. Proposition \ref{lllwp} is proven.
\subsection{Uniform estimates of the linearized approximation system}\label{apriorill}

Now we inductively prove the uniform-in-$n$ estimates for the linearized $\kk$-approximation system \eqref{elastollr}. WSuppose we have already have the energy estimates for $0\leq k\leq n$ and thus Lemma \ref{etapsir} holds true for the coefficients of $(n+1)$-th linearized system.

\subsubsection{Estimates of velocity}

The estimates of velocity is still based on the div-curl-tangential estimates and elliptic estimates of $h$. First we have
\begin{equation}\label{vlhodge}
\|v\|_4\lesssim\|v\|_0+\|\dive v\|_3+\|\curl v\|_3+\left|\TP v\cdot N\right|_{5/2}.
\end{equation}

\paragraph*{Curl estimates:}

The curl estimate follows the same way as in Section \ref{divcurl}. Taking $\curlr$ in the second equation of \eqref{elastolr} we get
\begin{equation}\label{curll0}
\p_t\left(\curlr v\right)-\sum\limits_{j=1}^3\FP\left(\curlr(\FP\eta)\right)=\curl_{\ark_t} v+\sum\limits_{j=1}^3\left[\curlr,\FP\right]\FP\eta.
\end{equation} Then we take $\p^3$, multiply $\p^3\curlr v$, integrate by parts and invoke $\p_t\eta=v+\psir$ to get
\begin{equation}\label{curlrvF}
\begin{aligned}
\left\|\curl v(T)\right\|_3^2+\sum_{j=1}^3\left\|\curl \FP\eta(T)\right\|_3^2\lesssim\delta^2\left(\|v(T)\|_4^2+\sum_{j=1}^3\left\|\FP\eta\right\|_4^2\right)+\PP_0+\int_0^T P(\EEr_{\kk}(t))\dt.
\end{aligned}
\end{equation}

\paragraph*{Tangential estimates:}

The boundary term in \eqref{vlhodge} is again reduced to $\|\TP^4 v\|_0$. We still use the Alinhac good unknown method by introducing
\[
\VVr:=\tpl v-\tpl\erk\cdot\park v,~~\HHr:=\tpl h-\tpl\erk\cdot\park h.
\]
Applying $\tpl$ to the second equation in the linearization system \eqref{elastolr}, one gets
\begin{equation}
\label{goodlinearl} \p_t\VVr-\sum_{j=1}^3\FP\left(\tpl\FP\eta\right)+\park \HHr=\p_t(\tpl \erk\cdot\park v)-\mathring{C}(h)+\sum\limits_{j=1}^{3}\left[\tpl,\FP\right](\FP\eta),
\end{equation}subject to the boundary condition
\begin{equation}\label{bdryrgood}
\HHr=-\tpl \erk_{\beta}\ark^{3\beta}\p_3 h~~~\text{ on }\Gamma,
\end{equation}and the corresponding compressibility condition
\begin{equation}\label{divrgood}
\park\cdot\VVr=\tpl(\divr v)-\mathring{C}^{\alpha}(v_{\alpha}),~~~\text{in }\Omega.
\end{equation}
Here for any function $f$, the comuutator $\mathring{C}(f)$ is defined in the same way as before but replacing $\ak$ by $\ark$:
\begin{equation}\label{alinhacar}
\tpl(\park f)=\park \mathring{\mathbf{f}}+\mathring{C}(f),
\end{equation} with 
\begin{equation}\label{alihanccr}
\|\mathring{C}(f)\|_0\lesssim P(\|\er\|_4)\|\p f\|_3.
\end{equation} Here $\mathring{\mathbf{f}}$ is the Alinhac good unknown for $f$.

Similarly we have
\begin{equation}\label{goodtpll}
\|\TP^4 f(t)\|_0\lesssim\|\mathring{\mathbf{f}}\|_0+P(\|f(0)\|_3)+P(\|\er\|_4)\int_0^t P(\|\p_t f(\tau)\|_3)~d\tau.
\end{equation}

Now we take $L^2$ inner product between \eqref{goodlinearl} and $\VVr$ to get the followin analogous estimates
\begin{equation}\label{tgs1l}
\frac{1}{2}\frac{d}{dt}\io |\VVr|^2\dy+\sum_{j=1}^3\io\tpl\left(\FP\eta\right)\cdot\FP\VVr=-\io \park \HHr\cdot \VVr\dy+\io(\text{RHS of }\eqref{goodlinearl})\cdot\VVr\dy,
\end{equation}where ``RHS" can be directly controlled as in the nonlinear counterpart. As for the first term, we integrate by parts to get 
\begin{equation}\label{tgs2l}
-\io \park \HHr\cdot \VVr\dy=-\ig \ark^{3i}\VVr_{i} \HHr\dS+\io \HHr(\park\cdot\VVr)\dy+\io\p_l\ark^{li}\HHr\VVr_{i}\dy,
\end{equation}where the second and the third term can be controlled in the same way as in Section \ref{tgspace}.

For the boundary term in \eqref{tgs2l}, we no longer need to plug the precise form of $\psir$ into it and find the subtle cancelltaion as in Section \ref{tgspace} because the energy estimate is not required to be $\kk$-independent. Instead, we integrate $\TP^{1/2}$ by parts then apply Kato-Ponce inequality \eqref{product} and Sobolev embedding ${H}^{0.5}(\T^2)\hookrightarrow L^4(\T^2)$ to get
\begin{equation}\label{Bl}
\begin{aligned}
-\ig \ark^{3i}\VVr_{i} \HHr\dS&=\io \left(\frac{\p h}{\p N}\right)\tpl\left(\lkk^2\er_{k}\right)\ark^{3k}\ark^{3i}\VVr_{i}\dS \\
&\lesssim\left(\left|\p_3 h\ark^{3k}\ark^{3i}\right|_{L^{\infty}}\left|\tpl(\lkk^2\er_{k})\right|_{0.5}+\left|\p_3 h\ark^{3k}\ark^{3i}\right|_{W^{\frac12,4}}\left|\tpl(\lkk^2\er_{k})\right|_{L^4}\right)|\VVr|_{\dot{H}^{-\frac12}}\dS \\
&\lesssim \frac{1}{\kk} P\left(\|h\|_{4},\|v\|_4,\|\er\|_{4}\right) .
\end{aligned}
\end{equation}

Another difference is that our flow map now contains a directional viscosity term, and thus the second term on LHS or \eqref{tgs1l} should be treated differently. We plug the expression of Alinhac good unknown $\VVr$ and $v=\p_t\eta-\psir$ into it to get
\begin{align*}
&\io\tpl\left(\FP\eta\right)\cdot\FP\VVr\dy\\
=&\io\tpl\left(\FP\eta\right)\cdot\FP\left(\tpl v-\tpl\erk\cdot\park v\right)\dy\\
=&-\frac12\frac{d}{dt}\io\left|\tpl\left(\FP\eta\right)\right|^2\dy\\
&-\io\tpl\left(\FP\eta\right)\cdot\left(\tpl\left(\FP\psir\right)-\left[\tpl,\FP\right]v-\FP\left(\tpl\erk\cdot\park v\right)\right).
\end{align*}

Direct computation shows that
\begin{equation}\label{tgFl}
\begin{aligned}
\sum_{j=1}^3\left\|\tpl\left(\FP\eta\right)(t)\right\|_0^2\bigg|^T_0\lesssim\sum_{j=1}^3 P\left(\|\F,\eta,\erk,v\|_4,\|\FP\eta,\FP\erk,\FP\psir\|_4\right).
\end{aligned}
\end{equation}

Summing up \eqref{goodtpll}-\eqref{tgFl}, we get the tangential estimates as follows
\begin{equation}\label{tglinear}
\left\|\TP^4 v\right\|_0^2+\sum_{j=1}^3\left\|\TP^4 \left(\FP\eta\right)\right\|_0^2+\left\|\sqrt{e'(\hr)}\TP^4 h\right\|_0^2
\lesssim\PP_0+\int_0^T P(\EEr_\kk(t))\dt.
\end{equation}

\paragraph*{Divergence estimates:}

The divergence estimate still follows the similar way as in Section \ref{apriorinkk}. We know that $\divr v=-e'(\hr)\p_t h$ and thus 
\begin{equation}\label{divlv}
\|\dive v\|_3^2\lesssim\delta^2\|v\|_4^2+\|e'(\hr)\p_t h\|_3^2
\end{equation}

The formula of $\divr \FP\eta$ can be computed in the same way as \eqref{divkkF}
\begin{equation}\label{divllF}
\begin{aligned}
\divr\FP\eta(T)=&-\dive \F_j+\int_0^T\dive_{\ark_t}\FP\eta\dt+\int_0^T\divr\FP(v+\psir)\dt\\
=&-\dive \F_j+\int_0^T\FP e'(\hr)\p_t h\dt\\
&+\int_0^T\divr\FP\psir\underbrace{-\ark^{lr}\p_m\p_t\erk_r\ark^{mi}}_{=\ark_t^{li}}\p_l(\FP \eta_i)+\ark^{li}\p_l(\FP v_i)\dt\\
=&-e'(\hr)\FP h(T)\\
&+\int_0^T\underbrace{\divr(\FP\psir)-\left(\park^r(\FP\eta_i)\right)\left(\park^i\p_t\erk_r\right)+\left(\park^i(\FP\eta_r)\right)\left(\park^r v_i\right)}_{=:N_j}\dt,
\end{aligned}
\end{equation} and thus
\begin{equation}\label{divlF}
\left\|\dive \FP\eta(T)\right\|_3^2\lesssim\delta^2\left\|\FP\eta\right\|_4^2+\left\|e'(\hr)\FP h\right\|_3^2+\left\|\int_0^T N_j(t)\dt\right\|_3^2
\end{equation}

Therefore, the divergence control is again reduced to the weighted estimates of $\p_t h$ and $\FP h$.

\subsubsection{Elliptic estimates and linearized wave equations}

Note that both $\p_t$ and $\FP$ are tangential derivatives on the boundary, so $h|_{\Gamma}=0$ implies that $\p_t h$ and $\FP h$ also vanish on the boundary. Therefore we can apply the elliptic estimates to both $\p_t h$ and $\FP h$.

We take divergence in the second equation of \eqref{elastollr} and invoke the third equation to get the linearized wave equation
\begin{equation}\label{hwavel0}
\begin{aligned}
e'(\hr)\p_t^2h-\lapark h=&\sum_{j=1}^3e'(\hr)\FP^2 h-\p_t\ark^{li}\p_l v_i-\park^i\left(\FP\erk_l\right)\park^l\left(\FP\erk_i\right)\\
&-e''(\hr)(\p_th)(\p_t\hr)+\sum_{j=1}^3e''(\hr)\left(\FP h\right)\left(\FP\hr\right)-\sum_{j=1}^3\FP\int_0^TN_j\dt.
\end{aligned}
\end{equation}

Invoking Lemma \ref{GLL}, we get
\begin{equation}\label{hl40}
\|h\|_4\lesssim P(\|\erk\|_3)\left(\|\lapark h\|_2+\|\TP\erk\|_3\|h\|_3\right).
\end{equation} Then plugging the wave equation \eqref{hwavel0}, we get similar estimates as in the treatment of \eqref{h40}
\begin{equation}\label{laplh2}
\|\lapark h\|_2\lesssim \left\|e'(\hr)\p_t^2 h\right\|_2+\sum_{j=1}^3\left\|e'(\hr)\FP^2 h\right\|_2+\PP_0+\int_0^T P(\EEr_\kk(t))\dt.
\end{equation} Therefore $\|h\|_4$ is reduced to $\left\|e'(\hr)\p_t^2 h\right\|_2$ adn $\sum\limits_{j=1}^3\left\|e'(\hr)\FP^2 h\right\|_2$.

Similarly, $\|\p_t h\|_3$ can be controlled by using Lemma \ref{GLL} and $\p_t$-differentiated wave equation \eqref{hwavel0}
\begin{equation}
\|\p_t h\|_3\approx\|\park \p_t h\|_2\lesssim \|\p_t\lapark h\|_1+\|[\p_t,\lapark]h\|_1,
\end{equation} and then
\begin{equation}
\begin{aligned}
\|\p_t\lapark h\|_1\lesssim&\|e'(\hr)\p_t^3 h\|+1+\sum_{j=1}^3\left\|e'(\hr)\FP^2\p_t h\right\|_1\\
&+\|\p_t(\p_t\ark \p v)\|_1+\sum_{j=1}^3\left\|\p_t(\park(\FP \eta))\right\|_1\left\|(\park(\FP \eta))\right\|_1\\
&+\|e''(\hr)\p_t\hr\p_t^2 h\|_1+\|e''(\hr)\p_t\hr\FP^2h\|_1\\
&+\|e'''(\hr)\p_t^2\hr\p_t h\|_1+\|e'''(\hr)\p_t\FP h\cdot\FP h\|_1+\sum_{j=1}^3\|\FP N_j\|_1.
\end{aligned}
\end{equation} Mimicing the proof of \eqref{ht30} yields
\begin{equation}\label{htl30}
\|\p_t\lapark h\|_1\lesssim\|e'(\hr)\p_t^3 h\|+1+\sum_{j=1}^3\left\|e'(\hr)\FP^2\p_t h\right\|_1+\PP_0+\int_0^T P(\EEr_\kk(t))\dt.
\end{equation}

Following the same manner of \eqref{ht30}, \eqref{hF30}, \eqref{htt20} and \eqref{hFF20}, we can get the following reduction
\begin{align}
\label{hFl30} \left\| \FP h\right\|_3\rightarrow&\left\|{e'(\hr)}\p_t^2\FP h\right\|_1+\sum_{k=1}^3\left\|{e'(\hr)}\FP(\F_k\cdot\p)^2\right\|_1\\
\label{httl20} \left\|\sqrt{e'(\hr)}\p_t^2 h\right\|_2\rightarrow&\left\|{e'(\hr)}^{\frac32}\p_t^4 h\right\|_0+\sum_{j=1}^3\left\|{e'(\hr)}^{\frac32}\FP^2\p_t^2 h\right\|_0\\
\label{hFFl20} \left\|\sqrt{e'(\hr)}\FP^2 h\right\|_2\rightarrow&\left\|{e'(\hr)}^{\frac32}\p_t^2\FP^2 h\right\|_0+\sum_{k=1}^3\left\|{e'(\hr)}^{\frac32}\FP^2(\F_k\cdot\p)^2 h\right\|_0.
\end{align}

Therefore, as in \eqref{remainh}, it remains to control the following quantities

\begin{equation}\label{remainhl}
\begin{aligned}
&\left\|e'(\hr)\p_t^3 h\right\|_1+\left\|(e'(\hr))^{\frac32}\p_t^4 h\right\|_0\\
&+\sum_{j=1}^3\left\|(e'(\hr))^2\p_t^2\FP h\right\|_1+\sum_{j=1}^3\left\|(e'(\hr))^{\frac32}\p_t^2\FP^2 h\right\|_0\\
&+\sum_{j=1}^3\left\|(e'(\hr))^2\FP^2 \p_t^2h\right\|_0+\sum_{j=1}^3\left\|e'(\hr) \FP^2\p_t h\right\|_1\\
&+\sum_{j,k=1}^3\left\|(e'(\hr))^2(\F_k\cdot\p)^2\FP^2h\right\|_0+\sum_{k=1}^3\left\|(e'(\hr))^2 (\F_k\cdot\p)^2\FP h\right\|_1,
\end{aligned}
\end{equation}
which will be estimated by $\p_t^3$-differentiated, $\p_t^2\FP$-differentiated, $\p_t\FP^2$-differentiated and $\sum\limits_{k=1}^3(\F_k\cdot\p)^2\FP$-differentiated wave equation as in Section \ref{wavediv}. There is no essential difference but just replacing lower order term of $h$ and weight function $\wt$ by the counterpart of $\hr$ and replacing $\ak$ by $\ark$ due to the linearisation. So we omit the proof here.

\subsubsection{Reduction of time derivatives of $v$ and $\FP\eta$}

Finally, we follow the same manner as in \eqref{remainvtttt}, \eqref{remainFtttt} and \eqref{remainvF} to get the following reduction

\begin{equation}\label{remainvFl}
\begin{aligned}
\left\|e'(\hr)\p_t^4 v\right\|_0\lesssim&\|e'(\hr) \p_t^3 h\|_1+\sum_{j=1}^3\|\F_j\|_{L^{\infty}}\left\|e'(\hr)\FP\p_t^3\eta\right\|_1+\text{lower order terms}.\\
\left\|e'(\hr)\FP\p_t^4 \eta\right\|_0\lesssim&\|\F_j\|_{L^{\infty}}\left\|e'(\hr)\p_t^3 v\right\|_1+\left\|e'(\hr)\FP\p_t^3\psir\right\|_0.\\
\left\|(e'(\hr))^{\frac12}\p_t^3 v\right\|_1\lesssim&\left\|(e'(\hr))^{\frac12}\park \p_t^2 h\right\|_1+\sum_{j=1}^3\|\F_j\|_{2}\left\|(e'(\hr))^{\frac12}\FP\p_t^2 \eta\right\|_2+\text{lower order terms}.\\
\left\|(e'(\hr))^{\frac12}\FP\p_t^3 \eta\right\|_1\lesssim&\|\F_j\|_2\left\|(e'(\hr))^{\frac12}\p_t^2 v\right\|_2+\left\|(e'(\hr))^{\frac12}\FP\p_t^2\psir\right\|_1.\\
\left\|\p_t^2 v\right\|_2\lesssim&\|\park\p_t h\|_2+\sum_{j=1}^3\|\F_j\|_2\|\FP\p_t \eta\|_3+\text{lower order terms}.\\
\left\|\FP\p_t^2\eta\right\|_2\lesssim&\|\F_j\|_2\|\p_t v\|_3+\|\FP\p_t\psir\|_2.\\
\left\|\p_t v\right\|_3\lesssim&\|\park h\|_3+\sum_{j=1}^3\|\F_j\|_3\|\FP\eta\|_4.\\
\left\|\FP\p_t\eta\right\|_3\lesssim&\|\F_j\|_3\|v\|_4+\|\FP\psir\|_4.
\end{aligned}
\end{equation}

Combining \eqref{curlrvF}, \eqref{tglinear}, \eqref{divlv}, \eqref{divlF}, \eqref{hl40}, \eqref{htl30}-\eqref{remainvFl}, we get
\begin{equation}
\EEr_\kk(T)\lesssim_{\kk^{-1}}\PP_0+P(\EEr_\kk(T))\int_0^T P(\EEr_{\kk}(t))\dt
\end{equation}
and thus by Gronwall-type inequality, there exists $T_\kk>0$ such that the following uniform-in-$n$ estimates hold
\begin{equation}\label{EEkkll}
\sup_{0\leq t\leq T_\kk}\EEr_\kk(t)\lesssim P(\|v_0\|_4^,\|\h_0\|_4,\|\F\|_4).
\end{equation}Therefore Proposition \ref{llenergy} is proven.

\subsection{Picard iteration to the nonlinear approximation system}

The last step is to construct the solution to \eqref{elastolkk} by Picard iteration which proves Propostion \ref{nnlwp}. Define 
\begin{equation}\label{diffl1}
[v]^{(n)}:=v^{(n+1)}-v^{(n)},~~[h]^{(n)}:=h^{(n+1)}-h^{(n)},~~[\eta]^{(n)}:=\eta^{(n+1)}-\eta^{(n)}, 
\end{equation}and
\begin{equation}\label{diffl2}
[a]^{(n)}:=a^{(n)}-a^{(n-1)},~~[\psi]^{(n)}:=\psi^{(n)}-\psi^{(n-1)}. 
\end{equation} Then we have the following system of $([\eta]^{(n)},[v]^{(n)},[h]^{(n)})$
\begin{equation}\label{elastodiffl}
\begin{cases}
\p_t[\eta]^{(n)}=[v]^{(n)}+[\psi]^{(n)}~~~&\text{ in }\Omega \\
\p_t[v]^{(n)}=-\nabla_{\ak^{(n)}}[h]^{(n)}-\nabla_{[\ak]^{(n)}} h^{(n)}+\sum\limits_{j=1}^3\FP^2[\eta]^{(n)}~~~&\text{ in }\Omega \\
\text{div}_{\ak^{(n)}}[v]^{(n)}=-e'(h^{(n)})\p_t [h]^{(n)}-\text{div}_{[\ak]^{(n)}}v^{(n)}-(e'(h^{(n)})-e'(h^{(n-1)}))\p_t h^{(n)}~~~&\text{ in }\Omega \\
\dive\F_j=-e'(\h_0)\FP\h_0~~~&\text{ in }\Omega \\
[h]^{(n)}=0, \F_j\cdot N=0~~~&\text{ on }\Gamma, \\
\end{cases}
\end{equation}together with its energy functional
\begin{equation}\label{ediffl}
\begin{aligned}
[\EE]^{(n)}(T)&:=\left\|[\eta]^{(n)}\right\|_3^2+\left\|[v]^{(n)}\right\|_3^2+\sum\limits_{j=1}^3\left\|\FP[\eta]^{(n)}\right\|_3^2+\left\|[h]^{(n)}\right\|_3^2\\
&+\left\|\p_t [v]^{(n)}\right\|_2^2+\sum\limits_{j=1}^3\left\|\FP\p_t[\eta]^{(n)}\right\|_2^2+\left\|\p_t [h]^{(n)}\right\|_2^2\\
&+\left\|\p_t^2[v]^{(n)}\right\|_1^2+\sum\limits_{j=1}^3\left\|\FP\p_t^2\eta^{(n)}\right\|_1^2+\left\|\sqrt{e'(h^{(n)})}\p_t^2 [h]^{(n)}\right\|_1^2\\
&+\left\|\sqrt{e'(h^{(n)})}\p_t^3 [v]^{(n)}\right\|_0^2+\sum\limits_{j=1}^3\left\|\sqrt{e'(h^{(n)})}\FP\p_t^3 [\eta]^{(n)}\right\|_0^2+\left\|e'(h^{(n)})\p_t^3 [h]^{(n)}\right\|_0^2.
\end{aligned}
\end{equation} Here the correction term $[\psi]^{(n)}$ becomes
\begin{equation}
-\Delta [\psi]^{(n)}=0,
\end{equation} with the following boundary condition
\begin{align*}
[\psi]^{(n)}=&\sum_{L=1}^2\TP^{-1}\mathbb{P}\bigg(\TL[\eta]^{(n-1)}_{k}\ak^{(n)Lk}\TP_i\lkk^2v^{(n)}+\TP\eta^{(n-1)}_{\beta}[\ak]^{(n)Lk}\TP_L\lkk^2v^{(n)}+\TP\eta^{(n-1)}_{k}\ak^{(n-1)Lk}\TP_L\lkk^2[v]^{(n-1)} \\
&-\TL\lkk^2[\eta]^{(n-1)}_{k}\ak^{(n) Lk}\TP_L v^{(n)}-\TL\lkk^2\eta^{(n-1)}_{k}[\ak]^{(n) Lk}\TP_L v^{(n)}-\TL\lkk^2\eta^{(n-1)}_{k}\ak^{(n-1) Lk}\TP_L [v]^{(n-1)}\bigg).
\end{align*}

\subsubsection{Estimates of $[\eta],[\ak],[\psi]$}

First by definition we have
\begin{align*}
[a]^{(n)li}(T)=\int_0^T\p_t(a^{(n)li}-a^{(n-1)li})\dt=-\int_0^T [a]^{(n)lr}\p_k\p_t\eta^{(n)}_{r}a^{(n)ki}+a^{(n-1)lr}\p_k\p_t[\eta]^{(n-1)}_{r}a^{(n)ki}+a^{(n-1)lr}\p_k\p_t\eta^{(n-1)}_{r}[a]^{(n)ki},
\end{align*}and thus
\begin{equation}\label{diffa}
\|[a]^{(n)}(T)\|_2\lesssim\int_0^T\|[a]^{(n)}(T)\|_2^2(\|[v]^{(n-1)}\|_3+\|[\psi]^{(n)}\|_3)\dt.
\end{equation} The estimates of $[\psi]^{(n)}$ and $\FP[\psi]^{(n)}$ can be similarly derived by elliptic estimates
\begin{equation}\label{diffpsi}
\|[\psi]^{(n)}\|_3^2\lesssim |[\psi]^{(n)}|_{2.5}\lesssim \|[\eta]^{(n-1)}\|_3^2+\|[v]^{(n-1)}\|_2^2+\|[\ak]^{(n)}\|_1^2,
\end{equation}and
\begin{equation}\label{diffFpsi}
\|\FP[\psi]^{(n)}\|_3^2\lesssim |[\psi]^{(n)}|_{2.5}\lesssim P\left(\|\FP[\eta]^{(n-1)}\|_3,\|[\eta]^{(n-1)}\|_3,\|[v]^{(n-1)}\|_2,\|[\ak]^{(n)}\|_1\right).
\end{equation} Analogous results hold for their time derivatives so we omit the proof
\begin{align}
[\psi]^{(n)},\p_t[\psi]^{(n)},\FP[\psi]^{(n)},\p_t[\eta]^{(n)},\FP[\eta]^{(n)}\in& L^{\infty}([0,T];H^3(\Omega)),\\
\p_t^2[\psi]^{(n)},\p_t\FP[\psi]^{(n)},\p_t^2[\eta]^{(n)},\p_t\FP[\eta]^{(n)}\in& L^{\infty}([0,T];H^2(\Omega)),\\
\sqrt{e'(h^{(n)})}\p_t^3[\psi]^{(n)},\p_t^2\FP[\psi]^{(n)},\p_t^3[\eta]^{(n)},\p_t^2\FP[\eta]^{(n)}\in& L^{\infty}([0,T];H^1(\Omega)),\\
e'(h^{(n)})\p_t^4[\psi]^{(n)},\sqrt{e'(h^{(n)})}\left(\p_t^3\FP[\psi]^{(n)},\p_t^4[\eta]^{(n)},\p_t^3\FP[\eta]^{(n)}\right)\in& L^{\infty}([0,T];L^2(\Omega)).
\end{align}

\subsubsection{Curl estimates of $[v]$ and $\FP[\eta]$}

Similarly as in Section \ref{divcurl}, we get the evolution equation of $\curl_{\ak^{(n)}}[v]^{(n)}$ as
\begin{equation}
\begin{aligned}
&\p_t\left(\curl_{\ak^{(n)}}[v]^{(n)}\right)-\sum_{j=1}^3\FP\left(\curl_{\ak^{(n)}}\FP[\eta]^{(n)}\right)\\
=&\curl_{\ak_t^{(n)}}[v]^{(n)}+\curl_{[\ak_t]^{(n)}}v^{(n)}+\sum_{j=1}^3\left[\curl_{\ak^{(n)}},\FP\right](\FP[\eta]^{(n)})+\FP\left(\curl_{[\ak]^{(n)}}\FP\eta^{(n)}\right)+\left[\curl_{[\ak]^{(n)}},\FP\right](\FP\eta^{(n)}).
\end{aligned}
\end{equation} Then the curl estimates directly follows from the $L^2$-estimates of $\p^2$-differentiated evolution equation.

\subsubsection{Tangential estimates of $[v]$ and $\FP[\eta]$}

We adopt the Ainhac good unknown method as in Section \ref{tgspace}. For each $n$ we define
\[
\VV^{(n+1)}=\TP^3v^{(n+1)}-\TP^3\ek^{(n)}\cdot\nabla_{\ak^{(n)}}v^{(n+1)},~~\HH^{(n+1)}=\TP^3h^{(n+1)}-\TP^3\ek^{(n)}\cdot\nabla_{\ak^{(n)}}h^{(n+1)}.
\] Their differences are denoted by 
\[
[\VV]^{(n)}:=\VV^{(n+1)}-\VV^{(n)}, [\HH]^{(n)}:=\HH^{(n+1)}-\HH^{(n)}.
\] The evolution equation of $[\VV]$ and $[\HH]$ now becomes
\begin{equation}
\begin{aligned}
\p_t[\VV]^{(n)}+\nabla_{\ak^{(n)}}[\HH]^{(n)}-\sum_{j=1}^3\FP\TP^3(\FP[\eta]^{(n)})=-\nabla_{[\ak]^{(n)}}\HH^{(n)}+\ff^{(n)},
\end{aligned}
\end{equation}with boundary condition
\begin{equation}
[\HH]^{(n)}|_{\Gamma}=-\left(\TP^3\ek^{(n)}_{k}\ak^{(n)3k}+\TP^3[\ek]^{(n-1)}_{k}\ak^{(n)3k}+\TP^3\ek^{(n-1)}_{k}[\ak]^{(n)3k}\right),
\end{equation}and the compressibility equation
\begin{equation}
\nabla_{\ak^{(n)}}\cdot[\VV]^{(n)}=-\nabla_{[\ak]^{(n)}}\cdot\VV^{(n)}+\mathfrak{g}^{(n)}.
\end{equation}
Here \begin{align*}
\ff^{(n)i}&=\p_t\left(\TP^3[\ek]^{(n-1)}_{k}\ak^{(n)lk}\p_{l}v^{(n+1)}_{i}+\TP^3\ek^{(n-1)}_{k}[\ak]^{(n)lk}\p_{\mu}v^{(n+1)}_{i}+\TP^3\ek^{(n-1)}_{k}\ak^{(n)lk}\p_{l}[v]^{(n)}_{i}\right)\\
&~~~~+[\ak]^{(n)lk}\p_{l}(\ak^{(n)ri}\p_{r}h^{(n+1)})\TP^3\ek^{(n)}_{k}+\ak^{(n-1)lk}\p_{l}([\ak]^{(n)ri}\p_{r}h^{(n+1)})\TP^3\ek^{(n)}_{k} \\
&~~~~+\ak^{(n-1)lk}\p_{l}(\ak^{(n-1)ri}\p_{r}[h]^{(n)})\TP^3\ek^{(n)}_{k}+\ak^{(n-1)lk}\p_{l}([\ak]^{(n)ri}\p_{r}h^{(n)})\TP^3[\ek]^{(n-1)}_{k}\\
&~~~~-\left[\TP^2,[\ak]^{(n)lk}\ak^{(n)ri}\TP\right]\p_{r}\ek^{(n)}_{k}\p_{l}h^{(n+1)}-\left[\TP^2,\ak^{(n-1)lk}[\ak]^{(n)ri}\TP\right]\p_{r}\ek^{(n)}_{k}\p_{l}h^{(n+1)}\\
&~~~~-\left[\TP^2,\ak^{(n-1)lk}\ak^{(n-1)ri}\TP\right]\p_{r}[\ek]^{(n-1)}_{k}\p_{l}h^{(n+1)}-\left[\TP^2,\ak^{(n-1)lk}\ak^{(n-1)ri}\TP\right]\p_{r}\ek^{(n-1)}_{k}\p_{l}[h]^{(n)}\\
&~~~~-\left[\TP^3,[\ak]^{(n)li},\p_{l}h^{(n+1)}\right]-\left[\TP^3,\ak^{(n-1)li},\p_{l}[h]^{(n)}\right]+\sum_{j=1}^3\left[\TP^3\FP\right]\FP[\eta]^{(n)},
\end{align*}
and 
\begin{align*}
\mathfrak{g}^{(n)}&=\TP^3(\dive_{\ak^{(n)}}[v]^{(n)}-\dive_{[\ak]^{(n)}}v^{(n)}) \\
&~~~~-\left[\TP^2,[\ak]^{(n)lk}\ak^{(n)ri}\TP\right]\p_{r}\ek^{(n)}_{k}\p_{l}v^{(n+1)}_{i}-\left[\TP^2,\ak^{(n-1)lk}[\ak]^{(n)ri}\TP\right]\p_{r}\ek^{(n)}_{k}\p_{l}v^{(n+1)}_{i} \\
&~~~~-\left[\TP^2,\ak^{(n-1)lk}\ak^{(n-1)ri}\TP\right]\p_{r}[\ek]^{(n-1)}_{k}\p_{l}v^{(n+1)}_{i} -\left[\TP^2,\ak^{(n-1)lk}\ak^{(n)ri}\TP\right]\p_{r}\ek^{(n-1)}_{k}\p_{l}[v]^{(n)}_{i} \\
&~~~~-\left[\TP^3,[\ak]^{(n)li},\p_{l}v^{(n+1)}_{i}\right]-\left[\TP^3,\ak^{(n-1)li},\p_{l}[v]^{(n)}_{i}\right] \\
&~~~~+[\ak]^{(n)lk}\p_{l}(\ak^{(n)ri}\p_{r}v^{(n+1)}_{i})\TP^3\ek^{(n)}_{k}+\ak^{(n-1)lk}\p_{l}([\ak]^{(n)ri}\p_{r}v^{(n+1)}_{i})\TP^3\ek^{(n)}_{k} \\
&~~~~+\ak^{(n-1)lk}\p_{l}(\ak^{(n-1)ri}\p_{r}[v]^{(n)}_{i})\TP^3\ek^{(n)}_{k}+\ak^{(n-1)lk}\p_{l}([\ak]^{(n)ri}\p_{r}v^{(n)}_{i})\TP^3[\ek]^{(n-1)}_{k}.
\end{align*}

Then we do the $L^2$-estimates of the good unknowns to get analogous results as in \eqref{tglinear}
\begin{equation}\label{difftg}
\left\|\TP^3 [v]^{(n)}\right\|_0^2+\sum_{j=1}^3\left\|\TP^3 \left(\FP[\eta]^{(n)}\right)\right\|_0^2+\left\|\sqrt{e'(h^{(n)})}\TP^3 [h]^{(n)}\right\|_0^2
\lesssim_{\kk^{-1}}\PP_0+\int_0^T [\EE]^{(n)}(t)+\EE^{(n-1)}(t)\dt.
\end{equation}

\subsubsection{Divergence estimates and Reduction procedure}

Similarly as in \eqref{divllF}, we derive the formula of $\dive_{\ak^{(n)}}\FP[\eta]^{(n+1)}$
\begin{equation}\label{divllFdiff}
\begin{aligned}
&~~~~\dive_{\ak^{(n)}}\FP[\eta]^{(n+1)}=-e'(h^{(n)})\FP[h]^{(n)}+\dive_{[a]^{(n)}}\eta^{(n)}-(e'(h^{(n)})-e'(h^{(n-1)}))\FP h^{(n)}\\
+&\int_0^T\Bigg(\dive_{\ak^{(n)}}\FP[\psi]^{(n)}+\dive_{[\ak]^{(n-1)}}\FP\psi^{(n-1)}\\
&-\left(\nabla_{\ak^{(n)}}^r(\FP[\eta_i]^{(n)})+\nabla_{[\ak]^{(n)}}^r(\FP\eta_i^{(n)})+\nabla_{\ak^{(n-1)}}^r(\FP\eta_i^{(n)})\right)\\
&~~~~\cdot\left(\nabla_{\ak^{(n)}}^r([\p_t\eta_i]^{(n)})+\nabla_{[\ak]^{(n)}}^r(\p_t\eta_i^{(n)})+\nabla_{\ak^{(n-1)}}^r(\p_t\eta_i^{(n)})\right)\\
&-\left(\nabla_{\ak^{(n)}}^r(\FP[\eta_i]^{(n)})+\nabla_{[\ak]^{(n)}}^r(\FP\eta_i^{(n)})+\nabla_{\ak^{(n-1)}}^r(\FP\eta_i^{(n)})\right)\\
&~~~~\cdot\left(\nabla_{\ak^{(n)}}^r[v_i]^{(n)}+\nabla_{[\ak]^{(n)}}^rv_i^{(n)}+\nabla_{\ak^{(n-1)}}^rv_i^{(n)}\right)\Bigg)\dt.
\end{aligned}
\end{equation}

The wave equation of $[h]^{(n+1)}$ becomes
\begin{equation}
\begin{aligned}
&e'(h^{(n)})\p_t^2 [h]^{(n)}-\Delta_{\ak^{(n)}}[h]^{(n)}=-\sum_{j=1}^3e'(h^{(n)})\FP^2 [h]^{(n)}\\
&-\p_t\left((e'(h^{(n)})-e'(h^{(n-1)}))\p_t h^{(n)}\right)-\sum_{j=1}^3(e'(h^{(n)})-e'(h^{(n-1)}))\FP^2 h^{(n)} \\
&-(\p_t\ak^{(n)li})\p_l [v]^{(n)}_{i}-\p_t[\ak]^{(n)li}\p_l v^{(n)}_i+[\ak]^{(n)li}\p_{l}({\ak^{(n)m}}_{i}\p_m h^{(n)})+\ak^{(n-1)li}\p_{l}({[\ak]^{(n)m}}_{i}\p_m h^{(n)})\\
&-\sum_{j=1}^3\left(\nabla_{\ak^{(n)}}^i(\FP[\ek_l]^{(n)})+\nabla_{[\ak]^{(n)}}^i(\FP\ek_l^{(n)})+\nabla_{\ak^{(n-1)}}^i(\FP\ek_l^{(n)})\right)\\
&~~~~\cdot\left(\nabla_{\ak^{(n)}}^l(\FP[\ek_i]^{(n)})+\nabla_{[\ak]^{(n)}}^l(\FP\ek_i^{(n)})+\nabla_{\ak^{(n-1)}}^l(\FP\ek_i^{(n)})\right)\\
&+\FP\left(\dive_{[a]^{(n)}}\eta^{(n)}-(e'(h^{(n)})-e'(h^{(n-1)}))\FP h^{(n)}\right)+\FP\int_0^T\cdots\dt,
\end{aligned}
\end{equation}where the time integral is the same as in \eqref{divllFdiff}. Similarly as the previous reduction procedure shown in \eqref{hFl30}-\eqref{hFFl20} we are able to finalize the energy estimates. We omit the details here because there is no essential difference from the previous control for the linearized equation.

Finally, we can apply Gronwall-type inequality to prove that there exists a sufficiently small $T_{\kk}>0$ such that
\[
\forall t\in[0,T_\kk],~~~[\EE]^{(n)}\leq\frac14\left([\EE]^{(n-1)}+[\EE]^{(n-2)}\right),
\]and thus
\[
[\EE]^{(n)}\lesssim_{\kk^{-1}}\PP_0/2^n.
\] Therefore, the sequence of approximation solutions $\{(\eta^{(n)},v^{(n)},h^{(n)})\}_{n\in\N}$ strongly converges (subsequentially) to $(\eta(\kk),v(\kk),h(\kk))$ of \eqref{elastolkk} as $n\to\infty$. The local well-posedness of the nonlinear $\kk$-approximation system is established.

\section{Local well-posedness of the original system}\label{lwp0}

Now we can finalize the proof of local well-posedness of the free-boundary compressible elastodynamic equations \eqref{elastol}. In Proposition \ref{nnlwp}, we proved that for each fixed $\kk>0$, the nonlinear $\kk$-approximation problem admits a unique strong solution $(\eta(\kk),v(\kk),h(\kk))$ in time interval $[0,T_\kk]$. In Proposition \ref{apriorikk}, we derive the uniform-in-$\kk$ a priori estimates for $(\eta(\kk),v(\kk),h(\kk))$. Therefore, there exists a $T>0$ independent of $\kk$, such that  $(\eta(\kk),v(\kk),h(\kk))$ exists in $[0,T]$ for each $\kk>0$. Also, such uniform energy bound yields the strong convergence (subsequentially) of $(\eta(\kk),v(\kk),h(\kk))$ to a limit $(\eta,v,h)$ in $[0,T]$ which solves the original system \eqref{elastol} and the corresponding energy functional $\EE (T)$ defined in \eqref{energy} satisfies the energy bound \eqref{energy1}. 

It remains to prove the uniqueness of the solution. We suppose $(\eta^1,v^1,h^1),(\eta^2,v^2,h^2)$ to be two solutions to \eqref{elastol} which satisfies the energy estimate \eqref{energy1} in Theorem \ref{lwp}. Denote the difference by $([\eta],[v],[h]):=(\eta^1-\eta^2,v^1-v^2,h^1-h^2)$ and $a^L:=[\p\eta^L]^{-1}$ ($L=1,2$) with $[a]:=a^2-a^1$. Then $([\eta],[v],[h])$ solves the following system \textbf{with ZERO initial data}:

\begin{equation}\label{diff}
\begin{cases}
\p_t[\eta]=[v]~~~&\text{ in }\Omega, \\
\p_t[v]+\sum\limits_{j=1}^3\FP^2[\eta]=-\nabla_{a^1}[h]+\nabla_{[a]}h^2~~~&\text{in }\Omega, \\
\dive_{a^1}[v]=\dive_{[a]}v^2-e'(h^2)\p_t[h]-(e'(h^1)-e'(h^2))\p_t h^2~~~&\text{in }\Omega \\
[h]=0~~~&\text{ on }\Gamma.
\end{cases}
\end{equation} 
We define the energy functional of \eqref{diff} by 
\begin{equation}\label{Ediff}
\begin{aligned}
[\EE]=&\left\|[\eta]\right\|_2^2+\sum_{k=0}^2\left\|\p_t^{2-k}[v]\right\|_k^2+\sum_{j=1}^3\left\|\p_t^{2-k}\FP[\eta]\right\|_k^2+\left|(a^{1})^{3i}\TP^2[i]_{i}\right|_0^2\\
&+\|[h]\|_2+\|\p_t [h]\|_2+\left\|\sqrt{e'(h^1)}\p_t^2[h]\right\|_0^2
\end{aligned}
\end{equation}

The only essential difference in the energy estimates is the boundary integral $$\ig [\HH](a^{1})^{3i} [\VV]_{i}\dS.$$.

We define the Alinhac good unknowns of $v^L,h^L$ for $L=1,2$
\[
\VV^L=\TP^2v^L-\TP^2\eta^L\cdot\nabla_{a^L} v^L,~~\HH^L=\TP^2h^L-\TP^2\eta^L\cdot\nabla_{a^L} h^L, 
\]and
\[
[\VV]:=\VV^1-\VV^2,~~[\HH]:=\HH^1-\HH^2.
\]
 The boundary integral then becomes
\begin{align*}
\ig [\HH](a^{1})^{3i} [\VV]_{i}=&-\ig\p_3[h]\TP^2\eta^2_{k}(a^2)^{3k}(a^2)^{3i}[\VV]_{i}\dS-\ig \p_3 h^1(\TP^2[\eta]_{k}(a^1)^{3k}+\TP^2\eta^2_{k}[a]^{3k})(a^1)^{3i}[\VV]_{i}\dS \\
\lesssim& -\frac{1}{2}\frac{d}{dt}\ig \p_3 h^1|(a^{1})^{3i}\TP^2[\eta]_{i}|_0^2\dS\\
&-\ig\p_3 h^1(a^1)^{3r}\TP^2[\eta]_{r}(\TP^2\eta^2_{k}[a]^{lk}\p_l v^1_{i}-\TP^2\eta^2_{k}(a^2)^{lk}\p_\mu[v]_{i})(a^1)^{3i}\dS \\
&-\ig \p_3 h^1(\TP^2[\eta]_{k}(a^1)^{3k}+\TP^2\eta^2_{k}[a]^{3k})(a^1)^{3i}[\VV]_{i}\dS\\
&\lesssim-\frac{c_0}{2}\frac{d}{dt}\ig |(a^{1})^{3i}\TP^2[\eta]_{i}|_0^2\dS+P(\text{initial data})P([\EE](t)).
\end{align*}

Here we use the precise formula of $[\VV]$, and in the third step we apply the physical sign condition for $h^1$. Therefore we have
\[
\sup_{t\in[0,T_0]}[\EE](t)\leq P(\text{initial data})+\int_0^{T_0}P([\EE](t))\dt.
\] 

Since the initial data of \eqref{diff} is 0, then we know $[\EE](t)=0$ for all $t\in[0,T]$ which gives the uniqueness of the solution to the compressible elastodynamics system \eqref{elastol}. Theorem \ref{lwp} is proven.

\section{Incompressible limit}\label{ilimit}

In this section we will prove Theorem \ref{limit}, i.e., the incompressible limit. This requires our energy estimate to be uniform in the sound speed. In physics, the sound speed of a compressible fluid is defined by $c^2(t,x):=p'(\rho)$. We parametrize the sound speed by $\eps>0$ such that
\[
{p'}_{\eps}(\rho)|_{\rho=1}=\eps.
\] Recall that the enthalpy derivative ${\h '}_{\eps}(\rho):=\frac{{p'}_{\eps}(\rho)}{\rho}>0$. We can write $\rho$ as a function of $\h$ depending on $\eps$. 

In the proof of Theorem \ref{lwp}, the estimate of the weighted energy functional $\EE(T)$ is uniform in sound speed $\eps$, i.e., it does not rely on $1/e'(h)$. Once we have this uniformly bounded energy, we are able to prove that the solution $(v^{\eps},\FF^{\eps},h^{\eps})$ of \eqref{elastoL} converges to $(V,G,Q)$ of incompressible elastodynamics system \eqref{elastoi} provided that the initial data $(v_0^{\eps},\FF_0^{\eps},\h_0^{\eps})$ converges to the initial data $(\vvv,\GG_0,Q_0)$ as $\eps\to\infty$. 

Specifically, let $\vvv$ be a divergence-free vector field, $\G$ be a divergence-free matrix and $Q_0$ is defined by the elliptic system with constraints $-\frac{\p Q}{\p N}|_{\Gamma}\geq c_0>0$
\[
\begin{cases}
-\Delta Q_0=\p_i\vvv^k\p_k\vvv^i-\p_i\G_{kj}\p_k\G_{ij}~~~&\text{in }\Omega,\\
Q_0=0~~~&\text{on }\Gamma.\\
\end{cases}
\] Let $(V,G,Q)$ be the solution to the free-boundary incompressible elastodynamic system \eqref{elastoi} with initial data $(\vvv,\G,Q_0)\in H^4\times H^4\times H^4$:
\[
\begin{cases}
\p_t\eta=V~~~&\text{in }\Omega,\\
\p_t V=-\pa Q+\sum\limits_{j=1}^3(G\cdot\pa)G~~~&\text{in }\Omega,\\
\dive_a V=0~~~&\text{in }\Omega,\\
(\dive G^{\top})_j:=\p_k G_{kj}=0~~~&\text{in }\Omega,\\
\p_t|_{\Gamma}\in\mathcal{T}([0,T]\times\Gamma)~~~&\text{on }\Gamma,\\
Q=0~~~&\text{on }\Gamma,\\
G_j\cdot N=0~~~&\text{on }\Gamma,\\
-\frac{\p Q}{\p N}\geq c_0>0~~~&\text{on }\Gamma,\\
(\eta,V,G,Q)|_{t=0}=(\text{Id},\vvv,\G,Q_0).
\end{cases}
\] 

Suppose that there exists initial data $(v_0^{\eps},\FF_0^{\eps},\h_0^{\eps})$ of compressible elastodynamic system \eqref{elastoL} converging to $(\vvv,\GG_0,Q_0)$ in $C^1$-norm (proved in the Section \ref{data}). Then the uniform-in-$\eps$ energy estimates \eqref{energy1} show that $v^{\eps},h^{\eps},F^{\eps}$ are all $C_{t,x}^1$ are uniformly bounded and also that $\p_t v,\p_t F,\p_t h\in C^{0,\frac12}$ by Morrey's embedding. Therefore, we actually show that $v^{\eps},h^{\eps},F^{\eps}$ are equi-continuous in $C_{t,x}^1$, and thus $(v^{\eps},F^{\eps},h^{\eps})$ has a convergent subsequence by Arzel\`a-Ascoli Lemma. As $\eps\to+\infty$, we have $e'(h_{\eps})\p_t h_{\eps},~e'(h_{\eps})\FP h_{\eps}\to 0$ which implies $$(v^{\eps},F^{\eps},h^{\eps})\to (V,G,Q)$$ provided the convergence of initial data. Finally, we have
\begin{equation}\label{soundl1}
\rho_{\eps}(h)\to 1\text{ and }e_{\eps}(h)\to 0,\text{ as }\eps\to\infty.
\end{equation}Therefore the incompressible limit is established. Theorem \ref{limit} is proven.

\section{Enhanced regularity of full time derivatives}\label{enhance0}

In Lindblad-Luo \cite{lindblad2018priori} and Luo \cite{luo2018ww} concerned with compressible Euler equations, their energy functionals contain the $H^1$-norm of $\p_t^4$-derivative to close the energy estimates and also the incompressible limit. This is because they had to include the full time derivatives in the boundary energy. In the presenting manuscript we no longer need those terms because we exactly have proven that the boundary energy is contributed by the full spatial tangential derivatives. Our result is also applicable to compressible Euler equations. Yet we are still able to prove the higher regularity of full time derivatives of $(v,F,h)$ to recover the previous incompressible limit results for free-boundary Euler equations when the elastic medium is slightly compressible, i.e., $|e'(h)|\ll 1$ is sufficiently small.

However, we cannot mimic the proof in \cite{lindblad2018priori,luo2018ww} to derive analogous result for compressible elastodynamic equations. The essential reason is that the source term of the following wave equation loses one derivative in $\sum\limits_{j=1}^3e'(h)\FP^2 h$ due to the appearance of the deformation tensor $F$
\begin{equation}\label{hwave}
\begin{aligned}
e'(h)\p_t^2 h-\Delta_a h=&\sum_{j=1}^3e'(h)\FP^2 h-\p_ta^{li}\p_lv_i-\sum_{j=1}^3(\pa^iF_{lj})\cdot(\pa^lF_{ij})-e''(h)(\p_t h)^2+\sum_{j=1}^3e''(h)(\FP h)^2.
\end{aligned}
\end{equation} If we analyze the $\p_t^4$-differentiated wave equation \eqref{hwave}, then we will have to control $\p\p_t^4 v$ coming from the source term. In \cite{lindblad2018priori,luo2018ww}, one can invoke the Euler equations $\p_t v=-\pa h$ to replace $\p\p_t^4 v$ by $\p^2\p_t^3 h$ whose $L^2$-norm can be reduced to $\|\p_t^5 h\|_0$ plus lower order terms by Lemma \ref{GLL}. However, the presence of deformation tensor produces another term $\p(\FP)\p_t^3 F$. Since $F\neq\mathbf{O}$ on the boundary, one cannot apply Lemma \ref{GLL} to $F$ and thus has to analyze higher order spatial derivatives which will ruin the whole proof. 

We notice that, for elastodynamic equations \eqref{elastoL}, we can do further div-curl-tangential control and delicate analysis of Alinhac good unknowns and the $\p_t^4$-differentiated wave equation to estimate the weighted norms of $\p_t^4v,\p_t^4F$ and $\p_t^4h,\p_t^5 h$. The loss of derivative caused by $e'(h)\FP^2 h$ can be absorbed if the compressibility of the elastic medium is sufficiently slight. Thus, our proof is totally different from \cite{lindblad2018priori, luo2018ww, ZhangCRMHD1}.

\subsection{Higher order wave equations and divergence control}\label{div5}

From now on, we again have $\FF_j=\FP\eta$. Taking $\p_t^4$ in \eqref{hwave} yields
\begin{equation}\label{hwave5}
\begin{aligned}
e'(h)\p_t^6h-\dive_a(\p_t^4 \pa h)=&\sum_{j=1}^3e'(h)\FP^2 \p_t^4 h+\underbrace{\p_t^4\left((\pa^i v_l)(\pa^l v_i)-\sum_{j=1}^3(\pa^iF_{lj})\cdot(\pa^lF_{ij})\right)}_{Z_1}\\
&\underbrace{-\p_t^4\left(e''(h)(\p_th)^2\right)+\sum_{j=1}^3\p_t^4\left(e''(h)(\FP h)^2\right)}_{Z_2}\\
&\underbrace{-\left[\p_t^4,e'(h)\right]\p_t^2 h+\left[\p_t^4,\dive_a\right]\pa h}_{Z_3}.
\end{aligned}
\end{equation}

We compute the $L^2$ inner product of \eqref{hwave5} and $(\wt)^2\p_t^5 h$ to get
\begin{align}
\label{h5e1} &\frac12\frac{d}{dt}\io\left|(\wt)^{\frac32}\p_t^5 h\right|^2+\left|\wt \p_t^4\pa h\right|^2\dy\\
\label{h5e2} =&\sum_{j=1}^3\io (\wt)^3\FP^2\p_t^4 h\cdot\p_t^5 h\dy+\io(\wt)^2(Z_1+Z_2+Z_3)\p_t^5 h\dy\\
\label{h5z4} &+\frac32\io(\wt)^2e''(h)\p_t h\left|\p_t^5 h\right|^2\dy+\io\wt e''(h)\p_t h\left|\p_t^4\pa h\right|^2\dy\\
\label{h5z5} &+\io(\wt)^2\left[\pa,\p_t^4\right]\p_t h\dy+2\wt e''(h)(\pa h)\p_t^5 h(\p_t^4\pa h)\dy
\end{align}

Let us analyze each term in details. In the first term of \eqref{h5e2}, we can integrate $\FP$ by parts to get the divergence control of $\FF$
\begin{equation}\label{h5e21}
\begin{aligned}
&\io (\wt)^3\FP^2\p_t^4 h\cdot\p_t^5 h\dy\\
=&-\frac12\frac{d}{dt}\io\left|(\wt)^{\frac32}\FP\p_t^4 h\right|^2\dy+\frac32\io(\wt)^2e''(h)\p_t h\left|\FP\p_t^4 h\right|^2\dy\\
&-3\io(\wt)^2e''(h)(\FP h)\left(\FP\p_t^4 h\right)\p_t^5 h\dy-\io(\wt)^3(\dive \F_j)\left(\FP\p_t^4 h\right)\p_t^5 h\dy\\
\lesssim&-\frac12\frac{d}{dt}\io\left|(\wt)^{\frac32}\FP\p_t^4 h\right|^2\dy+\|\wt\p_t h\|_{L^{\infty}}\left\|(\wt)^{\frac32}\FP\p_t^4 h\right\|_0^2\\
&+\left(\left\|\wt\FP h\right\|_{L^{\infty}}+\left\|\dive \F_j\right\|_{L^{\infty}}\right)\left\|(\wt)^{\frac32}\FP\p_t^4 h\right\|_0\left\|(\wt)^{\frac32}\p_t^5 h\right\|_0
\end{aligned}
\end{equation}

The analysis of the second term in \eqref{h5e2} will be postponed to the end of this part. Now we analyze \eqref{h5z4}.
\begin{equation}
\begin{aligned}
&\frac32\io(\wt)^2e''(h)\p_t h\left|\p_t^5 h\right|^2\dy+\io\wt e''(h)\p_t h\left|\p_t^4\pa h\right|^2\dy\\
\lesssim&\|\wt\p_t h\|_{L^{\infty}}\left(\left\|(\wt)^{\frac32}\p_t^5 h\right\|_0^2+\left\|\wt \p_t^4\pa h\right\|_0^2\right)
\end{aligned}
\end{equation}

Then we analyze \eqref{h5z5} term by term (we omit the coefficients)
\begin{align*}
-\left[\pa,\p_t^4\right]\p_t h=\p_t^4a\cdot\p\p_t h+\p_t^3a\cdot\p\p_t^2 h+\p_t^2a\cdot\p\p_t^3 h+\p_ta\cdot\p\p_t^4 h,
\end{align*}and thus
\begin{equation}
\begin{aligned}
&\io(\wt)^2\left[\pa,\p_t^4\right]\p_t h\dy\\
\lesssim&\left\|\swt\p_t^3 v\right\|_1\left\|\p_t h\right\|_1\|\wt^{\frac32}\|_{L^{\infty}}+\|\p_t^2 v\|_1\|\wt\p_t^2 h\|_1\|\wt\|_{L^{\infty}}\\
&+\|\p_t v\|_1\cdot\|\wt\p_t^3 h\|_1\|\wt\|_{L^{\infty}}+\|\p v\cdot a\|_1\|\wt\pa \p_t^4 h\|_0\|\wt\|_{L^{\infty}}
\end{aligned}
\end{equation} So \eqref{h5z5} is estimated as follows
\begin{equation}\label{h5z51}
\begin{aligned}
\eqref{h5z5}\lesssim&\left\|\swt\p_t^3 v\right\|_1\left\|\p_t h\right\|_1\|\wt^{\frac32}\|_{L^{\infty}}+\|\p_t^2 v\|_1\|\wt\p_t^2 h\|_1\|\wt\|_{L^{\infty}}\\
&+\|\p_t v\|_1\cdot\|\wt\p_t^3 h\|_1\|\wt\|_{L^{\infty}}+\|\p v\cdot a\|_1\|\wt\pa \p_t^4 h\|_0\|\wt\|_{L^{\infty}}\\
&+\left\|\swt\pa h\right\|_{L^{\infty}}\left\|\wt\p_t^4\pa h\right\|_0\left\|(\wt)^{\frac32}\p_t^5 h\right\|_0
\end{aligned}
\end{equation}

Finally, we analyze the second integral in \eqref{h5e2}. In fact, it suffices to compute $\left\|\swt(Z_1+Z_2+Z_3)\right\|_0$. We first write $Z_1$ term by term,
\begin{align*}
Z_1=&2\p_t^4(\pa v)\cdot(\pa v)-2\p_t^4(\pa F)\cdot(\pa F)\\
&+8\p_t^3(\pa v)\cdot\p_t(\pa v)-8\p_t^3(\pa F)\cdot\p_t(\pa F)\\
&+6\p_t^2(\pa v)\cdot\p_t^2(\pa v)-6\p_t^2(\pa F)\cdot\p_t^2(\pa F), 
\end{align*}and thus
\begin{equation}\label{h5z1}
\begin{aligned}
\left\|\swt Z_1\right\|_0\lesssim P(\|\eta\|_4)&\bigg(\left\|\swt\p_t^4 v\right\|_1\|\p v\|_{L^{\infty}}+\left\|\swt\p_t^4 F\right\|_1\|\p F\|_{L^{\infty}}\\
&+\left\|\swt\p_t^3 v\right\|_1\|\p\p_t v\|_{L^{\infty}}+\left\|\swt\p_t^3 F\right\|_1\|\p\p_t F\|_{L^{\infty}}\\
&+\left\|\swt\right\|_{L^{\infty}}\left(\left\|\p_t^2 v\right\|_2^2+\left\|\p_t^2 F\right\|_2^2\right)\bigg).
\end{aligned}
\end{equation}

For $Z_2$, it suffices to analyze its first term because $\p_t h$ and $\FP h$ always have the same power of weight functions. We have
\begin{align*}
\swt&\p_t^4\left(e''(h)(\p_th)^2\right)=\swt e''(h)\left(2\p_t^5 h\p_t h+8\p_t^4 h\p_t^2 h+12\p_t^3 h\p_t^3 h\right)\\
&+\swt e^{(3)}(h)\p_t h\left(2\p_t^4 h\p_t h+6\p_t^3 h\p_t^2 h\right)\\
&+\swt\left(e^{(4)}(h)(\p_t h)^2+e^{(3)}(h)\p_t^2 h\right)\left(2\p_t^3 h\p_t h+4\p_t^2 h\p_t^2 h\right)\\
&+\swt\left(e^{(5)}(h)(\p_t h)^3+3e^{(4)}(h)\p_t h\p_t^2 h+e^{(3)}(h)\p_t^3 h\right)\left(2\p_t^2 h\p_t h\right)\\
&+\swt\left(e^{(6)}(h)(\p_t h)^4+6e^{(5)}(h)\p_t^2 h(\p_t h)^2+3e^{(4)}(h)(\p_t^2 h\p_t^2 h+\p_t h\p_t^3h)+e^{(3)}(h)\p_t^4 h\right)\left(\p_t h\right)^2
\end{align*}
Invoking the physical constraints \eqref{sound}, we know 
\begin{equation}\label{h5z2}
\begin{aligned}
&\left\|\swt\p_t^4\left(e''(h)(\p_th)^2\right)\right\|_0\\
\lesssim& P\left(\left\|\swt\right\|_{L^{\infty}},\left\|(\wt)^{\frac32}\p_t^5 h\right\|_0,\left\|(\wt)^{\frac32}\p_t^4 h\right\|_0,\left\|\wt\p_t^3 h\right\|_1,\left\|(\wt)\p_t^2 h\right\|_2,\left\|\p_t h\right\|_2\right)
\end{aligned}
\end{equation}
Similar result holds for the second term by replacing $\p_t$ by $\FP$ so we omit the details.

For $Z_3$, we have (omitting the coefficients)
\begin{align*}
Z_3:=&-\left[\p_t^4,e'(h)\right]\p_t^2 h+\left[\p_t^4,\dive_a\right]\pa h\\
=&-e''(h)(\p_t h\p_t^5 h+\p_t^2 h\p_t^4 h+\p_t^3 h\p_t^3 h)-e^{(3)}(h)((\p_t h)^2\p_t^4 h+\p_t h\p_t^2 \p_t^3 h+(\p_t^2 h)^3)\\
&-e^{(4)}(h)((\p_th)^3\p_t^3 h+(\p_t h)^2(\p_t^2 h)^2)-e^{(5)}(h)(\p_t h)^4\p_t^2 h\\
&+\p_t^4 a\cdot\p(\pa h)+\p_t^3a\cdot\p\p_t(\pa h)+\p_t^2a\cdot\p\p_t^2(\pa h)+\p_ta\cdot\p\p_t^3(\pa h).
\end{align*}
Therefore
\begin{equation}\label{h5z3}
\begin{aligned}
&\left\|\swt Z_3\right\|_0\\
\lesssim P\bigg(& \|\wt\|_{L^{\infty}},\left\|(\wt)^{\frac32}\p_t^5 h\right\|_0,\left\|(\wt)^{\frac32}\p_t^4 h\right\|_0,\left\|\wt\p_t^3 h\right\|_1,\left\|(\wt)\p_t^2 h\right\|_2,\left\|\p_t h\right\|_3,\\
&\|h\|_4,\left\|\swt\p_t^3 v\right\|_1,\|\p_t^2 v\|_2,\|\p_t v\|_2\bigg)+\|\p v\|_{L^{\infty}}\|\p\eta\|_{L^{\infty}}\left\|\swt\p_t^3\pa h\right\|_1
\end{aligned}
\end{equation}

In the last term we again invoke Lemma \ref{GLL} to get
\begin{equation}\label{h5z31}
\begin{aligned}
\left\|\swt\p_t^3\pa h\right\|_1\leq&\left\|\swt\pa \p_t^3h\right\|_1+\left\|\swt[\p_t^3,\pa] h\right\|_1\\
\lesssim&\left\|\swt\Delta_a\p_t^3h\right\|_0+\left\|\swt[\p_t^3,\pa] h\right\|_1+L.O.T.\\
\leq&\left\|\swt\p_t^3\Delta_a h\right\|_0+\left\|\swt[\p_t^3,\Delta_a]\right\|_0+\left\|\swt[\p_t^3,\pa] h\right\|_1+L.O.T.
\end{aligned}
\end{equation}

Invoking the $\p_t^3$-differentiated wave equation \eqref{htttwave}, we know the two commutators above has been controlled in the analysis of \eqref{htttwave} so we do not repeat here. The only term that we have to be careful is the first source term in \eqref{htttwave}, i.e., $\sum\limits_{j=1}^3\wt\FP^2\p_t^3 h$. The inequality \eqref{h5z31} now becomes
\begin{equation}\label{h5z32}
\left\|\swt\p_t^3\pa h\right\|_1\lesssim\sum_{j=1}^3\left\|\swt\wt\FP^2\p_t^3 h\right\|_0\lesssim\|\F\|_2^2\|\wt\|_{L^{\infty}}\left\|\swt\p_t^3 h\right\|_2
\end{equation}
Since $\left\|\swt\p_t^3\pa h\right\|_1\approx\left\|\swt\p_t^3 h\right\|_2$ and $|\wt|\ll 1$ for a slightly compressible elastic medium, the RHS can be absorbed to LHS by letting $\eps$ sufficiently large (or say $e'(h)$ sufficiently small). 

Thus, the energy of $\p_t^4$-differentiated wave equation is controlled as follows
\begin{align}\label{div5vF}
\frac12\frac{d}{dt}\left(\io\left|(\wt)^{\frac32}\p_t^5 h\right|^2+\left|\wt \p_t^4\pa h\right|^2+\sum_{j=1}^3\left|(\wt)^{\frac32}\FP\p_t^4 h\right|^2\dy\right)\lesssim P(\EEE(T)).
\end{align}

\subsection{Curl control}\label{curl5}

According to Lemma \ref{hodge}, we need to control the $L^2$-norm of divergence, curl, and tangential derivative of $\p_t^4 v$ and $\p_t^4 \FF$. In Section \ref{div5}, we control the weighted divergence of $v$ and $F$ because 
\[
\swt\dive\p_t^4 h\approx -(e'(h))^{\frac32}\p_t^5 h,~~\swt\dive\p_t^4 F_j\approx\swt\dive_a\p_t^4(\FP\eta)=-(\wt)^{\frac32}\FP\p_t^4 h.
\] This part we are going to control the curl. Keep in mind $F_j=\FP\eta$.

We take $\p_t^4\curl_a$ in the seond equation of \eqref{elastol} to get the evolution equation of curl
\[
\p_t^4(\curl_a\p_t v)=\sum_{j=1}^3\p_t^4\left(\curl_a(\FP F_j)\right).
\]and thus
\begin{equation}\label{curl5eq}
\begin{aligned}
\p_t(\curl_a\p_t^4 v)-\sum_{j=1}^3\FP\curl_a\p_t^4F_j=&-\left[\FP,\curl_a\right]\p_t^4 F_j\\
&-3\curl_{\p_ta}\p_t^4 v-6\curl_{\p_t^2 a}\p_t^3 v\\
&-4\curl_{\p_t^3 a}\p_t^2 v-\curl_{\p_t^4 a}\p_t v\\
&+\sum_{j=1}^3\bigg(4\curl_{\p_ta}\FP\p_t^3F_j+6\curl_{\p_t^2 a}\FP\p_t^2F_j\\
&+4\curl_{\p_t^3 a}\FP\p_t F_j+\curl_{\p_t^4 a}\FP F_j\bigg).
\end{aligned}
\end{equation}

Then we compute the $L^2$ inner product of \eqref{curl5eq} and $\wt \curl_a\p_t^4 v$ to get
\begin{equation}
\begin{aligned}
&\frac12\frac{d}{dt}\io\wt\left|\curl_a\p_t^4 v\right|^2+\sum_{j=1}^3\wt\left|\curl_a\p_t^4\FP\eta\right|^2\dy\\
=&\sum_{j=1}^3\io\wt\left(\curl_a\p_t^4 v\right)\cdot\left[\FP,\curl_a\right]\p_t^4 v\dy\\
&+\text{RHS of }\eqref{curl5eq}+\cdots,
\end{aligned}
\end{equation}where the omitted terms correspond to the case that dertivatives fall on the weight function (which must be lower order and of higher power of weight functions). 

There is only one term that cannot be controlled by direct computation: $4\sum\limits_{j=1}^3\curl_{\p_ta}\FP\p_t^3F_j$ which requires $\|\p_t^3F\|_2$. However, we invoke again the second equation of \eqref{elastol} to replace $\sum\limits_{j=1}^3\FP F_j$ by $\p_t v-\pa h$:
\[
4\sum\limits_{j=1}^3\curl_{\p_ta}\FP\p_t^3 F_j=4\curl_{\p_t a}\p_t^4 v-4\curl_{\p_t a}\p_t^3\pa h,
\] where the first term can be directly controlled by $\|\p_t a\|_{L^{\infty}}\|\p_t^4 v\|_1$. The second term requires the control of $\|\p_t^3 h\|_2$. This can again be reduced to $\p_t^5 h$ by Lemma \ref{GLL}. One can mimic the proof in \eqref{h5z32} to control that.

Therefore, the following curl estimate holds
\begin{equation}\label{curl5vF}
\frac12\frac{d}{dt}\io\wt\left|\curl_a\p_t^4 v\right|^2+\sum_{j=1}^3\wt\left|\curl_a\p_t^4\FP\eta\right|^2\dy\lesssim P(\EEE(T)).
\end{equation}

\subsection{Tangential estimates: Alinhac good unknown}\label{tg14}

It remains to do the tangential estimates. Although $\TP\p_t^4$ contains time derivative, we cannot directly commute it with $\pa$ because this will produce a term like $\TP\p_t^4 a=\p_t^3\p^2 v\times\p\eta+\cdots$ which cannot be controlled in $L^2$. To avoid such problem, we again use Alinhac good unknown method. Define the Alinhac good unknowns for $v,h$ with respect to $\TP\p_t^4$ by
\begin{equation}\label{good5vF}
\begin{aligned}
\VVV:=&\TP\p_t^4v-\TP\p_t^4\eta\cdot\pa v=\TP\p_t^4v-\TP\p_t^3v\cdot\pa v,\\
\HHH:=&\TP\p_t^4 h-\TP\p_t^4\eta\cdot\pa h=\TP\p_t^4 h-\TP\p_t^3v\cdot\pa h.
\end{aligned}
\end{equation}

Similarly as in Section \ref{tgspace}, we have the following identity for the Alinhac good unknown $\mathfrak{g}:=\TP\p_t^4g-\TP\p_t^4\eta\cdot\pa g$ for a function $g$ with respect to $\TP\p_t^4$:
\begin{equation}\label{id5}
\TP\p_t^4(\pa g)=\pa \mathfrak{g}+\CC(g),
\end{equation}where
\begin{align}\label{com5}
\CC^i(g):=&\TP\p_t^4\eta_r\pa^i(\pa^r g)-\left(\left[\p_t^4,a^{lr}a^{mi}\right]\TP\p_m\eta_r\right)\p_l f+\left[\TP\p_t^4,a^{li},\p_lf\right].
\end{align}

Taking $\wt\TP\p_t^4$ in the second equation of \eqref{elastol} yields that
\begin{equation}\label{good5eq}
\wt\p_t\VVV-e'(h)\pa\HHH=\wt\sum_{j=1}^3\FP\TP\p_t^4F_j+\underbrace{\wt\left(\p_t(\TP\p_t^3 v\cdot\pa v)-\CC(h)+\left[\TP\p_t^4,\FP\right]F_j\right)}_{\ff}.
\end{equation}

Taking $L^2$ inner product of \eqref{good5eq} and $\VVV$ yields
\begin{equation}\label{good5e0}
\begin{aligned}
\frac12\frac{d}{dt}\io\wt|\VVV|^2\dy=\io\wt(\pa\HHH)\cdot\VVV\dy+\sum_{j=1}^3\io\wt\FP\TP\p_t^4F_j\cdot\VVV\dy+\underbrace{\io\ff\cdot\VVV\dy}_{\KK_0},
\end{aligned}
\end{equation}
subjected to 
\begin{align}
\pa\cdot\VVV=\TP\p_t^4(\dive_a v)-\CC^i(v_i),
\end{align}and
\begin{align}
\HHH|_{\Gamma}=(-\p_3 h)a^{3k}\TP\p_t^4\eta_k.
\end{align}
Here in \eqref{good5e0} all the terms containing derivatives that fall on the weight function are again omitted. 

We analyze the second term on RHS of \eqref{good5e0} by integrating $\FP$ by parts
\begin{equation}\label{tg5F}
\begin{aligned}
&\sum_{j=1}^3\io\wt\FP\TP\p_t^4F_j\cdot\VVV\dy\\
\overset{\FP}{=}&-\sum_{j=1}^3\io\wt\TP\p_t^4\FF_j\cdot\FP\left(\TP\p_t^4 v-\TP\p_t^4\eta\cdot\pa v\right)+\sum_{j=1}^3\io\wt(\dive\F_j)\TP\p_t^4F_j\cdot\VVV\\
=&-\sum_{j=1}^3\frac12\frac{d}{dt}\io\wt\left|\TP\p_t^4\left(\FP\eta\right)\right|^2\dy-\io\wt\TP\p_t^4F_j\cdot\left[\FP,\TP\right]\p_t^4 v\dy\\
&-\io\wt\TP\p_t^4F_j\cdot\TP\p_t^4\left(\FP\eta\right)\cdot \pa v\dy-\io\wt\TP\p_t^4F_j\cdot\left(\left(\left[\FP,\TP\right]\p_t^4\eta\right)\cdot\pa v\right)\\
&-\io\wt\TP\p_t^4F_j\cdot\left(\TP\p_t^4\eta\cdot\left(\FP\pa v\right) \right)\dy+\io\wt(\dive\F_j)\TP\p_t^4F_j\cdot\VVV\dy\\
\lesssim&-\sum_{j=1}^3\frac12\frac{d}{dt}\left\|\swt\TP\p_t^4\left(\FP\eta\right)\right\|_0^2+P\left(\|v\|_4,\|\swt\TP\p_t^4 v\|_0,\|\TP\p_t^4F\|_0,\|\swt\p_t^3 v\|_1\right).
\end{aligned}
\end{equation}

Next we analyze the first term in RHS of \eqref{good5e0} which will produce boundary energy. First we integrate by parts to get
\begin{equation}\label{good5e1}
\begin{aligned}
&\io\wt(\pa\HHH)\cdot\VVV\dy\\
=&-\io\wt\HHH(\pa\cdot\VVV)\dy-\io\wt\p_la^{li}\HHH\VVV_i\dy\underbrace{+\ig\wt\frac{\p h}{\p N}\TP\p_t^4\eta_ka^{3k}a^{3i}\VVV_i\dS}_{=:\BB}\\
=&-\io\wt\HHH\TP\p_t^4(\dive_a v)\dy+\io\wt\HHH\CC^i(v_i)\dy-\io\wt\p_la^{li}\HHH\VVV_i\dy+\BB\\
=:&\KK_1+\KK_2+\KK_3+\BB.
\end{aligned}
\end{equation}

We have that
\begin{equation}\label{KK3}
\KK_3\lesssim\|\p a\|_{L^{\infty}}\left\|\swt\HHH\right\|_0\left\|\swt\VVV\right\|_0.
\end{equation}

Next we plug the expression of $\HHH$ into $\KK_1$ to get
\begin{equation}\label{KK1}
\begin{aligned}
\KK_1=&-\io\wt\HHH\TP\p_t^4(\dive_a v)\dy=\io(\wt)^2(\TP\p_t^4 h-\TP\p_t^4\eta\cdot\pa h)\TP\p_t^5 h\dy\\
=&\frac12\frac{d}{dt}\left\|\swt\TP\p_t^4 h\right\|_0^2-\underbrace{\io(\wt)^2\TP\p_t^3 v_ka^{lk}\p_l h\TP\p_t^5 h\dy}_{\KK_4}.
\end{aligned}
\end{equation}

Then $\KK_4$ can be controlled after integrating $\p_t$ by parts under time integral
\begin{equation}\label{KK4}
\begin{aligned}
\int_0^T\KK_4(t)\dt:=&\int_0^T\io(\wt)^2\TP\p_t^3 v_ka^{lk}\p_l h\TP\p_t^5 h\dy\dt\\
\overset{\p_t}{=}&-\int_0^T\io(\wt)^2\TP\p_t^4 v_k a^{lk}\p_l h\TP\p_t^4 h\dy\dt+\io(\wt)^2\TP\p_t^3 v_ka^{lk}\p_l h\TP\p_t^4 h\dy\bigg|^T_0\\
&-\int_0^T\io(\wt)^2\TP\p_t^3v_k\p_t(\pa h)^k\TP\p_t^4 h\dy\\
\lesssim&\int_0^T P\left(\|\eta\|_{3},\|h\|_4,\|\p_t h\|_3,\|\p_t^3 v\|_1,\left\|\swt\TP\p_t^4 v\right\|_0,\left\|(\wt)^{\frac32}\TP\p_t^4 h\right\|_0 \right)\dt\\
&+\PP_0+\delta\left\|\wt\TP\p_t^4 h\right\|_0^2+\left\|\swt\right\|_{L^{\infty}}^2\left\|\swt\TP\p_t^3 v\right\|_{0}^2\|\pa h\|_{L^{\infty}}^2.
\end{aligned}
\end{equation}Here in the last step we use Young's inequality such that the $\delta$-term can be absorbed, and the last term can also be absorbed by $\EE(T)$ since $|\wt|\ll 1$ is assumed now. Therefore the term $\KK_1$ has been controlled.

Now we come to analyze the boundary integral. We follow the same method as in Section \ref{tgspace}. Since we do not have tangential smoothing now (and thus $\psi=0$), we can find the cancellation structure in the boundary integral.
\begin{equation}\label{B51}
\begin{aligned}
\BB:=&\ig\wt\left(\frac{\p h}{\p N}\right)\TP\p_t^4\eta_ka^{3k}a^{3i}\VVV_i\dS\\
=&-\ig\wt\left(-\frac{\p h}{\p N}\right)\TP\p_t^4\eta_ka^{3k}a^{3i}\left(\TP\p_t^5\eta_i-\TP\p_t^4\eta\cdot\pa v_i\right)\dS\\
=&-\frac12\frac{d}{dt}\ig\left(-\frac{\p h}{\p N}\right)\left|\swt a^{3i}\TP\p_t^4\eta_i\right|^2\dS+\frac12\ig\p_t\left(-\frac{\p h}{\p N}\right)\left|\swt a^{3i}\TP\p_t^4\eta_i\right|^2\dS\\
&+\underbrace{\ig\wt\left(-\frac{\p h}{\p N}\right)\p_ta^{3i}a^{3j}\TP\p_t^4\eta_j\TP\p_t^4\eta_i\dS}_{\BB_1}+\underbrace{\ig\wt\left(-\frac{\p h}{\p N}\right)\TP\p_t^4\eta_ja^{3j}a^{3i}\TP\p_t^4\eta_ka^{lk}\p_lv_i\dS}_{\BB_2}\\
\leq&-\frac{c_0}{4}\frac{d}{dt}\left|\swt a^{3i}\TP\p_t^4\eta_i\right|_0^2+\frac12\|\p_t h\|_3\left|\swt a^{3i}\TP\p_t^4\eta_i\right|_0^2+\BB_1+\BB_2.
\end{aligned}
\end{equation}

Plugging $\p_ta^{3i}=-a^{3r}\p_lv_ra^{li}$ into $\BB_1$ yields that
\begin{equation}\label{B52}
\begin{aligned}
\BB_1=&-\ig\wt\left(-\frac{\p h}{\p N}\right)a^{3r}\p_lv_ra^{ri}a^{3j}\TP\p_t^4\eta_j\TP\p_t^4\eta_i\dS\\
=-&\ig\wt\left(-\frac{\p h}{\p N}\right)a^{3r}\p_3v_ra^{3i}a^{3j}\TP\p_t^4\eta_j\TP\p_t^4\eta_i\dS\\
&\underbrace{-\sum_{L=1}^2\ig\wt\left(-\frac{\p h}{\p N}\right)a^{3r}\p_Lv_ra^{Li}a^{3j}\TP\p_t^4\eta_j\TP\p_t^4\eta_i\dS}_{\BB_3}\\
\lesssim&|\p_3 ha^{3r}\p_3 v_r|_{L^{\infty}}\left|\swt a^{3i}\TP\p_t^4\eta_i\right|_0^2+\BB_3.
\end{aligned}
\end{equation}

Next we compare $\BB_2$ with $\BB_3$
\begin{equation}\label{B53}
\begin{aligned}
\BB_2=&\ig\wt\left(-\frac{\p h}{\p N}\right)\TP\p_t^4\eta_ja^{3j}a^{3i}\TP\p_t^4\eta_ka^{3k}\p_3v_i\dS\\
&+\sum_{L=1}^2\ig\wt\left(-\frac{\p h}{\p N}\right)\TP\p_t^4\eta_ja^{3j}a^{3i}\TP\p_t^4\eta_ka^{Lk}\p_Lv_i\dS\\
=&\ig\wt\left(-\frac{\p h}{\p N}\right)\TP\p_t^4\eta_ja^{3j}a^{3i}\TP\p_t^4\eta_ka^{3k}\p_3v_i\dS-\BB_3\\
\lesssim&-\BB_3+|\p_3 ha^{3i}\p_3 v_i|_{L^{\infty}}\left|\swt a^{3i}\TP\p_t^4\eta_i\right|_0^2.
\end{aligned}
\end{equation}

Combining \eqref{B51}-\eqref{B53} we get the control of the boundary integral.
\begin{equation}\label{B5}
\BB\lesssim-\frac{c_0}{4}\frac{d}{dt}\left|\swt a^{3i}\TP\p_t^4\eta_i\right|_0^2+\left(\|\p_th\|_3+|\p_3 ha^{3i}\p_3 v_i|_{L^{\infty}}\right)\left|\swt a^{3i}\TP\p_t^4\eta_i\right|_0^2.
\end{equation}

It now remains to control $\KK_0$ and $\KK_2$. The reason for us to postpone the proof is that these two terms contain the commutator $\CC(v),\CC(h)$ in the Alinhac good unknown. Previously in Section \ref{tgspace} we do not have to care too much of it because all the derivatives are spatial in that case. However, the tangential derivative now becomes $\TP\p_t^4$ and the commutator might contain terms of the form $\p^2\p_t^3 v$ which cannot be controlled. 

In $\KK_0$, the third term in the commutator $\CC(h)$ contributes the following
\[
\KK_5:=-\io\wt\p_t a^{li}~\TP\p_l\p_t^3 h~\VVV_i\dy
\]
Note that one can apply Lemma \ref{GLL} to do elliptic estimates on $\left\|\swt\p_t^3 h\right\|_2$ as in \eqref{h5z31}-\eqref{h5z32} to reduce this term to $\left\|(\wt)^{\frac32}\p_t^5 h\right\|_0$ which has been controlled by the energy functional of $\p_t^4$-differentiated wave equation \ref{hwave5}.
\begin{equation}\label{KK5}
\KK_5\lesssim\left\|\p_t a\right\|_{L^{\infty}}\left\|(\wt)^{\frac32}\p_t^5 h\right\|_0\left\|\swt\VVV\right\|_0
\end{equation}

Another term we need to study carefully is $\KK_2$, where the commutator $\CC(v)$ produce the following terms
\begin{align*}
&-\io\wt\HHH\p_t a^{li}\TP\p_t^3\p_l v_i\\
=&-\io\wt\TP\p_t^4 h\p_t a^{mi}\TP\p_t^3\p_m v_i\dy+\io\wt\TP\p_t^4\eta_k a^{lk}\p_l h\p_t a^{mi}\TP\p_t^3\p_m v_i\dy\\
=:&\KK_6+\KK_7.
\end{align*}

We should control these two terms under time integral since we have to integrate $\p_t$ by parts. For simplicity we only list the most difficult terms. The omitted terms are always of lower order.
\begin{equation}\label{KK6}
\begin{aligned}
\int_0^T\KK_6:=&-\int_0^T\io\wt\TP\p_t^4 h\p_t a^{mi}\TP\p_t^3\p_m v_i\dy\dt\\
\overset{\TP}{=}&\int_0^T\io\wt\TP^2\p_t^4 h\p_t a^{mi}\p_t^3\p_m v_i\dy\dt+\cdots \\
\overset{\p_t}{=}&-\int_0^T\io\wt\TP^2\p_t^3 h\p_t a^{mi}\p_t^4\p_m v_i\dy\dt-\io\wt\TP^2\p_t^3 h\p_t a^{mi}\p_t^3\p_m v_i\dy+\cdots\\
\lesssim&\int_0^T\left\|\swt\TP^2\p_t^3 h\right\|_0\left\|\swt\TP\p_t^4 v\right\|_0\|\p_t a\|_{L^{\infty}}\dt+\left\|\swt\TP^2\p_t^3 h\right\|_0\left\|\swt\TP\p_t^3 v\right\|_0\|\p_t a\|_{L^{\infty}}\\
\lesssim&\int_0^TP(\EEE(t))\dt+\delta\left\|(\wt)^{\frac32}\p_t^5 h\right\|_0^2+\PP_0+\int_0^TP(\EE(t))\dt
\end{aligned}
\end{equation}

\begin{equation}\label{K7}
\begin{aligned}
\int_0^T\KK_7:=&\io\wt\TP\p_t^4\eta_k a^{lk}\p_l h\p_t a^{mi}\TP\p_t^3\p_m v_i\dy\dt\\
\overset{\p_t}{=}&-\int_0^T\io\wt\TP\p_t^5\eta_k a^{lk}\p_l ha^{mi}\TP\p_m\p_t^2v_i\dy\dt+\io\wt\TP\p_t^4\eta_k a^{lk}\p_l ha^{mi}\TP\p_m\p_t^2v_i\dy\\
=&-\int_0^T\io\wt\TP\p_t^4v_k a^{lk}\p_l ha^{mi}\TP\p_m\p_t^2v_i\dy\dt+\io\wt\TP\p_t^3v_k a^{lk}\p_l ha^{mi}\TP\p_m\p_t^2v_i\dy\\
\lesssim&\int_0^T\left\|\swt\right\|_{L^{\infty}}\left\|\swt\TP\p_t^4 v\right\|_0\|a\|_2^2\|h\|_3\|\p_t^2 v\|_2\dt\\
&+\left\|\swt\right\|_{L^{\infty}}\left\|\swt\TP\p_t^3 v\right\|_0\|a\|_2^2\|h\|_3\|\p_t^2 v\|_2\\
\lesssim&\int_0^TP(\EEE(t))\dt+\delta\|\p_t^2 v\|_2^2+\frac{1}{4\delta}\left\|\swt\right\|_{L^{\infty}}^2\left\|\swt\TP\p_t^3 v\right\|_0^2\|a\|_2^4\|h\|_3^2.
\end{aligned}
\end{equation}

By picking a suitably small $\delta>0$, the $\delta$-term is absorbed by $\EE(T)$. The last term can also be absorbed by using $|\wt|\ll 1$.

Summarizing \eqref{good5e0}, \eqref{tg5F}-\eqref{KK4}, \eqref{B5}-\eqref{K7}, we get the estimates for the Alinhac good unknowns $\VVV$ and $\HHH$
\begin{equation}\label{good5e}
\left\|\swt\VVV\right\|_0^2+\left\|\wt\TP\p_t^4 h\right\|_0^2+\sum_{j=1}^3\left\|\swt\TP\p_t^4\FP\eta\right\|_0^2+\frac{c_0}{4}\left|\swt a^{3i}\TP\p_t^4\eta_i\right|_0^2\lesssim\PP_0+\int_0^T P(\EEE(t))\dt.
\end{equation}

Finally, by the definition of Alinhac good unknowns \eqref{good5vF}, we have
\[
\left\|\swt\TP\p_t^4 v\right\|_0\lesssim\left\|\swt\VVV\right\|_0+\left\|\swt\TP\p_t^3 v\right\|_0\|\pa v\|_{L^{\infty}}\lesssim P(\EE(T))+\left\|\swt\VVV\right\|_0,
\] and thus we finalize the tangential estimates as
\begin{equation}\label{tg5vF}
\begin{aligned}
&\left\|\swt\TP\p_t^4 v\right\|_0^2+\left\|\wt\TP\p_t^4 h\right\|_0^2+\sum_{j=1}^3\left\|\swt\TP\p_t^4\FP\eta\right\|_0^2+\frac{c_0}{4}\left|\swt a^{3i}\TP\p_t^4\eta_i\right|_0^2\\
\lesssim&\PP_0+\EEE(0)+\int_0^T \EEE(t)^2\dt.
\end{aligned}
\end{equation}

\subsection{A posteriori enhanced regularity of full time derivatives}\label{enhance5}

Combining Lemma \ref{hodge} and divergence estimate \eqref{div5vF}, curl estimate \eqref{curl5vF} and tangential estimates \eqref{tg5vF}, we are able to get the enhanced regularity of full time derivatives. 
\begin{equation}\label{energy5}
\begin{aligned}
&\left\|\swt\p_t^4 v\right\|_1^2+\left\|\wt\TP\p_t^4 h\right\|_0^2+\sum_{j=1}^3\left\|\swt\p_t^4\FP\eta\right\|_1^2+\frac{c_0}{4}\left|\swt a^{3i}\TP\p_t^4\eta_i\right|_0^2\\
+&\left\|(\wt)^{\frac32}\p_t^5 h\right\|_0^2+\left\|\wt\p_t^4 h\right\|_1^2\\
\lesssim&\PP_0+\EEE(0)+\int_0^T P(\EEE(t))\dt,
\end{aligned}
\end{equation} and thus by Gronwall's inequality, there exists some $0<T_1<T$ such that
\begin{equation}\label{energy25}
\sup_{0\leq t\leq T_1}\EEE(t)\leq \PP_0\EE(0).
\end{equation}

Such estimate recovers the energy bound in Lindblad-Luo \cite{lindblad2018priori} and Luo \cite{luo2018ww}. This demonstrates that our proof is completely applicable to the study of compressible Euler equations. To finalize to proof of Theorem \ref{enhance}, it remains to construct the initial data satisfying the compatibility conditions \eqref{ccd} up to 5-th order such that $\EEE(0)\leq \PP_0$. This will be proved in Section \ref{data}.

\section{Construction of initial data satisfying the compatibility conditions}\label{data}

The last section of this manuscript presents the construction of initial data satisfying the compatibility conditions \eqref{ccd} up to 5-th order. The compatibility conditions come from the boundary conditions and higher order wave equations. We start the initial data $(\vvv,\G,Q_0)$ of free-boundary incompressible elastodynamic equations: $\vvv$ is a divergence-free vector field and $\G$ is a divergence-free matrix in the sense of $\p_k\G_{kj}=0$, then $Q_0$ is defined by the elliptic system $-\Delta Q_0=(\p_i\vvv^k)(\p_k\vvv^i)+\sum\limits_{j=1}^3(\p_i\GG_{kj}^0)(\p_k\GG_{ij}^0)$ subject to $Q_0|_{\Gamma}=0$ and also the Rayleigh-Taylor sign condition $-\frac{\p Q_0}{\p n}|_{\Gamma}\geq c_0>0$. We are going to construct a sequence of compressible data $(v_0^{\eps},\F_{\eps},\h_0^{\eps})$ satisfying the compatibility conditions \eqref{ccd} up to 5-th order and strongly converging to $(\vvv,\G,Q_0)$ as the sound speed $\eps\to+\infty$. For simplicity\footnote{The proof in general case can be similarly proceeded. See Luo \cite{luo2018ww}.} we suppose that $e(\h)=\eps^{-1} h$ in order to omit the smaller and lower order terms in the wave equation of $h$. Then the compressible elastodynamic equations at time 0 ($\eta=$ Id) becomes
\begin{equation}\label{elasto0}
\begin{cases}
\p_t v=-\p h+\sum\limits_{j=1}^3\FP^2\eta~~~&\text{in }\Omega,\\
\dive v=-\eps^{-1}\p_t h~~~&\text{in }\Omega,\\
\p_tF_j=\FP v~~~&\text{in }\Omega,\\
\dive F_j:=\p_k F_{kj}=-\eps^{-1}\FP h~~~&\text{in }\Omega,\\
\p_t|_{\Gamma}\in\mathcal{T}([0,T]\times\Gamma)~~~&\text{on }\Gamma,\\
h=0~~~&\text{on }\Gamma,\\
\F_j\cdot N=0~~~&\text{on }\Gamma,\\
-\frac{\p \h_0}{\p N}\geq c_0>0~~~&\text{on }\Gamma.
\end{cases}
\end{equation}

\subsection{Compatibility conditions and constraints on the initial data}

From \eqref{elasto0} we find that the initial data $(v_0,\h_0)$ should satisfy
\begin{align}\label{ccd0}
\h_0|_{\Gamma}=0,~~\dive v_0|_{\Gamma}=0,~~\dive\F_j|_{\Gamma}=0
\end{align}since $\p_t$ and $\FP$ are tangential derivatives on the boundary $\Gamma$.

We now define $h_{(k)}:=\p_t^k h|_{t=0},~v_{(k)}:=\p_t^k v|_{t=0}$ and $F_{(k)}:=\p_t^k F|_{t=0}$. Recall the wave equation of $h$ now becomes the following equation at $t=0$
\begin{equation}
\eps^{-1}\p_t^2 h-\Delta h_0=(\p_iv_0^{k})(\p_kv_0^i)-\sum_{j=1}^3(\p_i\F_{kj})(\p_k\F_{ij})+\eps^{-1}\sum_{j=1}^3\FP^2 h_0,
\end{equation}and thus
\begin{equation}\label{h0eq}
-\Delta \h_0=-\eps^{-1}h_{(2)}+\eps^{-1}\sum_{j=1}^3\FP^2 \h_0+\underbrace{(\p_iv_0^{k})(\p_kv_0^i)-\sum_{j=1}^3(\p_i\F_{kj})(\p_k\F_{ij})}_{\NN_0},
\end{equation} So we also have
\begin{align}\label{ccd1}
\Delta\h_0+\NN_0=0~~\text{ on }\Gamma.
\end{align}

Time differentiating the wave equation yields that
\begin{equation}\label{h1eq}
-\Delta h_{(1)}=-\eps^{-1}h_{(3)}+\eps^{-1}\sum_{j=1}^3\FP^2 h_{(1)}+\NN_1,
\end{equation}and thus we must have
\begin{align}\label{ccd2}
\Delta h_{(1)}+\NN_1=0~~\text{ on }\Gamma.
\end{align}Here $\NN_1$ denotes the nonlinear quantities of $\p_t\NN_0|_{t=0}$.

Repeat this step, we get $\p_t^k$-differentiated wave equation at time $t=0$
\begin{equation}\label{hkeq}
-\Delta h_{(k)}=-\eps^{-1}h_{(k+2)}+\eps^{-1}\sum_{j=1}^3\FP^2 h_{(k)}+\NN_k,
\end{equation}and thus we must have
\begin{align}\label{ccdk}
\Delta h_{(k)}+\NN_k=0~~\text{ on }\Gamma.
\end{align}Here $\NN_k$ denotes the nonlinear quantities of $\p_t\NN_{k-1}|_{t=0}$.

\subsection{Construction of initial data}

Now we construct the compressible data based on the incompressible data. Let $(\vvv,\G,Q_0)$ be the incompressible data and then we define 
\begin{align}
\label{v00} v_0=&\vvv+\p\phi,\\
\label{F00} \F_j=&\G_j+\p\varphi_j.
\end{align} Therefore the continuity equation, the divergence constraints and the boundary condition of $\FF$ yield the following equations
\begin{align}
\label{v000} -\Delta\phi=\eps^{-1}h_{(1)},&~~~\frac{\p\phi}{\p N}|_{\Gamma}=0\\
\label{F000} -\Delta\varphi_j=\eps^{-1}\FP \h_0=\eps^{-1}\p_k\varphi_j\p_k \h_0,&~~~\frac{\p\varphi_j}{\p N}|_{\Gamma}=0.
\end{align} Then \eqref{h0eq}, \eqref{h1eq}, \eqref{hkeq} require that for $k=0,1,\cdots,m$
\begin{equation}\label{hk00}
\begin{cases}
-\Delta h_{(k)}=-\eps^{-1}h_{(k+2)}+\eps^{-1}\sum\limits_{j=1}^3\FP^2 h_{(k)}+\NN_k~~~&\text{ in }\Omega,\\
h_{(k)}=0~~~&\text{ on }\Gamma.
\end{cases}
\end{equation} Here $\NN_k$ is a function of $v_0,\F,\h_0,h_{(1)},\cdots,h_{(k-1)}$ and their spatial derivatives. If we set $h_{(m+1)}=h_{(m+2)}=0$. Then \eqref{v00}-\eqref{hk00} gives a system of $(v_0,\F,\h_0,h_{(1)},\cdots,h_{(N)})$ such that the compressible data $(v_0,\F,\h_0)$ strongly converges to the incompressible data as $\eps\to+\infty$ and for each $\eps>0$ the compressible data satisfies the compatibility conditions up to order $m$. Then one can invoke $\p_t v=-\p h+\sum\limits_{j=1}^3\FP\F_j$ and $\p_t\FF_j=\FP v$ to recover $v_{(k)}$ and $\FF_{(k)}$ in terms of $(v_0,\F,\h_0,h_{(1)},\cdots,h_{(N)})$. Thus, it remains to show that the system \eqref{v00}-\eqref{hk00} has a solution if $\eps$ is suitably large with uniform-in-$\eps$ energy bound.

\subsection{Existence of initial data satisfying the compatibility conditions}

In the previous proof of local well-posedness, incompressible limit and enhaced regularity, we need to find the compressible data satisfying the compatibility conditions up to 5-th order. According to the analysis above, we need to solve the following elliptic system
\begin{equation}\label{datak}
\begin{cases}
v_0=\vvv+\p\phi~~~&\text{ in }\Omega,\\
\F_j=\G_j+\p\varphi_j~~~&\text{ in }\Omega,\\
-\Delta\phi=\eps^{-1}h_{(1)}~~~&\text{ in }\Omega,\\
-\Delta\varphi_j=\eps^{-1}\FP\h_0~~~&\text{ in }\Omega,\\
-\Delta h_{(k)}=-\eps^{-1}h_{(k+2)}+\eps^{-1}\sum\limits_{j=1}^3\FP^2 h_{(k)}+\NN_k~~~&\text{ in }\Omega,~0\leq k\leq 3\\
h_{(k)}=0,\frac{\p\phi}{\p N}=\frac{\p\varphi_j}{\p N}=0~~~&\text{ on }\Gamma,\\
h_{(4)}=h_{(5)}=0~~~&\text{ in }\Omega.\\
\end{cases}
\end{equation}
Here the quantity $\NN_k$ has the following expression
\begin{equation}\label{NNk}
\NN_k=\sum C^{\alpha_1\cdots\alpha_m\beta_1\cdots\beta_n\gamma_1\cdots\gamma_p}_{\lambda_1\cdots\lambda_p,k}(\p^{\alpha_1}v_0)\cdots(\p^{\alpha_m}v_0)(\p^{\beta_1}\F)\cdots(\p^{\beta_n}\F)(\p^{\gamma_1}h_{(\lambda_1)})\cdots(\p^{\gamma_p}h_{(\lambda_p)}),
\end{equation}where
\begin{align*}
\alpha_1+\cdots+\alpha_m+\beta_1+\cdots+\beta_n+(\gamma_1+\lambda_1)+\cdots+(\gamma_p+\lambda_p)=k+2,\\
0\leq\alpha_i,\beta_i,(\gamma_i+\lambda_i)\leq k+1,0\leq\lambda_i\leq k-1.
\end{align*}We compute $\NN_0,\NN_1$ for example to illustrate how we get such a formula. When $k=0$, \eqref{h0eq} shows that
\[
\NN_0=(\p v_0)(\p v_0)-(\p\F)(\p\F).
\] When $k=1$, we have $\NN_1=(\p v_{(1)})(\p v_0)-(\p\FF_{(1)})(\p\F)$. Invoking \eqref{elasto0} we get $v_{(1)}=-\p \h_0+(\F\cdot\p)\F$ and $\FF_{(1)}=(\F\cdot\p)v_0$, and thus
\[
\NN_1=-(\p^2\h_0)(\p v_0)+\left(\F\cdot(\p^2\F)+(\p \F)(\p \F)\right)(\p v_0)-\F\cdot(\p^2 v_0)(\p\F).
\] The expression of $\NN_k$ can also be similarly computed and we omit the details.

\subsubsection{A priori estimates}

We first prove the uniform-in-$\eps$ a priori estimate for the elliptic system \eqref{datak}. This is necessary for us to do iteration to construct the solution to \eqref{datak}.

By standard elliptic estimate one has
\begin{equation}\label{phiee}
\|\p\phi\|_5\approx\|\Delta\phi\|_4\lesssim\eps^{-1}\|h_{(1)}\|_4,
\end{equation}and thus
\begin{equation}\label{v0ee}
\|v_0\|_5\lesssim\|\vvv\|_5+\eps^{-1}\|h_{(1)}\|_4.
\end{equation}
Similarly one has
\begin{equation}\label{phijee}
\|\p\varphi_j\|_5\approx\|\Delta\varphi_j\|_{4}\lesssim\eps^{-1}\|\F\|_4\|\h_0\|_5,
\end{equation}and 
\begin{equation}\label{F0ee}
\|\F\|_5\lesssim\|\G\|_5+\eps^{-1}\|\F\|_4\|\h_0\|_5.
\end{equation}

Then we estimate $\h_0$ by 
\begin{equation}
\|\h_0\|_5\lesssim\eps^{-1}\left(\|h_{(2)}\|_3+\|\h_0\|_5\|\F\|_3^2\right)+\|v_0\|_4^2+\|\F\|_4^2.
\end{equation} This together with \eqref{v0ee} and \eqref{F0ee} gives 
\begin{equation}\label{h0ee}
\|\h_0\|_5\lesssim \|\vvv\|_4^2+\|\G\|_4^2+\eps^{-1}P\left(\|h_{(1)}\|_3,\|h_{(2)}\|_3,\|\h_0\|_5,\|\F\|_4\right).
\end{equation}

Invoking the first and third equations in \eqref{elasto0} we get
\begin{equation}\label{v1ee}
\begin{aligned}
\|v_{(1)}\|_4\leq&\|\h_0\|_5+\|(\F\cdot\p)\F\|_4\lesssim P\left(\|\h_0\|_5,\|\F\|_5\right)\\
\lesssim&P(\|\ww\|_5,\|\G\|_5)+\eps^{-1}P\left(\|\h_0\|_5,\|h_{(1)}\|_4,\|\F\|_4\right),
\end{aligned}
\end{equation} and
\begin{equation}\label{F1ee}
\begin{aligned}
\|F_{(1)}\|_4\leq&\|(\F\cdot\p)v_0\|_4\lesssim \|\F\|_4\|v_0\|_5\\
\lesssim&P(\|\ww\|_5,\|\G\|_4)+\eps^{-1}P\left(\|\h_0\|_4,\|h_{(1)}\|_4,\|\F\|_3\right).
\end{aligned}
\end{equation}

Next we analyze $h_{(1)}$. By elliptic estimates and previous estimates, we have
\begin{equation}\label{h1ee}
\begin{aligned}
\|h_{(1)}\|_4\lesssim&\eps^{-1}\left(\|h_{(3)}\|_2+\|\F\|_3\|\F\|_2\|h_{(1)}\|_4\right)+\|v_0\|_3\|v_{(1)}\|_3+\|\F\|_3\|F_{(1)}\|_3\\
\lesssim&\eps^{-1}P\left(\|h_{(3)}\|_2,\|h_{(2)}\|_3,\|h_{(1)}\|_4,\|\h_0\|_5,\|\F\|_4\right)+P(\|\ww\|_5,\|\G\|_4).
\end{aligned}
\end{equation}

Again invoking the $\p_t$-differentiated first and third equations in \eqref{elasto0} we get
\begin{equation}\label{v2ee}
\begin{aligned}
\|v_{(2)}\|_3\leq&\|h_{(1)}\|_4+\|(\F\cdot\p)F_{(1)}\|_3\lesssim \|h_{(1)}\|_4+\|\F\|_3\|F_1\|_4\\
\lesssim&\eps^{-1}P\left(\|h_{(3)}\|_2,\|h_{(2)}\|_3,\|\h_0\|_5,\|\F\|_4\right)+P(\|\ww\|_5,\|\G\|_4),
\end{aligned}
\end{equation} and
\begin{equation}\label{F2ee}
\begin{aligned}
\|F_{(2)}\|_3\lesssim\eps^{-1}P\left(\|\h_0\|_5,\|\F\|_3\right)+P(\|\ww\|_5,\|\G\|_4).
\end{aligned}
\end{equation}

Once again we use elliptic estimates and \eqref{v1ee}-\eqref{F2ee} and $h_{(4)}=0$ to get
\begin{equation}\label{h2ee}
\begin{aligned}
\|h_{(2)}\|_3\lesssim&\eps^{-1}P\left(\|h_{(3)}\|_2,\|h_{(2)}\|_3,\|h_{(1)}\|_4,\|\h_0\|_5,\|\F\|_4\right)+P(\|\ww\|_5,\|\G\|_4).
\end{aligned}
\end{equation}

Again invoking the $\p_t^2$-differentiated first and third equations in \eqref{elasto0} we get
\begin{equation}\label{v3ee}
\begin{aligned}
\|v_{(3)}\|_2&\eps^{-1}P\left(\|h_{(3)}\|_2,\|h_{(2)}\|_3,\|h_{(1)}\|_4,\|\h_0\|_5,\|\F\|_4\right)+P(\|\ww\|_5,\|\G\|_4),
\end{aligned}
\end{equation} and
\begin{equation}\label{F3ee}
\begin{aligned}
\|F_{(3)}\|_2\leq\|(\F\cdot\p)v_{(2)}\|_2\lesssim \eps^{-1}P\left(\|h_{(3)}\|_2,\|h_{(2)}\|_3,\|h_{(1)}\|_4,\|\h_0\|_5,\|\F\|_4\right)+P(\|\ww\|_5,\|\G\|_4).
\end{aligned}
\end{equation}

Next we use elliptic estimates and \eqref{h2ee}-\eqref{F3ee} and $h_{(5)}=0$ to get
\begin{equation}\label{h3ee}
\begin{aligned}
\|h_{(3)}\|_2\lesssim&\eps^{-1}P\left(\|h_{(3)}\|_2,\|h_{(2)}\|_3,\|h_{(1)}\|_4,\|\h_0\|_5,\|\F\|_4\right)+P(\|\ww\|_5,\|\G\|_4).
\end{aligned}
\end{equation}

Again invoking the $\p_t^3$-differentiated first and third equations in \eqref{elasto0} we get
\begin{equation}\label{v4ee}
\begin{aligned}
\|v_{(4)}\|_1&\eps^{-1}P\left(\|h_{(3)}\|_2,\|h_{(2)}\|_3,\|h_{(1)}\|_4,\|\h_0\|_5,\|\F\|_4\right)+P(\|\ww\|_5,\|\G\|_4),
\end{aligned}
\end{equation} and
\begin{equation}\label{F4ee}
\begin{aligned}
\|F_{(4)}\|_1\lesssim\eps^{-1}P\left(\|h_{(3)}\|_2,\|h_{(2)}\|_3,\|h_{(1)}\|_4,\|\h_0\|_5,\|\F\|_4\right)+P(\|\ww\|_5,\|\G\|_4).
\end{aligned}
\end{equation}

Define the energy
\begin{equation}
\EEE_I:=\sum_{k=0}^4\|v_{(k)}\|_{5-k}^2+\|F_{(k)}\|_{5-k}^2+\|h_{(k)}\|_{5-k}^2.
\end{equation} Then our computation above shows that
\[
\EEE_I\lesssim\eps^{-1} P(\EEE_I)+P(\|\ww\|_3,\|\G\|_4),
\] and thus by choosing $\eps>0$ suitably large we have proved that
\begin{equation}
\EEE_I\leq P(\|\ww\|_5,\|\G\|_4),
\end{equation}which is the uniform-in-$\epsilon$ estimates for the elliptic system \eqref{datak}.

\subsubsection{Existence by iteration scheme}

Having the a priori estimates in hand, it remains to proceed the standard iteration scheme. We start from the solution $(\h_0^{(0)},h_{(1)}^{(0)},\cdots,h_{(4)}^{(0)})$ which solves
\begin{equation}\label{eeh0}
-\Delta h_{(k)}^{(0)}=\NN_k(\p^{\alpha}\vvv,\p^{\beta}\F,\p^{\gamma_0}\h_0^{(0)},\cdots,\p^{\gamma_{k-1}}h_{(k-1)}^{(0)}),~~0\leq k\leq 3.
\end{equation} 
Then we inductively define  $(\h_0^{(m)},h_{(1)}^{(m)},\cdots,h_{(3)}^{(m)})$ by 
\begin{equation}\label{datakn}
\begin{cases}
v_0^{(n)}=\vvv+\p\phi^{(n)}~~~&\text{ in }\Omega,\\
{\F_j}^{(n)}=\G_j+\p\varphi_j^{(n)}~~~&\text{ in }\Omega,\\
-\Delta\phi^{(n)}=\eps^{-1}h_{(1)}^{(n)}~~~&\text{ in }\Omega,\\
-\Delta\varphi_j^{(n)}=\eps^{-1}({\F_j}^{(n)}\cdot\p)\h_0^{(n)}~~~&\text{ in }\Omega,\\
-\Delta h_{(k)}^{(n)}=-\eps^{-1}h_{(k+2)}^{(n)}+\eps^{-1}\sum\limits_{j=1}^3({\F_j}^{(n)}\cdot\p)^2 h_{(k)}^{(n)}+\NN_k^{(n)}~~~&\text{ in }\Omega,~0\leq k\leq 3\\
h_{(k)}^{(n)}=0,\frac{\p\phi^{(n)}}{\p N}=\frac{\p\varphi_j^{(n)}}{\p N}=0~~~&\text{ on }\Gamma,\\
h_{(4)}^{(n)}=h_{(5)}^{(n)}=0~~~&\text{ in }\Omega,
\end{cases}
\end{equation}where $\NN_k^{(n)}$ satisfies the form \eqref{NNk}.

Define the difference 
\begin{align*}
[\phi]^{(n)}:=&\phi^{(n)}-\phi^{(n-1)},~~[\varphi_j]^{(n)}:=\varphi_j^{(n)}-\varphi_j^{(n-1)},\\
[v]_{(k)}^{(n)}:=&v_{(k)}^{(n)}-v_{(k)}^{(n-1)},[\FF]_{(k)}^{(n)}:=\FF_{(k)}^{(n)}-\FF_{(k)}^{(n-1)},[h]_{(k)}^{(n)}:=h_{(k)}^{(n)}-h_{(k)}^{(n-1)},\\
[\NN]_k^{(n)}:=&\NN_k^{(n)}-\NN_k^{(n-1)},\\
[\EEE_I]^{(n)}:=&\sum_{k=0}^4\|v_{(k)}^{(n)}\|_{5-k}^2+\|\FF_{(k)}^{(n)}\|_{5-k}^2+\|h_{(k)}^{(n)}\|_{5-k}^2.
\end{align*}

Then we have
\begin{equation}\label{datadiff}
\begin{cases}
[v]_0^{(n)}=\p[\phi]^{(n)}~~~&\text{ in }\Omega,\\
{[\F_j]}^{(n)}=\p[\varphi_j]^{(n)}~~~&\text{ in }\Omega,\\
-\Delta[\phi]^{(n)}=\eps^{-1}[h]_{(1)}^{(n)}~~~&\text{ in }\Omega,\\
-\Delta[\varphi_j]^{(n)}=\eps^{-1}({[\F_j]}^{(n)}\cdot\p)\h_0^{(n)}+\eps^{-1}({\F_j}^{(n-1)}\cdot\p)[\h]_0^{(n)}~~~&\text{ in }\Omega,\\
-\Delta [h]_{(k)}^{(n)}=-\eps^{-1}[h]_{(k+2)}^{(n)}+\eps^{-1}\sum\limits_{j=1}^3\bigg(({[\F_j]}^{(n)}\cdot\p)({\F_j}^{(n)}\cdot\p) h_{(k)}^{(n)}~~~\\
~~~~~~~~+({\F_j}^{(n-1)}\cdot\p)({[\F_j]}^{(n)}\cdot\p)h_{(k)}^{(n)}+({\F_j}^{(n-1)}\cdot\p)^2[h]_{(k)}^{(n)}\bigg)+[\NN]_k^{(n)}~~~&\text{ in }\Omega,~0\leq k\leq 3\\
[h]_{(k)}^{(n)}=0,\frac{\p\phi^{(n)}}{\p N}=\frac{\p\varphi_j^{(n)}}{\p N}=0~~~&\text{ on }\Gamma,\\
[h]_{(4)}^{(n)}=[h]_{(5)}^{(n)}=0~~~&\text{ in }\Omega,
\end{cases}
\end{equation}

Following the same method as in the a priori estimates, we are able to prove that
\[
[\EEE_I]^{(n)}\lesssim\eps^{-1} P(\|\vvv\|_5,\|\G\|_5)[\EEE_I]^{(n-1)},
\]and inductively we can get
\[
[\EEE_I]^{(n)}\lesssim\eps^{-1} P(\|\vvv\|_5,\|\G\|_5)^{(n)}[\EEE_I]^{(0)}.
\]

By choosing a suitably large $\eps>0$ such that $\eps^{-1} P(\|\vvv\|_5,\|\G\|_5)<0.99$, we finally prove that
\[
[\EEE_I]^{(n)}+\cdots+[\EEE_I]^{(n+m)}\to 0
\] as $m,n\to+\infty$. The iteration scheme is finished and thus the existence of the solution to the elliptic system \eqref{datak}, i.e., Theorem \ref{dataexist} is proven.


{\small

}

\begin{thebibliography}{99}
\addcontentsline{toc}{section}{References}
\setlength{\itemsep}{0.5ex}
\begin{spacing}{0.9}
\bibitem{alazard2005}
Alazard, T. 
\newblock  Incompressible limit of the nonisentropic Euler equations with the solid wall boundary conditions.
\newblock {\em Adv. Diff. Eq.}, 10(1), 19-44, 2005.

\bibitem{alazard2014}
Alazard, T., Burq, N., Zuily, C.
\newblock On the Cauchy problem for gravity water waves.
\newblock  {\em Invent. Math.}, 198(1), 71-163, 2014.

\bibitem{alinhacgood89}
Alinhac, S.
\newblock  Existence d'ondes de rar\'efaction pour des syst\`emes quasi-lin\'eaires hyperboliques multidimensionnels.(French. English summary) [Existence of rarefaction waves for multidimensional hyperbolic quasilinear systems].
\newblock {\em Commun. Partial Differ. Eq.}, 14(2), 173-230, 1989

\bibitem{masmoudi05}
Ambrose, D., Masmoudi, N.
\newblock The zero surface tension limit of two-dimensional water waves.
\newblock  {\em  Commun. Pure Appl. Math.}, 58(10), 1287-1315, 2005

\bibitem{CaiLeiM19}
Cai, Y., Lin, F.-H., Lei, Z., Masmoudi, N.
\newblock Vanishing Viscosity Limit for Incompressible Viscoelasticity in Two Dimensions.
\newblock  {\em  Commun. Pure Appl. Math.}, 72(10), 2063-2120, 2019.

\bibitem{CSWelastic}
Chen, G.-Q., Secchi, P., Wang, T. 
\newblock Stability of Multidimensional Thermoelastic Contact Discontinuities.
\newblock {\em Arch. Rational Mech. Anal.}, 237(3), 1271-1323, 2020.

\bibitem{CHWWY1}
Chen, R. M., Hu, J., Wang, D.
\newblock Linear stability of compressible vortex sheets in two-dimensional elastodynamics.
\newblock {\em Adv. Math.}, Vol. 311, 18-60, 2017.

\bibitem{CHWWY2}
Chen, R. M., Hu, J., Wang, D. 
\newblock Linear stability of compressible vortex sheets in 2D elastodynamics: variable coefficients.
\newblock {\em Math. Ann.}, 376(3), 863–912, 2020.

\bibitem{CHWWY3}
Chen, R. M., Hu, J., Wang, D., Wang, T., Yuan, D. (2020).
\newblock  Nonlinear stability and existence of compressible vortex sheets in 2D elastodynamics.
\newblock  {\em J. Differ. Eq.}, 269(9), 6899-6940.

\bibitem{CHWWY4}
Chen, R. M., Hu, J., Wang, D., Yuan, D.
\newblock  Stabilization effect of elasticity on three-dimensional compressible vortex sheet.
\newblock  {\em Comm. Appl. Math. Comp. Sci}, to appear, 2022.

\bibitem{Cheng1} 
Cheng, B. 
\newblock Low-Mach-number Euler equations with solid-wall boundary condition and general initial data.
\newblock {\em  arXiv:1006.1148}, preprint. 2010.
. 
\bibitem{Cheng2} 
Cheng, B. 
\newblock Improved accuracy of incompressible approximation of compressible Euler equations.
\newblock {\em SIAM J. Math. Anal.}, 46(6), 3838-3864, 2014. 

\bibitem{christodoulou2000motion}
Christodoulou, D., Lindblad, H. 
\newblock On the motion of the free surface of a liquid.
\newblock  {\em Commun. Pure Appl. Math.}, 53(12), 1536--1602, 2000.

\bibitem{coutand2013LWP}
Coutand, D., Hole, J., Shkoller, S.
\newblock Well-Posedness of the Free-Boundary Compressible 3-D Euler Equations with Surface Tension and the Zero Surface Tension Limit.
\newblock  {\em SIAM J. Math. Anal.}, 45(6), 3690-3767, 2013.

\bibitem{coutand2010priori}
Coutand, D., Lindblad, H., Shkoller, S. 
\newblock  A priori estimtes for the free-boundary 3D compressible Euler equations in physical vacuum.
\newblock  {\em Commun. Math. Phys.}, 296(2), 559-587, 2010.

\bibitem{coutand2007LWP}
Coutand, D., Shkoller, S. 
\newblock Well-posedness of the free-surface incompressible Euler equations with or without surface tension.
\newblock  {\em J. Amer. Math. Soc.}, 20(3), 829--930, 2007.

\bibitem{coutand2010LWP}
Coutand, D., Shkoller, S. 
\newblock A simple proof of well-posedness for the free-surface incompressible Euler equations.
\newblock  {\em  Discrete and Continuous Dynamical Systems (Series S)}, 3(3), 429-449, 2010.

\bibitem{CSelastic}
Coutand, D., Shkoller, S.
\newblock The Interaction between Quasilinear Elastodynamics and the Navier-Stokes Equations.
\newblock  {\em Arch. Rational Mech. Anal.}, 179(3), 303-352, 2010.

\bibitem{coutand2012LWP}
Coutand, D., Shkoller, S.
\newblock {\em Well-posedness in smooth function spaces for the moving-boundary three-dimensional compressible Euler equations in physical vacuum.}
\newblock  Arch. Rational Mech. Anal., 206(2), 515-616, 2012.

\bibitem{dafermos10}
Dafermos, C. M.
\newblock {\em Hyperbolic Conservation Laws in Continuum Physics, 3rd edition,} Grundlehren Math. Wiis., Vol. 325, Springer-Verlag, 2010.


\bibitem{disconziebin}
Disconzi, M. M., Ebin, D. G. 
\newblock Motion of slightly compressible fluids in a bounded domain. II.
\newblock {\em  Commun. Contemporary Mathematics}, 19(4), 1650054, 57 pages, 2017.

\bibitem{DK}
Disconzi, M.~M., Kukavica, I. 
\newblock  A priori estimates for the 3D compressible free-boundary Euler equations with surface tension in the case of a liquid.
\newblock {\em Evolution Equations and Control Theory}, 8(3), 503-542, 2019.


\bibitem{luo2019limit}
Disconzi, M. M., Luo, C. 
\newblock  On the incompressible limit for the compressible free-boundary Euler equations with surface tension in the case of a liquid.
\newblock {\em Arch. Rational Mech. Anal.}, 237(2), 829-897, 2020.

\bibitem{ebin1982}
Ebin, D.~G. 
\newblock  Motion of slightly compressible fluids in a bounded domain. I.
\newblock  {\em  Commun. Pure Appl. Math.}, 35(4), 451-485, 1982.

\bibitem{ebin1987equations}
Ebin, D.~G. 
\newblock The equations of motion of a perfect fluid with free boundary are not well posed.
\newblock   {\em Commun. Partial Differ. Eq.}, 12(10), 1175-1201,1987.

\bibitem{ebin1993}
Ebin, D.~G. 
\newblock  Global solutions of the equations of elastodynamics of incompressible Neo-Hookean materials.
\newblock  {\em Proc. Nat. Acad. Sci. U.S.A.}, 90(9): 3802-3805, 1993.

\bibitem{GLL2019LWP}
Ginsberg, D., Lindblad, H., Luo, C. 
\newblock Local well-posedness for the motion of a compressible, self-gravitating liquid with free surface boundary.
\newblock  {\em Arch. Rational Mech. Anal.}, 236(2), 603-733, 2020.

\bibitem{GuLei2020ST}
Gu, X., Lei, Z. 
\newblock Local Well-posedness of the Free Boundary Incompressible Elastodynamics with Surface Tension.
\newblock  {\em arXiv: 2008.13354}, preprint, 2020.

\bibitem{GuWang2018}
Gu, X., Wang, F.
\newblock  Well-posedness of the free boundary problem in incompressible elastodynamics under the mixed type stability condition.
\newblock  {\em J. Math. Anal. Appl.}, 482(1), 123529, 2020.

\bibitem{gu2016construction}
Gu, X., Wang, Y. 
\newblock On the construction of solutions to the free-surface incompressible ideal magnetohydrodynamic equations.
\newblock {\em  J. Math. Pures Appl.}, 128, 1-41, 2019.

\bibitem{Gurtin}
Gurtin, M. E.
\newblock  An Introduction to Continuum Mechanics,
\newblock   Mathematics in Science and Engineering, Vol. 158, Academic Press, New York, London, 1981.

\bibitem{Hao2015gas}
Hao, C. 
\newblock Remarks on the free boundary problem of compressible Euler equations in physical vacuum with general initial densities.
\newblock  {\em Discrete and Continuous Dynamical Systems (Series B)}, 20(9), 2885-2931, 2015.

\bibitem{HaoWang2014}
Hao, C., Wang, D. 
\newblock A Priori Estimates for the Free Boundary Problem of Incompressible Neo-Hookean Elastodynamics.
\newblock  {\em J. Differ. Eq.}, 261(1), 712-737, 2016.

\bibitem{HuHuang2018}
Hu, X., Huang, Y.
\newblock Well-posedness of the free boundary problem for incompressible elastodynamics.
\newblock  {\em J. Differ. Eq.,} 266(12), 7844-7889, 2019.

\bibitem{HuWang2010}
Hu, X., Wang, D. 
\newblock Local strong solution to the compressible viscoelastic flow with large data.
\newblock  {\em  J. Differ. Eq.}, 249(5), 1179-1198, 2010.

\bibitem{HuWang2011}
Hu, X., Wang, D. (2011).
\newblock Global existence for the multi-dimensional compressible viscoelastic flows.
\newblock  {\em J. Differ. Eq.}, 250(2), 1200-1231, 201..

\bibitem{HuWu2013}
Hu, X.,  Wu, G. 
\newblock Global existence and optimal decay rates for three-dimensional compressible viscoelastic flows.
\newblock  {\em SIAM J. Math. Anal.}, 45(5), 2815-2833, 2013.

\bibitem{ITgas}
Ifrim, M., Tataru, D. 
\newblock The compressible Euler equations in a physical vacuum: a comprehensive Eulerian approach.
\newblock {\em arXiv: 2007.05668}, preprint, 2020.

\bibitem{jang2014gas}
Jang, J., Masmoudi, N.
\newblock Well-posedness of Compressible Euler Equations in a Physical Vacuum.
\newblock  {\em Commun. Pure Appl. Math.}, 68(1), 61-111, 2014.

\bibitem{kato1988commutator}
Kato, T., Ponce, G. 
\newblock Commutator estimates and the Euler and Navier-Stokes equations.
\newblock {\em  Commun. Pure Appl. Math.}, 41(7), 891-907, 1988.

\bibitem{klainerman1981}
Klainerman, S., Majda, A. 
\newblock Singular limits of quasilinear hyperbolic systems with large parameters and the incompressible limit of compressible fluids.
\newblock {\em Commun. Pure Appl. Math.}, 34(4), 481--524, 1981.

\bibitem{klainerman1982}
Klainerman, S. and Majda, A. 
\newblock Compressible and incompressible fluids.
\newblock {\em Commun. Pure Appl. Math.}, 35(5), 629-651, 1982.

\bibitem{lopatinskii}
Kreiss, H.-O.
\newblock Initial boundary value problems for hyperbolic systems.
\newblock {\em Commun. Pure Appl. Math.}, 23(3), 277-298, 1970.

\bibitem{lannes2005ww}
Lannes, D. 
\newblock  Well-posedness of the water-waves equations.
\newblock  {\em J. Amer. Math. Soc.}, 18(3), 605-654., 2005.

\bibitem{lax60}
Lax, P. D., Phillips, R. S. 
\newblock Local boundary conditions for dissipative symmetric linear differential operators.
\newblock  {\em Commun. Pure Appl. Math.}, 13, 427-455, 1960.

\bibitem{Lei16}
Lei, Z. 
\newblock Global well-posedness of incompressible elastodynamics in two dimensions.
\newblock {\em  Commun. Pure Appl. Math.,}  69(11), 2072-2106, 2016.

\bibitem{LLZ08}
Lei, Z., Liu, C., Zhou, Y. 
\newblock Global solutions for incompressible viscoelastic mediums.
\newblock {\em Arch. Rational Mech. Anal.}, 188(3): 371-398, 2008.

\bibitem{LSZ15}
Lei, Z., Sideris, T., Zhou, Y.
\newblock Almost Global Existence for 2-D Incompressible Isotropic Elastodynamics.
\newblock {\em Trans. Amer. Math. Soc.}, 367(11), 8175-8197, 2015.

\bibitem{LZ05}
Lei, Z., Zhou, Y.
\newblock Global existence of classical solutions for the two-dimensional Oldroyd model via the incompressible limit.
\newblock  {\em SIAM J. Math. Anal.}, 37(3), 797-814, 2005.

\bibitem{LWZ1}
Li, H., Wang, W. and Zhang, Z.
\newblock Well-posedness of the free boundary problem in incompressible elastodynamics.
\newblock  {\em J. Differ. Eq.}, 267(11), 6604-6643, 2019.

\bibitem{LWZ2}
Li, H., Wang, W. and Zhang, Z. 
\newblock Well-posedness of the free boundary problem in elastodynamics with mixed stability condition.
\newblock   {\em SIAM J. Math. Anal.}, 53(5), 5405-5435, 2021.

\bibitem{Lin12}
Lin, F.-H. 
\newblock Some analytical issues for elastic complex fluids.
\newblock  {\em Commun. Pure Appl. Math.}, 65(7), 893-919, 2012.

\bibitem{LLZ05}
Lin, F.-H., Liu, C., Zhang, P.
\newblock On hydrodynamics of viscoelastic mediums.
\newblock  {\em Commun. Pure Appl. Math.}, 58(11), 1437-1471, 2005.

\bibitem{LZ08}
Lin, F.-H., Zhang, P.
\newblock On the initialboundary value problem of the incompressible viscoelastic medium system.
\newblock  {\em Commun. Pure Appl. Math.}, 61(4), 539-558, 2008.

\bibitem{lindblad2002}
Lindblad, H.
\newblock Well-posedness for the linearized motion of an incompressible liquid with free surface boundary.
\newblock {\em Commun. Pure Appl. Math.}, 56(2), 153-197, 2002.

\bibitem{lindblad2003well}
Lindblad, H. 
\newblock Well-posedness for the linearized motion of a compressible liquid with free surface boundary.
\newblock {\em Commun. Math. Phys.}, 236(2), 281--310, 2003.

\bibitem{lindblad2005well}
Lindblad, H. 
\newblock Well-posedness for the motion of an incompressible liquid with free surface boundary.
\newblock  {\em Ann. Math.}, 162(1), 109-194, 2005.

\bibitem{lindblad2005cwell}
Lindblad, H. 
\newblock Well-posedness for the motion of a compressible liquid with free surface boundary.
\newblock {\em Commun. Math. Phys.}, 260(2), 319-392, 2005.

\bibitem{lindblad2018priori}
Lindblad, H., Luo, C.
\newblock A priori estimates for the compressible euler equations for a liquid with free surface boundary and the incompressible limit.
\newblock  {\em Commun. Pure Appl. Math.}, 71(7), 1273-1333, 2018.

\bibitem{LiuXu2021}
Liu, G., Xu, X.
\newblock Incompressible limit of the Hookean elastodynamics in a bounded domain.
\newblock {\em  Z. Angew. Math. Phys.}, 72:81, 1-14, 2021.

\bibitem{luo2018ww}
Luo, C. 
\newblock On the Motion of a Compressible Gravity Water Wave with Vorticity.
\newblock {\em Annals of PDE}, 4(2): 2506-2576, 2018.

\bibitem{luozhangCWWLWP}
Luo, C., Zhang, J.
\newblock Local Well-posedness for the Motion of a Compressible Gravity Water Wave with Vorticity.
\newblock {\em arXiv: 2109.02822}, preprint. (first submitted on April 12, 2020)

\bibitem{luoxinzeng2014}
Luo, T., Xin, Z., Zeng, H.
\newblock Well-posedness for the motion of physical vacuum of the three-dimensional compressible Euler equations with or without self-gravitation.
\newblock  {\em Arch. Rational Mech. Anal.}, 213(3), 763-831, 2014.

\bibitem{MRgood2017}
Masmoudi, N., Rousset, F.
\newblock Uniform Regularity and Vanishing Viscosity Limit for the Free Surface Navier-Stokes Equations.
\newblock  {\em Arch. Rational Mech. Anal.}, 223(1), 301-417, 2017.

\bibitem{mz2009}
Ming, M., Zhang, Z.
\newblock Well-posedness of the water-wave problem with surface tension.
\newblock {\em J. Math. Pures Appl.}, 92(5), 429-455, 2009.

\bibitem{MetivierNotes}
M\'etivier, G.
\newblock Stability of multidimensional shocks.
\newblock In: Freist\"uhler, H., Szepessy, A. (eds.) Advances in the Theory of Shock Waves, pp. 25–103. Birkh\"auser, Boston 2001.

\bibitem{Metivier2001}
M\'etivier, G., Schochet, S.
\newblock he incompressible limit of the non-isentropic Euler equations.
\newblock {\em TArch. Rational Mech. Anal.}, 158(1), 61-90, 2001.

\bibitem{elasticshock}
Morando, A., Trakhinin, Y., Trebeschi, P.
\newblock Structural stability of shock waves in 2D compressible elastodynamics.
\newblock {\em Math. Ann.}, 378(3), 1471–1504, 2020.

\bibitem{old1}
Oldroyd, J. G. 
\newblock On the formulation of rheological equations of state.
\newblock {\em Proc. R. Soc. Lond. Ser. A}, 200, 523-541, 1950.

\bibitem{old2}
Oldroyd, J. G.
\newblock Non-Newtonian effects in steady motion of some idealized elastico-viscous liquid.
\newblock {\em Proc. R. Soc. Lond. Ser. A}, 245, 278-297, 1958.

\bibitem{qian1}
Qian, J.
\newblock Initial boundary value problems for the compressible viscoelastic medium.
\newblock {\em J. Differ. Eq.}, 250(2), 848-865, 2011.

\bibitem{qian2}
Qian, J., Zhang, Z.
\newblock Global Well-Posedness for Compressible Viscoelastic Fluids near Equilibrium.
\newblock {\em Arch. Rational Mech. Anal.}, 198(3), 835–868, 2010.

\bibitem{Schochet1986}
Schochet, S.
\newblock The compressible Euler equations in a bounded domain: Existence of solutions and the incompressible limit.
\newblock {\em Commun. Math. Phys.}, 104(1), 49-75, 1985.

\bibitem{Schweizer05}
Schweizer, B.
\newblock On the three-dimensional Euler equations with a free boundary subject to surface tension.
\newblock {\em  Ann. Inst. H. Poincar\'e Anal. Non Lin\'eaire}, 22(6), 753-781, 2005.

\bibitem{Sideris91}
Sideris, T. 
\newblock The lifespan of smooth solutions to the three-dimensional compressible Euler equations and the incompressible limit.
\newblock  {\em  Indiana Univ. Math. J.}, 40, 535-550, 1991.

\bibitem{Sideris04}
Sideris, T., Thomases, B.
\newblock  Global existence for three-dimensional incompressible isotropic elastodynamics via the incompressible limit.
\newblock {\em Commun. Pure Appl. Math.}, 58(6), 750-788, 2005.

\bibitem{Sideris07}
Sideris, T., Thomases, B. 
\newblock Global existence for three-dimensional incompressible isotropic elastodynamics.
\newblock {\em Commun. Pure Appl. Math.}, 60(12), 1707-1730, 2007.

\bibitem{shatah2008geometry}
Shatah, J., Zeng, C. 
\newblock  Geometry and a priori estimates for free boundary problems of the  Euler's equation.
\newblock {\em Commun. Pure Appl. Math.}, 61(5), 698-744, 2008.

\bibitem{shatah2008priori}
Shatah, J., Zeng, C. 
\newblock  A priori estimates for fluid interface problems.
\newblock  {\em Commun. Pure Appl. Math.}, 61(6), 848-876, 2008.

\bibitem{shatah2011local}
Shatah, J., Zeng, C.
\newblock Local well-posedness for fluid interface problems.
\newblock  {\em Arch. Rational Mech. Anal.}, 199(2, 653-705, 2011.

\bibitem{tao2006nonlinear}
Tao, T.
\newblock {\em Nonlinear dispersive equations:  Local and global analysis.}
\newblock  CBMS Regional Conference Series in Mathematics 106,  American Mathematical Society, 2006.

\bibitem{taylorPDE1}
Taylor, M. 
\newblock {\em Partial Differential Equations I: Basic Theory.}
\newblock  Applied Mathematical Sciences 115, Springer-Verlag New York, 2011.

\bibitem{trakhiningas2009}
Trakhinin, Y.
\newblock Local existence for the free boundary problem for nonrelativistic and Relativistic compressible Euler equations with a vacuum boundary condition.
\newblock  {\em Commun. Pure Appl. Math.}, 62(11),1551-1594, 2009.

\bibitem{trakhinin2016elastic}
Trakhinin, Y.
\newblock Well-posedness of the free boundary problem in compressible elastodynamics.
\newblock {\em J. Differ. Eq. 264(3)}, 1661-1715, 2018.

\bibitem{wxcelastic}
Wang, X.
\newblock Global Existence for the 2D Incompressible Isotropic Elastodynamics for Small Initial Data.
\newblock  {\em Annales Henri Poincar\`e}, Vol. 18, 1213–1267, 2017.

\bibitem{wangxingood}
Wang, Y., Xin, Z. 
\newblock Vanishing viscosity and surface tension limits of incompressible viscous surface waves.
\newblock  {\em SIAM J. Math. Anal.}, 53(1), 574-648, 2021.

\bibitem{wu1997LWPww}
Wu, S.
\newblock Well-posedness in Sobolev spaces of the full water wave problem in 2-D.
\newblock {\em Invent. Math.}, 130(1), 39-72, 1997.

\bibitem{wu1999LWPww}
Wu, S.
\newblock Well-posedness in Sobolev spaces of the full water wave problem in
  3-D.
\newblock {\em J. Amer. Math. Soc.}, 12(2), 445-495, 1999.

\bibitem{XZZ2013}
Xu, L., Zhang, P., Zhang, Z.
\newblock Global Solvability of a Free Boundary Three-Dimensional Incompressible Viscoelastic Fluid System with Surface Tension.
\newblock {\em Arch. Rational Mech. Anal.}, 208(3), 753–803, 2013.

\bibitem{ZhangCRMHD1}
Zhang, J. 
\newblock A Priori Estimates for the Free-Boundary Problem of Compressible Resistive MHD Equations and Incompressible Limit.
\newblock  {\em arXiv: 1911.04928}, preprint, 2019.

\bibitem{ZhangCRMHD2}
Zhang, J.
\newblock Local Well-posedness of the Free-Boundary Problem in Compressible Resistive Magnetohydrodynamics.
\newblock  {\em arXiv: 2012.13931}, preprint, 2020

\bibitem{zhangzhang08Euler}
Zhang, P., Zhang, Z.
\newblock On the free boundary problem of three-dimensional incompressible Euler equations.
\newblock {\em Commun. Pure Appl. Math.}, 61(7), 877-940, 2008.

\bibitem{ZhangYH}
Zhang, Y.
\newblock Local well-posedness of the free-surface incompressible elastodynamics.
\newblock {\em  J. Differ. Eq.}, 268(11), 6971-7011, 2020.
\end{spacing}
\end{thebibliography}
\end{document}